\theoremstyle{plain}
\def\cal@symb#1|#2{\expandafter\def\csname #2#1\endcsname{\mathcal{#1}}}
\def\calsymbols#1#2{\@for\@tmpz:=#2\do{\expandafter\cal@symb\@tmpz|{#1}}}
\def\bb@symb#1|#2{\expandafter\def\csname #2#1\endcsname{\mathbb{#1}}}
\def\bbsymbols#1#2{\@for\@tmpz:=#2\do{\expandafter\bb@symb\@tmpz|{#1}}}
\def\bold@symb#1|#2{\expandafter\def\csname #2#1\endcsname{\mathbf{#1}}}
\def\boldsymbols#1#2{\@for\@tmpz:=#2\do{\expandafter\bold@symb\@tmpz|{#1}}}
\def\scr@symb#1|#2{\expandafter\def\csname #2#1\endcsname{\mathscr{#1}}}
\def\scrsymbols#1#2{\@for\@tmpz:=#2\do{\expandafter\scr@symb\@tmpz|{#1}}}
\def\frak@symb#1|#2{\expandafter\def\csname #2#1\endcsname{\mathfrak{#1}}}
\def\fraksymbols#1#2{\@for\@tmpz:=#2\do{\expandafter\frak@symb\@tmpz|{#1}}}
\def\dmth@p#1|{\expandafter\let\csname#1\endcsname\relax
  \expandafter\DeclareMathOperator\csname#1\endcsname{#1}}
\def\operators#1{\@for\@tmpz:=#1\do{\expandafter\dmth@p\@tmpz|}}
\theoremstyle{plain}
\def\cal@symb#1|#2{\expandafter\def\csname #2#1\endcsname{\mathcal{#1}}}
\def\calsymbols#1#2{\@for\@tmpz:=#2\do{\expandafter\cal@symb\@tmpz|{#1}}}
\def\bb@symb#1|#2{\expandafter\def\csname #2#1\endcsname{\mathbb{#1}}}
\def\bbsymbols#1#2{\@for\@tmpz:=#2\do{\expandafter\bb@symb\@tmpz|{#1}}}
\def\bold@symb#1|#2{\expandafter\def\csname #2#1\endcsname{\mathbf{#1}}}
\def\boldsymbols#1#2{\@for\@tmpz:=#2\do{\expandafter\bold@symb\@tmpz|{#1}}}
\def\scr@symb#1|#2{\expandafter\def\csname #2#1\endcsname{\mathscr{#1}}}
\def\scrsymbols#1#2{\@for\@tmpz:=#2\do{\expandafter\scr@symb\@tmpz|{#1}}}
\def\frak@symb#1|#2{\expandafter\def\csname #2#1\endcsname{\mathfrak{#1}}}
\def\fraksymbols#1#2{\@for\@tmpz:=#2\do{\expandafter\frak@symb\@tmpz|{#1}}}
\def\dmth@p#1|{\expandafter\let\csname#1\endcsname\relax
  \expandafter\DeclareMathOperator\csname#1\endcsname{#1}}
\def\operators#1{\@for\@tmpz:=#1\do{\expandafter\dmth@p\@tmpz|}}
\theoremstyle{definition}
\newtheorem{defn}{Definition}[subsection]
\newtheorem{thm}[defn]{Theorem}
\newtheorem{cor}[defn]{Corollary}
\newtheorem{prop}[defn]{Proposition}
\newtheorem{lem}[defn]{Lemma}
\newtheorem{ex}[defn]{Example}
\newtheorem{exs}[defn]{Examples}
\newtheorem{rmk}[defn]{Remark}
\newtheorem{notn}[defn]{Notation}
\newtheorem{MainThm}{Theorem}
\DeclareSymbolFont{largesymbols}{OMX}{yhex}{m}{n}
\DeclareMathAccent{\wideparen}{\mathord}{largesymbols}{"F3}
\DeclareMathAlphabet{\mathpzc}{OT1}{pzc}{m}{it}
\DeclareMathAlphabet{\mathpzc}{OT1}{pzc}{m}{it}
\newcommand{\h}[1]{\widehat{#1}}
\newcommand{\fr}[1]{\mathfrak{{#1}}}
\newcommand{\ts}[1]{\texorpdfstring{$#1$}{}}
\newcommand{\st}{\mid}
\newcommand{\be}{\begin{enumerate}[{(}a{)}]}
\newcommand{\ee}{\end{enumerate}}
  \let\leq=\leqslant
  \let\geq=\geqslant
\newcommand{\qmb}[1]{\quad\mbox{#1}\quad}
\newcommand{\hsp}{\hspace{0.1cm}}
\newcommand{\sq}[1]{\sqrt{|#1^\times|}}
\def\arrowlim#1#2{\mathop{\underset{\scriptstyle #1}{\underset
    {\raisebox{0ex}[0.25ex][-0.5ex]{$#2$}}{\operatorname{lim}}}}}
\newcommand{\invlim}[1][]{\arrowlim{#1}{\longleftarrow}}        
  \let\leq=\leqslant
  \let\geq=\geqslant
\def\Zp{\Z_p}
\begin{document}
\title[Equivariant line bundles with connection]{Equivariant line bundles with connection on the $p$-adic upper half plane}

\author{Konstantin Ardakov}
\address{Mathematical Institute\\University of Oxford\\Oxford OX2 6GG}
\email{ardakov@maths.ox.ac.uk}
\subjclass[2010]{14G22; 32C38}
\author{Simon Wadsley}
\address{Homerton College\\ Cambridge CB2 8PH}
\email{S.J.Wadsley@dpmms.cam.ac.uk}
\vspace{-2cm}

\begin{abstract}Let $F$ be a finite extension of $\bQ_p$, let $\Omega_F$ be Drinfeld's upper half-plane over $F$ and let $G^0$ the subgroup of $GL_2(F)$ consisting of elements whose determinant has norm $1$. By working locally on $\Omega_F$, we construct and classify the torsion $G^0$-equivariant line bundles with integrable connection on $\Omega$ in terms of the smooth linear characters of the units of the maximal order of the quaternion algebra over $F$. \end{abstract}

\maketitle
\vspace{-1cm}
\tableofcontents

\section{Introduction}
\subsection{Background}
Let $p$ be a prime number, $\Q_p$ the field of $p$-adic numbers and $F$ a finite extension of $\Q_p$ with ring of integers $\cO_F$ and uniformiser $\pi_F$. Let $\Omega_F$ denote the the Drinfeld upper half-plane: it is a rigid $F$-analytic space whose underlying set is the set of $\Gal(\overline{F}/F)$-orbits in $\mathbb{P}^1(\overline{F})\backslash \mathbb{P}^1(F)$. In \cite{DrinfeldCoverings}, Drinfeld defined a tower of $G:=GL_2(F)$-equivariant \'etale rigid $\breve{F}$-analytic coverings of $\Omega := \Omega_F \times_F \breve{F}$, where $\breve{F}$ is the completion of the maximal unramified extension of $F$. 

We consider a slight modification of Drinfeld's construction, due to Rapoport-Zink \cite{RZ96}. They constructed a tower of coverings \[\cdots \to \cM_n\to \cM_{n-1}\to \cdots\to\cM_1\to \cM_0\] of rigid $\breve{F}$-analytic spaces. Their base space $\cM_0$ can be viewed as being the disjoint union of countably many copies of $\Omega$ (cf \cite[Theorem 3.72]{RZ96}): there is a non-canonical isomorphism $\cM_0\cong \Omega \times \bZ$. The tower comes equipped with an action of $G$, so that the maps $\cM_n\to \cM_{n-1}$ are all $G$-equivariant. The $G$-action on the base space is given by \[ g\cdot (z,n)=(g\cdot z, n+v_{\pi_F}(\det g)).\] Here the $G$-action on $\Omega$ is the usual one by M\"obius transformations. Thus each copy of $\Omega$ is stabilised by the subgroup $G^0$ of $G$ consisting of matrices $g$ in $G$ such that $v_{\pi_F}(\det g)=0$. In this way we may view $\cM_0$ as being isomorphic to $G\times_{G^0} \Omega$. 

Let $D$ be the quaternion division algebra over $F$ with valuation ring $\cO_D$ and let $\Pi$ denote a generator of the unique maximal ideal in $\cO_D$. The maps $\cM_n\to \cM_{n-1}$ in the tower are all finite \'etale and Galois with \[\Gal(\cM_n/\cM_0)\cong\cO_D^\times/(1+\Pi^n\cO_D).\] Moreover, the actions of $G$ and $\Gal(\cM_n/\cM_0)$ on $\cM_n$ commute. 

Scholze--Weinstein have proved in \cite[Theorem 7.3.1]{SW13} that if $\bfC$ denotes a complete and algebraically closed extension of $\breve{F}$ and $\widetilde{\Omega}$ is a finite \'etale $G$-equivariant cover of the base-change $\Omega \times_{\breve{F}} \bfC$, then there is an integer $m\geq 0$ such that $\cM_m \times_{\breve{F}} \bfC\to \Omega \times_{\breve{F}} \bfC$ factors through $\widetilde{\Omega}$.  

One way to better understand these finite \'etale equivariant covers is through the study of equivariant vector bundles with flat connections on the base space: the two theories are essentially equivalent. Here we briefly sketch how to obtain equivariant vector bundles with flat connection from the Drinfeld tower. Suppose that $\rho$ is a smooth geometrically irreducible representation of $\cO_D^\times$ whose character is defined over some finite extension $K$ of $\breve{F}$. There is some $m\geq 0$ such that $\rho$ factors through $\cO_D^\times/(1+\Pi^m\cO_D)$ and, for all $n\geq m$, $\cO_{\cM_{n,K}}$ has a $\rho$-isotypical component $\cV^\rho$ independent of the choice of $n$. Then the pushforward of $\cV^\rho$ down to $\cM_{0,K}$ is a $G$-equivariant vector bundle over $\cM_{0,K}$ of rank $(\dim \rho)^2$, equipped with an integrable connection. 

In this paper we only consider the smooth $K$-linear representations of $\cO_D^\times$ of degree $1$, that is, the torsion characters $\theta : \cO_D^\times \to K^\times$. Then the pushforward of $\cV^\theta$ is a $G$-equivariant line bundle on $\cM_{0,K}$ with an integrable connection. For example, when $\bf{1}$ is the trivial representation, the corresponding line bundle is just the structure sheaf $\cO_{\cM_0}$ with its natural $G$-action and natural integrable connection. The set $\Hom(\cO_D^\times,K^\times)_{\tors}$ of torsion characters $\cO_D^\times \to K^\times$ forms a group under pointwise multiplication of characters; given a continuous action\footnote{see $\S \ref{PicConG}$ for the precise definitions} of a topological group $H$ on a smooth rigid $K$-analytic space $X$, we denote by $\PicCon^H(X)$ the group of isomorphism classes of $H$-equivariant line bundles with an integrable connection on $X$, where the group structure comes from the tensor product of these line bundles. In this way, the Drinfeld tower gives us a group homomorphism
\begin{equation}\label{DrinTowerMap} \Hom(\cO_D^\times,K^\times)_{\tors}\quad \longrightarrow\quad\PicCon^G(\cM_{0,K})_{\tors}.\end{equation}
\subsection{The main result} In this paper we will consider, for each complete valued field extension $K$ of $F$, the subgroup $\PicCon^{G^0}(\Omega)_{\tors}$ of $\PicCon^{G^0}(\Omega)$ consisting of the isomorphism classes of the $G^0$-equivariant line bundles with integrable connection on the rigid $K$-analytic space $\Omega := \Omega_F \times_F K$ that have finite order under $\otimes$. This group is naturally isomorphic to $\PicCon^G(G\times_{G_0} \Omega)_{\tors}$ via an induction map, and therefore also, non-canonically, to $\PicCon^G(\cM_{0,K})_{\tors}$ whenever $K$ contains $\breve{F}$. Our main result is then
\begin{MainThm}\label{ThmA} Suppose that $K$ contains the quadratic unramified extension $L$ of $F$. Then there is an isomorphism of abelian groups \[ \PicCon^{G^0}(\Omega)_{\tors} \quad \stackrel{\cong}{\longrightarrow}\quad \Hom(\cO_D^\times,K^\times)_{\tors}.\]
\end{MainThm}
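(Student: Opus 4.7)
The plan is a local-to-global analysis on an admissible affinoid cover of $\Omega$ indexed by the Bruhat-Tits tree $\cT$ of $GL_2(F)$, exploiting the slogan ``work locally on $\Omega_F$'' announced in the abstract. As a first reduction, $\Omega$ is a smooth Stein rigid curve over $K$, so $\Pic(\Omega) = 0$ and every line bundle on $\Omega$ is trivial. An integrable connection therefore has the form $d + \omega$ for some closed $\omega \in \Omega^1(\Omega)$, and a $G^0$-equivariant structure is encoded by a continuous crossed homomorphism $c \colon G^0 \to \cO(\Omega)^\times$ satisfying $g^*\omega - \omega = \dlog c(g)$ for all $g \in G^0$, modulo the coboundary action $(\omega, c) \mapsto (\omega + \dlog f,\ g \mapsto (g^*f / f)\cdot c(g))$ for $f \in \cO(\Omega)^\times$. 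The torsion condition becomes $n\omega \in \dlog \cO(\Omega)^\times$ for some $n \geq 1$. Thus $\PicCon^{G^0}(\Omega)_{\tors}$ is computed by a twisted first continuous cohomology of $G^0$ in $\cO(\Omega)^\times$, restricted to torsion classes.

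Next, I would use the standard admissible affinoid cover $\{U_\sigma\}$ of $\Omega$ indexed by simplices $\sigma$ of $\cT$. The tree is bipartite and every element of $G^0$ preserves vertex type (since $v_{\pi_F}(\det g) = 0$), so $G^0$ acts transitively on each color class. Fix representatives $v_0, v_1$ of the two orbits with maximal compact stabilizers $K_0, K_1 \subset G^0$, meeting in the Iwahori $I = K_0 \cap K_1$ stabilizing the edge between $v_0$ and $v_1$. I would compute $\PicCon^{K_i}(U_{v_i})_{\tors}$ and $\PicCon^I(U_{v_0} \cap U_{v_1})_{\tors}$ by direct analysis of $\cO(U_{v_i})^\times$ as a continuous $K_i$-module, and then assemble a global $G^0$-equivariant class from compatible local data $(\chi_0, \chi_1; \phi)$, where $\chi_i$ classifies the $K_i$-equivariant local structure and $\phi$ is the Iwahori-level gluing cocycle on the intersection.

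To match this with $\Hom(\cO_D^\times, K^\times)_{\tors}$, I would use the presentation $\cO_D^\times = \cO_L^\times \sqcup \Pi\cO_L^\times$, where $\Pi^2 \in \pi_F \cO_L^\times$ and $\Pi z \Pi^{-1} = \bar z$ for $z \in \cO_L^\times$. The natural embedding $\cO_L \hookrightarrow M_2(\cO_F)$ via multiplication on an $\cO_F$-basis of $\cO_L$ realises $\cO_L^\times$ inside both $K_0$ and $K_1$; the local characters $\chi_i$ therefore restrict to the same character $\chi \colon \cO_L^\times \to K^\times$, recovering the $\cO_L^\times$-part of the target. The missing ingredient is the gluing datum $\phi$, which I expect to correspond to the value of the $\cO_D^\times$-character at $\Pi$; the relation $\Pi z \Pi^{-1} = \bar z$ then translates precisely into the Frobenius-twist compatibility forced on $\phi$. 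The hypothesis $L \subset K$ is essential here to ensure that every torsion character of $\cO_L^\times$ is realised in $K^\times$ and that the Frobenius twist can be split, producing the claimed bijection; the construction sketched in the introduction via the Drinfeld tower then provides an explicit inverse.

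The main obstacle is the local cohomology computation in step two: controlling $\cO(U_{v_i})^\times$ as a continuous $K_i$-module and computing its twisted first cohomology in the torsion range. The unit groups of $p$-adic affinoids are infinite-dimensional pro-groups and the $K_i$-action is continuous but has no finite-dimensional model; one must introduce coordinates on $U_{v_i}$ compatible with the $K_i$-action and decompose the units equivariantly, then extract the torsion part. Once this delicate local calculation is in place, the matching of gluing data with the Frobenius-conjugation structure of $\cO_D^\times$ and the comparison with characters are essentially formal.
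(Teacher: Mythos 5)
Your overall strategy — trivializing line bundles on the quasi-Stein space $\Omega$, encoding an equivariant connection as a twisted cocycle valued in $\cO(\Omega)^\times$, and reducing to the vertex stabilizers $A = GL_2(\cO_F)$, $B = {}^wA$ and the Iwahori $I = A\cap B$ via the Bruhat--Tits tree — is broadly the same framework the paper uses (Proposition \ref{PicSeq}, the amalgam $G^0 = A\ast_I B$ and Proposition \ref{PicConamal}). However, there are two genuine gaps in your proposal that would prevent it from succeeding as written.

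First, the description ``$\cO_D^\times = \cO_L^\times \sqcup \Pi\cO_L^\times$'' is false. Elements of $\Pi\cO_L^\times$ have $D$-norm strictly less than $1$, so they are not units of $\cO_D$; what you have written down is a coset decomposition relevant to $D^\times/F^\times$, not to $\cO_D^\times$. Consequently the proposed ``gluing datum $\phi$ equals the value of the character at $\Pi$'' cannot make sense, since $\Pi\notin\cO_D^\times$ and characters of $\cO_D^\times$ have no value at $\Pi$. The actual bridge to $\Hom(\cO_D^\times,K^\times)_{\tors}$ goes through the computation of the abelianization $(\cO_D^\times)^{\ab}$: by Riehm's theorem (Proposition \ref{DerSubO_D} and Proposition \ref{AbelO_D^x}) one has a canonical isomorphism $(\cO_D^\times)^{\ab}\cong \cO_L^\times/P^1_L$ where $P^1_L = \ker N_{L/F}\cap \ker\omega_L$. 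Without this input you have no handle on $\Hom(\cO_D^\times,K^\times)$ at all, and the ``Frobenius-twist compatibility'' you invoke does not substitute for it.

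Second, and more structurally, your proposal never identifies the mechanism that actually produces the isomorphism. The paper's key device is the \emph{fibre functor} $\phi_z$ at a point $z\in\Omega_{F,0}(L)$: evaluating an equivariant line bundle at $z$ yields a character of the stabilizer $G^0_z\cong\cO_L^\times$, and Theorem \ref{phizisolocal} and Proposition \ref{imagephiztors} prove that $\phi_z$ is an isomorphism onto $\Hom(G^0_z/P_z,K^\times)_{\tors}$. This requires an explicit construction of enough equivariant line bundles: the $1$-cocycle $j(g) = a - cx$ on $A$, combined with a measure-theoretic calculation of $\cO(\Omega_n)^\times/K^\times\cO(\Omega_n)^{\times\times}$ as a permutation $A$-module (Proposition \ref{UnitsMeasCheese}, Proposition \ref{M0Gfinite}), which is precisely the ``delicate local calculation'' you acknowledge as the main obstacle but do not carry out. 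Your plan also needs a separate argument for the $p$-power torsion; there the paper shows (Corollary \ref{KilledByq+1}, Lemma \ref{phizptorsion}) that the $p$-torsion of $\PicCon^{G^0}(\Omega)$ comes entirely from twists $\cO_\chi$ by $p$-power characters of $G^0$, which in turn are controlled by $G^0 = G^0_z\cdot SL_2(F)$ (Corollary \ref{G0zSL2}). None of these steps is automatic, and the shape of the final answer — that the target is $\Hom(\cO_L^\times/P^1_L,K^\times)$ rather than, say, $\Hom(\cO_L^\times,K^\times)$ — comes precisely from the vanishing of characters on the pro-$p$ part of $\ker N_{L/F}$, which your gluing heuristic does not explain.
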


We note that Theorem \ref{ThmA} cannot be true without the condition that $K$ contains $L$ in the statement, because $\PicCon^{G^0}(\Omega)[p']$ is  cyclic of order $q^2-1$ for any choice of field extension $K$, but $K^\times$ contains no such subgroup unless $K$ contains $L$ and so $\Hom(\cO_D^\times, K^\times)$ cannot either. We also note here in passing that in fact all line bundles on $\Omega$ are known to be trivial --- see, e.g. \cite[Th\'{e}or\`{e}me A]{Junger23}.

Our isomorphism depends on a choice of a point $z \in \Omega_F(L)$ and an $F$-algebra embedding $\iota : L \hookrightarrow D$. However, there is a natural $G$-action on $\PicCon^{G^0}(\Omega)_{\tors}$ and a natural conjugation $D^\times$-action on $\Hom(\cO_D^\times, K^\times)_{\tors}$: the first action factors through $G / G^0 F^\times$ and the second action factors through $D^\times / \cO_D^\times F^\times$, both cyclic groups of order $2$. The isomorphisms in Theorem \ref{ThmA} are compatible with respect to these actions, provided we identify $G / G^0 F^\times$ with $D^\times / \cO_D^\times F^\times$ in the only possible way. Quotienting out by these actions on both sides, we obtain a bijection
\[ \PicCon^{G^0}(\Omega)_{\tors}/G \quad \stackrel{\cong}{\longrightarrow}\quad\Hom(\cO_D^\times,K^\times)_{\tors}/D^\times\] 
which no longer depends on any choices. We expect, but do not check it in this paper, that when $K=\breve{F}$, the isomorphism in Theorem \ref{ThmA} is in fact inverse to (\ref{DrinTowerMap}) for some choice of identification of $\cM_0$ with $G\times_{G^0}\Omega$, depending on our other choices.

\subsection{Motivation} We do not use the result of Scholze--Weinstein \cite{SW13}, nor even the existence of the Drinfeld tower, to prove Theorem \ref{ThmA}. Instead, we give an explicit construction of the elements $\PicCon^{G^0}(\Omega)_{\tors}$ as finite order $G^0$-equivariant line bundles with connection on $\Omega$ and prove directly that there are no others. More precisely: in this paper we give an \emph{elementary construction} \footnote{elementary in the sense of not depending on the duality of Rapoport-Zink spaces, nor on the theory of perfectoid spaces} of finite order $GL_2(\cO_F)$-equivariant line bundles with connection on the $K$-affinoid subdomain $\Omega_0$ of $\Omega$ that is the inverse image of the vertex fixed by $GL_2(\cO_F)$ in the Bruhat-Tits tree under the reduction map, and show that each of these line bundles extends uniquely to a $G^0$-equivariant line bundle with connection on all of $\Omega$. We have provided full arguments for some results that already appear in the literature in order to stress the elementary nature of our work. 

In our companion work \cite{AWgoat2}, we use this elementary construction to better understand the structure of the global sections $\sL^\theta(\Omega)$ of these equivariant line bundles as modules over the locally $F$-analytic distribution algebra of $G^0$. The results of this paper are used to show that if $j : \Omega \to \bP^1_K$ denotes the open inclusion, then $j_\ast \sL^\theta$ is a coadmissible $G^0$-equivariant $\cD$-module on $\bP^1_K$ in the sense of \cite{EqDCap}. We then use this to prove that for each $\theta\in \Hom(\cO_D^\times, K^\times)_{\tors}$, $\sL^\theta(\Omega)$ is \emph{irreducible} as a coadmissible $D(G^0,K)$-module when the underlying connection is non-trivial, and that it has length two  otherwise. The condition that the underlying connection is non-trivial can be shown to be equivalent to $\theta$ not being fixed by the $D^\times$-action on $\Hom(\cO_D^\times,K^\times)_{\tors}$ discussed above.

\subsection{An overview of some related works} Because of the central position of the theory of Drinfeld coverings and Rapoport-Zink spaces in the local Langlands program \cite{HarrisTaylor}, it is difficult to make a comprehensive literature review. In his work \cite{Teit1990}, Teitelbaum found explicit local equations for the first Drinfeld covering of $\Omega$: this yields explicit torsion line bundles with connection on $\Omega_0$ together with a (non-explicit) $GL_2(\cO_F)$-equivariant structure. However neither Teitelbaum, nor Lue Pan in his closely-related work \cite{Pan2017}, consider the flat connections on these line bundles, nor do they make an attempt to classify the appropriate equivariant structures in an elementary manner.

We acknowledge our intellectual debt to the Introduction of \cite{DospLeBras}, where Dospinescu and Le Bras explain the importance of the equivariant vector bundles on $\Omega$ in the context of locally analytic representations of $GL_2(F)$, and in particular in the $p$-adic local Langlands program. We mention here in passing that the constructions in our paper enable us to define the first Drinfeld covering of $\Omega$ directly over $F$ rather than over $\breve{F}$ without appealing to the theory of Weil descent. We also take the opportunity to mention here the monumental works of Colmez, Dospinescu and Nizio\l{} \cite{CDN20a}, \cite{CDN20b} that use Scholze's pro-\'etale methods and build upon \cite{DospLeBras} to show, amongst other things, that the $p$-adic \'etale cohomology of the Drinfeld tower realises the $p$-adic local Langlands correspondence, at least in the case when $F = \bQ_p$.

Finally, we mention that Junger in his recent preprint \cite{Jun23pre} has classified equivariant line bundles on Deligne's formal model $\widehat{\Omega}$ of $\Omega_F$. There are some formal similarities in our methods (for example his Proposition 2.10 plays a similar role to our Proposition \ref{PicSeq}), however we cannot deduce his results from ours, nor vice versa. We expect that our results here can be naturally extended to analogues of $\Omega$ in higher dimensions.

\subsection{Acknowledgements} \label{AckSec} We thank Aurel Page for pointing the paper of Riehm \cite{Riehm} on \href{https://mathoverflow.net/questions/433274/abelianization-of-unit-quaternions-over-a-p-adic-field}{MathOverflow}. We also thank James Taylor for his comments. The second author thanks Brasenose College, Oxford for its hospitality.
\subsection{Conventions and Notation}  \label{ConvNotn}
$F$ will denote a finite extension of $\mathbb{Q}_p$ with ring of integers $\cO_F$, uniformiser $\pi_F$ and residue field $k_F$ of order $q$. $K$ will denote a field containing $F$ that is complete with respect to a non-archimedean norm $|\cdot|$ such that $|p|=1/p$. We will write: 
\begin{itemize}
\item $K^\circ := \{a \in K : |a| \leq 1\}$ for the valuation ring of $K$,
\item $K^{\circ\circ} := \{a \in K : |a| < 1\}$ for the maximal ideal of $K^\circ$,
\item $\overline{K}$ for a fixed algebraic closure of $K$, and
\item $\bfC$ for the completion of $\overline{K}$.
\end{itemize}
Let $X$ be a rigid $K$-analytic variety. When $Y$ is a subset $X$, we will say that $Y$ is an \emph{affinoid subdomain} of $X$ to mean that $Y$ is an admissible open subspace of $X$, itself isomorphic to an affinoid $K$-variety. When $X$ itself happens to be a $K$-affinoid variety, this agrees by \cite[Corollary 8.2.1/4]{BGR} with the standard definition found at \cite[Definition 7.2.2/2]{BGR}. We will write
\begin{itemize}
\item $|\cdot|_X$ to denote the (power-multiplicative) supremum seminorm on $X$,
\item $\cO(X)^\circ := \{f \in \cO(X) : |f|_X \leq 1\}$,
\item $\cO(X)^{\circ\circ} := \{f \in \cO(X) : |f|_X < 1\}$,
\item $\cO(X)^{\times\times} := 1 + \cO(X)^{\times\times}$, the subgroup of \emph{small units} in $\cO(X)^\times$, and
\item $\cO(X)^{\times\times}_r := \{1 + f : |f|_X \leq r\} \leq \cO(X)^{\times\times}$ for each real number $r \in (0,1)$.
\end{itemize}
$\Pic(X)$ will denote the \emph{Picard group} of $X$ consisting of the isomorphism classes of line bundles on $X$ with the group operation given by tensor product. 

When $K'$ is a complete field extension of $K$, we will write $X_{K'} := X\times_K K'$ for the base change of $X$ to $K'$, and \[ X(K')=\{\phi\colon \Sp K'\to X: \phi \mbox{ is a morphism of rigid }K\mbox{-analytic varieties} \}\] 
will denote the set of {\emph{$K'$-valued points of $X$}}. 

Let $A$ be an abelian group and let $d$ be a non-zero integer. We will write
\begin{itemize} 
\item $A[d]=\{a\in A\st da=0\}$ to denote the $d$-torsion subgroup of $A$, 
\item $A[p']:= \bigcup_{(d,p)=1} A[d]$ to denote the prime-to-$p$ torsion subgroup of $A$,
\item $A[p^\infty]:=\bigcup_{n=1}^\infty A[p^n]$ to denote the $p$-power torsion subgroup of $A$. 
\item $A_{\tors}:=\bigcup_{d\geq 1}A[d]$ to denote the full torsion subgroup of $A$.
\end{itemize} 

Let a group $G$ act on a set $X$. We will write
\begin{itemize}
\item $G_x:=\{g\in G:gx=x\}$ for the stabilizer of a point $x\in X$, and
\item $X^G:=\{x\in X: gx=x $ for all $g\in G\}$ for the set of elements fixed by $G$.
\end{itemize}

When we discuss cochains, cocycles and coboundaries we will work with the continuous cochain cohomology of Tate, \cite{TateK2}. That is if $G$ is a topological group and $A$ is a topological abelian group equipped with a continuous action of $G$ then:
\begin{itemize} \item $C^n(G,A):=\{f\colon G^n\to A: f$ is continuous$\}$ is the set of continuous $A$-valued $n$-cochains, 
\item $Z^n(G,A)$ is the set of continuous $A$-valued $n$-cocycles,
\item $B^n(G,A)$ is the set of continuous $A$-valued $n$-coboundaries, 
\item $H^n(G,A)$ is the $n$th continuous cohomology group $Z^n(G,A)/B^n(G,A)$, 
\item $\delta_G\colon C^0(G,A)=A\to C^1(G,A)$ is the map given by $\delta_G(a)(g)=g\cdot a-a$.
\end{itemize}
Note that whenever $\theta\colon A\to B$ is a $G$-equivariant map of topological abelian groups with continuous $G$-action, we have $\delta_G\theta=\theta\delta_G$. 

In the particular case where $G$ acts trivially on $A$, we will usually write $\Hom(G,A)$ instead of $Z^1(G,A)$ or $H^1(G,A)$ to denote the group of continuous homomorphisms from $G$ to $A$.  For any continuous group homomorphism $\varphi \colon G \to H$, we denote by \[\varphi^\ast : \Hom(H, A) \to \Hom(G,A)\] the map given by pre-composition by $\varphi$.

\section{Background from algebra}

\subsection{Measures on profinite sets}\label{Measures}
We begin by adapting some definitions from \cite[Definitions 2.7.10]{FvdPut}.

\begin{defn} Let $Z$ be a profinite set and let $\fr{a}$ be an abelian group. 
	\be \item An \emph{$\fr{a}$-valued measure on Z} is a function $\nu$ from the set of clopen subsets of $Z$ to $\fr{a}$, satisfying $\nu(U) = \nu(V) + \nu(U \backslash V)$ whenever $V \subseteq U$ are clopen subsets of $Z$. 
	\item $M(Z, \fr{a})$ denotes the abelian group of all $\fr{a}$-valued measures on $Z$ under pointwise operations.
	\item $M_0(Z,\fr{a}) := \{\nu \in M(Z, \fr{a}) : \nu(Z)=0\}$ is the subgroup of \emph{$\fr{a}$-valued measures on $Z$ with total value zero}.
	\ee\end{defn}

\begin{exs} \label{deltameas} \hfill \begin{enumerate} \item If $Z$ is any profinite set and $\mathfrak{a}$ is a unital ring then for each $z\in Z$ we can define a measure $\delta_z$ by \[\delta_z(U)=\begin{cases} 1 & \mbox{ if }z\in U \\ 
			0 & \mbox{ otherwise.} \end{cases} \]
		\item If $Z$ happens to be finite we can define a `counting measure' in $M(Z,\bZ)$ via \[ \Sigma_Z(U)=|U| \mbox{ for all  }U\subset Z.\] Indeed $\Sigma_Z=\sum_{z\in Z}\delta_z$. \ee
	\end{exs}
	
	Let $C(Z,\bZ)$ be the set of continuous $\bZ$-valued functions on $Z$, where we give $\bZ$ the discrete topology. Of course every such function is locally constant.
	
	\begin{prop}\label{ExactMeasures} Let $Z$ be a profinite set and let $\fr{a}$ be an abelian group. 
		\be \item There is a natural additive isomorphism $M(Z, \fr{a}) \to \Hom_{\bZ}(C(Z, \bZ), \fr{a})$.
		\item Let $(Z_i)_{i \in I}$ be a filtered inverse system of finite sets. Then every isomorphism $Z \cong \invlim Z_i$ of profinite sets induces an isomorphisms of abelian groups $M(Z, \fr{a}) \cong \invlim M(Z_i, \fr{a})$ and $M_0(Z,\fr{a})\cong \invlim M_0(Z_i,\fr{a})$.
		\item The functor $\fr{a} \mapsto M(Z,\fr{a})$ is exact.
		\ee
	\end{prop}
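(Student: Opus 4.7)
The plan is to prove (a), use it as the backbone for (b), and then apply (b) to deduce (c) via a Mittag-Leffler argument.

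For (a), given $\nu \in M(Z,\fr{a})$, I would define $\phi_\nu \in \Hom_\bZ(C(Z,\bZ),\fr{a})$ by $\phi_\nu(f) := \sum_{n \in \bZ} n \cdot \nu(f^{-1}(n))$; the sum is finite because $Z$ is compact and $\bZ$ is discrete, so $f$ takes only finitely many values. Additivity of $\phi_\nu$ in $f$ follows by refining the (clopen) level-set partitions of $f$ and $g$ to a common one and invoking the defining additivity of $\nu$. Conversely, any $\phi\colon C(Z,\bZ)\to\fr{a}$ determines $\nu_\phi(U):=\phi(\mathbf{1}_U)$; the identity $\mathbf{1}_U=\mathbf{1}_V+\mathbf{1}_{U\setminus V}$ for $V\subseteq U$ yields the measure axiom, and a direct check confirms the two assignments are mutually inverse.

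For (b), the key observation is that $C(Z,\bZ)=\dirlim_i C(Z_i,\bZ)$ under pullback along the projections $Z\to Z_i$. Indeed, any clopen $U\subseteq Z$ is closed in the compact space $Z$ hence compact, and so equals a finite union of basic clopens of the form $\pi_i^{-1}(S)$; passing to a common upper bound $j$ of the relevant indices, $U$ is itself pulled back from $Z_j$. Any $f\in C(Z,\bZ)$ has finitely many clopen fibres, and by the same argument these all come from some common stage $Z_j$, so $f$ factors through $Z_j$. Applying $\Hom(\dirlim_i C(Z_i,\bZ),\fr{a})=\invlim_i \Hom(C(Z_i,\bZ),\fr{a})$ together with (a) at each level gives $M(Z,\fr{a})=\invlim_i M(Z_i,\fr{a})$. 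The $M_0$ statement follows by observing that under (a) the condition $\nu(Z)=0$ translates to $\phi(\mathbf{1}_Z)=0$, and this condition is compatible with the inverse-limit picture since $\mathbf{1}_Z$ is the image of $\mathbf{1}_{Z_i}$ at every stage.

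For (c), I would first replace the indexing system by a cofinal subsystem with surjective transition maps $Z_j\twoheadrightarrow Z_i$; this leaves the inverse limit unchanged. An easy induction on $|Z_i|$ using the measure axiom identifies $M(Z_i,\fr{a})$ with $\fr{a}^{Z_i}$ via $\nu_i\mapsto(\nu_i(\{z\}))_{z\in Z_i}$, so each functor $M(Z_i,-)$ is exact. Under this identification the transition map $M(Z_j,\fr{a})\to M(Z_i,\fr{a})$ becomes the fibre-summation map $\fr{a}^{Z_j}\to\fr{a}^{Z_i}$, which is surjective because $Z_j\to Z_i$ is. Given a short exact sequence $0\to A\to B\to C\to 0$, applying each $M(Z_i,-)$ yields a short exact sequence of inverse systems with surjective transitions, so the Mittag-Leffler condition holds and $\invlim_i$ preserves exactness; combined with (b) this gives (c). The main obstacle is precisely this last surjectivity-at-the-top step; left-exactness is formal from (a), and everything else is either set-theoretic bookkeeping or explicit calculation at finite levels.
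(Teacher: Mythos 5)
Your proofs of (a) and (b) are essentially identical to the paper's: in (a) you build the pairing $\nu \mapsto (f \mapsto \sum_n n\,\nu(f^{-1}(n)))$ with inverse $\phi \mapsto (U \mapsto \phi(\mathbf{1}_U))$, and in (b) you write $C(Z,\bZ)$ as the filtered colimit of $C(Z_i,\bZ)$ and apply $\Hom_\bZ(-,\fr{a})$. No issues there.

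Part (c) is where you diverge from the paper, and where there is a genuine gap. The paper invokes N\"obeling's theorem (via \cite[Theorem 5.4]{SchCond}) that $C(Z,\bZ)$ is a \emph{free} abelian group, from which exactness of $\Hom_\bZ(C(Z,\bZ),-)$ is immediate. You instead try to deduce exactness at the finite stages and then pass to the limit via Mittag--Leffler. The finite-stage argument is fine: after replacing each $Z_i$ by the image of $Z$ (so that transitions are surjective), $M(Z_i,\fr{a})\cong\fr{a}^{Z_i}$ and fibre-summation along a surjection of finite sets is surjective. But the step ``surjective transition maps $\Rightarrow$ $\invlim$ preserves short exactness'' is a theorem about $\bN$-indexed (or countably cofinal directed) inverse systems. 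The proposition is stated for an arbitrary profinite $Z$, whose poset of finite discrete quotients can have uncountable cofinality (e.g.\ $Z = \{0,1\}^I$ for $I$ uncountable), and for such directed posets $\lim^1$ can fail to vanish even under the Mittag--Leffler condition. Indeed, if your argument worked in general it would give an elementary proof that $C(Z,\bZ)$ is projective, hence free, for every profinite $Z$ --- i.e.\ N\"obeling's theorem itself --- which is precisely the nontrivial input the paper cites to handle this point. Your proof is correct if one restricts to second-countable $Z$ (which covers all the profinite groups actually used later in the paper), but as a proof of the stated proposition it needs the freeness of $C(Z,\bZ)$ or an equivalent set-theoretic argument; the Mittag--Leffler shortcut does not close that gap.
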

	\begin{proof} (a) Let $\nu \in M(Z,\fr{a})$ and let $f : Z \to \bZ$ be locally constant. Because $Z$ is profinite and hence compact, we can choose an open partition $\{U_1,\cdots,U_m\}$ of $Z$ such that $f_{|U_i}$ is constant for each $i$. Choose $z_i \in U_i$ for each $i = 1,\cdots, m$ and define $\langle \nu, f \rangle := \sum\limits_{i=1}^m f(z_i) \nu(U_i) \in \fr{a}$. Then $\langle \nu, f\rangle$ does not depend on the choice of open partition or the choice of the $z_i$'s, and $\nu \mapsto (f \mapsto \langle \nu, f\rangle)$ defines an additive map $M(Z,\fr{a}) \to \Hom_{\bZ}(C(Z,\bZ), \fr{a})$. 
		
		Let $1_U$ denote the characteristic function of the clopen subset $U$ of $Z$. Given an additive map $g : C(Z,\bZ) \to \fr{a}$, setting $\nu(U) := g(1_U) \in \fr{a}$ for each clopen $U$ defines an element $\nu \in M(Z,\fr{a})$ such that $\langle \nu, f \rangle = g(f)$ for all $f \in C(Z, \bZ)$ because the characteristic functions $1_U$ generate $C(Z,\bZ)$ as an abelian group. The function $g \mapsto \nu$ is then an inverse to $M(Z,\fr{a}) \to \Hom_{\bZ}(C(Z,\bZ), \fr{a})$.
		
		(b) The isomorphism $Z \cong \invlim Z_i$ induces an isomorphism of abelian groups $C(Z,\bZ) \cong \lim\limits_{\longrightarrow} C(Z_i,\bZ)$. The functor $\Hom_{\bZ}(-,\fr{a})$ converts colimits into limits; now apply part (a) to get $M(Z,\fr{a})\cong \invlim M(Z_i,\fr{a})$. Since taking limits commutes with taking kernels the other part follows immediately. 
		
		(c) By a theorem of N\"obeling --- see \cite[Theorem 5.4]{SchCond} --- $C(Z,\bZ)$ is a free abelian group. Hence $\Hom_{\bZ}(C(Z,\bZ), -)$ is exact; now apply part (a).
	\end{proof}
	
	\begin{defn}\label{Mfunct} For every abelian group $\fr{a}$ the map $Z\mapsto M(Z,\fr{a})$ from profinite sets to abelian groups extends to a functor from the category of profinite sets and continuous functions to the category of abelian groups sending a continuous function $f\colon Z_1\to Z_2$ to the group homomorphism \[f_\ast\colon M(Z_1,\fr{a})\to M(Z_2,\fr{a});\hspace{0.2cm} f_\ast(\nu)(U)=\nu(f^{-1}(U)).\] 
		
		In particular an action of group $G$ on a profinite set $Z$ by homeomorphisms induces an action on $M(Z,\fr{a})$ by automorphisms of abelian groups via $g\cdot\nu=g_\ast(\nu)$.  
	\end{defn}

\begin{rmk} If $\nu\in M_0(Z_1,\fr{a})$ in the setting of Definition \ref{Mfunct} then $f_\ast(\nu)\in M_0(Z_2,\fr{a})$ since $f_\ast(\nu)(Z_2)=\nu(f^{-1}(Z_2))=\nu(Z_1)=0$ so $Z\mapsto M_0(Z,\fr{a})$ also defines a functor.  
\end{rmk}	
	The following result will be useful in what follows.
	
	\begin{lem}\label{M0modD} Let $Z$ be a profinite set and let $d$ be a non-zero integer. Then the sequence $0 \to M_0(Z, \bZ) \stackrel{d}{\longrightarrow} M_0(Z,\bZ) \to M_0(Z, \bZ/d \bZ) \to 0$ is exact.
	\end{lem}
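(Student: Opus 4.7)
The plan is to derive the exact sequence from the snake lemma applied to a diagram in which multiplication by $d$ acts compatibly on $M(Z,\bZ)$, its subgroup $M_0(Z,\bZ)$, and the quotient $\bZ$.

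First I would establish, for any abelian group $\fr{a}$ and any nonempty profinite set $Z$, the short exact sequence
\[ 0 \to M_0(Z,\fr{a}) \to M(Z,\fr{a}) \xrightarrow{\mathrm{ev}_Z} \fr{a} \to 0,\]
where $\mathrm{ev}_Z(\nu) := \nu(Z)$. Exactness on the left and in the middle is immediate from the definition of $M_0(Z,\fr{a})$, while surjectivity on the right follows by fixing any $z_0 \in Z$ and observing that for $a \in \fr{a}$, the measure $a\,\delta_{z_0}$ (see Examples \ref{deltameas}) has total value $a$. (If $Z = \emptyset$, then $M(Z,\fr{a}) = 0$ and the claimed sequence is trivially exact.)

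Next, I would apply Proposition \ref{ExactMeasures}(c) to the short exact sequence $0 \to \bZ \xrightarrow{d} \bZ \to \bZ/d\bZ \to 0$ of abelian groups to obtain a short exact sequence
\[ 0 \to M(Z,\bZ) \xrightarrow{d} M(Z,\bZ) \to M(Z,\bZ/d\bZ) \to 0.\]
Combining these two with the compatibility of $\mathrm{ev}_Z$ with multiplication by $d$ gives a commutative diagram of short exact sequences
\[\begin{array}{ccccccccc}
0 & \to & M_0(Z,\bZ) & \to & M(Z,\bZ) & \xrightarrow{\mathrm{ev}_Z} & \bZ & \to & 0 \\
& & \downarrow d & & \downarrow d & & \downarrow d & & \\
0 & \to & M_0(Z,\bZ) & \to & M(Z,\bZ) & \xrightarrow{\mathrm{ev}_Z} & \bZ & \to & 0.
\end{array}\]
Because $\bZ$ is torsion-free, the vertical map on the right is injective, and so are the other two vertical maps (or one may use that $M(Z,\bZ) \cong \Hom_\bZ(C(Z,\bZ), \bZ)$ by Proposition \ref{ExactMeasures}(a) is torsion-free). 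The snake lemma then collapses to an exact sequence of cokernels
\[ 0 \to M_0(Z,\bZ)/d\, M_0(Z,\bZ) \to M(Z,\bZ/d\bZ) \xrightarrow{\mathrm{ev}_Z} \bZ/d\bZ \to 0.\]

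Finally, comparing with the first short exact sequence applied to $\fr{a} = \bZ/d\bZ$, the kernel of the right-hand map $\mathrm{ev}_Z$ is precisely $M_0(Z,\bZ/d\bZ)$. This identifies $M_0(Z,\bZ)/d\, M_0(Z,\bZ)$ with $M_0(Z,\bZ/d\bZ)$, yielding exactly the desired exact sequence.

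There is no serious obstacle: every nontrivial input (exactness of $M(Z,-)$, the structure of $M_0$ as the kernel of evaluation) has been established earlier in the section, and the argument reduces to a diagram chase. The only thing to watch is the surjectivity of $\mathrm{ev}_Z$, which is where the Dirac measures of Examples \ref{deltameas} play their essential role.
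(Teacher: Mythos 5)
Your proof is correct and takes essentially the same route as the paper: multiply by $d$ the short exact sequence $0 \to M_0(Z,\bZ) \to M(Z,\bZ) \to \bZ \to 0$ and chase the resulting diagram using the exactness of $M(Z,-)$ from Proposition \ref{ExactMeasures}(c). The paper invokes the Nine Lemma on the resulting $3\times 3$ diagram, whereas you apply the Snake Lemma and observe that the vanishing of all kernels collapses the six-term sequence to a short exact sequence of cokernels --- the same diagram chase in slightly different packaging.
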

	\begin{proof} Consider the multiplication-by-$d$ map on the short exact sequence of abelian groups $0 \to M_0(Z, \bZ) \to M(Z,\bZ) \to \bZ \to 0$. This gives a $3 \times 3$ diagram of abelian groups, whose rows are exact, whose second column is exact by Proposition \ref{ExactMeasures}(c) and whose third column is exact for trivial reasons. Hence the first column is also exact by the Nine Lemma. \end{proof}
Recall our conventions concerning continuous group cohomology from $\S \ref{ConvNotn}$.
	\begin{lem}\label{HiM0} Let $G$ be a profinite group with a continuous transitive action on a finite set $X$. Then \be
		\item $M(X,\bZ)^G=\mathbb{Z}\cdot\Sigma_X$;
		\item	$M_0(X,\bZ)^G=0$;
		\item $H^1(G,M(X,\bZ))=0$.
		 \ee
	\end{lem}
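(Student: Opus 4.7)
Since $X$ is finite, the key structural observation is that $M(X, \bZ) = \bigoplus_{x \in X} \bZ \cdot \delta_x$, with the $G$-action $g \cdot \delta_x = \delta_{gx}$ (this follows from the definition of the pushforward in \ref{Mfunct}, since $\delta_z(g^{-1}U) = \delta_{gz}(U)$). My plan is to use this coordinate description throughout.

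For (a): write a general $\nu = \sum_x a_x \delta_x$; then $G$-invariance is equivalent to $a_{gx} = a_x$ for all $g,x$, so transitivity of the $G$-action forces all $a_x$ to coincide, giving $\nu \in \bZ \cdot \Sigma_X$. Then (b) is immediate: any element of $\bZ \cdot \Sigma_X$ with zero total value must be $a\Sigma_X$ with $a|X|=0$, and $|X| \neq 0$ in the torsion-free group $\bZ$.

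For (c), the plan is to construct an explicit primitive for any given continuous $1$-cocycle. Fix a basepoint $x_0\in X$, let $H = G_{x_0}$, which is open in $G$ because the orbit map $G \to X$, $g \mapsto gx_0$, is continuous and $X$ is discrete. Let $c\colon G \to M(X,\bZ)$ be a continuous cocycle, and write $c(g) = \sum_x c_x(g)\delta_x$; the cocycle condition becomes $c_x(g_1g_2) = c_x(g_1) + c_{g_1^{-1}x}(g_2)$. The first step is to observe that $h \mapsto c_{x_0}(h)$ is a continuous homomorphism $H \to \bZ$: the cocycle identity specialised to $x = x_0$ and $g_1,g_2 \in H$ collapses the mixed term because $h_1^{-1}x_0 = x_0$. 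Since $H$ is a closed subgroup of the profinite group $G$, it is itself profinite; any continuous homomorphism from a compact group to the discrete torsion-free group $\bZ$ must be trivial, so $c_{x_0}$ vanishes on $H$.

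The second step is to pick coset representatives $\{g_x\}_{x \in X}$ for $G/H$ with $g_x \cdot x_0 = x$, and set $b := -\sum_x c_x(g_x)\delta_x \in M(X,\bZ)$. I then verify that $\delta_G b = c$, i.e., that $c_x(g) = c_x(g_x) - c_{g^{-1}x}(g_{g^{-1}x})$ for all $g \in G$ and $x \in X$. This follows by comparing the two expansions of $c(g\cdot g_{g^{-1}x}) = c(g_x \cdot h)$ where $h \in H$ is defined by $g \cdot g_{g^{-1}x} = g_x h$, taking the $x$-coordinate, and invoking vanishing of $c_{x_0}|_H$ to kill the leftover term $c_{x_0}(h)$. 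The only step that requires any care is bookkeeping the indices in the cocycle identity; the main obstacle is really just confirming that $c_{x_0}|_H = 0$, which is where the topological hypothesis enters. (A more abstract alternative would be to identify $M(X,\bZ) \cong \mathrm{Ind}_H^G \bZ$ and apply Shapiro's lemma, but the direct construction above fits better with the elementary spirit of the paper.)
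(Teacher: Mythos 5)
Your proof is correct. Parts (a) and (b) are the same as the paper's (which simply says "easy to compute" for (a) and then reads off (b) from the total-value map, as you do). For (c), you take a genuinely different route: the paper identifies $M(X,\bZ)\cong \mathrm{Ind}_{G_{x_0}}^G\bZ$ and invokes Shapiro's Lemma to reduce to $H^1(G_{x_0},\bZ)=\Hom(G_{x_0},\bZ)=0$, whereas you construct an explicit primitive $b=-\sum_x c_x(g_x)\delta_x$ by hand and check $\delta_G b=c$ directly. Your construction is in effect an unwinding of the Shapiro isomorphism: the one place the topology enters in either argument is the same fact, namely that a continuous homomorphism from the profinite group $G_{x_0}$ to discrete, torsion-free $\bZ$ vanishes, which you use to kill the residual term $c_{x_0}(h)$. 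The paper's route is shorter and references standard machinery; yours is self-contained and more elementary, which (as you note) matches the stated spirit of the paper, at the cost of some index bookkeeping. Both are fine; your cocycle manipulations check out (the identity $c_x(g_1g_2)=c_x(g_1)+c_{g_1^{-1}x}(g_2)$ is correct under the paper's convention $g\cdot\delta_x=\delta_{gx}$, and the computation $c(g\,g_{g^{-1}x})=c(g_x h)$ with $h\in H$ does give $c_x(g)=c_x(g_x)-c_{g^{-1}x}(g_{g^{-1}x})$ once $c_{x_0}|_H=0$).
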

	
	\begin{proof}
		(a) It is easy to compute that $M(X,\bZ)^G=\bZ\cdot \Sigma_X$ where $\Sigma_X$ is the counting measure from Example \ref{deltameas}(2). 
		
(b) Use $M_0(X,\bZ)^G=\ker \left(M(X,\bZ)^G\to \bZ; \nu\mapsto \nu(X)\right)$ and $n\Sigma_X(X)=n|X|$.
	
		(c) Choose an arbitrary point $x\in X$. Since $X$ is finite, $M(X,\bZ)$ is isomorphic to the (co)induced module $\mathrm{Ind}^{G_{x}}_G \bZ$ in the sense of \cite[Chapter I, \S6]{NSW}, where $\bZ$ denotes the trivial $G_{x}$-module. Then by Shapiro's Lemma, \cite[Proposition 1.6.4]{NSW}, we have \[H^1(G,M(X,\bZ))\cong H^1(G_{x},\bZ).\]
		
	Since the action of $G_{x}$ on $\bZ$ is trivial, $G_{x}$ is profinite and $\bZ$ is a discrete torsion-free group we deduce that $H^1(G_{x},\bZ)=\Hom(G_{x},\bZ)=0$.
	\end{proof}
	
	\begin{prop} \label{M0Gfinite} Let $G$ be a profinite group acting continuously and transitively on two non-empty finite sets $X,Y$, and let $\pi\colon X\to Y$ be a $G$-equivariant function. Let $d\geq 1$ be an integer and let $h:=\gcd(d, |X|)$.  Then
		\be \item $M(X,\bZ/d\bZ)^G$ is cyclic of order $d$, generated by the image of $\Sigma_X$,
		\item $M_0(X,\bZ/d\bZ)^G$ is cyclic of order $h$, generated by the image of $\frac{d}{h}\Sigma_X$, and
		\item $\pi_\ast \left(\Sigma_X\right)=\frac{|X|}{|Y|}\Sigma_Y$.  \ee
	\end{prop}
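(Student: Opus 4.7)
The plan is to derive everything from the vanishing statements of Lemma \ref{HiM0} together with exactness of the functor $M(X,-)$ from Proposition \ref{ExactMeasures}(c), treating (a) first and bootstrapping (b) from it, and then doing (c) by a hands-on computation of fibre sizes.

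For (a), I would apply the left-exact functor $(-)^G$ to the short exact sequence
\[ 0 \to M(X,\bZ) \stackrel{d}{\longrightarrow} M(X,\bZ) \to M(X,\bZ/d\bZ) \to 0, \]
which is exact by Proposition \ref{ExactMeasures}(c). The resulting six-term sequence has $M(X,\bZ)^G = \bZ \cdot \Sigma_X$ by Lemma \ref{HiM0}(a), and the connecting map lands in $H^1(G, M(X,\bZ)) = 0$ by Lemma \ref{HiM0}(c). So $M(X,\bZ/d\bZ)^G$ is the cokernel of multiplication by $d$ on $\bZ \cdot \Sigma_X$, i.e.\ cyclic of order $d$ generated by the image of $\Sigma_X$.

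For (b), rather than running a second long exact sequence (which would force the computation of $H^1(G, M_0(X,\bZ))$), I would exploit the identity
\[ M_0(X, \bZ/d\bZ)^G \;=\; M_0(X, \bZ/d\bZ) \cap M(X, \bZ/d\bZ)^G \]
inside $M(X, \bZ/d\bZ)$. By part (a), an element of $M(X, \bZ/d\bZ)^G$ has the form $n \cdot \Sigma_X$ with $n \in \bZ/d\bZ$, and it lies in $M_0$ precisely when $n|X| \equiv 0 \pmod d$. Writing $d = h(d/h)$ and $|X| = h(|X|/h)$ with $\gcd(d/h,|X|/h)=1$, this condition rearranges to $\frac{d}{h} \mid n$. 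Hence $M_0(X, \bZ/d\bZ)^G$ is the cyclic subgroup $\frac{d}{h} \bZ \cdot \Sigma_X$, of order $h$ and generated by $\frac{d}{h}\Sigma_X$, as claimed.

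For (c), this is a direct calculation. Since $G$ acts transitively on $Y$ and $\pi$ is $G$-equivariant, all fibres of $\pi$ have a common size, which is necessarily $|X|/|Y|$. Then for any subset $U \subseteq Y$,
\[ \pi_\ast(\Sigma_X)(U) \;=\; \Sigma_X(\pi^{-1}(U)) \;=\; |\pi^{-1}(U)| \;=\; \tfrac{|X|}{|Y|} |U| \;=\; \tfrac{|X|}{|Y|}\Sigma_Y(U). \]
The only mildly subtle point in the whole proposition is identifying the generator in (b), but the intersection trick above reduces it to an elementary divisibility computation; no new cohomological input beyond part (a) is needed.
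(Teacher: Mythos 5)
Your proof is correct and follows essentially the same route as the paper: part (a) via the long exact cohomology sequence and Lemma~\ref{HiM0}, part (b) by observing that $M_0(X,\bZ/d\bZ)^G$ is cut out inside $M(X,\bZ/d\bZ)^G$ by the total-mass-zero condition and then doing the divisibility computation, and part (c) by the fibre-size count. The paper phrases (b) as $M_0(X,\bZ/d\bZ)^G=\{\nu\in M(X,\bZ/d\bZ)^G:\nu(X)=d\bZ\}$, which is exactly your intersection, so there is no substantive difference.
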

	
	\begin{proof}
		(a) The short exact sequence \[ 0\to M(X,\bZ) \stackrel{d}{\to} M(X,\bZ)\to M(X,\bZ/d\bZ)\to 0\] induces a long exact sequence of cohomology \[ 0\to M(X,\bZ)^G\stackrel{d}{\to} M(X,\bZ)^G\to M(X,\bZ/d\bZ)^G{\to} H^1(G,M(X,\bZ))\] whose final term is $0$ by Lemma \ref{HiM0}(c). Since $M(X,\bZ)^G=\bZ\cdot \Sigma_X$ by Lemma \ref{HiM0}(a) the result follows.
		
		(b) Note that $M_0(X,\bZ/d\bZ)^G = \{\nu \in M(X,\bZ/d\bZ)^G : \nu(X)=d\bZ\}$. By part (a), every such $\nu$ is the image of $n\Sigma_X$ for some $n$. But $n\Sigma_X(X)=n|X|$ so \[M_0(X,\bZ/d\bZ)^G=\{n\Sigma_X\in M(X,\bZ/d\bZ):d\mbox{ divides }n|X|\}\] giving the result. 
		
	
		(c) Whenever $U\subseteq Y$, we have $\pi_\ast(\Sigma_X)(U)=\Sigma_X(\pi^{-1}(U))=\left|\pi^{-1}(U)\right|$. Since $G$ acts transitively on $X$ and $Y$, all the fibres have order $\frac{|X|}{|Y|}$ and so \[ \left|\pi^{-1}(U) \right|=\frac{|X|}{|Y|}|U|\] as required. 
	\end{proof}

\subsection{Some stabilisers in \ts{G^0} and their linear characters} 

\begin{notn} \label{IwahoriDefn}\hsp 
\be \item $G^0:=\{ g \in GL_2(F) : v_{\pi_F}(\det g)=1\}$.
\item The \emph{Iwahori subgroup} $I$ of $GL_2(\cO_F)$ is
\[ I := \left\{ \begin{pmatrix} a & b \\ c & d\end{pmatrix}\in GL_2(\cO_F) : c\equiv 0 \mod \pi_F\cO_F\right\}.\]
\item We write $w:=\begin{pmatrix} 0 & 1 \\ \pi_F & 0\end{pmatrix}\in GL_2(F)$.
\item We write $A := GL_2(\cO_F)$ and $B := {}^w A = w A w^{-1}$.
\ee
\end{notn}

We recall from \cite[\S\S II.1.2-3]{Trees} that if $\cB\cT$ is the Bruhat--Tits tree associated with $PGL_2(F)$, then $A,B$ and $I$ arise as stabilisers in $G^0$ under the natural $G^0$-action on $\cB\cT$ as follows: there is a vertex $s$ of $\cB\cT$ such that $(s$ $ws)$ is an edge of $\cB\cT$, and
\[A=G^0_s, \quad B=G^0_{ws}, \qmb{and} I=G^0_{(s \mbox{ }ws)}.\] 
In particular, we have $I = A \cap B$.  The following classical result will be crucial to our arguments later on in $\S \ref{MainProofSec}$.

\begin{thm}\label{AmalgTrees} $G^0$ is the amalgamated product of its open subgroups $A$ and $B$ over their intersection $I$: \[ G^0=A\underset{I}{\ast}{} B.\]\end{thm}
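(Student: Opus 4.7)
The plan is to deduce this from Bass--Serre theory applied to the action of $G^0$ on the Bruhat--Tits tree $\cB\cT$, exactly as in \cite[\S II.1.4]{Trees}. The key structural fact to establish is that $G^0$ acts on $\cB\cT$ \emph{without inversions} and that a fundamental domain for this action is the closed edge $(s \hsp ws)$ (including both endpoints $s$ and $ws$); once this is in hand, the amalgamation statement is a direct application of \cite[Theorem I.7]{Trees}.

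First I would recall that the vertices of $\cB\cT$ correspond to homothety classes of $\cO_F$-lattices in $F^2$, with two classes joined by an edge when they have representatives $L \supsetneq L'$ with $L/L' \cong \cO_F/\pi_F\cO_F$. The group $GL_2(F)$ acts on the set of lattices, and this descends to an action on $\cB\cT$. The tree $\cB\cT$ is bipartite: one can assign to each vertex the parity of $v_{\pi_F}(\det g)$ for any $g \in GL_2(F)$ mapping a fixed reference lattice to a representative of the class. Since elements of $G^0$ have determinant of valuation zero, the $G^0$-action preserves the bipartition and so cannot swap the two endpoints of any edge; in other words, $G^0$ acts on $\cB\cT$ without inversions.

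Next I would verify that the closed edge with vertices $s$ and $ws$ is a fundamental domain for the $G^0$-action. By the elementary divisor theorem, every lattice in $F^2$ is $GL_2(F)$-equivalent to one of the form $\cO_F \oplus \pi_F^n \cO_F$; restricting to determinant-of-valuation-zero transformations, one sees that the $G^0$-orbits on vertices of $\cB\cT$ are exactly the two colour classes of the bipartition, represented by $s$ and $ws$ respectively. Transitivity of $G^0$ on edges is similarly standard (the full $GL_2(F)$ is transitive on edges, and the edge stabiliser in $GL_2(F)$ meets both cosets of $G^0 F^\times$). Thus $G^0 \backslash \cB\cT$ is a single edge with two distinct vertices, and the relevant stabilisers are $G^0_s = A$, $G^0_{ws} = B$, and $G^0_{(s \hsp ws)} = I = A \cap B$, as recorded in the excerpt.

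Finally, the theorem of Bass and Serre \cite[Theorem I.7]{Trees} says that when a group $\Gamma$ acts without inversions on a tree $T$ with fundamental domain a segment, $\Gamma$ is canonically the amalgamated product of the two vertex stabilisers over the common edge stabiliser. Applying this to $G^0$ acting on $\cB\cT$ yields $G^0 = A \ast_I B$. The main obstacle here is conceptually minor: it is the combinatorial verification that the quotient is a segment rather than a loop, which is exactly the point where one must use $v_{\pi_F}(\det g) = 0$ rather than allow arbitrary determinants (the latter would give $GL_2(F) = (A F^\times) \ast_{I F^\times} (B F^\times)$ with inversions on $\cB\cT$, collapsing the quotient to a single vertex and producing an HNN-like extension instead).
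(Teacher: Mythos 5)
Your proof is correct, but it takes a genuinely different route from the paper in one specific sense: the paper's entire proof is the one-line citation ``This is \cite[Theorem II.3]{Trees},'' whereas you reconstruct the Bass--Serre argument that underlies that theorem from first principles. Concretely, you verify the three ingredients Serre himself uses in \S II.1.4 of \emph{Trees}: (i) $G^0$ preserves the canonical $2$-colouring of $\cB\cT$ by the parity of $v_{\pi_F}(\det)$ and hence acts without inversion; (ii) $G^0$ is transitive on each colour class of vertices and on the set of geometric edges, so the quotient graph $G^0\backslash\cB\cT$ is a single segment with two distinct endpoints; and (iii) the fundamental-domain-is-a-segment theorem (Serre \S I.4.1, Th\'eor\`eme~6 --- your reference to ``Theorem I.7'' should be checked against the edition you have in hand) then gives $G^0 = G^0_s \ast_{G^0_{(s\,ws)}} G^0_{ws} = A \ast_I B$. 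Each of these steps is correct, and your closing remark --- that allowing arbitrary determinants would introduce inversions and collapse the quotient to a loop, yielding an HNN extension rather than an amalgam --- correctly identifies exactly why the restriction to $v_{\pi_F}(\det g)=0$ is what makes the segment-fundamental-domain theorem applicable. What the citation buys the paper is brevity; what your version buys is self-containedness and explicit visibility of where the hypothesis $g \in G^0$ (rather than $g \in GL_2(F)$) is used, which fits the paper's stated goal of keeping the construction elementary.
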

\begin{proof} This is \cite[Theorem II.3]{Trees}. \end{proof}

 Recall that $F^\times$ is the direct product of its subgroups $\mu_{p'}(F)$, $\cO_F^{\times\times}$ and $\langle \pi_F\rangle$: \[ F^\times \cong \mu_{p'}(F)\times \cO_F^{\times\times}\times \langle \pi_F\rangle. \] We write $a\mapsto \widehat{a}$ to denote the homomorphism that is the projection $F^\times\to \mu_{p'}(F)$ with kernel $\cO_F^{\times\times}\times \langle \pi_F\rangle$. We will use the same notation $\hat{\cdot}$ to denote the analogous projection $L^\times\to \mu_{p'}(L)$ for other finite extensions $L/\bQ_p$. 
\begin{lem}\hsp 
\label{Hommup'} \hfill \be \item $\Hom(A,K^\times)[p'] =\left\{\widehat{\det}^k: k\in \bZ/(q-1)\bZ\right\}$.
	\item Every element of $\Hom(I,K^\times)[p']$ is of the form \[ \begin{pmatrix} a & b \\ \pi_F c & d\end{pmatrix} \mapsto \widehat{a}^{n_1}\widehat{d}^{n_2} \] for some $n_1,n_2\in \bZ/(q-1)\bZ$.
	\item $\left(\Hom(I,K^\times)[p']\right)^{\langle w\rangle}=\left\{\widehat{\det}^k: k\in \bZ/(q-1)\bZ\right\}$.
\ee
\end{lem}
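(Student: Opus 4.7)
The plan is to exploit that $A = GL_2(\cO_F)$ and $I$ are both profinite, each containing an open normal pro-$p$ congruence subgroup on which any prime-to-$p$ character must vanish. This reduces each part to a short computation with an explicit finite residue group; the only mildly subtle point will be treating the small residue field cases in (a).

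\textbf{Part (a).} Let $A_1 := 1 + \pi_F M_2(\cO_F)$. Its successive congruence quotients $(1 + \pi_F^n M_2(\cO_F))/(1 + \pi_F^{n+1} M_2(\cO_F)) \cong M_2(k_F)$ are elementary abelian $p$-groups, so $A_1$ is pro-$p$ and any $\chi \in \Hom(A, K^\times)[p']$ descends to a character of $A/A_1 \cong GL_2(k_F)$. I would then observe that $SL_2(k_F)^{\mathrm{ab}}$ is a $p$-group in every case: trivial for $q \geq 4$, of order $2$ for $q = 2$ (since $SL_2(\bF_2) \cong S_3$) and of order $3$ for $q = 3$ (since $SL_2(\bF_3)$ is the binary tetrahedral group); in each case the exponent equals $p$, so the prime-to-$p$ hypothesis still forces $\chi$ to vanish on $SL_2(k_F)$, whence $\chi$ factors through $\det : GL_2(k_F) \to k_F^\times$. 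Since $k_F^\times$ is cyclic of prime-to-$p$ order $q - 1$, the Teichmuller section identifies $\Hom(k_F^\times, K^\times)$ with $\bZ/(q-1)\bZ$ and pulls back to the family $\widehat{\det}^k$ on $A$.

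\textbf{Part (b).} Apply the same strategy to $I_1 := \ker(I \to B(k_F))$, where $B(k_F)$ is the upper-triangular Borel of $GL_2(k_F)$ obtained from entrywise reduction modulo $\pi_F$; since $I_1 \subseteq A_1$, it is again pro-$p$. Thus any $\chi \in \Hom(I, K^\times)[p']$ factors through $B(k_F) = U(k_F) \rtimes T(k_F)$, and because the unipotent radical $U(k_F) \cong (k_F, +)$ is a normal $p$-subgroup, $\chi$ further descends to the torus $T(k_F) \cong k_F^\times \times k_F^\times$, read off from $I$ by the two diagonal entries modulo $\pi_F$. Parametrising characters of $T(k_F)$ via Teichmuller yields the pair $(n_1, n_2) \in (\bZ/(q-1)\bZ)^2$ from the statement.

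\textbf{Part (c).} A direct computation gives \[ w^{-1} \begin{pmatrix} a & b \\ \pi_F c & d\end{pmatrix} w = \begin{pmatrix} d & c \\ \pi_F b & a\end{pmatrix}, \] so conjugation by $w$ swaps the two diagonal entries. In terms of the parametrisation from (b), this action sends $(n_1, n_2)$ to $(n_2, n_1)$, so the $w$-fixed characters are exactly those with $n_1 = n_2 =: k$. For such a character, the value on $M = \begin{pmatrix} a & b \\ \pi_F c & d\end{pmatrix}$ is $\widehat{a}^k \widehat{d}^k = \widehat{ad}^k$, which equals $\widehat{\det M}^k$ because $\det M = ad - \pi_F bc \equiv ad \pmod{\pi_F}$ and $\widehat{\cdot}$ factors through reduction modulo $\pi_F$. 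The reverse inclusion is immediate from the conjugation-invariance of $\det$.
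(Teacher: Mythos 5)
Your proof is correct. Parts (b) and (c) are essentially the paper's argument: reduce modulo the pro-$p$ Iwahori kernel and then compute the $w$-action on diagonal characters. The interesting divergence is in part (a). You reduce to the finite group $GL_2(k_F)$ and then verify, case by case over $q$, that $SL_2(k_F)^{\mathrm{ab}}$ is always a $p$-group, handling $q = 2, 3$ separately. The paper's proof avoids this entirely by observing that $SL_2(\cO_F)$ is generated by its pro-$p$ subgroups (the unipotent upper- and lower-triangular subgroups, each a copy of $(\cO_F, +)$), so that any $p'$-torsion character of $A$ automatically kills $SL_2(\cO_F)$ before one ever needs to pass to the residue field. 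This structural fact is more uniform — no special attention to small $q$ is needed — and it in fact implies the finiteness statement you use, since any perfect-modulo-pro-$p$ group has no nontrivial abelian $p'$-quotients. Your route costs a recollection of which $SL_2(\bF_q)$ are perfect, but is otherwise equally rigorous. Both arguments are sound; the paper's is slightly slicker, yours more hands-on.
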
 
\begin{proof} 
	(a) For any $\theta\in \Hom(A, K^\times)[p']$, every pro-$p$ subgroup of $A$ is contained in $\ker \theta$. Since $SL_2(\cO_F)$ is generated by its pro-$p$ subgroups it follows that $\theta$ factors through $\det\colon A\to \cO_F^\times$. Since $\cO_F^{\times\times}$ is also pro-$p$ we see that any element of $\Hom(\cO_F^\times, K^\times)[p']$ factors through $\widehat{-}$. It remains to observe that $\Hom(\mu_{p'}(F), K^\times)$ consists of maps of the form $a\mapsto a^k$ for some $k\in \bZ/(q-1)\bZ$, because $\mu_{p'}(F)$ is cyclic of order $q-1$.
	
	(b) Once again, if $\theta\in \Hom(I, K^\times)[p']$ then any pro-$p$ subgroup of $I$ is contained in $\ker \theta$. The kernel of the map $I\to \mu_{p'}(F) \times \mu_{p'}(F)$ sending $\begin{pmatrix} a & b \\ \pi_F c & d\end{pmatrix}$ to $(\widehat{a},\widehat{c})$ is pro-$p$. Now the result follows as in (a).
 
 	(c) By part (b), every $\chi\in \Hom(I,K^\times)[p']$ is of the form  \[ \begin{pmatrix} a & b \\ \pi_F c & d\end{pmatrix} \mapsto \widehat{a}^{n_1}\widehat{d}^{n_2} \] for some $n_1,n_2\in \bZ/(q-1)\bZ$. Now \[ (w\cdot \chi)\left(\begin{pmatrix} a & b \\ \pi_F c & d\end{pmatrix}\right)= \chi\left(\begin{pmatrix} d & c \\ \pi_F b & a \end{pmatrix}\right) = \widehat{a}^{n_2}\widehat{d}^{n_1}. \] Thus if $\chi=w\cdot\chi$ then $n_1=n_2$ and $\chi=\widehat{\det}^{n_1}$. 
\end{proof}
\begin{rmk} \label{p'exp} It follows from the proof of Lemma \ref{Hommup'} that any $p'$-quotient of $A, B$ or $I$ has exponent dividing $q-1$.\end{rmk}
We will now recall the structure of the stabilisers in $GL_2(F)$ of points in $\bP^1(L)$ for quadratic extensions $L$ of $F$, under the $GL_2(F)$-action by M\"{o}bius transformations.
\begin{lem}\label{stabz} Suppose that $F(z)$ is a quadratic extension of $F$. Let \[N,\tr\colon F(z)\to F\]  be the norm and trace maps respectively.  \be \item The stabiliser $GL_2(F)_{z}$ of $z$ in $GL_2(F)$ is 
\[ GL_2(F)_{z}=\left\{\begin{pmatrix} a & -cN(z) \\ c & a-c\tr(z) \end{pmatrix}\,\middle\vert (a,c) \in F^2\backslash\{(0,0)\} \right\}. \]

	 \item There is a commutative diagram of groups\[ \xymatrix{GL_2(F)_z  \ar[r]^{j_z} \ar[rd]_\det  & F(z)^\times \ar[d]^N \\ & F^\times } \] whose horizontal arrow  \[j_z\colon \begin{pmatrix} a & -cN(z) \\ c & a-c\tr(z) \end{pmatrix}\mapsto a-cz \] is an isomorphism of topological abelian groups. 
	 \item $j_z(G^0_z)=\cO_{F(z)}^\times$ and $j_z(SL_2(F)_{z})=\ker N \cap \cO_{F(z)}^\times$.
	 \item Let $\sigma \in \Gal(F(z)/F)$ be the unique element of order two. Then 
	 \[\sigma\cdot g= \det (g)g^{-1} \qmb{for all} g\in GL_2(F)_z\] 
defines an continuous action of $\Gal(F(z)/F)$ on $GL_2(F)_{z}$ such that 
\begin{enumerate}[{(}i{)}]
\item $j_z$ is $\Gal(F(z)/F)$-equivariant, 
\item $G^0_z$, $SL_2(F)_z$ and the Sylow pro-$p$-subgroup $P_z$ of $SL_2(F)_z$ are all $\Gal(F(z)/F)$-stable. 
\end{enumerate} \ee
 \end{lem}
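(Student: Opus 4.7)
The plan is to treat each part largely by direct computation, relying throughout on the quadratic relation $z^2 = \tr(z) z - N(z)$ coming from the minimal polynomial of $z$ over $F$.

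For part (a), I would start from the equation $g \cdot z = z$ for $g = \begin{pmatrix} a & b \\ c & d \end{pmatrix}$, which reads $az + b = cz^2 + dz$. Substituting the quadratic relation and comparing coefficients in the $F$-basis $\{1, z\}$ of $F(z)$ forces $b = -cN(z)$ and $d = a - c\tr(z)$. The invertibility criterion $\det g = a^2 - ac\tr(z) + c^2 N(z) = N(a-cz)$ is non-zero precisely when $a - cz \neq 0$ in $F(z)$, i.e. $(a,c) \neq (0,0)$.

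For part (b), commutativity of the diagram is essentially the identity $\det g = N(a-cz)$ just obtained. The map $j_z$ is $F$-linear with values in $F(z)$, and lands in $F(z)^\times$ by the invertibility criterion; conversely every element of $F(z)^\times$ arises since each element of $F(z)$ is uniquely of the form $a - cz$. The only substantive step is multiplicativity of $j_z$, which I would verify by expanding the matrix product $gg'$ and the product $(a-cz)(a'-c'z)$ in $F(z)$, and observing that both yield the element of the form prescribed by (a) with $a'' = aa' - cc'N(z)$ and $c'' = ac' + a'c - cc'\tr(z)$. Bicontinuity of $j_z$ is immediate. Part (c) then follows at once: via $j_z$, the condition $v_{\pi_F}(\det g) = 0$ translates to $v(N(j_z(g))) = 0$, equivalently $j_z(g) \in \cO_{F(z)}^\times$; while $\det g = 1$ translates to $j_z(g) \in \ker N$, and any element of $\ker N$ automatically has valuation zero, explaining the intersection in the statement.

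For part (d), the formula $\sigma \cdot g = \det(g) g^{-1}$ clearly defines a continuous involution on $GL_2(F)_z$, and it is a group homomorphism because $GL_2(F)_z$ is abelian by (b); thus it extends to a continuous action of the order-two group $\Gal(F(z)/F)$. Equivariance of $j_z$ reduces to the identity $N(w)/w = \sigma(w)$ for $w \in F(z)^\times$, which is straightforward. By (b), the stability claims can then be checked on the $j_z$-images: $\cO_{F(z)}^\times$ is $\sigma$-stable because $\sigma$ preserves the valuation ring; $\ker N$ is $\sigma$-stable because $N \circ \sigma = N$; and the Sylow pro-$p$-subgroup of the profinite abelian group $SL_2(F)_z$ is unique, hence characteristic, hence stable under any continuous automorphism. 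The main obstacle is nothing deeper than careful bookkeeping with signs in the multiplicativity check in (b); no genuine difficulty arises.
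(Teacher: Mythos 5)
Your proof is correct and follows essentially the same route as the paper: direct computation from the quadratic relation for (a) and (b), translating valuation and determinant conditions through $j_z$ for (c), and checking stability via the $j_z$-images together with the characteristic nature of $P_z$ for (d). The one cosmetic difference is that the paper defines the $\Gal(F(z)/F)$-action on $GL_2(F)_z$ by transport of structure along $j_z$ (making equivariance automatic), whereas you define it by the formula and then verify it is a homomorphism using abelianness — both are fine.
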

\begin{proof}
(a)	We can compute that $\frac{az+b}{cz+d}=z$ if and only if $cz^2+(d-a)z-b=0$. Moreover the minimal polynomial of $z$ is $t^2-\tr(z)t+N(z)$. So $\begin{pmatrix} a & b \\ c & d\end{pmatrix}\in GL_2(F)$ fixes $z$ if and only if $(d-a)=-c\tr(z)$ and $b=-cN(z)$ as claimed.
	
(b)	Since \[\det\begin{pmatrix} a & -cN(z) \\ c & a-c\tr(z) \end{pmatrix}=a^2-ac\tr(z)+c^2N(z)=N(a-cz) \]  the given diagram commutes. 
	For any $(a_1,c_1),(a_2,c_2)\in F^2\backslash \{(0,0\}$,  we have \[ \begin{pmatrix} a_1 & -c_1N(z) \\ c_1 & a_1-c_1\tr(z) \end{pmatrix}\begin{pmatrix} a_2 & -c_2N(z) \\ c_2 & a_2-c_2\tr(z) \end{pmatrix}= \begin{pmatrix} a_1a_2-c_1c_2N(z) & * \\  \\ c_1a_2+a_1c_2-c_1c_2\tr(z) &  * \end{pmatrix}\in GL_2(F)_z\]and 
	
	\begin{eqnarray*} (a_1-c_1z)(a_2-c_2z)&  = &(a_1a_2-(a_1c_2+c_1a_2)z +  c_1c_2z^2)\\ & = & (a_1a_2-c_1c_2N(z)) - (c_1a_2+a_1c_2-c_1c_2\tr(z))z\in F(z)^\times. \end{eqnarray*}
 This implies that $j_z$ is a group isomorphism.
	 
	 (c) Since $G^0_z=\ker \left(v_{\pi_F}\circ \det\colon GL_2(F)_z\to \bZ\right)$, part (b) implies that \[ j_z(G^0_z)=\ker\left( v_{\pi_F}\circ N\colon F(z)^\times \to\bZ\right)=\cO_{F(z)}^\times.\] Similarly since $SL_2(F)_z= \ker \left( \det\colon GL_2(F)_z\to F^\times\right)$, we see that $j_z(SL_2(F)_z)=\ker \left(N\colon F(z)^\times \to F^\times\right)$, which is contained in $\cO^\times_{F(z)}$.
	
(d) We have $N(\lambda)=\lambda\sigma(\lambda)$ for any $\lambda\in F(z)^\times$. Now it follows from (b) that \[j_z^{-1}(\sigma(\lambda))=\det(j_z^{-1}(\lambda))j_z^{-1}(\lambda)^{-1} \qmb{for any} \lambda \in F(z)^\times\] and we can define the claimed action by transport of structure. Since $SL_2(F)_z$ is $\sigma$-stable and $P_z$ is a characteristic subgroup of $SL_2(F)_z$, the rest follows.
 \end{proof}

\begin{cor} \label{G0zSL2} Let $L = F(z)$ be a quadratic unramified extension of $F$. Then
\[ G^0_z \cdot SL_2(F) = G^0.\]
\end{cor}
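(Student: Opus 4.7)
The plan is to reduce the claim to the surjectivity of the norm map on units. Since $SL_2(F) = \ker \det$ is normal in $GL_2(F)$, the product $G^0_z \cdot SL_2(F)$ is a subgroup of $G^0$ containing $SL_2(F)$; and since $\det$ identifies $G^0 / SL_2(F)$ with $\cO_F^\times$, the inclusion $G^0_z \cdot SL_2(F) \subseteq G^0$ is an equality if and only if $\det(G^0_z) = \cO_F^\times$.

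By Lemma \ref{stabz}(b) and (c), the restriction of $\det$ to $G^0_z$ factors as $G^0_z \overset{j_z}{\longrightarrow} \cO_{F(z)}^\times \overset{N}{\longrightarrow} \cO_F^\times$, where the first arrow is an isomorphism. Hence the corollary reduces to showing that $N \colon \cO_{F(z)}^\times \to \cO_F^\times$ is surjective.

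This is the substantive step. In the setting of the paper $F(z)/F$ is unramified, and I would prove surjectivity in two stages, using the filtration of units by the subgroup of principal units. First, the induced norm on residue fields $k_{F(z)}^\times \to k_F^\times$ is the map $x \mapsto x^{q+1}$, which is surjective by cardinality considerations on cyclic groups of orders $q^2 - 1$ and $q - 1$. Second, for the principal units $1 + \pi_F \cO_{F(z)}$, a Hensel-type iterative lifting argument, starting from the identity $N(1 + \pi_F y) \equiv 1 + \pi_F \tr(y) \pmod{\pi_F^2}$ and using the surjectivity of the residue-field trace $\tr \colon k_{F(z)} \to k_F$ (valid in the unramified setting), shows $N(1 + \pi_F \cO_{F(z)}) = 1 + \pi_F \cO_F$. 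Combining the two via the snake lemma applied to the short exact sequence $1 \to (1 + \pi_F \cO_{F(z)}) \to \cO_{F(z)}^\times \to k_{F(z)}^\times \to 1$ gives the required surjectivity.

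The main obstacle is the third step, since everything before it is formal; and within that step, the unramified hypothesis is essential, as the analogous assertion fails in the totally ramified case (for instance, $2 \notin N(\cO_{\bQ_3(\sqrt{3})}^\times)$). The rest of the argument is bookkeeping.
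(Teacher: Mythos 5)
Your reduction to the surjectivity of the unit norm $N \colon \cO_{F(z)}^\times \to \cO_F^\times$ is exactly the paper's argument; the only difference is that where the paper cites \cite[Chapter V, Corollary to Proposition 3]{SerreLF} for that surjectivity, you supply a self-contained proof via the standard filtration-by-principal-units argument, and your sketch of that proof is correct. The one thing worth flagging is your (correct) observation that the unramified hypothesis is essential: Corollary~\ref{G0zSL2} as stated merely assumes that $F(z)/F$ is quadratic, but your computation that $2 \notin N(\cO_{\bQ_3(\sqrt{3})}^\times)$ (since $a^2 - 3b^2 \equiv a^2 \equiv 1 \pmod 3$ when $a$ is a unit) shows the conclusion fails when $F(z)/F$ is ramified, for then $\det(G^0_z) = N(\cO_{F(z)}^\times)$ is a proper subgroup of $\cO_F^\times$. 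The result from Serre that the paper invokes is in the section on the unramified case, and the corollary is only ever applied in the paper with $F(z)=L$ the unramified quadratic extension (in the proof of Lemma~\ref{phizptorsion}), so no downstream harm is done; but the hypothesis that $F(z)/F$ be unramified really ought to appear in the statement of the corollary.
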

\begin{proof} Let $g \in G^0$ so that $\det(g) \in \cO_F^\times$. Since the residue field $k_F$ is finite, the norm map $N : k_L^\times \to k_F^\times$ on residue fields is surjective by \cite[p. 82, Remark(1)]{SerreLF}. Since $L/F$ is unramified, the norm map $N : \cO_{L}^\times \to \cO_F^\times$ is surjective by \cite[p. 82, Corollary]{SerreLF}, so we can find $x \in \cO_L^\times$ such that $N(x) = \det(g)$. Using Lemma \ref{stabz}(c), we can choose $h \in G^0_z$ such that $j_z(h) = x$. Then Lemma \ref{stabz}(b) implies that $\det(h) =  N(j_z(h)) = N(x)=\det(g)$, so $g = h \cdot h^{-1}g \in G^0_z \cdot SL_2(F)$ as required.
\end{proof}

	 \begin{rmk}\label{jzcan} Note that, with the notation of Lemma \ref{stabz}, $GL_2(F)_z=GL_2(F)_{\sigma\cdot z}$ 
	 	but $j_z\neq j_{\sigma\cdot z}$. However $j_z^\ast$ does induce a canonical bijection \[ \Hom(G^0_z,K^\times)/\Gal(F(z)/F)\to \Hom(\cO_{F(z)}^\times,K^\times)/\Gal(F(z)/F). \]
	 	\end{rmk}
When $F(z)$ a quadratic extension of $F$, we define a continuous homomorphism $\widehat{j_z}\colon G^0_z\to F(z)^\times$ by setting
\[ \widehat{j_z}(g):= \widehat{j_z(g)} \qmb{for all} g \in G^0_z.\]
\begin{lem}\label{HomG0z} Suppose that $F(z)$ is a quadratic extension of $F$. Then  \[\Hom(G^0_z,K(z)^\times)[p']=\Hom(G^0_z,\mu_{p'}(F(z)))= \left\langle \widehat{j_z} \right\rangle\]
is a cyclic group. Its order is $q^2-1$ if $F(z)/F$ is unramified, and $q-1$ otherwise. \end{lem}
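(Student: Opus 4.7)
The plan is to transport everything across the isomorphism $j_z : G^0_z \stackrel{\cong}{\longrightarrow} \cO_{F(z)}^\times$ from Lemma \ref{stabz}(c) and reduce the question to counting prime-to-$p$ torsion characters of the unit group of a local field.

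The first step is to use the local-field decomposition $\cO_{F(z)}^\times = \mu_{p'}(F(z)) \times (1 + \mathfrak{m}_{F(z)})$, the second factor being pro-$p$. Any $p'$-torsion character of $\cO_{F(z)}^\times$ into an abelian group must therefore kill $1 + \mathfrak{m}_{F(z)}$ and so factor through the projection $\widehat{\cdot} : \cO_{F(z)}^\times \twoheadrightarrow \mu_{p'}(F(z))$. Composing with $j_z$, I obtain that every $\chi \in \Hom(G^0_z, K(z)^\times)[p']$ factors through $\widehat{j_z} : G^0_z \twoheadrightarrow \mu_{p'}(F(z))$.

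Next I would compute the size of the cyclic group $\mu_{p'}(F(z))$ using Teichm\"uller lifts, which identify it with $k_{F(z)}^\times$. In the unramified case the residue field is $\bF_{q^2}$, giving order $q^2-1$; in the ramified case the residue field is still $k_F$, giving order $q-1$. Let $n$ denote this order. Since $\widehat{j_z}$ is surjective onto the cyclic group $\mu_{p'}(F(z))$ of order $n$, it has order exactly $n$ in $\Hom(G^0_z, F(z)^\times)$, and every character of $G^0_z$ factoring through $\widehat{j_z}$ with image in $\mu_{p'}(F(z))$ is a power of it.

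The only genuinely non-formal step is verifying that a $p'$-torsion character $\chi : G^0_z \to K(z)^\times$ actually takes values in $\mu_{p'}(F(z)) \subseteq F(z)^\times$ and not in some larger group of roots of unity living only in $K(z)$. For this I would argue that the image of $\chi$ is contained in $\mu_n(K(z))$, which for $n$ coprime to $p$ is a cyclic group of order dividing $n$; but $\mu_n(F(z))$ already has the maximal possible order $n$ by the residue field calculation above, so the inclusion $\mu_n(F(z)) \subseteq \mu_n(K(z))$ is forced to be an equality. This identifies $\Hom(G^0_z, K(z)^\times)[p']$ with $\Hom(G^0_z, \mu_{p'}(F(z)))$, which by the previous paragraph is $\langle \widehat{j_z} \rangle$ of the claimed order.
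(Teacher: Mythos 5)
Your proof is correct and follows the same route as the paper: transport along $j_z$, kill the pro-$p$ part $1+\mathfrak{m}_{F(z)}$ so that $p'$-torsion characters factor through the projection to $\mu_{p'}(F(z))$, then identify $\mu_{p'}(F(z))$ with $k_{F(z)}^\times$. The only difference is that you spell out the observation that $\mu_n(K(z))=\mu_n(F(z))$ (for $n$ the order of $\mu_{p'}(F(z))$), a point the paper leaves as "straightforward"; this is a harmless bit of extra care rather than a different argument.
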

\begin{proof}
Since $j_z\colon G^0_z\to \cO_{F(z)}^\times$ is an isomorphism of topological groups by Lemma \ref{stabz}(c), it suffices to show that every element of $\Hom(\cO_{F(z)}^\times, K(z)^\times)[p']$ is of the form $a\mapsto \hat{a}^k$ for some $k\in \bZ$. Since the kernel of $\hat{\cdot}\colon \cO_{F(z)}^\times\to \mu_{p'}(F(z))$ is pro-$p$, any $\theta\in \Hom(\cO_{F(z)}^\times, K(z)^\times)[p']$ factors through $\hat{\cdot}$. Since $\mu_{p'}(F(z))$ is cyclic of order $q^2-1$ if $F(z)/F$ is unramified and cyclic of order $q-1$ if $F(z)$ is ramified the result is now straightforward.
\end{proof}

\subsection{Quaternions}\label{sec:Quat}
We use \cite[\S1.4]{PlatRap94} and \cite{Pierce} as basic references for this material. By \cite[Proposition 12.5b, Theorem 17.10]{Pierce} there is a central division algebra $D$ over $F$ of dimension $4$ which is unique up to isomorphism. The norm on $F$ extends uniquely to a norm $|\cdot |_D$ on the division algebra by \cite[Proposition 17.6]{Pierce}.

\begin{notn} Let $\cO_D := \{ d \in \cO_D : |d|_D \leq 1\}$ be the maximal order in $D$, $\cP_D := \{ d \in \cO_D : |d|_D < 1\}$ its unique maximal ideal and let $k_D := \cO_D/\cP_D$. We will write $\omega_D : \cO_D^\times\to k_D^\times$ to denote the reduction map modulo $\cP_D$ on multiplicative groups.  Let $L$ denote the unramified quadratic field extension of $F$; the reduction map $\omega_L : \cO_L^\times \to k_L^\times$ on multiplicative groups is defined similarly. \end{notn} 

We recall the Noether--Skolem Theorem.

\begin{thm}[Noether--Skolem]Suppose that $B$ is a central simple algebra over $F$ and that $A$ is a simple subalgebra of $B$. If $\chi\colon A\to B$ is an algebra homomorphism then there is $b\in B^\times$ such that $\chi(x)=bxb^{-1}$ for all $x\in A$.\end{thm}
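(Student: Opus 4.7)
The plan is to follow the standard argument that realises both module structures on $B$ over a common simple algebra and compares them.

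First I would form the tensor product $R := A \otimes_F B^{\op}$. Since $B$ is central simple over $F$, so is $B^{\op}$, and therefore $R$ is simple because the tensor product of a simple algebra with a central simple algebra over the common centre is again simple (a standard consequence of the double centraliser theorem). Being finite-dimensional and simple, $R$ is Artinian, so by Wedderburn any two finite-dimensional left $R$-modules of equal $F$-dimension are isomorphic.

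Next I would equip $B$ with two different structures of left $R$-module, both on the same underlying $F$-vector space. The first, call it $B_1$, is defined by $(a\otimes b)\cdot x := axb$, using the inclusion $A\hookrightarrow B$. The second, $B_2$, is defined by $(a\otimes b)\cdot x := \chi(a)xb$, using the homomorphism $\chi$. Both $B_1$ and $B_2$ have the same $F$-dimension, namely $\dim_F B$, so by the previous paragraph there exists an $R$-linear isomorphism $\phi : B_1 \to B_2$.

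Unwinding what it means for $\phi$ to be $R$-linear: for all $a\in A$, $b\in B$ and $x\in B$ we have $\phi(axb) = \chi(a)\phi(x)b$. Setting $a=1$ and $x=1$ gives $\phi(b) = \phi(1)\cdot b$ for all $b\in B$, so $\phi$ is simply left multiplication by $c:=\phi(1)$. Since $\phi$ is an $F$-linear bijection of the finite-dimensional algebra $B$, this $c$ must be a unit in $B$. Setting $b=1$ and $x=1$ gives $\phi(a) = \chi(a)\phi(1)$, i.e.\ $c\cdot a = \chi(a)\cdot c$ for all $a\in A$. Rearranging yields $\chi(a) = cac^{-1}$, so the element $b := c \in B^\times$ has the required property.

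The only genuinely non-routine ingredient is the simplicity of $A\otimes_F B^{\op}$, which rests on $B$ being central simple over $F$ — this is the hypothesis that does the real work. Everything else is bookkeeping: producing two module structures of matching dimension and extracting the conjugating element from the intertwiner. I expect no obstacles beyond citing the standard fact about simplicity of such tensor products.
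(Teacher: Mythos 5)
Your argument is correct and is the standard textbook proof of Noether--Skolem. The paper does not prove the theorem itself but simply cites Pierce (\cite[Theorem 12.6]{Pierce}), and the argument found there is essentially the one you give: form $R := A \otimes_F B^{\op}$, observe it is simple (hence Artinian with a unique simple module) by the central-simplicity of $B$, compare the two $R$-module structures on $B$ coming from the inclusion and from $\chi$, and extract the conjugating unit from an $R$-linear isomorphism. Two small points worth being explicit about, both of which you use implicitly: you need $B$ (and hence $A$) to be finite-dimensional over $F$ for $R$ to be Artinian and for the ``bijection implies unit'' step, and you need $A$ to be finite-dimensional as well so that $A \otimes_F B^{\op}$ is a finite-dimensional simple algebra rather than just a simple ring. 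Both hypotheses are built into Pierce's definition of a simple subalgebra of a finite-dimensional central simple algebra, and into the paper's intended application ($B = D$ a quaternion algebra, $A = L$ a quadratic field), so there is no actual gap — just a hypothesis worth stating.
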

	\begin{proof}
		See \cite[Noether--Skolem Theorem 12.6]{Pierce}. 
	\end{proof}
\begin{cor}\label{CorToSKN} $D^\times$ acts transitively on the set of $F$-algebra homomorphisms $\iota\colon L\to D$ via conjugation: \[ (d \cdot \iota)(x)=d\hsp\iota(x)\hsp d^{-1} \qmb{for all} d \in D^\times, x \in L. \]\end{cor}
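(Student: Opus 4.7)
The plan is to apply the Noether--Skolem theorem essentially verbatim, after verifying the trivial setup. First I would check that conjugation does define a $D^\times$-action on the set of $F$-algebra homomorphisms $\iota : L \to D$: any such $\iota$ is nonzero and sends $1$ to $1$, so its kernel is a proper ideal of the field $L$, hence zero; thus $\iota$ is injective, and $\iota(L)$ is a (commutative) $F$-subalgebra of $D$. For $d \in D^\times$, the map $x \mapsto d\,\iota(x)\,d^{-1}$ is then again an $F$-algebra map $L \to D$, and the action axioms $(d_1d_2)\cdot \iota = d_1\cdot(d_2\cdot \iota)$ and $1 \cdot \iota = \iota$ are immediate.

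For transitivity, let $\iota_1, \iota_2 : L \to D$ be two $F$-algebra homomorphisms. Set $A := \iota_1(L) \subseteq D$; since $\iota_1$ is injective and $L$ is a field, $A$ is a subfield of $D$ and in particular a simple $F$-subalgebra. The composite
\[ \chi := \iota_2 \circ \iota_1^{-1} \colon A \longrightarrow D \]
is an $F$-algebra homomorphism from the simple $F$-subalgebra $A$ into the central simple $F$-algebra $D$. Applying the Noether--Skolem theorem stated above, we obtain $d \in D^\times$ such that $\chi(y) = d\,y\,d^{-1}$ for every $y \in A$. Setting $y = \iota_1(x)$ for $x \in L$ gives $\iota_2(x) = d\,\iota_1(x)\,d^{-1}$, i.e.\ $\iota_2 = d \cdot \iota_1$, which is the transitivity claim.

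There is no real obstacle here: the corollary is essentially a packaging of the Noether--Skolem theorem into the language of the $D^\times$-conjugation action. The only points needing any verification are the injectivity of an $F$-algebra map out of the field $L$ (trivial) and that $D$ is central simple over $F$ (which has already been noted at the start of $\S\ref{sec:Quat}$, using \cite[Proposition 12.5b, Theorem 17.10]{Pierce}). Note that the corollary makes no claim that the set of embeddings is non-empty, so no separate existence argument (e.g.\ that $L$ embeds as a maximal subfield of $D$) is needed for the statement itself.
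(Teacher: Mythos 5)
Your proof is correct and takes the same approach the paper intends: the corollary is stated without proof immediately after the Noether--Skolem theorem, and your argument — identify $A := \iota_1(L)$ as a simple $F$-subalgebra of $D$, form $\chi := \iota_2 \circ \iota_1^{-1} : A \to D$, and apply Noether--Skolem to extract the conjugating element $d \in D^\times$ — is exactly the expected unpacking. The supporting remarks (injectivity of $\iota_1$, that no existence claim is being made) are accurate.
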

Recall the reduced norm map $\Nrd\colon D^\times\to F^\times$ from \cite[Chapter 16]{Pierce}.
\begin{prop}\label{DerSubO_D} The derived subgroup of $\cO_D^\times$ is the intersection of \[\ker \left(\Nrd|_{\cO_D^\times}\colon \cO_D^\times\to \cO_F^\times\right)\mbox{ and }\ker \left(\omega_D : \cO_D^\times\to k_D^\times\right).\] \end{prop}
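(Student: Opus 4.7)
The inclusion $[\cO_D^\times, \cO_D^\times] \subseteq \ker(\Nrd|_{\cO_D^\times}) \cap \ker(\omega_D)$ is immediate, since both $\cO_F^\times$ and $k_D^\times$ are abelian, so the homomorphisms $\Nrd|_{\cO_D^\times}$ and $\omega_D$ both factor through the abelianisation of $\cO_D^\times$.

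For the reverse inclusion, the plan is to work inside the pro-$p$ group $U := 1 + \cP_D$, filtered by $U^{(i)} := 1 + \cP_D^i$, and to prove by successive approximation that every $g \in U \cap \ker(\Nrd)$ lies in $[\cO_D^\times, \cO_D^\times]$. Fix a uniformiser $\Pi$ of $\cO_D$ with $\Pi^2 = \pi_F$ and the unramified embedding $\iota : \cO_L \hookrightarrow \cO_D$; then conjugation by $\Pi$ realises the nontrivial Frobenius $\sigma \in \Gal(L/F)$ on $\iota(\cO_L)$, and Teichm\"uller lifting realises $k_D^\times$ as a subgroup $\mu \subseteq \iota(\cO_L)^\times \subseteq \cO_D^\times$. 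Given $g$ as above, I would construct inductively commutator products $c_n \in [\cO_D^\times, \cO_D^\times]$ with $g c_n^{-1} \in U^{(n+1)}$; since the derived subgroup of the compact $p$-adic analytic group $\cO_D^\times$ is closed, passing to the limit gives $g \in [\cO_D^\times, \cO_D^\times]$.

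The inductive step reduces to the assertion that at each level $n$, the image of commutators and the reduced-trace constraint carve out the same subspace of $\cP_D^n / \cP_D^{n+1} \cong k_D$. Standard calculations give, for $x \in \mu$, $\alpha \in \cP_D^i$ and $\beta \in \cP_D^j$,
\[ [x, 1 + \alpha] \equiv 1 + (x\alpha x^{-1} - \alpha) \pmod{\cP_D^{i+1}}, \qquad [1+\alpha, 1+\beta] \equiv 1 + (\alpha\beta - \beta\alpha) \pmod{\cP_D^{i+j+1}}. \]
At odd levels $n$, commutators $[x, 1 + \Pi^n \gamma]$ have residue $(\sigma(\bar x)/\bar x - 1)\bar\gamma \in k_D$, which fills all of $k_D$ as $\bar x \in k_D^\times$ and $\bar\gamma \in k_D$ vary (using Hilbert 90 for $k_D/k_F$ and that $k_D$ is a field). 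At even levels $n = 2m \geq 2$, those commutators vanish mod $\cP_D^{n+1}$ (since $k_D$ is commutative), so I would use $[1 + \Pi\gamma, 1 + \Pi^{n-1}\delta]$ instead, whose residue is $\sigma(\bar\gamma)\bar\delta - \sigma(\bar\delta)\bar\gamma$ and fills the trace-zero subspace $\{y \in k_D : y + \sigma(y) = 0\}$. The quaternion identity $\Nrd(1 + \xi) = 1 + \mathrm{Trd}(\xi) + \Nrd(\xi)$, applied to $\xi = \Pi^n \rho$ with $\rho = a + b\Pi$ ($a,b \in \cO_L$), shows that $\Nrd(g) = 1$ forces the residue of $\rho$ in $k_D$ to be trace-zero at even levels $n \geq 2$, while at odd levels the constraint involves the $b$-coefficient and leaves the residue unconstrained; in both cases the achievable residues and the allowed residues coincide.

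The main technical obstacle is the explicit bookkeeping required to match, level by level, the image of commutators with the reduced-trace constraint coming from $\Nrd(g) = 1$, together with some care in residue characteristic $2$ where the trace pairing on $k_D$ is degenerate. A less elementary alternative is to invoke Riehm's classification \cite{Riehm} of the abelianisation of the unit group of a $p$-adic quaternion order, from which the stated description of $[\cO_D^\times, \cO_D^\times]$ follows directly.
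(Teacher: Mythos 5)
Your proposal is correct in outline but takes a genuinely different route from the paper. The paper dispenses with the reverse inclusion in one sentence by invoking Riehm's theorem on the normal subgroup structure of the unit group of a local division algebra, via the remark following Theorem~1.9 in \cite{PlatRap94}; you note this option yourself at the very end, and that is exactly what the paper does. Your main argument, by contrast, is a direct level-by-level computation in the congruence filtration $U^{(i)} = 1 + \cP_D^i$, matching at each level the residues achievable by commutators against the residues permitted by the constraint $\Nrd(g) = 1$. The calculations you record are correct: at odd levels the commutators $[x, 1 + \Pi^n\gamma]$ with $x$ a Teichm\"uller lift and $\gamma \in \cO_L$ sweep out all of $k_D$ and the reduced-norm constraint puts no restriction on the level-$n$ residue of $g$ (it constrains the complementary $b$-component), while at even levels the commutators $[1+\Pi\gamma, 1+\Pi^{n-1}\delta]$ sweep out exactly the $\sigma$-trace-zero subspace of $k_D$, which is precisely the constraint imposed by $\Nrd(g)=1$ at that level; the characteristic-$2$ case goes through because $\sigma(\bar\gamma)\bar\delta - \sigma(\bar\delta)\bar\gamma$ becomes the trace form $\mathrm{Tr}_{k_D/k_F}(\sigma(\bar\gamma)\bar\delta)$, which is surjective onto $k_F$, the trace-zero subspace in that case. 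What you gain is self-containment and explicitness---in effect an elementary proof of the quaternion case of Riehm's theorem, entirely in the spirit of the paper's stated aim of avoiding heavy machinery. What you lose is brevity: the ``bookkeeping'' you flag (checking both parities, the base of the induction at $n=1$ where $\Nrd(\rho) \bmod \cP_D$ enters the constraint, and justifying that $[\cO_D^\times,\cO_D^\times]$ is closed so the inductive limit lands inside it) is genuine work that you have not carried out, whereas the paper's citation closes the gap in a single line. Both are sound; the paper simply prefers the quicker path here.
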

\begin{proof} Since $\cO_F^\times$ and $k_D^\times$ are both abelian groups, the derived subgroup must be contained in the intersection given in the statement. By a result of Riehm \cite{Riehm}, the two subgroups are in fact equal --- see the remark following \cite[Theorem 1.9]{PlatRap94}. 
\end{proof}
\begin{defn} \label{DefP1L} Let $P^1_L$ denote the following subgroup of $\cO_L^\times$:\[ P^1_L:=\ker N_{L/F} \quad\cap\quad \ker(\omega_L : \cO_L^\times\to k_L^\times).\] 
\end{defn}

Let $\varrho: \cO_D^\times \twoheadrightarrow (\cO_D^\times)^{\ab}$ denote the canonical projection. 
\begin{prop} \label{AbelO_D^x} The $F$-algebra homomorphism $\iota : F \hookrightarrow L$ induces an isomorphism of profinite abelian groups
\[ \overline{\varrho \circ \iota} : \cO_L^\times / P^1_L \stackrel{\cong}{\longrightarrow} (\cO_D^\times)^{\ab}.\]
\end{prop}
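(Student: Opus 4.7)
The plan is to embed both sides of the proposed isomorphism injectively into the same abelian group $\cO_F^\times \times k_D^\times$ and then show the images coincide. On the $D$-side, Proposition \ref{DerSubO_D} tells us that $(\Nrd, \omega_D)$ has kernel $[\cO_D^\times, \cO_D^\times]$, so it induces an injection $\Phi_D : (\cO_D^\times)^{\ab} \hookrightarrow \cO_F^\times \times k_D^\times$. On the $L$-side, the definition of $P^1_L$ as $\ker N_{L/F} \cap \ker \omega_L$ immediately gives an injection $\Phi_L : \cO_L^\times / P^1_L \hookrightarrow \cO_F^\times \times k_L^\times$. A first key observation is that $\iota$ induces a $k_F$-algebra isomorphism $k_L \congs k_D$, since both residue fields are degree-$2$ extensions of $k_F$ (the equality $[k_D:k_F]=2$ is the standard computation for the quaternion division algebra over a local field) and the induced map is an injective ring homomorphism of fields of the same finite cardinality.

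Under this identification, I would next verify the two compatibilities $\Nrd \circ \iota = N_{L/F}$ and $\omega_D \circ \iota = \omega_L$. The first is the well-known fact that the reduced norm on a maximal subfield of $D$ coincides with the field norm, which follows because left-multiplication by $\iota(x)$ on $D$, viewed as an $F$-linear endomorphism of the $4$-dimensional $F$-vector space $D$, has characteristic polynomial $(X^2 - \tr_{L/F}(x) X + N_{L/F}(x))^2$, and the reduced characteristic polynomial is the square root of this. The second holds by the very construction of the identification. These compatibilities imply that $\iota(P^1_L) \subseteq [\cO_D^\times, \cO_D^\times]$, so that $\overline{\varrho \circ \iota}$ is well-defined, and that the triangle $\Phi_D \circ \overline{\varrho \circ \iota} = \Phi_L$ commutes; injectivity of $\overline{\varrho \circ \iota}$ is then immediate.

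The main obstacle is surjectivity, which I would handle by showing that $\im(\Phi_L)$ and $\im(\Phi_D)$ both coincide with the subgroup
\[H := \{(a,b) \in \cO_F^\times \times k_D^\times : \omega_F(a) = N_{k_D/k_F}(b)\}.\]
The containments in $H$ express the norm-compatibility $\omega_F \circ \Nrd = N_{k_D/k_F} \circ \omega_D$ (and similarly for $L$); the $L$-side is standard, while for the $D$-side I would fix a uniformiser $\Pi \in \cO_D$ satisfying $\Pi^2 \in \pi_F\cO_F^\times$ and $\Pi\iota(\ell) = \iota(\sigma(\ell))\Pi$ for $\ell \in L$, giving the decomposition $\cO_D = \iota(\cO_L) \oplus \iota(\cO_L)\Pi$, and use the explicit quaternion formula $\Nrd(\iota(x)+\iota(y)\Pi) = N_{L/F}(x) - \Pi^2 \cdot N_{L/F}(y)$; the unit condition forces $x \in \cO_L^\times$, and reduction mod $\pi_F$ yields the required identity. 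To obtain equality $\im(\Phi_L) = H$ I would use Teichm\"uller lifts from $k_L^\times$ into $\mu_{q^2-1}(L) \subseteq \cO_L^\times$ together with the surjectivity of $N_{L/F} : 1+\pi_F\cO_L \to 1+\pi_F\cO_F$ for the unramified extension $L/F$. Combined with the trivial inclusion $\im(\Phi_L) \subseteq \im(\Phi_D)$ coming from $\iota(\cO_L^\times) \subseteq \cO_D^\times$, this squeezes both images to $H$ and establishes surjectivity.
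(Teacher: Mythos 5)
Your proof is correct and uses the same core inputs as the paper --- Riehm's result (Proposition \ref{DerSubO_D}), the compatibilities $\Nrd\circ\iota = N_{L/F}$ and $\omega_D\circ\iota = \overline{\iota}\circ\omega_L$, and surjectivity of $N_{L/F}$ on principal units --- but packages the surjectivity argument differently. You pin down the common image of $\Phi_L$ and $\Phi_D$ explicitly as the fiber-product subgroup $H = \{(a,b): \omega_F(a) = N_{k_D/k_F}(b)\}$, which requires you to additionally verify $\omega_F\circ\Nrd = N_{k_D/k_F}\circ\omega_D$ (via the decomposition $\cO_D = \iota(\cO_L)\oplus\iota(\cO_L)\Pi$ and the quaternion norm formula) and then to hit all of $H$ from the $L$-side via Teichm\"uller lifts. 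The paper instead sidesteps identifying $H$: after establishing injectivity of $q = \Nrd\times\omega_D$ on $(\cO_D^\times)^{\ab}$, it decomposes the virtually pro-$p$ abelian group $(\cO_D^\times)^{\ab}$ into its Sylow pro-$p$ part $S$ and its prime-to-$p$ part, showing $\varrho\circ\iota$ surjects onto $S$ by comparing with $(1+\pi_F\cO_F)\times 1$ under $q$ and onto the $p'$-part by using that $1+\cP_D$ is pro-$p$ so $\varrho$ factors through $k_D^\times \cong k_L^\times$ there. Your route buys a cleaner structural statement (the exact image inside $\cO_F^\times\times k_D^\times$) at the cost of a couple of extra explicit computations; the paper's route is shorter because it never needs to know what $H$ is.
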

\begin{proof} The projection $\varrho$ appears in the following diagram:
\[\xymatrix{ \cO_L^\times \ar[rr]^\iota \ar[d]_{N_{L/F} \times \omega_L} && \cO_D^\times \ar@{>>}[rrrr]^\varrho \ar[d]^{\Nrd \times \omega_D} &&&& (\cO_D^\times)^{\ab} \ar[dllll]^q \\
\cO_F^\times \times k_L^\times \ar[rr]_{\id \times \overline{\iota}} && \cO_F^\times \times k_D^\times
} \]
We have $\Nrd \circ \iota = N_{L/F}$ by \cite[Proposition 16.2(b)]{Pierce}. The inclusion $\iota : L \hookrightarrow D$ induces ring homomorphisms $\iota : \cO_L \hookrightarrow \cO_D$ and $\overline{\iota} : k_L \hookrightarrow k_D$ \footnote{In fact, $\overline{\iota}$ is an isomorphism}, and we have $\omega_D \circ \iota = \overline{\iota} \circ\omega_L$ . We can now see that the square on the left is commutative. Since $\cO_F^\times \times k_D^\times$ is abelian, $\Nrd \times \omega_D$ factors through $\varrho$, giving the diagonal arrow $q$ and making the entire diagram commutative. 

Proposition \ref{DerSubO_D} tells us that $\ker \varrho = \ker(\Nrd \times \omega_D)$, so the map $q$ is injective. Since $\overline{\iota}$ is injective, chasing the diagram shows that
\[ \ker(\varrho \circ \iota) = \ker(q \circ \varrho \circ \iota) = \ker((\Nrd \times \omega_D) \circ \iota) = \ker(N_{L/F} \times \omega_L) = P^1_L.\]
Hence $\varrho \circ \iota : \cO_L^\times \to (\cO_D^\times)^{\ab}$ descends to give an injective group homomorphism
\[ \overline{\varrho \circ \iota} : \cO_L^\times / P^1_L \hookrightarrow (\cO_D^\times)^{\ab}.\]
Since both $\cO_L^\times/P^1_L$ and $(\cO_D^\times)^{\ab}$ are abelian groups that are virtually pro-$p$, to show that $\overline{\varrho \circ \iota}$ is surjective, it suffices to check this on the Sylow pro-$p$ subgroups of both groups, and on the subgroups of elements of order coprime to $p$. 

The Sylow pro-$p$ subgroup $S$ of $(\cO_D^\times)^{\ab}$ appears in the commutative triangle
\[ \xymatrix{  1 + \pi_F \cO_L  \ar[drr]_{N_{L/F} \times 1} \ar[rr]^{\varrho \circ \iota} && S \ar[d]^q \\
&& (1 + \pi_F \cO_F) \times 1 }\]
where the diagonal arrow is surjective by \cite[Chapter V, Proposition 3(a)]{SerreLF}. Since $q$ is injective, we see that $\varrho \circ \iota : 1 + \pi_F \cO_L \to S$ is surjective as well. 

Finally, $\ker \omega_D = 1 + \cP_D$ is a pro-$p$ subgroup of $\cO_D^\times$, so $\varrho : \cO_D^\times \twoheadrightarrow (\cO_D^\times)^{\ab}$ induces a surjective homomorphism $k_D^\times \twoheadrightarrow (\cO_D^\times)^{\ab}[p']$. Since $\overline{\iota} : k_L \to k_D$ is an isomorphism, this implies that $\varrho \circ i : \cO_L^\times[p'] \to (\cO_D^\times)^{\ab}[p']$ is surjective as well.
\end{proof}

\begin{cor}\label{charactersofDandL} Let $\iota : L \hookrightarrow D$ be an $F$-algebra homomorphism.
\be \item There is an isomorphism of abelian groups
\[\overline{\varrho \circ \iota}^\ast : \Hom(\cO_D^\times, K^\times) \stackrel{\cong}{\longrightarrow} \Hom(\cO_L^\times/P^1_L, K^\times).\]
\item This induces a bijection 
\[ \overline{\overline{\varrho \circ \iota}^\ast} : \Hom(\cO_D^\times,K^\times)/D^\times \stackrel{\cong}{\longrightarrow} \Hom(\cO_L^\times/P^1_L,K^\times)/\Gal(L/F).\] 
\item The bijection $\overline{\overline{\varrho \circ \iota}^\ast}$ does not depend on the choice of $\iota$. \ee\end{cor}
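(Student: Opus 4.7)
\emph{Proof of (a).} Since $K^\times$ is abelian, every continuous homomorphism $\cO_D^\times \to K^\times$ factors uniquely through the abelianisation map $\varrho$, giving a canonical identification $\Hom(\cO_D^\times, K^\times) = \Hom((\cO_D^\times)^{\ab}, K^\times)$. Part (a) follows by applying the contravariant functor $\Hom(-, K^\times)$ to the isomorphism $\overline{\varrho \circ \iota}$ of Proposition \ref{AbelO_D^x}.

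\emph{Proof of (b).} Conjugation by $d \in D^\times$ preserves reduced norms, hence preserves $\cO_D^\times$, giving the $D^\times$-action $(d \cdot \chi)(x) := \chi(d^{-1} x d)$. When $d \in \cO_D^\times$, the element $d^{-1}xd \cdot x^{-1}$ is a commutator in $\cO_D^\times$ and is therefore killed by $\chi$; when $d \in F^\times$ the conjugation is trivial. Hence the $D^\times$-action factors through the order-two quotient $D^\times / \cO_D^\times F^\times$. Next, applying Noether--Skolem (Corollary \ref{CorToSKN}) to the pair of embeddings $\iota$ and $\iota \circ \sigma$ of $L$ into $D$ produces $\Pi \in D^\times$ with $\Pi \iota(x) \Pi^{-1} = \iota(\sigma(x))$ for all $x \in L$, where $\sigma$ generates $\Gal(L/F)$. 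Using the structure of $D$ as a cyclic algebra over $L$ (equivalently, that $F^\times / N_{L/F}(L^\times)$ has order two and is generated by the class of $\pi_F$), one may further arrange that $v_F(\Nrd(\Pi))$ is odd, so that $\Pi$ represents the non-trivial coset of $D^\times/\cO_D^\times F^\times$. For every $\chi$ and every $x \in \cO_L^\times$ one then computes
\[\overline{\varrho \circ \iota}^\ast(\Pi \cdot \chi)(x) \;=\; \chi\bigl(\iota(\sigma(x))\bigr) \;=\; \bigl(\sigma \cdot \overline{\varrho \circ \iota}^\ast(\chi)\bigr)(x).\]
Thus $\overline{\varrho \circ \iota}^\ast$ intertwines the $D^\times$-action with the $\Gal(L/F)$-action via the canonical isomorphism $D^\times / \cO_D^\times F^\times \cong \Gal(L/F)$, and since this homomorphism is surjective and $\overline{\varrho \circ \iota}^\ast$ is already a bijection, the induced map on quotients is a bijection.

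\emph{Proof of (c).} Let $\iota' : L \hookrightarrow D$ be a second $F$-algebra embedding. By Corollary \ref{CorToSKN} there exists $d \in D^\times$ with $\iota'(x) = d\,\iota(x)\,d^{-1}$. The conjugation $c_d$ preserves $\cO_D^\times$ and maps commutators to commutators, so descends to an automorphism $\overline{c_d}$ of $(\cO_D^\times)^{\ab}$; applying $\varrho$ to $\iota'(x) = d\,\iota(x)\,d^{-1}$ shows that $\overline{\varrho \circ \iota'} = \overline{c_d} \circ \overline{\varrho \circ \iota}$. Taking pullbacks gives
\[\overline{\varrho \circ \iota'}^\ast(\chi) \;=\; \overline{\varrho \circ \iota}^\ast(\chi \circ \overline{c_d})\]
for every $\chi \in \Hom(\cO_D^\times, K^\times)$. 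Viewed in $\Hom((\cO_D^\times)^{\ab},K^\times)$, the character $\chi \circ \overline{c_d}$ coincides with the action of $d^{-1}$ on $\chi$, so it lies in the same $D^\times$-orbit as $\chi$. Invoking the bijection of (b) we conclude $\overline{\overline{\varrho \circ \iota'}^\ast}([\chi]) = \overline{\overline{\varrho \circ \iota}^\ast}([\chi])$ for all $[\chi]$.

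\emph{Main obstacle.} The only substantive step is the construction of the coset representative $\Pi$ in part (b): Noether--Skolem produces some $\Pi$ realising $\sigma$ by conjugation, but one additionally needs the classical fact from the theory of central simple algebras (equivalently, local class field theory) that such a $\Pi$ may simultaneously be chosen so that $v_F(\Nrd(\Pi))$ is odd, ensuring that $\Pi$ represents the non-trivial coset of $\cO_D^\times F^\times$ and so that the induced map $D^\times/\cO_D^\times F^\times \to \Gal(L/F)$ is an isomorphism rather than the trivial map.
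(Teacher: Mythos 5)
Your proof is correct and follows essentially the same route as the paper: (a) is the identical observation, and (c) uses exactly the same Noether--Skolem/conjugation argument. The only difference is in (b), where the paper cites Pierce [Theorem~17.10, Proposition~15.1a] directly for an element $\Pi$ with both $\Pi\iota(a)\Pi^{-1}=\iota(\sigma(a))$ and $\Pi^2=\pi_F$, so that $\Pi$ visibly represents the nontrivial coset of $D^\times/\cO_D^\times F^\times$, whereas you first invoke Noether--Skolem and then separately appeal to the cyclic-algebra description of $D$ to control $v_F(\Nrd(\Pi))$ --- the same underlying fact, which you correctly flag as the substantive step. One small clarification of your wording: replacing $\Pi$ by $\iota(a)\Pi$ for $a\in L^\times$ changes $\Nrd(\Pi)$ by $N_{L/F}(a)$, which has even valuation since $L/F$ is unramified, so the parity of $v_F(\Nrd(\Pi))$ is \emph{forced} rather than something one ``arranges''; showing it is odd is equivalent to the fact that $D$ is the nonsplit cyclic algebra $(L/F,\sigma,\pi_F)$, which is exactly what Pierce supplies.
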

\begin{proof} (a) This follows immediately from Proposition \ref{AbelO_D^x}.

(b) Let $\Gal(L/F) = \langle \sigma\rangle$; then, by \cite[Theorem 17.10, Proposition 15.1a]{Pierce}, we can find an element $\Pi \in D$ such that $\Pi^2 = \pi_F$ and 
\[ \Pi \hsp \iota(a) \hsp \Pi^{-1} = \iota(\sigma(a)) \qmb{for all} a \in L.\]
Since $\cO_D^\times$ is normal in $D^\times$, there is a natural conjugation action of $D^\times$ on $(\cO_D^\times)^{\ab}$ which evidently factors through $D^\times / F^\times \cO_D^\times$, a group of order $2$ generated by the image of $\Pi$. Then the above formula shows that this action of $D^\times / F^\times \cO_D^\times$ on $(\cO_D^\times)^{\ab}$ corresponds under the isomorphism $\overline{\varrho \circ \iota}$  to the natural $\Gal(L/F)$-action on $\cO_L^\times / P^1_L$. Hence, the isomorphism in part (a) is equivariant with respect to the $D^\times/F^\times \cO_D^\times$-action on $\Hom(\cO_D^\times,K^\times)$ and the $\Gal(L/F)$-action on $\Hom(\cO_L^\times/P^1_L,K^\times)$, when we identify $\Gal(L/F)$ with $D^\times/F^\times \cO_D^\times$ via $\sigma \mapsto \Pi$.

(c) Let $\iota' : L \hookrightarrow D$ be another $F$-algebra homomorphism. Then by Corollary \ref{CorToSKN} we can find $d \in D^\times$ such that $\iota'(x) = d\hsp \iota(x)\hsp d^{-1}$ for all $x \in L$. 

Let $c_d : \cO_D^\times \to \cO_D^\times$ and $\overline{c_d} : (\cO_D^\times)^{\ab} \to (\cO_D^\times)^{\ab}$ denote the conjugations by $d$, so that $\iota' = c_d \circ \iota$ and $\overline{c_d} \circ \varrho = \varrho \circ c_d$. Then $\overline{c_d} \circ \varrho \circ \iota = \varrho \circ c_d \circ \iota = \varrho \circ \iota'$, so $\overline{c_d} \circ \overline{\varrho \circ \iota} = \overline{\varrho \circ \iota'}$. Hence $\overline{\varrho \circ \iota'}^\ast = \overline{\varrho \circ \iota}^\ast \circ \overline{c_d}^\ast$ and we can now see that $\overline{\overline{\varrho \circ \iota'}^\ast} = \overline{\overline{\varrho \circ \iota}^\ast}$.
 \end{proof}

\subsection{Equivariant sheaves and amalgamated products}\label{EqShAmPr}
We begin by recalling some material from \cite[\S 2.3]{EqDCap}. Let $X$ be a set equipped with a Grothendieck topology in the sense of \cite[Definition 9.1.1/1]{BGR}. Note that we do not assume at the outset that there is a final object in the category of admissible open subsets of $X$, as $X$ is not itself required to be admissible open in the $G$-topology. 

Let $\Homeo(X)$ be the group of continuous bijections from $X$ to itself. We say that a group $G$ \emph{acts on $X$} if there is given a group homomorphism $\rho : G \to \Homeo(X)$. If this action is understood, we write $gU$ to denote the image of an admissible open subset $U$ of $X$ under the action of $g \in G$. For every $g \in G$, there is an auto-equivalence $\rho(g)_\ast$ of the category of sheaves on $X$, with inverse $\rho(g)^\ast = \rho(g^{-1})_\ast$. To simplify the notation, we will simply denote these auto-equivalences by $g_\ast$ and $g^\ast$, respectively. Thus
\[ (g_\ast \cF)(U) = \cF(g^{-1}U)\qmb{and} (g^\ast \cF)(U) = \cF(gU)\]
for all admissible open subsets $U$ of $X$ and all $g\in G$. 

\begin{defn}\label{DefnEquivSheaf} Let $G$ act on $X$, and let $\cF$ be a presheaf of $R$-modules on $X$, where $R$ is a commutative base ring.
\be \item  An \emph{$R$-linear equivariant structure} on $\cF$ is a set $\{g^{\cF}: g \in G\}$, where 
\[ g^{\cF} : \cF \to g^\ast \cF\]
is a morphism of presheaves of $R$-modules for each $g \in G$,  such that 
\begin{equation}\label{Cocycl} (gh)^{\cF} = h^\ast(g^\cF) \circ h^\cF \qmb{for all} g,h \in G,\qmb{and} 1^{\cF} = 1_{\cF}.\end{equation}
\item An \emph{$R$-linear $G$-equivariant presheaf} is a pair $(\cF, \{g^{\cF}\}_{g \in G})$, where $\cF$ is a presheaf of $R$-modules on $X$, and  $\{g^{\cF}\}_{g \in G}$ is an $R$-linear equivariant structure on $\cF$. 
\item A \emph{morphism} of $R$-linear $G$-equivariant presheaves 
\[\varphi : (\cF, \{g^{\cF}\}) \to (\cF', \{g^{\cF'}\})\] 
is a morphism of presheaves of $R$-modules $\varphi : \cF \to \cF'$ such that 
\[ g^\ast(\varphi) \circ g^{\cF} = g^{\cF'} \circ \varphi \qmb{for all} g \in G.\]
\ee\end{defn}
We will frequently use this abuse of notation, and simply write $\varphi(x)$ to mean $\varphi(U)(x)$ if $x$ is a section of $\cF$ over the admissible open subset $U$ of $X$. Note that with this abuse of notation, the cocycle condition $(\ref{Cocycl})$ becomes simply
\begin{equation}\label{EasyCocyc} g^{\cF}(h^{\cF}(x)) = (gh)^{\cF}(x) \qmb{for all} x \in \cF, g,h \in G.\end{equation}
When the base ring $R$ and the $R$-linear equivariant structure on a sheaf $\cF$ of $R$-modules is understood, we will simply say that $\cF$ is a \emph{$G$-equivariant sheaf}, and omit the equivariant structure from the notation. 
\begin{defn}\label{AutFX}  Let $\cF$ be a presheaf of $R$-modules on $X$. 
\be \item An \emph{automorphism of $\cF$ over $X$} is a pair $(\alpha, \beta)$, where $\alpha \in \Homeo(X)$ and $\beta : \cF \to \alpha^\ast \cF$ is an $R$-linear isomorphism of presheaves on $X$. 
\item Define $\Aut(\cF/X)$ to be the set of all automorphisms of $\cF$ over $X$.
\item Given $(\alpha_1,\beta_1), (\alpha_2,\beta_2) \in \Aut(\cF/X)$, define 
\[ (\alpha_1,\beta_1) \square (\alpha_2,\beta_2) := (\alpha_1\alpha_2, \alpha_2^\ast(\beta_1) \circ \beta_2).\]
This is again an element of $\Aut(\cF/X)$. 
\ee
\end{defn}
\begin{lem} Let $\cF$ be a presheaf of $R$-modules on $X$. Then the binary operation $\square$ turns $\Aut(\cF/X)$ into a group.
\end{lem}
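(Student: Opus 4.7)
The plan is to verify the four group axioms in turn, with closure of $\square$ already noted in Definition \ref{AutFX}. Throughout, I will rely on the basic identities $1_X^\ast = \id$, $(\alpha\beta)^\ast = \beta^\ast \alpha^\ast$, and $\gamma^\ast(f \circ g) = \gamma^\ast(f) \circ \gamma^\ast(g)$, $\gamma^\ast(1_{\cF}) = 1_{\gamma^\ast \cF}$ for $\alpha, \beta, \gamma \in \Homeo(X)$ and composable morphisms $f, g$ of presheaves on $X$; these all follow directly from the definitions $(\gamma_\ast \cF)(U) = \cF(\gamma^{-1}U)$ and $\gamma^\ast = (\gamma^{-1})_\ast$.

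First, to confirm well-definedness of $\square$, observe that for $(\alpha_i, \beta_i) \in \Aut(\cF/X)$ we have $\beta_2 \colon \cF \to \alpha_2^\ast \cF$ and $\alpha_2^\ast(\beta_1) \colon \alpha_2^\ast \cF \to \alpha_2^\ast \alpha_1^\ast \cF = (\alpha_1 \alpha_2)^\ast \cF$, so the composite $\alpha_2^\ast(\beta_1) \circ \beta_2$ is an $R$-linear isomorphism $\cF \to (\alpha_1 \alpha_2)^\ast \cF$, and $\alpha_1 \alpha_2 \in \Homeo(X)$.

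Next, for associativity I would expand both sides of $((\alpha_1, \beta_1) \square (\alpha_2, \beta_2)) \square (\alpha_3, \beta_3)$ and $(\alpha_1, \beta_1) \square ((\alpha_2, \beta_2) \square (\alpha_3, \beta_3))$ and check they agree. Using the identities above, both reduce to
\[ \bigl(\alpha_1 \alpha_2 \alpha_3,\ (\alpha_2\alpha_3)^\ast(\beta_1) \circ \alpha_3^\ast(\beta_2) \circ \beta_3 \bigr), \]
so associativity is automatic. For the identity, I claim $(1_X, 1_{\cF})$ is a two-sided identity: computing $(1_X, 1_{\cF}) \square (\alpha, \beta) = (\alpha, \alpha^\ast(1_{\cF}) \circ \beta) = (\alpha, \beta)$ and $(\alpha, \beta) \square (1_X, 1_{\cF}) = (\alpha, 1_X^\ast(\beta) \circ 1_{\cF}) = (\alpha, \beta)$ uses only $\alpha^\ast(1_{\cF}) = 1_{\alpha^\ast \cF}$ and $1_X^\ast = \id$.

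Finally, for inverses, given $(\alpha, \beta) \in \Aut(\cF/X)$, I propose the candidate $(\alpha^{-1}, \gamma)$ with $\gamma := ((\alpha^{-1})^\ast \beta)^{-1}$, noting that $(\alpha^{-1})^\ast \beta \colon (\alpha^{-1})^\ast \cF \to (\alpha^{-1})^\ast \alpha^\ast \cF = \cF$ is an isomorphism, so $\gamma \colon \cF \to (\alpha^{-1})^\ast \cF$ makes sense. Then $(\alpha, \beta) \square (\alpha^{-1}, \gamma) = (1_X, (\alpha^{-1})^\ast(\beta) \circ \gamma) = (1_X, 1_{\cF})$ by construction, and for the other side, $(\alpha^{-1}, \gamma) \square (\alpha, \beta) = (1_X, \alpha^\ast(\gamma) \circ \beta)$, so I must verify $\alpha^\ast(\gamma) \circ \beta = 1_{\cF}$. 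Applying $\alpha^\ast$ to the identity $(\alpha^{-1})^\ast(\beta) \circ \gamma = 1_{(\alpha^{-1})^\ast \cF}$ yields $\beta \circ \alpha^\ast(\gamma) = 1_{\cF}$, so $\alpha^\ast(\gamma)$ is a two-sided inverse of $\beta$, hence equals $\beta^{-1}$, giving $\alpha^\ast(\gamma) \circ \beta = \beta^{-1} \circ \beta = 1_{\cF}$ as required.

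None of the steps presents a real obstacle; the only point requiring some care is the categorical bookkeeping for inverses, specifically recognising that $\alpha^\ast(\gamma)$ must be identified with $\beta^{-1}$ via applying the functor $\alpha^\ast$ to the equation defining $\gamma$.
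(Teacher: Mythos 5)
Your proof is correct and follows essentially the same route as the paper: you verify the same associativity identity, identify the same unit $(1_X,1_{\cF})$, and since $(\alpha^{-1})^\ast = \alpha_\ast$ your inverse $(\alpha^{-1}, ((\alpha^{-1})^\ast\beta)^{-1})$ coincides with the paper's $(\alpha^{-1}, \alpha_\ast(\beta)^{-1})$. (There are two harmless labelling slips in the inverse computation: the identity $(\alpha^{-1})^\ast(\beta)\circ\gamma = 1$ is the identity of $\cF$, not of $(\alpha^{-1})^\ast\cF$, and applying $\alpha^\ast$ to it gives $\beta\circ\alpha^\ast(\gamma) = 1_{\alpha^\ast\cF}$ rather than $1_{\cF}$; neither affects the conclusion that $\alpha^\ast(\gamma) = \beta^{-1}$.)
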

\begin{proof} The identity element is $(1_X,1_{\cF})$. Let $(\alpha_i,\beta_i)$, $i = 1,2,3$ be three elements of $\Aut(\cF/X)$. Checking that the operation $\square$ is associative boils down to the formula
\[ \alpha_3^\ast(\alpha_2^\ast(\beta_1)\circ \beta_2) \circ \beta_3 = (\alpha_2\alpha_3)^\ast(\beta_1) \circ \alpha_3^\ast(\beta_2) \circ \beta_3,\]
which is readily verified. The inverse of $(\alpha,\beta) \in \Aut(\cF/X)$ is $(\alpha^{-1}, \alpha_\ast(\beta)^{-1})$.
\end{proof}

By Definition \ref{AutFX}(c), the first projection map $\pr_1 : \Aut(\cF/X) \to \Homeo(X)$ is a group homomorphism.
\begin{defn} Let $G$ be a group acting on $X$ via $\rho : G \to \Homeo(X)$, and let $\cF$ be a presheaf of $R$-modules on $X$. Form the fibre product 
\[\Aut(\cF/X/G) :=  G \quad \underset{\Homeo(X)}{\times}{} \quad \Aut(\cF/X)\]
with respect to the group homomorphisms 
\[\rho : G \to \Homeo(X) \qmb{and} \pr_1 : \Aut(\cF/X) \to \Homeo(X).\]
\end{defn}
By definition, the elements of $\Aut(\cF/X/G)$ have the form $(g, (\rho(g), \beta))$ for some $g \in G$ and some $R$-linear isomorphism $\beta : \cF \stackrel{\cong}{\longrightarrow} g^\ast \cF$; evidently, such an element is completely determined by the pair $(g, \beta)$. In order to simplify the notation, we will abuse notation and write
\[ \Aut(\cF/X/G) = \left\{ (g, \beta) : g \in G, \quad \beta : \cF \stackrel{\cong}{\longrightarrow} g^\ast \cF \mbox{ } R-\mbox{linear} \right\},\]
where the product is given by the formula
\begin{equation}\label{GpLawAutFXG}(g_1, \beta_1) \square (g_2,\beta_2) = (g_1g_2, g_2^\ast(\beta_1) \circ \beta_2).\end{equation}

\begin{defn} Let $G$ be a group acting on $X$ via $\rho : G \to \Homeo(X)$, and let $\cF$ be a presheaf of $R$-modules on $X$. Define
\[ \cS(G,\cF) := \left\{ \sigma \in \Hom(G, \Aut(\cF/X/G)) : \pr_1 \circ \sigma = 1_G\right\}\]
to be the set of sections of the first projection $\pr_1 : \Aut(\cF/X/G) \to G$.
\end{defn}
We make these definitions in order to formulate the following

\begin{lem}\label{RGstrAsSections} Let $G$ be a group acting on $X$ via $\rho : G \to \Homeo(X)$, and let $\cF$ be a presheaf of $R$-modules on $X$. Then the rule
\[ \{ g^{\cF} \}_{g \in G} \quad \mapsto \quad \left[ g \mapsto (g, g^{\cF}) \in \Aut(\cF/X/G) \right]\]
defines a bijection between the set all $R$-linear $G$-equivariant structures on $\cF$ and $\cS(G,\cF)$.
\end{lem}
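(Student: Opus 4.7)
The plan is to show that both directions of the claimed bijection follow by directly unpacking the definitions, with the cocycle condition on equivariant structures matching exactly the multiplicativity of sections under the group law $\square$ on $\Aut(\cF/X/G)$.

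First I would verify that the given rule is well-defined. Suppose $\{g^\cF\}_{g \in G}$ is an $R$-linear $G$-equivariant structure on $\cF$, and define $\sigma(g) := (g, g^\cF)$ for each $g \in G$. By definition each $g^\cF : \cF \to g^\ast\cF$ is an $R$-linear morphism of presheaves, and applying the cocycle identity to the pair $(g, g^{-1})$ and then to $(g^{-1}, g)$ shows that $g^\cF$ is an isomorphism with inverse $(g^{-1})^\ast(g^{-1})^\cF$ (using $1^\cF = 1_\cF$). Hence $\sigma(g) \in \Aut(\cF/X/G)$ and clearly $\pr_1 \circ \sigma = 1_G$. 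For multiplicativity, using the group law $(\ref{GpLawAutFXG})$ we compute
\[ \sigma(g) \square \sigma(h) = (g, g^\cF) \square (h, h^\cF) = (gh,\ h^\ast(g^\cF) \circ h^\cF), \]
while $\sigma(gh) = (gh, (gh)^\cF)$. These agree precisely by the cocycle condition $(\ref{Cocycl})$. Moreover $\sigma(1) = (1, 1_\cF)$ is the identity of $\Aut(\cF/X/G)$ by $1^\cF = 1_\cF$. Hence $\sigma \in \cS(G,\cF)$.

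Conversely, given any $\sigma \in \cS(G,\cF)$, the condition $\pr_1 \circ \sigma = 1_G$ means that $\sigma(g)$ has the form $(g, g^\cF)$ for a unique $R$-linear isomorphism $g^\cF : \cF \stackrel{\cong}{\to} g^\ast\cF$. I would then reverse the calculation above: the identity $\sigma(gh) = \sigma(g) \square \sigma(h)$ in $\Aut(\cF/X/G)$ becomes precisely $(gh)^\cF = h^\ast(g^\cF) \circ h^\cF$, and $\sigma(1) = (1, 1_\cF)$ yields $1^\cF = 1_\cF$. Thus $\{g^\cF\}_{g \in G}$ is an $R$-linear $G$-equivariant structure on $\cF$.

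Finally, these two assignments are mutually inverse essentially by inspection, since each extracts the second coordinate of the pair $(g, g^\cF)$ and either repackages or unpacks it. There is no real obstacle here; the only thing to watch carefully is the bookkeeping around the order of composition and pullback in the definition $(\ref{GpLawAutFXG})$ of $\square$, which is engineered exactly so that the multiplicativity of $\sigma$ translates into the cocycle condition $(\ref{Cocycl})$ with no sign or order mismatch.
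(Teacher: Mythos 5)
Your argument is correct and follows essentially the same route as the paper: unpack the group law $\square$ on $\Aut(\cF/X/G)$, match the multiplicativity of $\sigma$ with the cocycle condition $(\ref{Cocycl})$, and reverse the computation for the inverse direction. One small bookkeeping slip: you claim the inverse of $g^\cF$ is $(g^{-1})^\ast(g^{-1})^\cF$, but this has the wrong source and target; the correct inverse is $g^\ast\bigl((g^{-1})^\cF\bigr) : g^\ast\cF \to \cF$, obtained by applying the cocycle identity to the pair $(g^{-1},g)$ (and one also applies it to $(g,g^{-1})$ and hits the result with $g^\ast$ to get a right inverse). This observation — that each $g^\cF$ is automatically an isomorphism — is a detail the paper leaves implicit, and it is worth including since Definition \ref{DefnEquivSheaf}(a) only asks for a morphism while $\Aut(\cF/X/G)$ requires isomorphisms.
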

\begin{proof} Let $\{ g^{\cF} \}_{g \in G}$ be an $R$-linear $G$-equivariant structure on $X$. Define the map $\sigma : G \to \Aut(\cF/X/G)$ by setting $\sigma(g) = (g, g^{\cF})$ for all $g \in G$. Using the cocycle condition (\ref{Cocycl}), we compute that for all $g, h \in G$ we have
\[ \sigma(gh) = (gh, h^\ast(g^{\cF}) \circ h^{\cF}) = (g, g^{\cF}) \square (h, h^{\cF}) = \sigma(g) \square \sigma(h).\]
Since $\sigma(1) = (1, 1^{\cF}) = (1, 1_{\cF})$, we see that $\sigma$ is a group homomorphism such that $\pr_1 \circ \sigma = 1_G$, that is, $\sigma \in \cS(G,\cF)$. 

Conversely, for each $\sigma \in \cS(G,\cF)$, the set $\{ \pr_2(\sigma(g)) \}_{g \in G}$ forms an $R$-linear $G$-equivariant structure on $\cF$ by reversing the above argument.
\end{proof}

Next, we recall the following definitions from \cite{EqDCap}:

\begin{defn} Let $G$ act on $X$, and let $\cA$ be a sheaf of $R$-algebras on $X$. 
\be \item
We say that $\cA$ is a \emph{$G$-equivariant sheaf of $R$-algebras} if there is given an $R$-linear $G$-equivariant structure $\{g^{\cA} : g \in G\}$ such that each $g^{\cA} : \cA \to g^\ast \cA$ is a morphism of sheaves of $R$-algebras.
\item Let $\cA$ be a $G$-equivariant sheaf of $R$-algebras on $X$. A \emph{$G$-$\cA$-module} is an $R$-linear $G$-equivariant sheaf $\cM$ on $X$, such that $\cM$ is a sheaf of left $\cA$-modules and $g^{\cM}(a \cdot m) = g^{\cA}(a) \cdot g^{\cM}(m)$ for all $g \in G$, $a \in \cA, m \in \cM$. 
\ee
\end{defn}
We want to study all possible $G$-$\cA$-module structures on a given $\cA$-module $\cM$.

\begin{defn}\label{AutAMXG} Let $G$ act on $X$, let $\cA$ be a $G$-equivariant sheaf of $R$-algebras on $X$ and let $\cM$ be an $\cA$-module. We define
\[ \Aut_{\cA}(\cM/X/G) := \left\{ \begin{array}{rcl} (g, \beta) &\in& \Aut(\cM/X/G) : \\ \beta(a \cdot m) &=& (g \cdot a) \cdot \beta(m) \qmb{for all} a \in \cA, m \in \cM\end{array}\right\}.\]
\end{defn}

\begin{lem} With the hypotheses of Definition \ref{AutAMXG}, $\Aut_{\cA}(\cM/X/G)$ is a subgroup of $\Aut(\cM/X/G)$. \end{lem}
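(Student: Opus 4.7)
The plan is the standard three-step subgroup check: verify that $(1, 1_{\cM})$ lies in $\Aut_{\cA}(\cM/X/G)$, that the operation $\square$ preserves the $\cA$-equivariance condition, and that the inverse (as computed at the end of Definition \ref{AutFX}) preserves it as well.

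The identity is immediate: for $a \in \cA(U), m \in \cM(U)$ we have $1_{\cM}(a \cdot m) = a \cdot m = (1 \cdot a) \cdot 1_{\cM}(m)$, since the equivariant structure $\{g^\cA\}$ on $\cA$ satisfies $1^\cA = 1_\cA$. For closure under $\square$, suppose $(g_1,\beta_1)$ and $(g_2,\beta_2)$ both satisfy the condition in Definition \ref{AutAMXG}. Their product in $\Aut(\cM/X/G)$ is $(g_1g_2,\, g_2^\ast(\beta_1) \circ \beta_2)$ by (\ref{GpLawAutFXG}). For $a \in \cA(U), m \in \cM(U)$ we compute
\[ (g_2^\ast(\beta_1) \circ \beta_2)(a \cdot m) = g_2^\ast(\beta_1)\bigl((g_2 \cdot a) \cdot \beta_2(m)\bigr) = (g_1 \cdot (g_2 \cdot a)) \cdot (g_2^\ast(\beta_1) \circ \beta_2)(m),\]
using the $\cA$-equivariance of $\beta_2$ on the first step and that of $\beta_1$ (applied over $g_2 U$) on the second. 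The cocycle condition (\ref{Cocycl}) applied to the equivariant structure on $\cA$ yields $g_1 \cdot (g_2 \cdot a) = (g_1g_2) \cdot a$, so the product lies in $\Aut_{\cA}(\cM/X/G)$.

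For inverses, given $(g,\beta) \in \Aut_{\cA}(\cM/X/G)$, its inverse in $\Aut(\cM/X/G)$ has the form $(g^{-1}, \beta')$ where $\beta' = ((g^{-1})^\ast \beta)^{-1}$. To check that $\beta'(a \cdot m) = (g^{-1} \cdot a) \cdot \beta'(m)$, it suffices to apply the isomorphism $(g^{-1})^\ast \beta$ to both sides and verify the resulting equality in $\cM$. Using the $\cA$-equivariance of $\beta$ over $g^{-1}U$, together with the identity $g \cdot (g^{-1} \cdot a) = a$ (again a consequence of the cocycle condition for $\cA$), both sides reduce to $a \cdot m$.

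There is no substantive obstacle here; the one point demanding care is the bookkeeping of the admissible open subsets over which each section lives and the implicit use of the abuse of notation following Definition \ref{DefnEquivSheaf}, so that the expressions $g^\ast(\beta_1) \circ \beta_2$ and $(g^{-1})^\ast \beta$ are interpreted correctly as morphisms of presheaves on $X$. Once this is fixed, the verification is entirely mechanical and follows directly from the cocycle property of $\{g^\cA\}_{g\in G}$.
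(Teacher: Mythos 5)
Your proof is correct and matches the paper's argument in all three steps of the subgroup test: the identity check, the closure computation using the cocycle property of $\{g^{\cA}\}$, and the inverse check, which the paper phrases by substituting $b = g^{-1}\cdot a$, $w = \beta^{-1}(v)$ rather than applying $(g^{-1})^\ast\beta$ to both sides, but these are the same rearrangement. Your observation that $\beta' = ((g^{-1})^\ast\beta)^{-1}$ agrees with the paper's $g_\ast(\beta^{-1})$ since $(g^{-1})^\ast = g_\ast$ and pushforward commutes with inversion.
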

\begin{proof} It is clear that $(1, 1_{\cF})$ lies in $\Aut_{\cA}(\cM/X/G)$. Let $(g_1,\beta_1)$ and $(g_2,\beta_2)$ be two elements of $\Aut_{\cA}(\cM/X/G)$, so that 
\begin{equation}\label{BetaiAM}\beta_i(a \cdot m) = (g_i \cdot a) \cdot \beta_i(m) \qmb{for all} a \in \cA, m \in \cM, i=1,2.\end{equation} 
Let $(g_3,\beta_3) := (g_1,\beta) \square (g_2,\beta_2)$ so that $g_3 = g_1g_2$ and $\beta_3 = g_2^\ast(\beta_1) \circ \beta_2$. On local sections, $\beta_3$ is simply the composition $\beta_1\beta_2$. For any $a \in \cA$ and $m \in \cM$, we use the fact that $\cA$ is a $G$-equivariant sheaf together with $(\ref{BetaiAM})$ to compute
\[ \begin{array}{lll} \beta_3(a \cdot m) &=& \beta_1(\beta_2(a \cdot m)) = \beta_1( (g_2\cdot a) \cdot \beta_2(m)) = (g_1\cdot (g_2\cdot a)) \cdot \beta_1(\beta_2(m)) \\
 &=& ((g_1g_2) \cdot a) \cdot (\beta_1\beta_2(m)) = (g_3 \cdot a)\cdot \beta_3(m). \end{array}\]
So, $(g_3,\beta_3) \in \Aut_{\cA}(\cM/X/G)$ and $\Aut_{\cA}(\cM/X/G)$ is closed under composition.

To show that $\Aut_{\cA}(\cM/X/G)$ is stable under inversion in $\Aut(\cM/X/G)$, let $(g,\beta) \in \Aut_{\cA}(\cM/X/G)$. Then for $b := g^{-1} \cdot a \in \cA$ and $w := \beta^{-1}(v) \in \cM$ we have $\beta( b \cdot w) = (g \cdot b) \cdot \beta(w) = a \cdot v.$ Applying $\beta^{-1}$ to this equation gives $\beta^{-1}(a\cdot v) = b \cdot w = (g^{-1} \cdot a) \cdot \beta^{-1}(v)$, so $(g,\beta)^{-1} = (g^{-1}, g_\ast(\beta^{-1})) \in \Aut_{\cA}(\cM/X/G)$.
\end{proof}

\begin{defn} With the hypotheses of Definition \ref{AutAMXG}, define
\[ \cS_{\cA}(G,\cM) := \left\{ \sigma \in \Hom(G, \Aut_{\cA}(\cM/X/G)) : \pr_1 \circ \sigma = 1_G\right\}\]
to be the set of sections of the first projection $\pr_1 : \Aut_{\cA}(\cM/X/G) \to G$.
\end{defn}
We can now give the generalisation of Lemma \ref{RGstrAsSections} to the case of $\cA$-modules: the proof is completely straightforward and is therefore omitted.

\begin{prop}\label{GAmodStrAsSections} Assume the hypotheses of Definition \ref{AutAMXG}. Then
\[ \{ g^{\cM} \}_{g \in G} \quad \mapsto \quad \left[ g \mapsto (g, g^{\cM}) \in \Aut_{\cA}(\cM/X/G)) \right]\]
defines a bijection between the set all $G$-$\cA$-module structures on $\cM$  extending the given $\cA$-module structure on $\cM$, and $\cS_{\cA}(G,\cM)$.
\end{prop}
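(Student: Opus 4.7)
The plan is to bootstrap directly off Lemma \ref{RGstrAsSections}. That lemma already provides a bijection $\Phi : \{g^{\cM}\}_{g \in G} \mapsto [g \mapsto (g, g^{\cM})]$ between $R$-linear $G$-equivariant structures on $\cM$ and $\cS(G, \cM)$, so I only need to show that $\Phi$ restricts to a bijection between the two subsets of interest: on the source side, those equivariant structures which are moreover $G$-$\cA$-module structures extending the given $\cA$-action; on the target side, $\cS_{\cA}(G, \cM)$.

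First I would unwind the notation and observe that for any $R$-linear morphism $g^{\cM} : \cM \to g^\ast \cM$, the condition
\[ g^{\cM}(a \cdot m) = g^{\cA}(a) \cdot g^{\cM}(m) \qmb{for all} a \in \cA, m \in \cM \]
that characterises a $G$-$\cA$-module structure is literally the condition appearing in Definition \ref{AutAMXG} that $(g, g^{\cM})$ lie in $\Aut_{\cA}(\cM/X/G)$, because the action $g \cdot a$ appearing there is by convention $g^{\cA}(a)$, supplied by the fixed $G$-equivariant $R$-algebra structure on $\cA$.

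Given this observation, the forward direction is immediate: if $\{g^{\cM}\}$ is a $G$-$\cA$-module structure, then each $(g, g^{\cM})$ already lies in $\Aut_{\cA}(\cM/X/G)$, so the homomorphism $\Phi(\{g^{\cM}\}) : G \to \Aut(\cM/X/G)$ from Lemma \ref{RGstrAsSections} factors through the subgroup $\Aut_{\cA}(\cM/X/G)$ and lands in $\cS_{\cA}(G, \cM)$. Conversely, given $\sigma \in \cS_{\cA}(G, \cM)$, Lemma \ref{RGstrAsSections} recovers an $R$-linear equivariant structure $\{g^{\cM} := \pr_2(\sigma(g))\}$, and the fact that each $\sigma(g)$ lies in $\Aut_{\cA}(\cM/X/G)$ is precisely the $\cA$-semi-linearity making this structure a $G$-$\cA$-module structure. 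Since $\Phi$ and its inverse from Lemma \ref{RGstrAsSections} carry these matched subsets onto each other, the bijection restricts as claimed.

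There is no real obstacle here: the argument is pure bookkeeping, made automatic once one notices that the subgroup $\Aut_{\cA}(\cM/X/G)$ has been designed precisely to cut out exactly those equivariant data that are additionally $\cA$-semi-linear. The only care required is to keep the two conventions for the symbol $g \cdot (-)$ straight — namely, that on sections of $\cA$ it denotes $g^{\cA}$ — after which the verification of the group-theoretic compatibility (already done in the proof of Lemma \ref{RGstrAsSections} via the composition law $(\ref{GpLawAutFXG})$) carries over verbatim.
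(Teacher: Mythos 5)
Your proof is correct and is precisely the "completely straightforward" argument the paper has in mind; the authors in fact omit the proof for this proposition, remarking only that it is the evident generalisation of Lemma \ref{RGstrAsSections} to the $\cA$-module setting. Your key observation — that the semi-linearity condition defining a $G$-$\cA$-module structure is word-for-word the membership condition for $\Aut_{\cA}(\cM/X/G)$ inside $\Aut(\cM/X/G)$, so the bijection of Lemma \ref{RGstrAsSections} restricts between the matched subsets — is exactly what makes the omission justified.
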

Next, we briefly study the functorialities of $\Aut_{\cA}(\cM/X/G)$ and $\cS_{\cA}(G,\cM)$. 

\begin{lem}\label{AutAMXGFunc} Assume the hypotheses of Definition \ref{AutAMXG}, and let $H$ be a subgroup of $G$. 
\be \item $\Aut_{\cA}(\cM/X/H)$ is a subgroup of $\Aut_{\cA}(\cM/X/G)$.
\item Restriction of functions induces a map $\Res^G_H : \cS_{\cA}(G, \cM) \to \cS_{\cA}(H,\cM)$.
\item For any subgroup $J$ of $H$, we have  $\Res^G_J = \Res^H_J \circ \Res^G_H$. 
\ee
\end{lem}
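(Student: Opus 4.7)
This is a routine functoriality check; no serious obstacle arises. The plan is to unwind the definitions and observe that each statement follows from the fact that the group law $(\ref{GpLawAutFXG})$ on $\Aut_{\cA}(\cM/X/G)$ only depends on the action of the first coordinate $g$ on sheaves, not on $G$ itself.

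For part (a), let $(h,\beta) \in \Aut_{\cA}(\cM/X/H)$. Since $H \leq G$, we have $h \in G$, and $\beta : \cM \stackrel{\cong}{\longrightarrow} h^\ast \cM$ is an $R$-linear isomorphism satisfying the twisted $\cA$-linearity condition in Definition \ref{AutAMXG}; this condition does not depend on whether we regard $h$ as an element of $H$ or of $G$. Hence $(h,\beta) \in \Aut_{\cA}(\cM/X/G)$. The group law in both groups is given by the same formula $(\ref{GpLawAutFXG})$, so the inclusion is a group homomorphism; since the identity $(1,1_{\cM})$ lies in $\Aut_{\cA}(\cM/X/H)$, it is indeed a subgroup.

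For part (b), let $\sigma \in \cS_{\cA}(G,\cM)$, and define $\Res^G_H(\sigma) := \sigma_{|H}$. This is a group homomorphism $H \to \Aut_{\cA}(\cM/X/G)$ satisfying $\pr_1 \circ \sigma_{|H} = 1_H$. Because $\pr_1 \circ \sigma_{|H} = 1_H$, every element in the image of $\sigma_{|H}$ has first coordinate in $H$, so by part (a) the image is contained in the subgroup $\Aut_{\cA}(\cM/X/H)$. Thus $\sigma_{|H} \in \cS_{\cA}(H,\cM)$, as required.

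For part (c), the identity $\Res^G_J = \Res^H_J \circ \Res^G_H$ is immediate from the fact that restricting a function from $G$ to $J$ may be performed either directly or via first restricting to $H$ and then to $J$.
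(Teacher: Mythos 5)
Your proof is correct and follows essentially the same route as the paper's: unwind the definitions for part (a), note that restriction preserves the section condition for part (b), and observe associativity of restriction for part (c). You are slightly more explicit than the paper about why the inclusion in (a) is compatible with the group law, but the substance is the same.
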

\begin{proof} (a) An element of $\Aut_{\cA}(\cM/X/H)$ is a pair $(h, \beta)$ where $h\in H$ and $\beta : \cM \to h^\ast \cM$ is an $R$-linear isomorphism of sheaves such that $\beta(a \cdot m) = (h \cdot a) \cdot \beta(m)$ for all $a \in \cA$ and $m \in \cM$. Evidently such a pair is also an element of $\Aut_{\cA}(\cM/X/G)$. 

(b) Given a group homomorphism $\sigma : G \to \Aut_{\cA}(\cM/X/G)$ such that $\pr_1 \circ \sigma = 1_G$, the restriction $\sigma|_H : H \to \Aut_{\cA}(\cM/X/G)$ takes values in $\Aut_{\cA}(\cM/X/H)$. It is still a group homomorphism, and $\pr_1 \circ  \sigma|_H = (\pr_1 \circ \sigma)|_H = (1_G)_H = 1_H$. Hence $\sigma \mapsto \sigma|_H$ defines the required function $\Res^G_H : \cS_{\cA}(G, \cM) \to \cS_{\cA}(H,\cM)$.

(c) This is clear from the definitions.
\end{proof}

We now come to the application of the above formalism. Suppose that the group $G$ is equal to an amalgamated product
\[ G = A \underset{C}{\ast} B\]
of its subgroups $A$ and $B$, along their common subgroup $C$. Using Lemma \ref{AutAMXGFunc}, we see that sending $\sigma \in \cS_{\cA}(G,\cM)$ to the pair $(\Res^G_A(\sigma), \Res^G_B(\sigma))$ defines a function
\begin{equation}\label{SGM} \cS_{\cA}(G,\cM)  \longrightarrow \cS_{\cA}(A,\cM) \underset{\cS_{\cA}(C,\cM)}{\times}{} \cS_{\cA}(B,\cM). \end{equation}

\begin{thm}\label{amalgequiv}Let $G$ be a group acting on $X$, let $\cA$ be a $G$-equivariant sheaf of $R$-algebras on $X$ and let $\cM$ be an $\cA$-module. Suppose further that $G$ is equal to the amalgamated product $G = A \underset{C}{\ast} B$ of its subgroups $A$ and $B$ along their common subgroup $C$. Then the map $(\ref{SGM})$ is a bijection.
\end{thm}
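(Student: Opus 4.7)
The proof is a formal consequence of the universal property of the amalgamated product $G = A \underset{C}{\ast} B$, applied to group homomorphisms landing in the (large) group $\Aut_{\cA}(\cM/X/G)$. The plan is to unpack what the two sides of the map in $(\ref{SGM})$ are, and then appeal to the universal property twice: once to construct a section $\sigma \in \cS_\cA(G,\cM)$ from a compatible pair, and once to verify that this section genuinely takes values in the subset $\cS_\cA(G,\cM) \subset \Hom(G, \Aut_{\cA}(\cM/X/G))$ cut out by the condition $\pr_1 \circ \sigma = 1_G$.

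In more detail, I would proceed as follows. First, I would record the inclusions $\Aut_{\cA}(\cM/X/A) \hookrightarrow \Aut_{\cA}(\cM/X/G)$ and $\Aut_{\cA}(\cM/X/B) \hookrightarrow \Aut_{\cA}(\cM/X/G)$ furnished by Lemma \ref{AutAMXGFunc}(a), so that any $\sigma_A \in \cS_{\cA}(A,\cM)$ and $\sigma_B \in \cS_{\cA}(B,\cM)$ can be viewed as group homomorphisms from $A$ and $B$ into the common target $\Aut_{\cA}(\cM/X/G)$. If $(\sigma_A, \sigma_B)$ lies in the fibre product, meaning $\Res^A_C(\sigma_A) = \Res^B_C(\sigma_B)$, then these two homomorphisms agree on the common subgroup $C$. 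The universal property of $G = A \underset{C}{\ast} B$ then produces a unique group homomorphism $\sigma : G \to \Aut_{\cA}(\cM/X/G)$ whose restrictions to $A$ and $B$ are $\sigma_A$ and $\sigma_B$ respectively.

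Next, I would verify that $\sigma$ lies in $\cS_\cA(G,\cM)$, that is, that $\pr_1 \circ \sigma = 1_G$. Both $\pr_1 \circ \sigma$ and $1_G$ are group homomorphisms $G \to G$; on $A$ they agree because $\pr_1 \circ \sigma_A = 1_A$, and on $B$ they agree because $\pr_1 \circ \sigma_B = 1_B$. Applying the uniqueness part of the universal property of $G = A \underset{C}{\ast} B$ once more then forces $\pr_1 \circ \sigma = 1_G$. This constructs an inverse to the map $(\ref{SGM})$ on the level of elements. For injectivity of $(\ref{SGM})$, any $\sigma, \sigma' \in \cS_\cA(G,\cM)$ with equal restrictions to $A$ and to $B$ must coincide on all of $G$ by the same universal property, so the constructed inverse really is two-sided.

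There is no real technical obstacle; the only subtle point is notational, namely making sure that the subgroup inclusions of Lemma \ref{AutAMXGFunc}(a) are in place so that one can speak of $\sigma_A$ and $\sigma_B$ as homomorphisms into a \emph{single} group $\Aut_{\cA}(\cM/X/G)$ before invoking the universal property. Once this is set up, the verification that the amalgamation $\sigma$ respects the $\cA$-linearity condition in Definition \ref{AutAMXG} is automatic, because that condition is already built into the codomain $\Aut_{\cA}(\cM/X/G)$ in which $\sigma$ takes its values.
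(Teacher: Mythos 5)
Your proposal is correct and follows essentially the same route as the paper's own proof: invoke Lemma \ref{AutAMXGFunc}(a) to view all the relevant sections as homomorphisms into the common codomain $\Aut_{\cA}(\cM/X/G)$, then apply the universal property of the amalgamated product for surjectivity and the fact that $A$ and $B$ generate $G$ (equivalently, the uniqueness clause in the universal property, which is what you use) for injectivity and for checking $\pr_1 \circ \sigma = 1_G$. The only cosmetic difference is that the paper phrases the injectivity and $\pr_1$-verification directly via "$A$ and $B$ generate $G$" rather than re-invoking the uniqueness half of the universal property, but these are the same observation.
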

\begin{proof} Using Lemma \ref{AutAMXGFunc}(a), we have the commutative diagram of groups and group homomorphisms
\[ \xymatrix{ & \Aut_{\cA}(\cM/X/A) \ar[dr] & \\ \Aut_{\cA}(\cM/X/C) \ar[ur]\ar[dr] & & \Aut_{\cA}(\cM/X/G). \\ & \Aut_{\cA}(\cM/X/B) \ar[ur] &}\]
Let $\sigma_1, \sigma_2 \in \cS_{\cA}(G,\cM)$ be such that $\Res^G_A(\sigma_1) = \Res^G_A(\sigma_2)$ and $\Res^G_B(\sigma_1) = \Res^G_B(\sigma_2)$. Using the above diagram, we may regard $\sigma_1$ and $\sigma_2$ having the same codomain $\Aut_{\cA}(\cM/X/G)$. Then $(\sigma_1)|_A = (\sigma_2)|_A$ and $(\sigma_1)|_B = (\sigma_2)|_B$. Since $A$ and $B$ generate $G$ as a group, it follows that $\sigma_1 = \sigma_2$.

Suppose now that $(\tau,\psi)$ is an element of the fibre product on the right hand side of $(\ref{SGM})$. Then $\tau : A \to \Aut_{\cA}(\cM/X/G)$ and $\psi : B \to \Aut_{\cA}(\cM/X/G)$ have the same restriction to $C$. By the universal property of amalgamated products --- see \cite[equation $(\ast), \S$ 1.1]{Trees} --- $\tau$ and $\psi$ extend to a unique group homomorphism $\sigma : G \to \Aut_{\cA}(\cM/X/G)$ such that $\sigma|_A = \tau$ and $\sigma|_B = \psi$. Then $(\pr_1 \circ \sigma)|_A = \pr_1 \circ (\sigma|_A) = \pr_1 \circ \tau = 1_A$ and $(\pr_1 \circ \sigma)|_B = \pr_1 \circ (\sigma|_B) = \pr_1 \circ \psi = 1_B$ because $\tau \in \cS_{\cA}(\cM/X/A)$ and $\psi \in \cS_{\cA}(\cM/X/B)$. Since $A$ and $B$ generate $G$ as a group, and the group homomorphism $\pr_1 \circ \sigma : G \to G$ fixes both $A$ and $B$ pointwise, we conclude that $\pr_1 \circ \sigma = 1_G$. So, $\sigma \in \cS_{\cA}(\cM/X/G)$, $\Res^G_A(\sigma) = \tau$ and $\Res^G_B(\sigma) = \psi$.
\end{proof}

To spell out the meaning of Theorem \ref{amalgequiv} together with Proposition \ref{GAmodStrAsSections}: the data of a $G$-$\cA$-module structure on $\cM$ is equivalent to the data of an $A$-$\cA$-module structure and a $B$-$\cA$-module structures whose restrictions to a $C$-$\cA$-module structure agree.

\section{Topics in rigid analytic geometry}

\subsection{Line bundles with flat connection on smooth rigid spaces}\label{ELBsec}

Let $X$ be a non-empty, smooth rigid $K$-analytic space. By a \emph{line bundle with flat connection} we mean a $\cD$-module $\sL$ on $X$ which is invertible as an $\cO$-module.

If $\sL$ and $\sM$ are two line bundles with flat connection on $X$, then so are $\sL \otimes_{\cO} \sM$ and $\sL^{\otimes -1} := \mathpzc{Hom}_{\cO}(\sL, \cO)$: the tangent sheaf $\cT_X$ acts via the Leibniz rule on $\sL \otimes_{\cO} \sM$, and on $\sL^{\otimes -1}$ via the rule
\[(v \cdot f)(\ell) = v \cdot f(\ell) - f(v\cdot \ell)\qmb{for all} v \in \cT_X, f \in \sL^{\otimes -1}, v \in \sL.\]

\begin{defn}\label{PicConDefn}\hfill \be \item $\PicCon(X)$ denotes the abelian group of isomorphism classes of line bundles with flat connection on $X$ under the operation $-\otimes_\cO$-.  \item $\Con(X) := \ker(\PicCon(X) \to \Pic(X))$ denotes the group of isomorphism classes of line bundles with flat connection on $X$ that are trivial after forgetting the connection.  \ee
\end{defn}

We now show that when $X$ is connected, $\sL$ is a simple $\cD_X$-module for any $[\sL] \in \PicCon(X)$. We start with the case where $X$ is $K$-affinoid. 

\begin{lem}\label{LXsimple} Suppose that $X$ is a connected $K$-affinoid variety such that $\cT_X$ is a free $\cO_X$-module. Then for every $[\sL]\in \Con(X)$, $\sL(X)$ is a simple $\cD(X)$-module.
\end{lem}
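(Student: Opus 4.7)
Fix a trivialisation $\sL(X) \cong \cO(X) \cdot e$ as an $\cO(X)$-module, which exists because $[\sL] \in \Con(X)$. For each $v \in \cT(X)$ the flat connection gives $v \cdot e = \omega(v)\,e$ for some $\omega(v) \in \cO(X)$, and hence
\[ v \cdot (fe) = \bigl(v(f) + f\,\omega(v)\bigr)\,e \qquad \text{for all } f \in \cO(X).\]
Any $\cO(X)$-submodule of $\sL(X)$ has the form $N = I\cdot e$ for an ideal $I \subseteq \cO(X)$. The displayed formula shows that $N$ is a $\cD(X)$-submodule if and only if $I$ is stable under $\cT(X)$, since $f\,\omega(v) \in I$ automatically whenever $f \in I$. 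Thus the plan reduces to showing:

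\emph{every nonzero $\cT(X)$-stable ideal $I$ of $\cO(X)$ equals $\cO(X)$.}

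Since $X$ is $K$-affinoid, by the Nullstellensatz it suffices to show that $I \not\subseteq \fr{m}_x$ for every maximal ideal $\fr{m}_x$ of $\cO(X)$. Pick such an $x$ and a nonzero $f \in I$. Let $n \geq 0$ be the largest integer with $f \in \fr{m}_x^n$ (this exists as the regular local ring $\cO_{X,x}$ is Noetherian and $f \neq 0$); I want to reduce $n$ using derivations. Fix an $\cO_X$-basis $\partial_1,\dots,\partial_d$ of the free tangent sheaf $\cT_X$; since $X$ is smooth at $x$, $d = \dim_x X$ and $\partial_1,\dots,\partial_d$ reduce modulo $\fr{m}_x$ to a $\kappa(x)$-basis of $T_xX = (\fr{m}_x/\fr{m}_x^2)^\ast$. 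Again by smoothness, the graded ring $\bigoplus_n \fr{m}_x^n/\fr{m}_x^{n+1}$ is the symmetric algebra $\Sym^\bullet(\fr{m}_x/\fr{m}_x^2)$, and a standard Leibniz calculation shows that each $\partial_i$ induces on it the corresponding partial derivative $\Sym^n \to \Sym^{n-1}$. No nonzero homogeneous polynomial of positive degree is killed by every partial derivative, so for $n \geq 1$ there exists some $i$ with $\partial_i f \in \fr{m}_x^{n-1}\setminus \fr{m}_x^n$. Iterating at most $n$ times gives an element $g \in I$ with $g(x) \neq 0$, so $I \not\subseteq \fr{m}_x$ as required.

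\textbf{Expected main obstacle.} The bookkeeping is elementary once the setup is in place; the only substantive point is the claim that the derivations from a global $\cO_X$-basis of $\cT_X$ span $T_xX$ at every $x$ and induce the classical partial-derivative action on $\Sym^\bullet(\fr{m}_x/\fr{m}_x^2)$. This is where the hypotheses that $X$ is smooth and that $\cT_X$ is free (not merely locally free) are both used in an essential way. No use of connectedness is needed beyond what is implicit in the Nullstellensatz argument — though connectedness will matter later when globalising from $\cO(X)$-free to $\cO_X$-invertible.
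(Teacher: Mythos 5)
Your reduction to the statement that $\cO(X)$ has no nonzero proper $\cT(X)$-stable ideals is correct, and the subsequent argument is essentially right, but it takes a route that is genuinely different from the paper's. The paper argues via dimension theory: it equips $\sL(X)$ with the trivial good filtration, computes the associated graded module $\cO(X)[T_1,\ldots,T_n]/(T_1,\ldots,T_n)$, and then invokes the Bernstein-type inequality of Mebkhout--Narv\'{a}ez-Macarro to rule out a proper quotient of strictly smaller dimension. Your argument is more hands-on: reduce to $\cT(X)$-stable ideals, localise at a closed point, pass to the graded ring $\Sym^{\bullet}_{\kappa(x)}(\fr{m}_x/\fr{m}_x^2)$ using regularity, and observe that a nonzero element of positive degree has a nonzero image under some induced degree-$(-1)$ operator, then finish with the affinoid Nullstellensatz. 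This buys you independence from the holonomic dimension machinery, but it has a couple of features worth being explicit about. First, the induced operators $\gr\partial_i$ are not literally $\partial/\partial \bar t_j$ but $\kappa(x)$-linear combinations $\sum_j \partial_i(t_j)(x)\,\partial/\partial \bar t_j$ whose coefficient matrix is invertible (precisely because the $\partial_i$ reduce to a basis of $T_xX$); this is harmless for the argument but worth saying. Second, the key step --- that a nonzero homogeneous polynomial of positive degree cannot be annihilated by all partial derivatives --- genuinely uses $\operatorname{char} K = 0$ (Euler's identity), which holds here since $K \supseteq \bQ_p$, whereas the paper's appeal to Mebkhout--Narv\'{a}ez does not make this characteristic-zero dependence as visible. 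Third, connectedness is used slightly more heavily than your closing remark suggests: you need $\cO(X)$ to be a domain to know that a nonzero global $f$ remains nonzero in each stalk $\cO_{X,x}$, so that the integer $n$ with $f \in \fr{m}_x^n \setminus \fr{m}_x^{n+1}$ exists at every $x$; without connectedness the statement is simply false (a factor $\cO(X_1)\times 0$ is a nonzero $\cT(X)$-stable ideal).
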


\begin{proof} Suppose that $n=\dim X$. Following the formalism of \cite[\S1]{MebNar} we see that $\cO(X)$ satisfies the conditions found in \S1.1.2 of {\it loc. cit.} Let $\partial_1,\ldots,\partial_n$ denote a free generating set for $\cT(X)$ as a $\cO(X)$-module so we may consider \[\cD(X)=\cO(X)[\partial_1,\ldots,\partial_n]\] as a filtered $K$-algebra with associated graded ring \[\gr \cD(X)\cong \cO(X)[T_1,\ldots, T_n]\] with $\cO(X)$ in degree $0$ and $T_1,\ldots,T_n$ in degree $1$, the principal symbols of $\partial_1,\ldots,\partial_n$ respectively. 
	
The filtration of $\sL(X)$ whose $0$th filtered part is $\sL(X)$ and whose $-1$st filtered part is $0$ is a good filtration, so that \[\gr \sL(X)\cong \cO(X)[T_1,\ldots,T_n]/(T_1,\ldots,T_n).\] 
	
	Since $X$ is also connected, $\cO(X)$ is an integral domain by the proof of \cite[Proposition 4.2]{ABB}.  Thus any non-zero proper $\cD(X)$-module quotient of $\sL(X)$ must have dimension $<n$. However, by \cite[Th\'eor\`eme 1.1.4, Corollaire 1.2.3]{MebNar}, no such $\cD(X)$-module can exist and so $\sL(X)$ is a simple $\cD(X)$-module as claimed. 
\end{proof}

\begin{prop}\label{simple} Suppose $X$ is connected and that $[\sL]\in \PicCon(X)$. Then $\sL$ is simple as a $\cD$-module. 
\end{prop}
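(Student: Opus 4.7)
The strategy is to reduce the proposition to the affinoid case treated in Lemma \ref{LXsimple} via a local-to-global argument exploiting the connectedness of $X$.

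Since $X$ is smooth and $\sL$ is locally trivial, I would choose an admissible basis $\mathcal{V}$ of $X$ consisting of connected $K$-affinoid subdomains $V$ on which $\sL|_V$ is trivial as an $\cO$-module and $\cT_X|_V$ is a free $\cO_V$-module. For each $V \in \mathcal{V}$ we have $[\sL|_V] \in \Con(V)$, so Lemma \ref{LXsimple} yields that $\sL(V)$ is simple as a $\cD(V)$-module.

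Let $\sM$ be a nonzero $\cD$-submodule of $\sL$. For each $V \in \mathcal{V}$, $\sM(V)$ is a $\cD(V)$-submodule of the simple module $\sL(V)$, hence $\sM(V) \in \{0, \sL(V)\}$. Set $\sS := \{V \in \mathcal{V} : \sM(V) = \sL(V)\}$ and $U := \bigcup_{V \in \sS} V$, an admissible open of $X$. When $V \in \sS$, a trivializing section $e_V \in \sL(V) = \sM(V)$ generates $\sL|_V$ as an $\cO_V$-module; since $\sM|_V$ is an $\cO_V$-submodule of $\sL|_V$ containing $e_V$, we deduce $\sM|_V = \sL|_V$ sheaf-theoretically, whence $\sM|_U = \sL|_U$. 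The assumption $\sM \neq 0$ combined with the sheaf axiom applied to $\mathcal{V}$ forces $\sM(V) \neq 0$ for at least one $V \in \mathcal{V}$, so $U \neq \emptyset$.

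Setting $Z := \bigcup \{V \in \mathcal{V} : V \cap U = \emptyset\}$, the sets $U$ and $Z$ are disjoint admissible opens of $X$. The key step, which I expect to be the main obstacle, is to prove $U \cup Z = X$: equivalently, that every $V \in \mathcal{V}$ meeting $U$ must satisfy $V \in \sS$. Granted this dichotomy, the connectedness of $X$ together with $U \neq \emptyset$ forces $Z = \emptyset$, whence $U = X$ and $\sM = \sL$.

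To establish the dichotomy, suppose $V \in \mathcal{V}$ has $V \cap U \neq \emptyset$; by refining we find $W \in \mathcal{V}$ with $W \subseteq V \cap V'$ for some $V' \in \sS$, and the inheritance argument above yields $W \in \sS$, so $\sM|_W = \sL|_W$. The challenge is to upgrade this to $\sM(V) = \sL(V)$, since a priori sections of $\sM$ over $W$ need not extend to sections over $V$. I would resolve this by exploiting the connectedness of the affinoid $V$ itself, together with the sheaf axiom: if $\sM(V) = 0$, then the locus of $V$ on which $\sM$ locally equals $\sL$ is non-empty (it contains $W$) but cannot cover $V$, and one would derive an admissible disconnection of $V$ incompatible with the connectedness of the affinoid $V$.
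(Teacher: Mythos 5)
Your strategy is essentially the same as the paper's: localize to an affinoid cover where Lemma \ref{LXsimple} applies, then use the connectedness of $X$ to show that the admissible opens where $\sM$ agrees with $\sL$ and where $\sM$ vanishes cannot both be nonempty. The step you flag as the main obstacle --- upgrading $\sM|_W = \sL|_W$ for some nonempty affinoid $W \subseteq V$ to $\sM(V) = \sL(V)$ --- is indeed the crux, and your proposed resolution does not close the gap. Deriving an admissible disconnection of the affinoid $V$ is circular: to show that the locus where $\sM$ vanishes and the locus where $\sM$ equals $\sL$ form two disjoint admissible opens covering $V$ is essentially a restatement, on $V$, of the very dichotomy you are trying to prove on $X$, with no base case for the recursion.

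The paper closes this gap with no appeal to the connectedness of $V$, using coherence of the subsheaf instead. For an affinoid subdomain $V' \subseteq V$, Kiehl's theorem gives the comparison isomorphism $\sM(V') \cong \cO(V') \otimes_{\cO(V)} \sM(V)$, so $\sM(V) = 0$ already forces $\sM(V') = 0$ for every affinoid $V' \subseteq V$. In the paper's notation, if $U \in \cV_1$ and $V \in \cV_2$ overlap, then $\sL(U \cap V) = \sM(U \cap V) \cong \cO(U \cap V) \otimes_{\cO(V)} \sM(V) = 0$, which is impossible since $U \cap V \neq \emptyset$ and $\sL$ is a line bundle. This is precisely the missing ingredient in your write-up: once you know that $\sM(V) = 0$ implies $\sM|_V = 0$ as a sheaf, the contradiction with $\sM(W) = \sL(W) \neq 0$ is immediate and no disconnection of $V$ is needed.
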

\begin{proof} Since $\sL$ is a line bundle, there is an admissible cover $\cU$ of $X$ consisting of $K$-affinioid subdomains such that the line bundle $\sL|_U$ is trivial for all $U \in \cU$. By passing to a refinement, we may also assume that for each $U\in \cU$, $U$ is connected and that $\cT|_U$ is a free $\cO_U$-module. 
	
 Suppose that $\sM$ is a subobject of $\sL$ as a $\cD$-module and consider \[\cV_1 := \{U \in \cU : \sM(U)=\sL(U)\}\mbox{ and }\cV_2=\{U\in \cU: \sM(U)=0 \}.\] Then $\cU$ is the disjoint union of $ \cV_1$ and $\cV_2$ by Lemma \ref{LXsimple}. Now if $U \in \cV_1$ and $V \in \cV_2$ with $U \cap V \neq \emptyset$, then \[\cL(U\cap V)\cong \cO(U\cap V)\otimes_{\cO(U)}\sM(U)\cong \sM(U\cap V)\cong \cO(U\cap V)\otimes_{\cO(V)}\sM(V)=0,  \] a contradiction.  Since $X$ is connected if follows that $\cU=\cV_1$ or $\cU=\cV_2$. Hence $\sM=\sL$ or $\sM=0$ as required. 
\end{proof}
Whenever $\sM$ is a $\cD$-module on $X$, we have at our disposal the $K$-vector space of \emph{global horizontal sections}
\[ \sM(X)^{\cT(X) = 0} := \{m \in \sM(X) : \partial \cdot m = 0 \qmb{for all} \partial \in \cT(X)\}.\]
Similarly, when $x \in X$, we have the vector space of \emph{local horizontal sections}
\[ \sM_{X,x}^{\cT_{X,x} = 0} := \{m \in \sM_{X,x} : \partial \cdot m = 0 \qmb{for all} \partial \in \cT_{X,x}\}.\]
These vector spaces are related as follows.
\begin{lem} \label{GlobHorSec} Let $\sM$ be a $\cD$-module on $X$.
\be
\item There is an injective $K$-linear map
\[ \Hom_{\cD}(\cO,\sM) \hookrightarrow \sM(X)^{\cT(X)=0}\]
given by $\varphi \mapsto \varphi(X)(1)$. This map is functorial in $\sM$ and commutes with restriction to admissible open subspaces $U$ of $X$.
\item Suppose that $\cT$ is generated as an $\cO$-module by its global sections. Then the map in part (a) is an isomorphism, and for every $x \in X$, restriction sends $\sM(X)^{\cT(X)=0}$ into $\sM_{X,x}^{\cT_{X,x}=0}$.
\ee\end{lem}
\begin{proof} (a) Recall that for any $\cO$-module $\frF$ on \emph{any} ringed space $(\frX,\cO)$, the rule $\varphi \mapsto \varphi(\frX)(1)$ gives a functorial bijection $\Hom_{\cO}(\cO, \frF) \stackrel{\cong}{\longrightarrow} \frF(\frX)$, whose inverse sends the global section $f \in \frF(\frX)$ to the $\cO$-linear map $r_f : \cO \to \frF$ sending $a \in \cO(\mathfrak{U})$ to $a \cdot f|_{\mathfrak{U}}$. In our setting, this bijection sends the $\cD$-linear homomorphisms to the global horizontal sections, because $1 \in \cO(X)$ is killed by $\cT(X)$. The first statement follows, and the second is clear.

(b) Suppose that $f \in \sM(X)$ is such that $\cT(X)\cdot f = 0$. Then $\cT(U) \cdot f|_U = 0$ for all admissible open $U \subseteq X$ because $\cT$ is generated by global sections. It follows that the map $r_f(U)  : \cO(U) \to \sM(U)$ is $\cD(U)$-linear, and hence $r_f \in \Hom_{\cD}(\cO,\sM)$. 

The second statement holds because global horizontal sections of $\sM$ restrict to local horizontal sections of $\sM$ whenever $\cT$ is generated by global sections.
\end{proof}
Before we can proceed, we recall some well-known facts about the local structure of smooth rigid spaces. 
\begin{lem} \label{local} Let $K(x) := \cO_{X,x}/\frm_{X,x}$ denote the residue field of $x \in X$.
\be \item $\cO_{X,x}$ is a regular local ring.
\item There exist local parameters $t_1,\cdots,t_d \in \cO_{X,x}$ and derivations $v_1,\cdots,v_d \in \cT_{X,x}$ such that $v_i(t_j) = \delta_{i,j}$ for all $i,j=1,\cdots,d$. 
\item The $\frm_{X,x}$-adic completion $\h{\cO_{X,x}}$ of $\cO_{X,x}$ is isomorphic to the formal power series ring $K(x)[[t_1,\cdots,t_d]]$.
\item The natural map $\widehat{\cO_{X,x}}^{\cT_{X,x} = 0} \to K(x)$ is an isomorphism.
\item The restriction map $\cO(X) \to \h{\cO_{X,x}}$ is injective whenever $X$ is connected.
\ee
\end{lem}
\begin{proof} (a) This follows from the smoothness of $X$.

(b) This follows from the proof of \cite[Theorem A.5.1]{HTT}. 

(c) Since $K$ is a field of characteristic zero, the local ring $\cO_{X,x}$ is equi-characteristic. Hence it admits a coefficient field by \cite[Theorem 28.3]{MatRingThy}. The result now follows by using part (a) together with \cite[Theorem 11.22]{AMac}.

(d) This follows immediately from (b) and (c).

(e) Suppose that $f \in \cO(X)$ maps to zero in $\h{\cO_{X,x}}$. Since $\cO_{X,x}$ is a regular local ring by (a), it embeds into $\h{\cO_{X,x}}$ by \cite[Corollary 10.19]{AMac}, and hence the germ $f_x \in \cO_{X,x}$ of $f$ at $x$ is zero. Hence there exists an affinoid subdomain $U$ of $X$ containing $x$ such that $f|_U = 0$. Let $Z = V(f \cO_X)$ denote the closed $K$-analytic subspace of $X$ cut out by $f$; then $U \subseteq Z$. Since $X$ is smooth, it is normal in the sense of \cite{ConradIrred}. Since it is also connected, \cite[Lemma 2.1.4]{ConradIrred} ensures that $Z = X$. Since $X$ is reduced, it follows that $f = 0$. 
\end{proof}

\begin{prop}\label{GeomConGlobHorO} Suppose that $X$ is geometrically connected and that $\cT$ is generated as an $\cO$-module by its global sections. Then 
\[ \cO(X)^{\cT(X)=0}=K.\]
\end{prop}
\begin{proof} Pick a point $x \in X$ and consider the completion $\widehat{\cO_{X,x}}$ of the local ring $\cO_{X,x}$ at $x$. Since $X$ is connected, the restriction map $\cO(X) \to \widehat{\cO_{X,x}}$ is injective by Lemma \ref{local}(e).
This restriction map sends the $K$-subalgebra $L := \cO(X)^{\cT(X) = 0}$ of $\cO(X)$ into $\widehat{\cO_{X,x}}^{\cT_{X,x} = 0}$ by Lemma \ref{GlobHorSec}(b), which is isomorphic to the residue field $K(x)$ by Lemma \ref{local}(d).  Since $K(x)$ is finite dimensional over $K$, we see that $L$ is a finite field extension of $K$. If $L$ was a proper field extension of $K$, then base changing $X$ to $L$ would yield non-trivial idempotents in $L \otimes L \subset \cO(X) \otimes L = \cO(X_L)$ and show that $X_L$ is not connected. Since $X$ was assumed to be geometrically connected, we conclude that $L = \cO(X)^{\cT(X) = 0}$ is in fact equal to $K$.
\end{proof}
As a consequence, we can characterise the trivial line bundle with trivial connection in terms of its horizontal sections under certain assumptions on $X$.

\begin{cor}\label{GlobHorL} Suppose that $X$ is geometrically connected and that $\cT$ is generated as an $\cO$-module by its global sections. Then for every $[\sL]\in \PicCon(X)$, the following are equivalent:
\begin{enumerate}[{(}i{)}]
\item $\sL$ is the trivial line bundle with trivial flat connection,
\item $\dim_K \sL(X)^{\cT(X) = 0} = 1$,
\item $\sL$ has a non-zero global horizontal section.
\end{enumerate}
\end{cor}
\begin{proof} (i) $\Rightarrow$ (ii). This follows from Proposition \ref{GeomConGlobHorO}.

(ii) $\Rightarrow$ (iii). This is clear.

(iii) $\Rightarrow$ (i). By Lemma \ref{GlobHorSec}(b), there is a non-zero $\cD$-linear map $\cO\to \sL$. Since $X$ is connected and since $\cO$ and $\sL$ are both simple $\cD$-modules by Proposition \ref{simple}, this map must be an isomorphism.
\end{proof}

Proposition \ref{GeomConGlobHorO} also implies that an isomorphism between two line bundles with flat connection is unique up to scalars.

\begin{cor}\label{IsoLL}  Suppose that $X$ is geometrically connected and that $\cT$ is generated as an $\cO$-module by its global sections. Let $\varphi_1, \varphi_2 : \sL_1 \to \sL_2$ be two isomorphisms between two line bundles with flat connection on $X$. Then there is a scalar $\lambda \in K^\times$ such that $\varphi_2 = \lambda \varphi_1$.
\end{cor}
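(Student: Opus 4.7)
The plan is to reduce the statement to the computation of $\End_{\cD}(\sL_1)$ as a $K$-algebra, and use the first half of the proof of Proposition \ref{GlobHorO}.

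First I would form the composite $\psi := \varphi_1^{-1} \circ \varphi_2 : \sL_1 \to \sL_1$, which is an automorphism of $\sL_1$ as a line bundle with flat connection. The goal is then to show that $\psi$ is multiplication by a scalar $\lambda \in K^\times$, since then $\varphi_2 = \varphi_1 \circ \psi = \lambda \varphi_1$.

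Next I would identify $\End_{\cD}(\sL_1)$ with the horizontal sections of $\cO$. Because $\sL_1$ is locally free of rank one, the natural map $\cO \to \mathpzc{End}_{\cO}(\sL_1)$, $f \mapsto (\ell \mapsto f\ell)$, is an isomorphism of $\cO$-modules. Under this identification, an $\cO$-linear endomorphism corresponding to $f \in \cO(X)$ commutes with the action of a vector field $v \in \cT(X)$ precisely when $v \cdot f = 0$: indeed, by the Leibniz rule, $v\cdot (f\ell) - f\cdot (v\cdot\ell) = (v\cdot f)\ell$, and this vanishes for all local sections $\ell$ of $\sL_1$ exactly when $v \cdot f = 0$. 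Therefore $\End_{\cD}(\sL_1) \cong \cO(X)^{\cT(X) = 0}$ as $K$-algebras, and $\psi$ corresponds to some $f \in \cO(X)^{\cT(X) = 0}$.

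Finally I would invoke the first paragraph of the proof of Proposition \ref{GlobHorO}, which shows that when $X$ is quasi-Stein and geometrically connected, $\cO(X)^{\cT(X) = 0} = K$. Hence $f = \lambda$ for some $\lambda \in K$, and $\psi$ is multiplication by $\lambda$. Since $\psi$ is an automorphism, $\lambda \in K^\times$, and $\varphi_2 = \lambda \varphi_1$ as required. There is no substantive obstacle here; the only minor point to verify is the Leibniz-rule computation identifying $\cD$-linear endomorphisms of a line bundle with horizontal functions, and the real content has already been carried out in the proof of Proposition \ref{GlobHorO}.
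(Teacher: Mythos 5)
Your proof is correct and follows essentially the same route as the paper. The paper reduces to $\sL_2 = \cO$ by tensoring with $\sL_2^{\otimes -1}$ and then invokes the identification $(\ref{HomDOL})$ together with Proposition \ref{GlobHorO}; you instead compute $\End_{\cD}(\sL_1) \cong \cO(X)^{\cT(X)=0}$ directly via the canonical isomorphism $\cO \cong \mathpzc{End}_{\cO}(\sL_1)$, but this is the same Leibniz-rule observation and the same appeal to $\cO(X)^{\cT(X)=0} = K$.
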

\begin{proof} By tensoring $\varphi_1$ and $\varphi_2$ by $\sL_2^{\otimes -1}$ we may assume that $\sL_2 = \cO$. But then $\varphi_1 \circ \varphi_2^{-1} : \cO \to \cO$ is a $\cD$-linear isomorphism, so by Lemma \ref{GlobHorSec}(a) and Proposition \ref{GeomConGlobHorO}, it is given by multiplication by a non-zero scalar in $K$.
\end{proof}

\begin{prop} \label{PicConinvlim} Suppose that $(X_n)_{n\geq 0}$ is an increasing admissible covering of $X$ by geometrically connected affinoid subdomains. 
\be \item $X$ is also geometrically connected.
\item If $\cT$ is generated as an $\cO$-module by its global sections, then the restriction maps $\PicCon(X)\to \PicCon(X_n)$ induce a natural isomorphism \[ \PicCon(X)\stackrel{\cong}{\longrightarrow} \invlim \PicCon(X_n). \]
\ee
 \end{prop}

\begin{proof} (a) By base-changing to $\bfC$, it is enough to show that $X$ is connected. If it isn't, then we can find two non-empty families of non-empty admissible open subsets $\cU$ and $\cV$ of $X$, such that $\cU\cup \cV$ is an admissible covering of $X$ and \[\bigcup_{U\in \cU}U \cap \bigcup_{V\in \cV}V=\emptyset.\] 
Since the admissible covering $(X_n)$ was assumed to be increasing, we can find $n\geq 0$, $U\in \cU$ and $V\in \cV$ such that $X_n \cap U$ and $X_n \cap V$ are both non-empty.  But then $\{U\cap X_n:U\in \cU\}$ and $\{V\cap X_n:V\in \cV\}$ together form an admissible cover of $X_n$ that disconnects it, giving the required contradiction. 
	
(b) Suppose that $[\sL]\in \PicCon(X)$ is such that $[\sL|_{X_n}]=0$ in $\PicCon(X_n)$ for all $n$. Since $\cT$ is generated as an $\cO$-module by its global sections, $\sL^{\cT=0}$ is a subsheaf of $\sL$ with $\sL(X_n)^{\cT=0}=K$ for all $n$ by Proposition \ref{GeomConGlobHorO}. It follows that all restriction maps $\sL(X_{n+1})^{\cT=0}\to \sL(X_n)^{\cT=0}$ are isomorphisms. Hence by the sheaf condition on $\sL^{\cT=0}$, $\sL(X)^{\cT=0}=K$. Using part (a) together with Corollary \ref{GlobHorL} , we now see that $[\sL]=0\in \PicCon(X)$.

To see that the homomorphism is also surjective we consider a family of line bundles with connection $[\sL_n]_{n\geq 0}\in \prod_{n\geq 0}\PicCon(X_n)$ such that \[[\sL_n|_{X_m}]=[\sL_m] \mbox{ in } \PicCon(X_m) \qmb{whenever} n \geq m.\]
Choose isomorphisms of $\cD$-modules $\varphi_m\colon \sL_{m+1}|_{X_m}\stackrel{\cong}{\longrightarrow} \sL_m$ for all $m \geq 0$. Then whenever $n \geq m$ we can define an isomorphism of sheaves $\varphi_{n,m}\colon \sL_n|_{X_m}\to \sL_m$ by \[\varphi_{n,m}:=\varphi_{n-1}|_{X_m}\circ\cdots \circ \varphi_m|_{X_m}.\] Then the construction in \cite[\S 4.4]{FvdPut} gives a sheaf $\sL$ of $\cD$-modules on $X$ together with isomorphisms of $\cD$-modules $\sL|_{X_n}\stackrel{\cong}{\longrightarrow}\sL_n$. Because each $\sL(X_n)$ is free of rank $1$ over $\cO(X_n)$, we see that $\sL$ must be a line bundle on $X$. Hence $[\sL] \in \PicCon(X)$ is the required preimage of $[\sL_n]\in \invlim \PicCon(X_n)$.\end{proof}
	
The following result will also be useful.

\begin{cor} \label{dtors} Suppose that $X$ is geometrically connected and that $\cT$ is generated as an $\cO$-module by its global sections. Then for every positive integer $d$, the abelian group $\cO(X)^\times/K^\times$ has no $d$-torsion.\end{cor}

 \begin{proof}
	Suppose that $f\in \cO(X)^\times$ with $f^d\in K^\times$. Then for every $\partial\in \cT(X)$ we see that $0=\partial(f^d)=df^{d-1}\partial(f)$. Since $df^{d-1}\in \cO(X)^\times$ it follows that $\partial(f)=0$. Thus $f \in K^\times$ by Proposition \ref{GeomConGlobHorO}.
\end{proof}
\begin{lem}\label{GactsPicCon}  \hsp Suppose that a group $G$ acts on $X$. Then $G$ acts naturally on $\PicCon(X)$ by abelian group automorphisms via \[ g\cdot [\sL]  = [g_\ast \sL]\] where $g_\ast \sL$ is a $\cD$-module via the ring isomorphsim $(g^{-1})^\cD\colon \cD\stackrel{\cong}{\longrightarrow}g_\ast\cD$. 
	
Moreover $\Con(X)$ is a $G$-stable subgroup of $\PicCon(X)$. 
\end{lem}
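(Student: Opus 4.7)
The plan is to verify the lemma by systematic bookkeeping, using that $G$ acts on $X$ by rigid automorphisms, so that the structure sheaves $\cO$ and $\cD$ carry natural $G$-equivariant structures in the sense of Definition \ref{DefnEquivSheaf}. The key data are the ring isomorphisms $(g^{-1})^\cO\colon \cO \to g_\ast \cO$ and $(g^{-1})^\cD\colon \cD \to g_\ast \cD$, which satisfy the cocycle condition (\ref{EasyCocyc}), and with $(g^{-1})^\cD$ extending $(g^{-1})^\cO$ along the inclusion $\cO \subset \cD$.

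First I would fix $g\in G$ and $[\sL]\in \PicCon(X)$ and check that $g_\ast \sL$ genuinely belongs to $\PicCon(X)$ with the prescribed structure. The pushforward $g_\ast \sL$ is tautologically a $g_\ast \cO$-module and a $g_\ast \cD$-module; pulling back the scalar action along $(g^{-1})^\cO$ and $(g^{-1})^\cD$ respectively equips $g_\ast \sL$ with $\cO$- and $\cD$-module structures. Compatibility of these two structures is immediate from the fact that $(g^{-1})^\cD$ extends $(g^{-1})^\cO$. Since $(g^{-1})^\cO$ is a sheaf isomorphism of rings, local freeness of rank $1$ transfers from $\sL$ to $g_\ast \sL$.

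Next I would check that $g\cdot[\sL]:=[g_\ast \sL]$ is well-defined and functorial in $g$. Well-definedness follows because any $\cD$-linear isomorphism $\sL \stackrel{\cong}{\to} \sL'$ pushes forward to a $g_\ast \cD$-linear isomorphism $g_\ast \sL \stackrel{\cong}{\to} g_\ast \sL'$, which becomes $\cD$-linear after the scalar extension by $(g^{-1})^\cD$. The identity $(gh)_\ast = g_\ast h_\ast$ of functors on sheaves combines with the cocycle condition $(gh)^\cD = h^\ast(g^\cD)\circ h^\cD$ to provide a canonical identification $(gh)_\ast \sL \cong g_\ast(h_\ast \sL)$ as $\cD$-modules, and $1_\ast \sL = \sL$ trivially. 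This shows we have a genuine left action of $G$ on $\PicCon(X)$.

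To see the action is by abelian group automorphisms, I would exhibit a canonical $\cD$-linear isomorphism
\[ g_\ast(\sL \otimes_\cO \sM) \;\cong\; g_\ast \sL \otimes_\cO g_\ast \sM. \]
Since $(g^{-1})^\cO$ is a ring isomorphism and tensor product commutes with scalar restriction along ring isomorphisms, this holds at the level of $\cO$-modules, and the induced connections agree on both sides because each is determined by the Leibniz rule applied to the pushed-forward connections.

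Finally, for $G$-stability of $\Con(X)$: if $[\sL]\in \Con(X)$ so that $\sL \cong \cO$ as an $\cO$-module (forgetting the connection), then $g_\ast \sL \cong g_\ast \cO$ as $g_\ast \cO$-modules, and restriction of scalars along $(g^{-1})^\cO$ identifies $g_\ast \cO$ with $\cO$. Hence $g_\ast \sL$ is still trivial as a line bundle, so $g\cdot[\sL]\in \Con(X)$.

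The argument is essentially formal; the only point requiring any attention is to consistently use the equivariant structures $(g^{-1})^\cO$ and $(g^{-1})^\cD$ to convert the naturally-arising $g_\ast\cO$- and $g_\ast\cD$-module structures back to $\cO$- and $\cD$-module structures. There is no substantive obstacle.
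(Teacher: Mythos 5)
Your proposal is correct and follows essentially the same route as the paper: restrict scalars along $(g^{-1})^\cO$ and $(g^{-1})^\cD$, note that $g_\ast(\sL\otimes_\cO\sM)\cong g_\ast\sL\otimes_{g_\ast\cO}g_\ast\sM$ as $g_\ast\cD$-modules to get the group homomorphism property, and observe that pushforward preserves triviality of the underlying line bundle for $\Con(X)$-stability. The paper's proof is terser, leaving the well-definedness and cocycle bookkeeping implicit, but the underlying argument is the same.
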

\begin{proof}
	 We can check that $g \cdot ([\sL] [\sM]) = (g \cdot [\sL]) (g \cdot [\sM])$ for any $[\sL], [\sM] \in \PicCon(X)$, because $g_\ast (\sL \otimes_{\cO} \sM)$ is naturally isomorphic to $(g_\ast \sL) \otimes_{g_\ast\cO} (g_\ast \sM)$ as a $g_\ast\cD$-module.
	
	Since $g_\ast\sL$ is trivial as a line bundle whenever $\sL$ is trivial as a line bundle the last part is immediate.   	
\end{proof}

\begin{rmk} \label{GactsPicConRem} We note more generally, in the context of Lemma \ref{GactsPicCon}, that if $U$ is an admissible open subspace of $X$, each $g\in G$ induces a group isomorphism $\PicCon(U)\to \PicCon(g\cdot U)$; $[\sL]\mapsto [g_\ast \sL]$ where again $g_\ast\sL$ is a $\cD_{gU}$-module via the ring isomorphism $(g^{-1})^{\cD}\colon \cD_{gU}\to g_\ast\cD_{U}$. Moreover these restrict to isomorphisms $\Con(U)\to \Con(g\cdot U)$. \end{rmk} 

We will now construct some connections on the trivial line bundle by using units.
\begin{lem} \label{Lud} Suppose that $d$ is a positive integer and $u\in \cO(X)^\times$. Then there is a unique element $[\sL_{u,d}]$ of $\Con(X)$ such that $\sL_{u,d}$ has a free generator $v$ as a $\cO$-module with $\partial(v)= \frac{1}{d} \frac{\partial(u)}{u} v$ for all $\partial\in \cT$. \end{lem}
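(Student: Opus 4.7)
The plan is to construct $\sL_{u,d}$ explicitly as the trivial $\cO$-module equipped with a modified action of the tangent sheaf, and then verify first that this action extends to a $\cD$-module structure (giving existence of a line bundle with flat connection), and second that any two such line bundles are isomorphic (giving uniqueness in $\Con(X)$).

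For existence, I would define $\sL_{u,d}$ as a copy of $\cO$ with free generator $v$, and prescribe the $\cT$-action by the formula $\partial(fv) := \partial(f)v + \frac{f}{d}\frac{\partial(u)}{u}v$ for any local section $f \in \cO$ and any $\partial \in \cT$. This is manifestly well-defined as a morphism of sheaves of $K$-vector spaces, satisfies the Leibniz rule with respect to the $\cO$-action by construction, and is $\cO$-linear in $\partial$ since $\partial \mapsto \frac{\partial(u)}{u}$ is. It is then clear that $\sL_{u,d}$ is a line bundle (it is $\cO$ as an $\cO$-module) satisfying the required generation property.

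The main obstacle is checking that this action of $\cT$ extends to a $\cD$-module structure, i.e.\ is integrable: one needs $\partial_1 \partial_2(v) - \partial_2 \partial_1(v) = [\partial_1,\partial_2](v)$ for all $\partial_1,\partial_2 \in \cT$. Expanding the left-hand side, the cross-terms involving $\frac{1}{d^2}\frac{\partial_1(u)\partial_2(u)}{u^2}$ cancel, leaving $\frac{1}{d}\left(\frac{\partial_1\partial_2(u) - \partial_2\partial_1(u)}{u}\right)v = \frac{1}{d}\frac{[\partial_1,\partial_2](u)}{u}v$, which is exactly $[\partial_1,\partial_2](v)$. (Conceptually, this is the flatness of the connection $d + \tfrac{1}{d}\mathrm{dlog}(u)$, whose curvature equals $d(\tfrac{1}{d}\mathrm{dlog}(u)) = 0$.) This calculation is local and straightforward; the only subtlety is that it must be checked on local sections but holds globally by sheafification.

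For uniqueness, suppose $\sL'$ is another line bundle with flat connection, trivial as an $\cO$-module via some generator $v'$ satisfying $\partial(v') = \frac{1}{d}\frac{\partial(u)}{u}v'$ for all $\partial \in \cT$. Then the $\cO$-linear isomorphism $\sL_{u,d} \to \sL'$ sending $v \mapsto v'$ is automatically compatible with the $\cT$-actions, since both generators satisfy the same formula. Hence this map is an isomorphism in $\Con(X)$, so $[\sL_{u,d}]$ is well-defined independently of the chosen representative.
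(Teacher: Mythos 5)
Your proposal is correct and follows essentially the same route as the paper's proof: both construct $\sL_{u,d}$ as the trivial $\cO$-module with $\cT$-action prescribed by the formula, verify integrability via the same direct computation $(\partial_1\partial_2-\partial_2\partial_1)(v)=\tfrac{1}{d}\tfrac{[\partial_1,\partial_2](u)}{u}v$, and observe that the $\cD$-module structure is forced since $\cD$ is generated by $\cO$ and $\cT$. The only difference is cosmetic (you prove existence before uniqueness, the paper the reverse) plus your optional remark interpreting the check as vanishing curvature of $d+\tfrac{1}{d}\operatorname{dlog}u$.
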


\begin{proof} Suppose that $\sL=\cO v$ is a line bundle with a flat connection satisfying  \[\partial(v)= \frac{1}{d} \frac{\partial(u)}{u} v\] for all $\partial\in \cT$. For all $f\in \cO$ and $\partial\in \cT$, necessarily \[ \partial(fv)=\left(\partial(f)+ \frac{1}{d}\frac{\partial(u)}{u}f\right)v \] so as $\cD$ is generated by $\cO$ and $\cT$ there is at most one element of $\Con(X)$ with the property given in the statement. To prove the existence of such a line bundle with flat connection it suffices to show that for all $\partial_1,\partial_2\in \cT$ \[ (\partial_1\partial_2-\partial_2\partial_1)(v)=[\partial_1 \partial_2](v)\] where $[--]$ denotes the Lie bracket on $\cT$. 

But \begin{eqnarray*} \partial_1\partial_2(v) & = & \partial_1\left(\frac{1}{d}\frac{\partial_2(u)}{u}v\right)\\ 
	                                          & = & \frac{1}{d}\partial_1\left(\frac{\partial_2(u)}{u}\right)v + \frac{1}{d}\frac{\partial_2(u)}{u}\frac{1}{d}\frac{\partial_1(u)}{u}v \\
	                                          & = & \frac{1}{d}\frac{u\partial_1\partial_2(u)-\partial_2(u)\partial_1(u)}{u^2}v + \frac{1}{d}\frac{\partial_2(u)}{u}\frac{1}{d}\frac{\partial_1(u)}{u}v 
\end{eqnarray*}
and so \[ (\partial_1\partial_2-\partial_2\partial_1)(v)=\frac{1}{d} \frac{[\partial_1 \partial_2](u)}{u}v\] as required.
\end{proof}
\begin{prop}\label{PicConDtor} Suppose that $X$ is geometrically connected and that $\cT$ is generated as an $\cO$-module by its global sections. For every non-zero integer $d$, there is a homomorphism of abelian groups
	$\cO(X)^\times\to  \Con(X)$ given by $u\mapsto [\sL_{u,d}]$. The kernel of this homomorphism is $K^\times\cO(X)^{\times d}$ and its image is $\Con(X)[d]$. \end{prop}
\begin{proof}  Suppose that $u_1,u_2\in \cO(X)^\times$. Then if, for $i=1,2$, $v_i$ generates $\sL_{u_i,d}$ with $\partial(v_i)=\frac{1}{d}\frac{\partial(u_i)}{u_i}$, $v_1\otimes v_2$ is a generator of $\sL_{u_1,d}\otimes \sL_{u_2,d}$ and \begin{eqnarray*} \partial(v_1\otimes v_2) & = &\partial(v_1)\otimes v_2+v_1\otimes\partial(v_2) \\ & = & \frac{1}{d}\left(\frac{\partial(u_1)}{u_1}+ \frac{\partial(u_2)}{u_2}\right)(v_1\otimes v_2) \\ & = & \frac{1}{d}\frac{\partial(u_1u_2)}{u_1u_2}(v_1\otimes v_2) \end{eqnarray*} for all $\partial\in \cT$. Thus $u\mapsto [\sL_{u,d}]$ does define a homomorphism.
	
	Note that $u\in \cO(X)^\times$ is in the kernel of the homomorphism if and only if $[\sL_{u,d}]=[\cO.v]=[\cO]$ in $\Con(X)$. That is, $u$ is in the kernel if and only if there is $w\in \cO(X)^\times$  such that $\partial(wv)=0$ for all $\partial\in \cT(X)$. But \[\partial(wv)=\left(\frac{\partial(w)}{w}+ \frac{1}{d}\frac{\partial(u)}{u}\right)wv. \] Now \[\frac{\partial(uw^{d})}{uw^d}=\frac{\partial(u)}{u}+ d\frac{\partial(w)}{w}\] so $\partial(wv)=0$ if and only if $\partial(uw^{d})=0$. Thus by Proposition \ref{GeomConGlobHorO}, the kernel is precisely $K^\times\cO(X)^{\times d}$ as claimed. Moreover $[\sL_{u,d}^{\otimes d}]=[\sL_{u^d,d}]=[\cO]$ so each $[\sL_{u,d}]$ is indeed $d$-torsion.

	Given $[\sL] \in \Con(X)[d]$, we use the hypothesis that $\sL$ is trivial as a line bundle to pick a generator $v \in \sL(X)$ as an $\cO(X)$-module, and we choose a $\cD$-linear isomorphism $\psi \colon \sL^{\otimes d} \stackrel{\cong}{\longrightarrow} \cO$ using the fact that $d \cdot [\sL] = 0$ in $\Con(X)$. We claim that if $\psi(v^{\otimes d})=u$ then $\partial(v)= \frac{1}{d}\frac{\partial(u)}{u}$ for all $\partial\in \cT$ and so $[\sL]=[\sL_{u,d}]$. For this we compute that $\partial(v^{\otimes d})=\frac{\partial(u)}{u}v^{\otimes d}$ for all $\partial\in \cT$. But if $\partial(v)=av$ then $\partial(v^{\otimes d})=dav^{\otimes d}$ so $a=\frac{1}{d}\frac{\partial(u)}{u}$ as claimed.
\end{proof}

\begin{defn} Suppose that $X$ is geometrically connected and that $\cT$ is generated as an $\cO$-module by its global sections.
 We define \[\theta_d\colon \Con(X)[d]\to \cO(X)^\times/K^\times\cO(X)^{\times d}\] to be the inverse of the isomorphism induced by the homomorphism $u \mapsto [\sL_{u,d}]$ in Proposition \ref{PicConDtor}.
\end{defn}
The proof of surjectivity in Proposition \ref{PicConDtor} shows that $\theta_d\left([\cO v]\right)$ is determined by the image of $v^{\otimes d}$ under any $\cD$-linear isomorphism $\psi\colon (\cO v)^{\otimes d}\stackrel{\cong}{\longrightarrow} \cO$, via
\begin{equation}\label{ExplthetaDef} \theta_d([\cO v]) = \psi(v^{\otimes d}) K^\times\cO(X)^{\times d}. \end{equation}
\begin{prop}\label{PicConDtorG} Suppose that $X$ is geometrically connected and that $\cT$ is generated as an $\cO$-module by its global sections. Let $G$ be a group acting on $X$. Then for every non-zero integer $d$, $\theta_d$ is a $G$-equivariant isomorphism \[  \theta_d\colon \Con(X)[d]\stackrel{\cong}{\longrightarrow} \cO(X)^\times/K^\times\cO(X)^{\times d}.\] 
\end{prop}
\begin{proof}
	Let $g \in G$, $[\sL] \in \Con(X)[d]$ and fix a $\cD$-linear isomorphism $\psi : \sL^{\otimes d} \stackrel{\cong}{\longrightarrow} \cO$. The map $g^{\cO} : \cO \to g^\ast \cO$ is a $\cD$-linear isomorphism, so $g^{\cO} \circ \psi : \sL^{\otimes d} \to g^\ast\cO$ is also a $\cD$-linear isomorphism. After identifying $g_\ast (\sL^{\otimes d})$ with $(g_\ast \sL)^{\otimes d}$ and $g_\ast(g^\ast\cO)$ with $\cO$, we obtain a $g_\ast \cD$-linear isomorphism
	\[ \psi' := g_\ast( g^{\cO} \circ \psi) : (g_\ast \sL)^{\otimes d}  \stackrel{\cong}{\longrightarrow}  \cO.\]
	Recall Lemma \ref{GactsPicCon} that $g \cdot [\sL] = [g_\ast \sL]$, where $\cD$ acts on $g_\ast \sL$ via the ring isomorphism $(g^{-1})^{\cD} : \cD \stackrel{\cong}{\longrightarrow} g_\ast \cD$. So $\psi'$ becomes an $\cD$-linear isomorphism in this way, and we can use it to compute $\theta_d([g_\ast \sL])$ as follows: let $v \in \sL(X)$ be such that $\sL(X) = \cO(X)v$; then by definition of $\psi'$ we have $\psi'(v^{\otimes d}) = g \cdot \psi(v^{\otimes d})$, so
	\[ \theta_d(g \cdot [\sL]) = \theta_d([g_\ast \sL]) = \psi'(v^{\otimes d})K^\times \cO(X)^{\times d} = g \cdot \psi(v^{\otimes d})K^\times \cO(X)^{\times d} = g \cdot \theta_d([\sL])\]
	as required.
\end{proof}
\begin{rmk} \label{groupoidactPicCon} Proposition \ref{PicConDtorG} can be viewed as saying that \[[g_\ast\sL_{u,d}]=[\sL_{g\cdot u,d}]\in \Con(X)[d].\] More generally if $U$ is an admissible open subset of $X$ and $u\in \cO(U)^\times$ then \[ [g_\ast\sL_{u,d}]=[\sL_{g\cdot u,d} ]\in \Con(g\cdot U).\] \end{rmk}

\subsection{Equivariant line bundles with flat connections}\label{PicConG}
We now turn to a discussion of \emph{equivariant} line bundles with flat connection. In this section we will assume that $G$ is a topological group acting continuously on a smooth rigid $K$-analytic space $X$ in the sense of \cite[Definition 3.1.8]{EqDCap}. We first consider $G$-equivariant line bundles. Our next definition, Definition \ref{GLB} below, will require some preparation. 

\begin{lem}\label{IntConLocFrech} Let $\sM$ be a coherent $\cO$-module on $X$. Suppose that $\{g^{\sM}\}_{g \in G}$ is a $G$-equivariant structure on $\sM$. Then for every affinoid subdomain $U$ of $X$ and $g\in G$, the structure map\[ g^\sM\colon \sM(U)\to \sM(gU) \] is continuous with respect to canonical $K$-Banach topologies on the domain and codomain.
\end{lem}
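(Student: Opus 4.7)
The plan is to reduce continuity of $g^{\sM}_U$ to the (assumed) continuity of the $K$-algebra homomorphism $g^{\cO}_U : \cO(U) \to \cO(gU)$ coming from the continuous $G$-action, together with the standard Banach-module structure on finitely generated modules over affinoid algebras.

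First I would record the setup: since $U$ is an affinoid subdomain and $\sM$ is coherent, $\sM(U)$ is a finitely generated $\cO(U)$-module; likewise $\sM(gU)$ is a finitely generated $\cO(gU)$-module (note that $gU$ is also an affinoid subdomain as $g$ acts by an isomorphism of rigid spaces). Each carries its canonical Banach topology, and by the standard theory (cf.\ \cite[Proposition 3.7.3/3]{BGR}) any finitely generated submodule is closed and any $\cO(U)$-module map between such modules is automatically continuous. Crucially, by Definition \ref{DefnEquivSheaf} the map $g^{\sM}_U : \sM(U) \to \sM(gU)$ is $\cO(U)$-semilinear with respect to the ring homomorphism $g^{\cO}_U : \cO(U) \to \cO(gU)$, i.e.\ $g^{\sM}_U(f \cdot m) = g^{\cO}_U(f) \cdot g^{\sM}_U(m)$.

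Next I would pick a finite generating set $m_1,\dots,m_r$ of $\sM(U)$ over $\cO(U)$, producing a continuous $\cO(U)$-linear surjection $\pi \colon \cO(U)^r \twoheadrightarrow \sM(U)$. By Banach's open mapping theorem for affinoid algebras, $\pi$ is open, so the canonical topology on $\sM(U)$ is the quotient topology. It therefore suffices to show that the composition
\[ \widetilde{g} := g^{\sM}_U \circ \pi \colon \cO(U)^r \longrightarrow \sM(gU), \qquad (f_1,\dots,f_r) \longmapsto \sum_{i=1}^r g^{\cO}_U(f_i)\cdot g^{\sM}_U(m_i), \]
is continuous. But each summand is the composition of the continuous map $g^{\cO}_U$ with multiplication by the fixed element $g^{\sM}_U(m_i) \in \sM(gU)$, which is continuous because $\sM(gU)$ is a Banach $\cO(gU)$-module. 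Hence $\widetilde{g}$ is continuous, and so $g^{\sM}_U$ is continuous by the universal property of the quotient topology.

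The main (indeed only) obstacle is verifying that $g^{\cO}_U : \cO(U) \to \cO(gU)$ is continuous. This is where the hypothesis that $G$ acts \emph{continuously} on $X$ in the sense of \cite[Definition 3.1.8]{EqDCap} enters: the structure maps of the equivariant sheaf $\cO$ are required to be continuous $K$-algebra homomorphisms on sections over affinoid subdomains. Once that input is in hand, everything else is a formal consequence of coherence and the open mapping theorem.
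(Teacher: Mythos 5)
Your argument is correct, but it is organized differently from the paper's. The paper's proof is shorter: it equips $\sM(gU)$ with the \emph{twisted} $\cO(U)$-module structure $a \ast v := (g\cdot a) v$, observes that $\sM(gU)$ is then a finitely generated $\cO(U)$-module and that $g^{\sM}(U)$ becomes $\cO(U)$-linear, and immediately invokes the automatic continuity of $\cO(U)$-linear maps between finitely generated modules over an affinoid algebra (\cite[Corollary 1.2.4]{FvdPut}). Your presentation-plus-open-mapping argument is essentially an unwinding of the proof of that automatic continuity result combined with a hands-on treatment of semilinearity. Both are valid; the paper's version avoids the lifting step entirely by hiding it in the citation.

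One small inaccuracy worth noting: you attribute the continuity of $g^{\cO}_U : \cO(U) \to \cO(gU)$ to the standing hypothesis that $G$ acts continuously in the sense of \cite[Definition 3.1.8]{EqDCap}, but this is not where it comes from. Any $K$-algebra homomorphism between $K$-affinoid algebras is automatically continuous (e.g.\ \cite[Theorem 6.1.3/1]{BGR}), so the continuity of each individual $g^{\cO}_U$ is free. The ``continuous action'' hypothesis is a strictly stronger condition about the topology on $G$ as a group and how it interacts with $X$; it is needed elsewhere (e.g.\ in Lemma \ref{TrivGLB}, where one needs \emph{uniform} control over a neighbourhood of the identity), but not for the pointwise continuity asserted in this Lemma. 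Accordingly, the paper's proof does not invoke the continuous-action hypothesis at all.
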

\begin{proof} Let $U$ be an affinoid subdomain of $X$. By Kiehl's Theorem --- see, e.g. \cite[Theorem 4.5.2]{FvdPut} --- $\sM(U)$ is a finitely generated module over the affinoid algebra $\cO(U)$ because $\sM_{|U}$ is a coherent $\cO_U$-module. Recalling from \cite[Proposition 2.1]{ST} that every finitely generated module $M$ over an affinoid algebra $A$ carries a canonical $K$-Banach-space topology, we see that $\sM(U)$ carries a canonical $K$-Banach space topology. Fixing $g \in G$, we can regard $\sM(gU)$ as an $\cO(U)$-module via a twisted action, by defining $a \ast v := (g\cdot a)v$ for all $a \in \cO(U)$ and $v \in \sM(gU)$. Then $\sM(gU)$ is still a finitely generated $\cO(U)$-module, and the structure map $g^{\sM}(U) : \sM(U) \to \sM(gU)$ is now a $\cO(U)$-linear homomorphism between two finitely generated $\cO(U)$-modules. It is therefore automatically continuous by \cite[Corollary 1.2.4]{FvdPut}.
\end{proof}

\begin{lem}\label{phizM} Let $\sM$ be a coherent $\cO_X$-module. Suppose that $\{g^\sM\}_{g\in G}$ is a $G$-equivariant structure on $\sM$ and that $L$ is a finite field extension of $K$. Then for every $z\in X(L)$ there is a natural group homomorphism \[\phi_{z,\sM}\colon G_z\to \Aut_L(\sM(z))\] where $\sM(z):=L\otimes_{\cO_{X,z}}\sM_z$ denotes the fibre of $\sM$ at $z$. 
\end{lem}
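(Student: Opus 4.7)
The plan is to define $\phi_{z,\sM}(g)$ by passing the equivariant structure map $g^\sM$ to the stalk at $z$ and then base-changing to $L$. Throughout, I would treat $\sM$ implicitly as a $G$-$\cO$-equivariant module (the natural convention for coherent $\cO$-modules with $G$-equivariant structure), so that $g^\sM$ is $\cO$-semi-linear.

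First I would fix $g \in G_z$ and construct the stalk map. Taking stalks at $z$ of $g^\sM : \sM \to g^\ast\sM$ produces a $K$-linear map $(g^\sM)_z : \sM_z \to (g^\ast\sM)_z$. Now $(g^\ast\sM)_z$ is the filtered colimit of $\sM(gU)$ over admissible open neighbourhoods $U$ of $z$; since $g\cdot z = z$, the substitution $V = gU$ identifies this colimit canonically with $\sM_z$. The resulting map $(g^\sM)_z : \sM_z \to \sM_z$ is semi-linear with respect to the ring endomorphism $(g^\cO)_z : \cO_{X,z} \to \cO_{X,z}$.

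Next I would base-change to $L$. The equality $g\cdot z = z$ of morphisms $\Sp L \to X$ of rigid $K$-analytic varieties implies the equality $z^\sharp \circ (g^\cO)_z = z^\sharp$ of $K$-algebra homomorphisms $\cO_{X,z} \to L$. This is precisely the compatibility needed to tensor $(g^\sM)_z$ with $L$ over $\cO_{X,z}$ (via $z^\sharp$) and obtain an $L$-linear endomorphism
\[ \phi_{z,\sM}(g) \;:=\; \id_L \otimes_{\cO_{X,z}} (g^\sM)_z \;:\; \sM(z) \longrightarrow \sM(z). \]
Finally, the cocycle condition $(gh)^\sM = h^\ast(g^\sM) \circ h^\sM$ and the unit condition $1^\sM = 1_\sM$ from $(\ref{Cocycl})$ pass through the formation of stalks at $z$ and the subsequent tensor product with $L$ to yield $\phi_{z,\sM}(gh) = \phi_{z,\sM}(g) \circ \phi_{z,\sM}(h)$ and $\phi_{z,\sM}(1) = \id_{\sM(z)}$. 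In particular each $\phi_{z,\sM}(g)$ has two-sided inverse $\phi_{z,\sM}(g^{-1})$, so it lands in $\Aut_L(\sM(z))$, and $\phi_{z,\sM}$ is a group homomorphism.

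The main subtlety is the descent of the semi-linear stalk map $(g^\sM)_z$ through the tensor product with $L$; this hinges entirely on the identity $z^\sharp \circ (g^\cO)_z = z^\sharp$, which is exactly the content of $g \in G_z$. Everything else is a formal consequence of the definitions. Naturality of the construction in $\sM$ (for morphisms of $G$-equivariant coherent $\cO$-modules) is immediate from the functoriality of stalk-taking and base change, although it is not strictly needed for the statement of the lemma.
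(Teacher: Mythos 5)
Your proof is correct and takes essentially the same approach as the paper's: both pass the semi-linear stalk map $g^{\sM}_z$ through the base change $-\otimes_{\cO_{X,z}} L$ and then verify that the cocycle relations give a group homomorphism. You are somewhat more explicit than the paper (which leaves the check as "straightforward") in isolating the identity $z^\sharp \circ (g^{\cO})_z = z^\sharp$ as the precise content of $g \in G_z$ that makes the tensor product well-defined.
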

\begin{proof} Let $g \in G_z$. The $G$-equivariant structure on $\cO$ gives us a local $K$-algebra automorphism $g^{\cO}_z : \cO_{X,z} \to \cO_{X,z}$, whereas the $G$-equivariant structure on $\sM$ gives a $K$-linear automorphism $g^{\sM}_z : \sM_z \to \sM_z$, satisfying $g^{\sM}_z( a.m) = g^{\cO}_z(a) \cdot g^{\sM}_z(a)$ for all $a \in \cO_{X,z}$ and $m \in \sM_z$. It is now straightforward to check that setting 
\[g \cdot (\lambda \otimes m) := \lambda \otimes g^{\sM}_z(a) \qmb{for all} g \in G_z, \lambda \in L, m \in \sM_z\]
gives a well-defined $L$-linear action of $G_z$ on $L \otimes_{\cO_{X,z}} \sM_z$. \end{proof}
Suppose that $M$ is any $K$-Banach space. Then the $K$-algebra of bounded $K$-linear endomorphisms $\cB(M)$ is also a $K$-Banach algebra through the operator norm $||T||:=\sup\limits_{v\in V\backslash \{0\}} \frac{|Tv|}{|v|}$, so its unit group $\cB(M)^\times$ becomes a topological group --- using the geometric series, one can check that the inversion map on $\cB(M)^\times$ is continuous. If $\cM \subset M$ is the unit ball in $M$, then the congruence subgroups of $\cB(M)^\times$ 
\[\Gamma_n(\cM) := \{\gamma \in \cB(M)^\times :  (\gamma - 1)(\cM) \subseteq \pi_F^n \cM\}\]
form a fundamental system of open neighbourhoods of the identity in $\cB(M)^\times$.  Since any isomorphism of $K$-Banach spaces $M \stackrel{\cong}{\longrightarrow} N$ induces an isomorphism of topological groups $\cB(M)^\times \stackrel{\cong}{\longrightarrow} \cB(N)^\times$ via `conjugation', we see that the topology on $\cB(M)^\times$ only depends on the topology on $M$ and not on any particular choice of $K$-Banach norm on $M$.

Let $\sM$ be a coherent $\cO_X$-module and let $\{g^{\sM}\}_{g \in G}$ be a $G$-equivariant structure on $\sM$. For each affinoid subdomain $U$ of $X$ and each $g\in G_U$, the maps $g^{\sM}(U) : \sM(U) \to \sM(U)$ induce, by Lemma \ref{IntConLocFrech} a homomorphism
\[ G_U \to \cB(\sM(U))^\times.\]

\begin{defn}\label{GLB} A \emph{$G$-equivariant line bundle on $X$} is a $G$-equivariant $\cO_X$-module $\sL$ on $X$ such that \be \item $\sL$ is invertible as an $\cO_X$-module, and \item the action map $G_U\to \cB(\sL(U))^\times$ is continuous for every affinoid subdomain $U$ of $X$.\ee  \end{defn}

\begin{lem}\label{TrivGLB} $\cO_X$ with its usual equivariant structure is a $G$-equivariant line bundle on $X$.
\end{lem}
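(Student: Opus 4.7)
Condition (a) is trivial: $\cO_X$ is invertible as a module over itself. The content is in condition (b), for which the plan is as follows. First I would unpack the ``usual'' equivariant structure on $\cO_X$: for each $g \in G$ and each admissible open $U$, the map $g^{\cO}(U) : \cO(U) \to \cO(gU) = (g^\ast \cO)(U)$ is the $K$-algebra isomorphism induced by pullback along the inverse of $\rho(g) : X \to X$ restricted to $U$. When $U$ is an affinoid subdomain and $g \in G_U$, this is a $K$-algebra automorphism of the affinoid algebra $\cO(U)$ that preserves the supremum norm, so it lies in $\cB(\cO(U))^\times$ and has operator norm equal to $1$.

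The continuity of the action of $G$ on $X$ in the sense of \cite[Definition 3.1.8]{EqDCap} directly gives a joint continuity statement on sections: for any affinoid subdomain $V$ of $X$ and any $g_0 \in G$ with $g_0 U \subseteq V$, there is an open neighbourhood $W$ of $g_0$ in $G$ such that $gU \subseteq V$ for all $g \in W$, and the resulting map $W \times \cO(V) \to \cO(U)$ given by $(g, f) \mapsto g^{\cO}(f)_{|U}$ is continuous with respect to the canonical Banach topology on $\cO(U)$. Applying this with $V = U$ and $g_0 \in G_U$, and replacing $W$ by $W \cap G_U$, we conclude that the evaluation map $G_U \times \cO(U) \to \cO(U)$, $(g,f) \mapsto g^{\cO}(U)(f)$, is continuous.

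To upgrade this pointwise-on-sections continuity to continuity of the map $G_U \to \cB(\cO(U))^\times$ in the operator norm topology, I would exploit the fact that $\cO(U)$ is a quotient of a Tate algebra: choose a surjection $K\langle T_1,\ldots,T_n\rangle \twoheadrightarrow \cO(U)$ and let $f_1,\ldots,f_n \in \cO(U)^\circ$ denote the images of the generators. The standard estimate for norms of continuous $K$-algebra endomorphisms of Tate algebras shows that for any two elements $g, h \in G_U$ we have the inequality
\[ \| g^{\cO}(U) - h^{\cO}(U) \| \;\leq\; \max_{1 \leq i \leq n} \| g^{\cO}(U)(f_i) - h^{\cO}(U)(f_i) \|_U,\]
obtained by writing an arbitrary element $f \in \cO(U)^\circ$ as a power series in the $f_i$ and applying the telescoping identity $\prod a_j - \prod b_j = \sum b_1 \cdots b_{j-1}(a_j - b_j)a_{j+1}\cdots a_k$ to each monomial, using that $g^{\cO}(f_i), h^{\cO}(f_i) \in \cO(U)^\circ$.

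Given a basic open neighbourhood $\Gamma_n(\cO(U)^\circ)$ of the identity in $\cB(\cO(U))^\times$, I would therefore use the continuity of $g \mapsto g^{\cO}(U)(f_i)$ at $g_0 \in G_U$ for each of the finitely many indices $i = 1,\ldots,n$ to produce a neighbourhood of $g_0$ on which each $g^{\cO}(U)(f_i)$ lies in $g_0^{\cO}(U)(f_i) + \pi_F^n \cO(U)^\circ$; by the displayed inequality applied to $g$ and $g_0$, this neighbourhood maps into $g_0^{\cO}(U) \cdot \Gamma_n(\cO(U)^\circ)$, completing the proof. The main technical point is this passage from pointwise-on-sections continuity (which comes directly from the hypothesis) to operator-norm continuity (which is the stronger statement required by Definition \ref{GLB}); everything else is formal.
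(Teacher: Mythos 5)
Your proposal takes a genuinely different route from the paper, but the allocation of difficulty is inverted, and there is an unproven assertion precisely where the substance lies. The paper's proof is the single observation that the congruence subgroup $\Gamma_n(\cO(U)^\circ)$ of $\cB(\cO(U))^\times$ contains the group $\cG_{\pi_F^n}(\cA)$ of \cite[p.~19]{EqDCap}; since Definition 3.1.8 of \cite{EqDCap} stipulates that $G_U \to \Aut_K(U,\cO_U)$ is continuous for the topology generated by the $\cG_{\pi_F^n}(\cA)$, and this topology is therefore at least as fine as the one induced from $\cB(\cO(U))^\times$, operator-norm continuity is immediate. You instead assert that Definition 3.1.8 of \cite{EqDCap} ``directly gives'' joint continuity of $(g,f) \mapsto g^{\cO}(f)$ and declare this the easy part, reserving the technical content for the telescoping upgrade. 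That is backwards: to extract even strong-operator continuity from the actual definition one must produce, for each $n$, a neighbourhood $W$ of $g_0$ in $G_U$ with $(g_0^{\cO})^{-1}g^{\cO} \in \cG_{\pi_F^n}(\cA)$ for all $g \in W$ --- but this already places $(g_0^{\cO})^{-1}g^{\cO}$ in $\Gamma_n(\cA)$, i.e.\ within $|\pi_F|^n$ of the identity in operator norm, which is the conclusion you are after. Your telescoping estimate on the images $f_i$ of Tate-algebra generators is correct and is a legitimate way to pass from strong-operator to operator-norm continuity, so it would be the right tool if the hypothesis genuinely supplied only the former; here, however, it is redundant once the ``easy'' step has been honestly justified. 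One smaller point: $\cO(U)^\circ$ need not be topologically generated over $K^\circ$ by the $f_i$, so the displayed inequality should be read against the residue norm (whose unit ball is a quotient of $K^\circ\langle T_1,\ldots,T_n\rangle$), or one should invoke the paper's earlier remark that the topology on $\cB(\cO(U))^\times$ is independent of the choice of equivalent $K$-Banach norm on $\cO(U)$.
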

\begin{proof} Let $U$ be an affinoid subdomain of $X$. Consider the sup norm $|\cdot |_U$ on $\cO(U)$ whose unit ball in $\cA := \cO(U)^\circ$. For each $n \geq 0$, the congruence subgroup $\Gamma_n(\cO(U)^\circ)$ of $\cB(\cO(U))^\times$ contains the group $\cG_{\pi_F^n}(\cA)$ appearing on \cite[p. 19]{EqDCap}. Through the Raynaud generic fibre functor $\rig$, $\cG_{\pi_F^n}(\cA)$ can be identified with a subgroup of the group of $K$-linear automorphisms $\Aut_K(U, \cO_U)$ of the $G$-ringed topological space $(U, \cO_U)$. These subgroups of $\Aut_K(U, \cO_U)$ form a filter base for a certain topology on $\Aut_K(U,\cO_U)$ --- see \cite[Theorem 3.1.5]{EqDCap} --- and the action map $G_U \to \Aut_K(U, \cO_U)$ is continuous with respect to this topology by \cite[Definition 3.1.8]{EqDCap}, because $G$ is assumed to act continuously on $X$. It follows that the action map $G_U \to \cB(\cO(U))^\times$ is continuous as required.
\end{proof}

A \emph{morphism} between two $G$-equivariant line bundles $\sL$ and $\sM$ is a morphism of $G$-equivariant $\cO$-modules in the sense of \cite[Definition 2.3.1(c)]{EqDCap}. Given any such morphism $\varphi : \sL \to \sM$ and an affinoid subdomain $U$ of $X$, the map $\varphi(U) : \sL(U) \to \sM(U)$ is then an $\cO(U)$-linear homomorphism between two finitely generated $\cO(U)$-modules, and is therefore automatically continuous.

\begin{defn} \label{PicGX} We let $\Pic^G(X)$ denote the set of isomorphism classes of $G$-equivariant line bundles on $X$.\end{defn}

\begin{lem} \label{TensorHomPicG}  Let $\sL$ and $\sM$ be $G$-equivariant line bundles on $X$. Then so are $\sL \otimes_{\cO} \sM$ and $\sL^{\otimes -1} = \mathpzc{Hom}_{\cO}(\sL, \cO)$.\end{lem}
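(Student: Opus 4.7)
The plan is as follows. Invertibility of $\sL \otimes_\cO \sM$ and $\sL^{\otimes -1}$ as $\cO_X$-modules is a classical fact from commutative algebra, so what remains is (i) to build an $R$-linear $G$-equivariant structure on each, in the sense of Definition \ref{DefnEquivSheaf}, and (ii) to verify the continuity condition (b) of Definition \ref{GLB} on every affinoid subdomain $U$ of $X$.

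For (i), I would use the canonical identifications $g^\ast(\sL \otimes_\cO \sM) \cong g^\ast\sL \otimes_{g^\ast\cO} g^\ast\sM$ and $g^\ast \mathpzc{Hom}_\cO(\sL, \cO) \cong \mathpzc{Hom}_{g^\ast\cO}(g^\ast\sL, g^\ast\cO)$ to set
\[ g^{\sL \otimes \sM} := g^\sL \otimes g^\sM, \qquad g^{\sL^{\otimes -1}}(\varphi) := g^\cO \circ \varphi \circ (g^\sL)^{-1}, \]
the latter on local sections. The $\cO$-linearity of $g^{\sL^{\otimes -1}}$ follows from the $\cO$-semilinearity of $g^\sL$ through $g^\cO$. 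The cocycle condition (\ref{EasyCocyc}) in each case reduces by a direct computation to the known cocycle conditions for $\sL$, $\sM$ and $\cO$, using only that tensor product preserves composition and that inversion reverses composition order.

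For (ii), fix an affinoid subdomain $U$ of $X$ and write $A := \cO(U)$, $L := \sL(U)$, $M := \sM(U)$. Coherence together with Kiehl's Theorem (as in the proof of Lemma \ref{IntConLocFrech}) yield canonical $K$-Banach identifications $(\sL \otimes \sM)(U) \cong L \otimes_A M$ and $\sL^{\otimes -1}(U) \cong \Hom_A(L,A)$. Under these, the action of $g \in G_U$ becomes $g^\sL(U) \otimes g^\sM(U)$ and $\varphi \mapsto g^\cO(U) \circ \varphi \circ (g^\sL(U))^{-1}$ respectively. Continuity of these maps then follows by composing the hypothesised continuous maps $g \mapsto g^\sL(U)$, $g \mapsto g^\sM(U)$ and $g \mapsto g^\cO(U)$ (the last by Lemma \ref{TrivGLB}) with three standard statements: (a) the tensor product $\cB(L) \times \cB(M) \to \cB(L \otimes_A M)$ is bounded bilinear, since $\|T \otimes S\| \leq \|T\|\hsp\|S\|$; (b) pre- and post-composition on $\Hom_A(L, A)$ are bounded bilinear; (c) inversion in $\cB(L)^\times$ is continuous via a geometric-series argument.

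I expect the main difficulty to be purely notational: carefully tracking the canonical identifications of $g^\ast$ applied to a tensor product or internal Hom, so that the cocycle verification for the new equivariant structures reduces cleanly to the given cocycle identities for $\sL$, $\sM$ and $\cO$. No step appears to require anything beyond the topological-group formalism already developed in $\S\ref{ELBsec}$--$\S\ref{PicConG}$.
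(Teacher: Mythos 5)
Your proposal is correct and follows exactly the route the paper intends: the paper's own proof is a one-line appeal to "the usual formula for tensor product and contragredient representations," and you have spelled out that verification — the equivariant structures, the reduction of the cocycle identity to those for $\sL$, $\sM$, $\cO$, and the continuity check on affinoids via Kiehl and boundedness of tensor/composition — in the intended way.
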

\begin{proof}
We can easily verify that the usual formula for tensor product and contragredient representations satisfies Definition \ref{GLB}.
\end{proof}
	
With respect to these operations $\Pic^G(X)$ is an abelian group with unit given by the structure $\{g^\cO\}$ on $\sL=\cO_X$. 

\begin{prop} \label{phiz} Suppose that $L$ is a finite extension of $K$ and $z\in X(L)$. There is a natural group homomorphism $\phi_z\colon \Pic^G(X)\to \Hom(G_z,L^\times)$ given by \[ \phi_z([\sL])=\phi_{z,\sL}.\] 
\end{prop}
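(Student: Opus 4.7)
The plan is to check four things: that $\phi_{z,\sL}$ in fact takes values in $L^\times$ (rather than the potentially larger $\Aut_L(\sL(z))$); that $\phi_{z,\sL}$ is continuous; that the assignment $[\sL] \mapsto \phi_{z,\sL}$ is well-defined on isomorphism classes; and that it is multiplicative.

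First, because $\sL$ is invertible as an $\cO$-module, its stalk $\sL_z$ is free of rank one over $\cO_{X,z}$, so the fibre $\sL(z) = L \otimes_{\cO_{X,z}} \sL_z$ is a one-dimensional $L$-vector space. Hence $\Aut_L(\sL(z))$ is canonically identified with the group of scalar multiplications $L^\times$, independently of any choice of basis, and the homomorphism $\phi_{z,\sL}$ of Lemma \ref{phizM} may be regarded as a group homomorphism $G_z \to L^\times$.

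Second, for continuity, I would choose an affinoid subdomain $U$ of $X$ containing $z$ on which $\sL$ is trivial, with some generator $v \in \sL(U)$. Using continuity of the $G$-action on $X$ in the sense of \cite[Definition 3.1.8]{EqDCap}, one produces an open subgroup $H$ of $G_z$ that stabilises $U$. Then Definition \ref{GLB} gives a continuous group homomorphism $H \to \cB(\sL(U))^\times$. The evaluation at $z$ is a continuous surjective $K$-linear map $\sL(U) \twoheadrightarrow \sL(z)$; because each $g \in H \subseteq G_z$ preserves the maximal ideal at $z$ via $g^\cO_z$, the action of $g$ on $\sL(U)$ descends to the fibre, inducing a continuous map $\cB_{G_z}(\sL(U))^\times \to \Aut_L(\sL(z)) = L^\times$. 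Composing yields continuity of $\phi_{z,\sL}$ on the open subgroup $H$ of $G_z$, and hence on all of $G_z$ since it is a group homomorphism. This step is the main obstacle, as one has to be careful about the interplay between $g$ stabilising $U$ and $g$ fixing $z$.

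Finally, any isomorphism $\varphi : \sL \stackrel{\cong}{\longrightarrow} \sM$ of $G$-equivariant line bundles induces an $L$-linear $G_z$-equivariant isomorphism $\bar{\varphi}_z : \sL(z) \stackrel{\cong}{\longrightarrow} \sM(z)$; because both fibres are one-dimensional, conjugation by $\bar{\varphi}_z$ preserves the scalar characters, so $\phi_{z,\sL} = \phi_{z,\sM}$, giving well-definedness on isomorphism classes. The homomorphism property follows from the canonical $G_z$-equivariant identification $(\sL \otimes_{\cO} \sM)(z) \cong \sL(z) \otimes_L \sM(z)$ from Lemma \ref{TensorHomPicG}: if $g \in G_z$ acts by $\alpha \in L^\times$ on $\sL(z)$ and by $\beta \in L^\times$ on $\sM(z)$, then it acts on the tensor product by $\alpha\beta$, so $\phi_z([\sL]\otimes[\sM]) = \phi_z([\sL])\phi_z([\sM])$. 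This in particular shows $\phi_z$ sends the identity class to the trivial character.
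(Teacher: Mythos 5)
Your proof is correct and follows essentially the same route as the paper's: the one-dimensionality of $\sL(z)$ over $L$ identifies $\Aut_L(\sL(z))$ with $L^\times$, continuity is checked on the open subgroup $G_z \cap G_U$ by invoking Definition~\ref{GLB}(b), and multiplicativity follows from the $G_z$-equivariant identification $(\sL\otimes_{\cO}\sM)(z) \cong \sL(z)\otimes_L\sM(z)$. One minor caveat: the evaluation map $\sL(U)\to\sL(z)$ need not be surjective (its image is a $k(z)$-line, where $k(z)$ is the residue field of $\cO_{X,z}$, which may be a proper subfield of $L$), but this does not harm your argument because the $G_z$-action on $\sL(z)$ is a priori by $L$-scalars, so nontriviality of the evaluation map already determines $\phi_{z,\sL}(g)$ continuously.
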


\begin{proof} We note that if $\sL$ is a line bundle on $X$ with a $G$-equivariant structure then $\sL(z)$ is a one-dimensional vector space over the residue field of the local ring $\cO_{X,z}$, and so, by Lemma \ref{phizM}, $\phi_{z,\sL}$ can be viewed as a homomorphism $G_z\to L^\times$.  

Next we show that $\phi_{z,\sL}$ is continuous. To this end we choose an affinoid subdomain $U$ of $X$ such that $z\colon \Sp(L)\to X$ factors through $U$. Since $G_z\cap G_U$ is an open subgroup of $G_z$ it suffices to show that $\phi_{z,\sL}|_{G_U\cap G_z}$ is continuous. Since the natural map $\cB(\sL(U))^\times \to L^\times$ is continuous this follows from Definition \ref{GLB}(b).

It remains to show that $\phi_z$ is a group homomorphism. This is immediate, because whenever $\sL_1$ and $\sL_2$ are elements of $\Pic^G(X)$, there is a canonical isomorphism \[(\sL_1\otimes_{\cO} \sL_2)(z)\to \sL_1(z)\otimes_L \sL_2(z)\] which is compatible with the $G$-actions. 
\end{proof}

This discussion leads us on to the following definition.

\begin{defn}\label{GLBFC} A \emph{$G$-equivariant line bundle with flat connection on $X$} is a $G$-equivariant $\cD_X$-module $\sL$, such that when $\sL$ is viewed as a $G$-equivariant $\cO_X$-module by restriction, it is a $G$-equivariant line bundle on $X$.\end{defn}

It follows easily from Lemma \ref{TrivGLB} that $\cO_X$ equipped with the trivial connection and its usual equivariant structure is a $G$-equivariant line bundle with connection. 

We can also typically put other $G$-equivariant structures on the trivial line bundle with trivial connection by considering $\Hom(G,K^\times)$, the abelian group of \emph{continuous} group homomorphisms from $G$ to $K^\times$. 

\begin{defn}\label{defnOchi} Given $\chi\in \Hom(G,K^\times)$ we define a new $G$-equivariant structure on $\cO$ equipped with the trivial connection: for each affinoid subdomain $U$ of $X$ and for each $g \in G$ we define $K$-linear continuous maps
	\[ g^{\cO_\chi}(U) \colon \cO(U) \longrightarrow \cO(gU)\]
	by $f \mapsto \chi(g) g^{\cO}(f)$.
\end{defn}
This family $\{g^{\cO_\chi}\}$ then defines a $G$-equivariant line bundle with connection on $X$ in the sense of Definition \ref{GLBFC} that we will denote by $\cO_\chi$. Note that condition (b) in Definition \ref{GLB} follows from the assumption that $\chi : G \to K^\times$ is continuous.

\begin{lem}\label{TensorHomPicConG} Let $\sL$ and $\sM$ be $G$-equivariant line bundles with flat connection. Then so are $\sL \otimes_{\cO} \sM$ and $\sL^{\otimes -1} = \mathpzc{Hom}_{\cO}(\sL, \cO)$.
\end{lem}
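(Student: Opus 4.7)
The plan is to combine Lemma \ref{TensorHomPicG}, which already gives the $G$-equivariant $\cO_X$-module structure on $\sL \otimes_{\cO} \sM$ and $\sL^{\otimes -1}$, with the standard (Leibniz-type) formulas that equip tensor products and duals of $\cD$-modules with a flat connection, as already recalled at the start of $\S \ref{ELBsec}$. The main point is to verify compatibility between the two pieces of structure, so that the result is genuinely a $G$-equivariant $\cD_X$-module (i.e.\ a $G$-$\cD$-module in the sense of the paper), and then to quote Definition \ref{GLBFC}.

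First, I would record the $\cD_X$-module structures: on $\sL \otimes_{\cO} \sM$ the tangent sheaf $\cT_X$ acts by $v \cdot (\ell \otimes m) = (v \cdot \ell)\otimes m + \ell \otimes (v \cdot m)$, and on $\sL^{\otimes -1}$ by $(v\cdot f)(\ell) = v\cdot f(\ell) - f(v\cdot \ell)$, as already defined at the beginning of $\S \ref{ELBsec}$. Next, I would take the $G$-equivariant $\cO_X$-module structures $\{g^{\sL\otimes\sM}\}$ and $\{g^{\sL^{\otimes -1}}\}$ produced by (the proof of) Lemma \ref{TensorHomPicG}, namely $g^{\sL\otimes \sM}(\ell\otimes m) = g^\sL(\ell)\otimes g^\sM(m)$ and $g^{\sL^{\otimes -1}}(f)(\ell) = g^{\cO}\!\bigl(f\bigl((g^{-1})^\sL(\ell)\bigr)\bigr)$.

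The key verification is that these morphisms of $\cO_X$-modules are in fact $\cD_X$-linear in the appropriate equivariant sense, i.e.\ that
\[ g^{\sL\otimes\sM}(v\cdot (\ell\otimes m)) = g^\cD(v)\cdot g^{\sL\otimes\sM}(\ell\otimes m) \]
and the analogous identity for $\sL^{\otimes -1}$, for all local sections $v$ of $\cT_X$. This is a direct computation: expand using the Leibniz rule and apply the fact that $g^\sL$ and $g^\sM$ are individually $\cD_X$-linear in the equivariant sense (as part of the hypothesis that $\sL$ and $\sM$ are $G$-$\cD$-modules), together with the fact that $g^\cD$ is a ring homomorphism compatible with the action on $\cO$. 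Nothing deep happens here; it is essentially the universal property of the Leibniz rule combined with the cocycle identity $(\ref{EasyCocyc})$.

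Finally, condition (a) of Definition \ref{GLBFC} (invertibility as an $\cO_X$-module) is immediate, and condition (b) (continuity of the action map $G_U \to \cB((\sL \otimes \sM)(U))^\times$, resp.\ $G_U \to \cB(\sL^{\otimes -1}(U))^\times$) follows from Lemma \ref{TensorHomPicG} applied to the underlying $G$-equivariant $\cO_X$-modules, since Definition \ref{GLBFC} only requires continuity at the level of line bundles. Thus no separate hard step remains: the main (entirely mechanical) obstacle is just bookkeeping of the cocycle and Leibniz identities, and the lemma follows.
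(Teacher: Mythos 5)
Your proposal is correct and follows exactly the same route as the paper: take the $\cD$-module structures from the Leibniz formulas at the start of $\S\ref{ELBsec}$, the $G$-equivariant $\cO$-module structures from Lemma \ref{TensorHomPicG}, and check that these are compatible, yielding a $G$-equivariant $\cD$-module satisfying Definition \ref{GLBFC}. The paper's own proof is just a terse three-sentence version of this; you are merely spelling out the compatibility check that the paper summarises as ``the usual formula for the tensor product and contragredient representations allow us to see.''
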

\begin{proof}  We saw at the start of $\S \ref{ELBsec}$ that $\sL \otimes_{\cO} \sM$ and $\sL^{\otimes -1}$ are $\cD$-modules on $X$. The usual formula for the tensor product and contragredient representations allow us to see that they also carry standard $G$-equivariant $\cD$-module structures and Lemma \ref{TensorHomPicG} shows that this makes them $G$-equivariant line bundles.\end{proof}

\begin{defn} We denote the set of isomorphism classes of $G$-equivariant line bundles with flat connection on $X$ by $\PicCon^G(X)$.
\end{defn}

In view of Lemma \ref{TensorHomPicConG}, the operations $-\otimes_{\cO}-$ and $(-)^{\otimes -1}$ endow $\PicCon^G(X)$ with the structure of an abelian group. The unit element in this group is given by the isomorphism class of $\cO_X$ equipped with the trivial connection together with its usual $G$-equivariant structure.

\begin{defn}\label{ConGdefn} We define $\Con^G(X)$ by 
	\[\Con^G(X) := \ker(\PicCon^G(X) \to \Pic(X)),\]
	the group of isomorphism classes of $G$-equivariant line bundles with flat connection on $X$ that are trivial after forgetting the connection and the $G$-action. 
\end{defn}

We record that $\Con$ is functorial in a natural way. For $g\in G$ we write $c_g\colon G\to G$ to denote conjugation by $g$; $c_g(x)=gxg^{-1}$. 

\begin{prop}\label{GactsPicConeq} Let $U$ be a geometrically connected admissible open subset of $X$, and suppose that $H$ is a closed subgroup of $G_U$. Then for each $g\in G$, \be \item the map $[\sL]\mapsto [g_\ast\sL]$ induces a natural isomorphism 
\[g\colon \PicCon^H(U)\quad \stackrel{\cong}{\longrightarrow} \quad\PicCon^{^{g}H}(g U);\] \item for $L$ a finite extension of $K$ and $z\in U(L)$, the following diagram commutes: \[ \xymatrix{ \PicCon^H(U) \ar[r]^{\phi_z} \ar[d]_g & \Hom(H_z, L^\times) \\  \PicCon^{^gH}(gU) \ar[r]_{\phi_{g\cdot z}}  & \Hom({}^g H_z, L^\times).\ar[u]_{c_g^\ast}  }\]  \ee\end{prop}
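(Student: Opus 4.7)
For part (a), the plan is to construct the map $[\sL]\mapsto[g_\ast\sL]$ by transport of structure. Given $[\sL]\in \PicCon^H(U)$ with equivariant structure $\{h^\sL\}_{h\in H}$, the sheaf $g_\ast\sL$ on $gU$ is invertible as an $\cO_{gU}$-module and inherits a $\cD_{gU}$-module structure via $(g^{-1})^\cD\colon\cD_{gU}\stackrel{\cong}{\longrightarrow}g_\ast\cD_U$, as recorded in Lemma \ref{GactsPicCon} and Remark \ref{GactsPicConRem}. For each $\tilde h=ghg^{-1}\in{}^gH$, the equality of homeomorphisms $\tilde h\circ g=g\circ h$ on $U$ yields a canonical identification $\tilde h^{\ast}g_\ast\sL=g_\ast h^\ast\sL$, so we define $\tilde h^{g_\ast\sL}:=g_\ast(h^\sL)$. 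A direct bookkeeping check shows the cocycle condition (\ref{EasyCocyc}) for $\{\tilde h^{g_\ast\sL}\}_{\tilde h\in {}^gH}$ reduces to the cocycle condition for $\{h^\sL\}_{h\in H}$, and compatibility with the $\cD_{gU}$-action is immediate from the construction. For the continuity condition in Definition \ref{GLB}(b), one notes that for any affinoid subdomain $V$ of $gU$, the action map $({}^gH)_V\to\cB((g_\ast\sL)(V))^\times$ factors as conjugation $c_{g^{-1}}\colon ({}^gH)_V\to H_{g^{-1}V}$ followed by the action of $H$ on $\sL$; both of these are continuous.

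It remains to verify that this assignment is a group homomorphism and to exhibit an inverse. The natural isomorphism $g_\ast(\sL\otimes_{\cO_U}\sM)\cong g_\ast\sL\otimes_{\cO_{gU}}g_\ast\sM$ is easily seen to intertwine the equivariant structures on both sides, so $[\sL]\otimes[\sM]\mapsto[g_\ast\sL]\otimes[g_\ast\sM]$. The same transport-of-structure construction applied to $g^{-1}$ produces a map $\PicCon^{{}^gH}(gU)\to\PicCon^{H}(U)$; the two compositions are the identity because functoriality of pushforward gives $g_\ast(g^{-1})_\ast=\id$ and vice versa, and the equivariant structures match on the nose under these identifications.

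For part (b), first observe that $({}^gH)_{g\cdot z}=\{ghg^{-1}:h\in H,\,ghz=gz\}={}^g(H_z)$, so $c_g\colon H_z\to({}^gH)_{g\cdot z}$, $h\mapsto ghg^{-1}$, is a topological group isomorphism and the map $c_g^\ast$ in the diagram is well-defined. Next, the stalk $(g_\ast\sL)_{gz}$ is, by definition, equal to $\sL_z$ as an abelian group, and the structure map $g^{\cO}_z\colon\cO_{X,gz}\to\cO_{X,z}$ intertwines $(gz)^\sharp$ with $z^\sharp$, so tensoring over $L$ yields a canonical $L$-linear identification of fibres $(g_\ast\sL)(gz)\cong\sL(z)$. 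Under this identification, the action of $\tilde h=ghg^{-1}\in({}^gH)_{gz}$ on $(g_\ast\sL)(gz)$ via $\tilde h^{g_\ast\sL}_{gz}=g_\ast(h^\sL)_{gz}$ is literally the action of $h$ on $\sL(z)$ via $h^\sL_z$. Therefore
\[\phi_{g\cdot z}([g_\ast\sL])(ghg^{-1})=\phi_z([\sL])(h)\qmb{for all}h\in H_z,\]
which is exactly the commutativity of the diagram.

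The main obstacle throughout is purely notational: keeping the natural transformations $g_\ast h^\ast=\tilde h^\ast g_\ast$, the conjugation $c_g$, and the base-change comparisons on stalks consistently aligned. Once the formal identifications are set up, every verification reduces to unwinding the definitions, and no new idea beyond transport of structure is required.
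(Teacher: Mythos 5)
Your proof is correct and takes essentially the same approach as the paper's: part (a) is transport of structure via $g_\ast$ with the equivariant structure $(ghg^{-1})^{g_\ast\sL} := g_\ast(h^\sL)$, and part (b) identifies the stalk $(g_\ast\sL)_{g\cdot z}$ with $\sL_z$, descends the identification to fibres, and matches the actions under the conjugation $c_g$. One small notational slip: given the convention $g^\cO : \cO \to g^\ast\cO$ in $\S\ref{EqShAmPr}$, the induced map on stalks is $g^\cO_z : \cO_{X,z} \to \cO_{X,gz}$ (as the paper uses in its verification that $\tau_g(a\cdot m) = g^\cO_z(a)\cdot\tau_g(m)$), not $\cO_{X,gz} \to \cO_{X,z}$ as you wrote; what you actually want there is its inverse $(g^{-1})^\cO_{gz}$. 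This does not affect the validity of your argument, since either direction yields the same semi-linear identification once the residue-field maps are chased.
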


\begin{proof} (a) We've already seen in Remark \ref{GactsPicConRem} that $\sL\mapsto g_\ast\sL$ induces an isomorphism $\PicCon(U)\to \PicCon(gU)$. 
	
	It remains to see how the $H$-equivariant structure on $\sL\in \PicCon^H(U)$ induces an ${}^gH$-equivariant structure on $g_{\ast} \sL$: for each $h\in H$ we have an isomorphism $h^\sL\colon \sL\to h^\ast \sL$. This induces an isomorphism \[(ghg^{-1})^{g_\ast\sL}\colon g_\ast\sL\to (ghg^{-1})^{\ast}g_\ast\sL=g_\ast h^\ast\sL\] given by $(ghg^{-1})^{g_\ast \sL}=g_\ast(h^\sL)$. It is easy to verify that this induces the desired isomorphism $\PicCon^H(U)\to \PicCon^{{}^gH}(gU)$. 
	
	(b) Fix $[\sL] \in \PicCon^H(U)$ and consider the stalk $(g_\ast \sL)_{g \cdot z}$ of $g_\ast \sL$ at $g \cdot z \in gU$. There is a natural bijection between the affinoid subdomains of $gU$ containing $g\cdot z$, and the affinoid subdomains of $U$ containing $z$, given by $V \mapsto g^{-1}V$. This gives a $K$-linear isomorphism $\tau_g : \sL_z \to (g_\ast \sL)_{g \cdot z}$ which is appropriately equivariant with respect to the $H_z$-action on $\sL_z$ and the $H_{g\cdot z} = {}^g H_z$-action on $(g_\ast \sL)_{g \cdot z}$:
\[ \tau_g( h \cdot m ) = c_g(h) \cdot \tau_g(m) \qmb{for all} h \in H_z, m \in \sL_z.\]
Now let $h \in H_z$. Using Lemma \ref{phizM}, we see that the scalar $\phi_z([\sL])(h) \in L^\times$ is completely determined by the following equation inside $\sL(z)$:
	\[ 1 \otimes h \cdot m = \phi_z([\sL])(h) \otimes m \qmb{for all} m \in \sL_z.\]
Since $c_g(h) = ghg^{-1}$ lies in $H_{g\cdot z}$, we have a similar equation inside $(g_\ast \sL)(g \cdot z)$:
\[ 1 \otimes c_g(h) \cdot m = \phi_{g\cdot z}([g_\ast \sL])(c_g(h)) \otimes m  \qmb{for all} m \in (g_\ast \sL)_{g \cdot z}. \]
Note that the map $\tau_g$ satisfies $\tau_g(a \cdot m) = g^{\cO}_z(a) \cdot \tau_g(m)$ for all $a \in \cO_{X,z}, m \in \sL_z$. Therefore $1 \otimes \tau_g : L \otimes_K \sL_z \to L \otimes_K (g_\ast \sL)_{g \cdot z}$ descends to a well-defined $L$-linear map $\sL(z) \to (g_\ast \sL)(g \cdot z)$. Applying this map to the first equation and comparing the result with the second shows that
\[ \phi_z([\sL])(h) = \phi_{g \cdot z}([g_\ast \sL])(c_g(h)) = (c_g^\ast \circ \phi_{g\cdot z} \circ g)([\sL])(h)\qmb{for all} h \in H_z.\]
This implies the commutativity of the diagram in the statement.\end{proof} 

  Forgetting the $G$-equivariant structure gives us a functor $\underline{\omega}$ from the category of $G$-equivariant line bundles with flat connection on $X$ and isomorphisms between them to the category of line bundles with flat connection on $X$ and isomorphisms between them. Moreover $\underline{\omega}$ induces a group homomorphism \[\omega\colon \PicCon^G(X) \longrightarrow \PicCon(X).\]
Recall\footnote{see \cite[Definition 2.3]{Kiehl}} that $X$ is said to be \emph{quasi-Stein} if there is an admissible affinoid covering $(X_n)_{n=0}^\infty$ of $X$ with $X_0 \subseteq X_1 \subseteq X_2 \subseteq \cdots $ such that for each $n \geq 0$, the restriction map $\cO(X_{n+1}) \to \cO(X_n)$ has dense image. If $X$ is quasi-Stein, then the global sections functor $\Gamma(X,-)$ gives a fully faithful embedding from the category of coherent $\cO$-modules on $X$ into the category $\cO(X)$-modules; the essential image is the category of \emph{coadmissible $\cO(X)$-modules} in the sense of \cite{ST}. In particular, every coherent $\cO$-module (such as the tangent sheaf $\cT$) is generated by its global sections.

\begin{prop}\label{PicSeq} Suppose that $X$ is geometrically connected and quasi-Stein. There is an exact sequence of abelian groups
	\[ 0 \to \Hom(G, K^\times) \to \PicCon^G(X) \stackrel{\omega}{\to} \PicCon(X)^G\]
	with the first non-trivial map given by $\chi\mapsto \cO_\chi$.
\end{prop}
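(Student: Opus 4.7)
The plan is to verify four things in turn: that $\chi \mapsto \cO_\chi$ is a well-defined group homomorphism; that $\omega$ lands in $\PicCon(X)^G$; that the composition $\omega \circ (\chi \mapsto \cO_\chi)$ vanishes; and that every element of $\ker\omega$ arises as an $\cO_\chi$. Multiplicativity of $\chi \mapsto \cO_\chi$ follows directly from Definition \ref{defnOchi}: on the tensor product $\cO \otimes_{\cO} \cO$ the twisted structures multiply the scalars to give $\chi(g)\chi'(g)$. For $[\sL] \in \PicCon^G(X)$, each equivariant structure map $g^\sL : \sL \to g^\ast \sL$ is a $\cD$-linear isomorphism, so applying $g_\ast$ yields a $\cD$-linear isomorphism $g_\ast\sL \cong \sL$, whence $g \cdot [\omega(\sL)] = [\omega(\sL)]$ in the $G$-action on $\PicCon(X)$ of Lemma \ref{GactsPicCon}. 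Triviality of $\omega(\cO_\chi)$ is clear since only the equivariant structure is twisted.

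For injectivity, suppose $\varphi : \cO \to \cO_\chi$ is an isomorphism in $\PicCon^G(X)$. Forgetting the equivariant structure, $\varphi$ is a $\cD$-linear automorphism of $\cO$, so by Corollary \ref{IsoLL} it equals multiplication by some $\lambda \in K^\times$. Writing out the equivariance identity $g^{\cO_\chi} \circ \varphi = g^\ast(\varphi) \circ g^\cO$ on local sections and using $\varphi = \lambda \cdot \id$ together with $g^{\cO_\chi}(f) = \chi(g)\hsp g^\cO(f)$ gives $\chi(g)\lambda g^\cO(f) = \lambda g^\cO(f)$, forcing $\chi \equiv 1$.

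For exactness at $\PicCon^G(X)$, take $[\sL] \in \ker \omega$ and choose a $\cD$-linear isomorphism $\psi : \cO \to \sL$. Transport the equivariant structure to $\cO$ by setting $\tilde g := g^\ast(\psi)^{-1} \circ g^\sL \circ \psi$; this produces a $G$-equivariant $\cD$-module $(\cO, \{\tilde g\})$ isomorphic to $\sL$ via $\psi$. Both $\tilde g$ and the standard map $g^\cO$ are $\cD$-linear isomorphisms $\cO \to g^\ast \cO$, so a further application of Corollary \ref{IsoLL} (viewing $g^\ast\cO$ as a line bundle with flat connection on $X$) produces a unique scalar $\chi(g) \in K^\times$ with $\tilde g = \chi(g)\hsp g^\cO$. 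Substituting into the cocycle identity $\widetilde{gh} = h^\ast(\tilde g) \circ \tilde h$ and cancelling the cocycle for $\{g^\cO\}$ yields $\chi(gh) = \chi(g)\chi(h)$, so by construction $(\cO, \{\tilde g\}) = \cO_\chi$.

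The main obstacle is verifying continuity of $\chi : G \to K^\times$. For any affinoid subdomain $U$ and any $g \in G_U$, the restriction $\tilde g(U) : \cO(U) \to \cO(U)$ equals $\chi(g)\hsp g^\cO(U)$, and both $g \mapsto \tilde g(U)$ and $g \mapsto g^\cO(U)$ are continuous maps $G_U \to \cB(\cO(U))^\times$ by Definition \ref{GLB}(b) applied to $\sL$ and to $\cO$ (Lemma \ref{TrivGLB}). Since inversion is continuous on the group of units of a $K$-Banach algebra, evaluating the quotient at $1 \in \cO(U)$ shows $\chi$ is continuous on $G_U$. As $\chi$ is a homomorphism, continuity at $1 \in G$ suffices, and the continuity of the $G$-action on $X$ in the sense of \cite[Definition 3.1.8]{EqDCap} furnishes open neighbourhoods of $1$ stabilising affinoid subdomains of $X$.
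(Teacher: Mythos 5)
Your proof is correct and follows essentially the same outline as the paper's: the key step, exactness at $\PicCon^G(X)$, again proceeds by trivializing $\sL$ and extracting a character from the transported equivariant structure, then checking the cocycle identity and continuity. The one technical divergence is in how the scalar relation is obtained as an identity of sheaf morphisms: you apply Corollary \ref{IsoLL} directly to deduce $\tilde{g}=\chi(g)\,g^{\cO}$ as sheaf maps, whereas the paper first defines $\chi$ via the $G$-action on a horizontal generator $v\in\sL(X)^{\cT(X)=0}$, verifies equivariance of $\varphi_\chi$ on global sections, and then upgrades this to a sheaf identity via the quasi-Stein hypothesis and \cite[Corollary 3.3]{ST}. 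Both routes ultimately rest on Proposition \ref{GlobHorO}, but yours is slightly more economical in that it bypasses the appeal to \cite{ST}. One small imprecision worth noting: $\tilde{g}$ and $g^{\cO}$ are $\cD$-\emph{semi}-linear rather than $\cD$-linear maps $\cO\to g^\ast\cO$ (each intertwines $P\in\cD$ with $g^{\cD}(P)$), so the clean way to invoke Corollary \ref{IsoLL} is to the genuinely $\cD$-linear endomorphism $(g^{\cO})^{-1}\circ\tilde{g}$ of $\cO$, which then equals $\chi(g)\cdot\mathrm{id}_{\cO}$ by that corollary (or directly by Proposition \ref{GlobHorO}). Your continuity argument, using the topological group $\cB(\cO(U))^\times$ and evaluation at $1$, is a streamlined but equivalent version of the paper's explicit estimate via affine formal models.
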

\begin{proof} It is easy to verify that $\chi\mapsto \cO_\chi$ defines a group homomorphism from  $\Hom(G,K^\times)$ to $\PicCon^G(X)$. Using Lemma \ref{GlobHorSec}(b), we see that for any $G$-equivariant line bundle with flat connection $\sL$, $\Hom_{\cD}(\cO, \sL) \cong \sL(X)^{\cT(X) =0}$ is a $K$-linear $G$-representation: if $g \in G$ and $\cT(X) \cdot v = 0$ for some $v \in \sL(X)$, then $\partial \cdot (g \cdot v) = g \cdot ((g^{-1} \cdot \partial) \cdot v) = 0$ for all $\partial \in \cT(X)$ so that $g \cdot v \in \sL(X)^{\cT(X) = 0}$ again. 
	
	Suppose that $\chi \in \Hom(G, K^\times)$ is such that $\cO_\chi$ is isomorphic to $\cO$ as a $G$-equivariant line bundle with flat connection. Considering the global horizontal sections, we obtain an isomorphism of continuous $G$-representations
	\[\cO(X)^{\cT(X) = 0} \quad \cong \quad \cO_\chi(X)^{\cT(X) = 0}.\]
	By Proposition \ref{GeomConGlobHorO}, both of these $K$-vector spaces are $1$-dimensional and spanned by $1 \in \cO(X)$. However the $G$-action on the first is trivial, whereas the $G$-action on the second is through the character $\chi$. Hence $\chi$ is the trivial character, and the map $\Hom(G, K^\times) \to \PicCon^G(X)$ is injective.
	
	 If $\sL$ is $G$-equivariant, then $(g^{-1})^{\sL} : \sL \to g_\ast \sL$ is a $\cD$-linear isomorphism for all $g \in G$ which means that the class $[\sL]$ in $\PicCon(X)$ is fixed by this $G$-action i.e. the image of the map $\PicCon^G(X)\to \PicCon(X)$ is indeed contained in $\PicCon(X)^G$. 
	 
	Finally suppose that $\sL$ is a $G$-equivariant line bundle with flat connection on $X$ which becomes trivial after forgetting the $G$-structure. Then we can find a $\cD$-linear isomorphism $\varphi : \cO \stackrel{\cong}{\longrightarrow} \sL$. Applying the functor of global horizontal sections,  we deduce from Proposition \ref{GeomConGlobHorO} that the $K$-linear $G$-representation $\sL(X)^{\cT(X) = 0}$ is in fact one-dimensional. Let $v := \varphi(X)(1) \in \sL(X)^{\cT(X)=0}$; since $1 \in \cO(X)$ generates $\cO$ as an $\cO$-module, we see that $v \in \sL(X)$ generates $\sL$ as an $\cO$-module: $\sL = \cO \cdot v$. Let $\chi \in \Hom(G, K^\times)$ describe the $G$-action on $v$, so that $g \cdot v = \chi(g) v$ for all $g \in G$. This gives us a character $\chi : G \to K^\times$;  we will first show that $\chi$ is continuous. To see this, choose a non-empty affinoid subdomain $U$ of $X$ and let $\cA$ be an affine formal model in $\cO(U)$, so that $\cA \cdot v_{|U}$ is the unit ball in $\sL(U) = \cO(U) \cdot v_{|U}$ with respect to some $K$-Banach norm defining the canonical topology on $\sL(U)$. By Definition \ref{GLB}(b), for any $n \geq 0$ we can find an open subgroup $H_n$ of $G_U$ such that $h \cdot v_{|U} \equiv v_{|U} \mod \pi_F^n \cA \cdot v_{|U}$ for all $h \in H_n$. Hence $\chi(h) \equiv 1 \mod \pi_F^n K^\circ$ for all $h \in H_n$, and $\chi : G \to K^\times$ is therefore continuous as claimed.
	
	With $\chi \in \Hom(G,K^\times)$ in hand, consider the $G$-equivariant line bundle with connection $\sL \otimes_{\cO} \cO_{\chi^{-1}},$ and the $\cD$-linear isomorphism $\varphi_\chi : \cO \to \sL \otimes_{\cO} \cO_{\chi^{-1}}$ given by $\varphi_\chi(f) : =\varphi(f)\otimes 1$. We claim that $\varphi_\chi$  is $G$-equivariant; given this claim, it follows immediately that $\sL \cong \cO_\chi$ as a $G$-equivariant $\cD$-module, and then $[\sL] \in \PicCon^G(X)$ lies in the image of $\Hom(G, K^\times)$ as required.
	
	To establish the claim, we first check that $\varphi_\chi(X) : \cO(X) \to (\sL \otimes \cO_{\chi^{-1}})(X)$ is $G$-equivariant: for $g \in G$ and $f \in \cO(X)$ we have
	\[g\cdot\varphi_\chi(X)(f) = g\cdot(f v\otimes 1) =(g\cdot f)(g\cdot v)\otimes \chi^{-1}(g) = (g\cdot f)v\otimes 1= \varphi_\chi(X)(g\cdot f).\] 
	By replacing $\sL$ by $\sL \otimes \cO_{\chi^{-1}}$, it now remains to show that if $\varphi : \cO \to \sL$ is a $\cD$-linear isomorphism such that $\varphi(X) : \cO(X) \to \sL(X)$ is $G$-equivariant, then $\varphi$ is also $G$-equivariant. To this end, fix $g \in G$, and consider the morphisms $g^\ast(\varphi) \circ g^{\cO} : \cO \to g^\ast \sL$ and $g^{\sL} \circ \varphi : \cO \to g^\ast \sL$. By precomposing the $g^\ast \cO$-module structure on $g^\ast \sL$ with the ring isomorphism $g^{\cO} : \cO \to g^\ast \cO$ we may regard $g^\ast \sL$ to be an $\cO$-module. Then it is coherent, and the two maps are $\cO$-linear. Since they have the same global sections by our computation above and since $X$ is \emph{quasi-Stein}, we conclude using \cite[Corollary 3.3]{ST} that the two maps are equal. This means that $\varphi$ is $G$-equivariant.
\end{proof}
In order to understand $\PicCon^{G^0}(\Omega)$ the following result will prove to be useful.

\begin{prop}\label{PicConamal}Suppose that  $X$ is quasi-Stein and geometrically connected and $G$ is equal to an amalgamated product $A \ast_CB$ of its open subgroups $A$ and $B$ along their common subgroup $C$. Then the homomorphism
	\[ (p_A,p_B)\colon \PicCon^G(X)\to \PicCon^A(X)\underset{\PicCon^C(X)}{\times}{}\PicCon^B(X) \] induced by restriction of equivariant structures is an isomorphism.
\end{prop}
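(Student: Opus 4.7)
The plan is to prove injectivity and surjectivity of the restriction map separately, using Theorem~\ref{amalgequiv} with $\cA=\cD_X$ together with Proposition~\ref{GAmodStrAsSections} for surjectivity, and Corollary~\ref{IsoLL} for injectivity. For injectivity, suppose $[\sL_1],[\sL_2]\in\PicCon^G(X)$ have the same image. Then there exist an $A$-equivariant $\cD$-linear isomorphism $\varphi_A\colon\sL_1\to\sL_2$ and a $B$-equivariant one $\varphi_B\colon\sL_1\to\sL_2$, which a priori need not coincide. However, Corollary~\ref{IsoLL} forces $\varphi_B=\lambda\varphi_A$ for some $\lambda\in K^\times$; since central scalars commute with all relevant $K$-linear maps, the $B$-equivariance of $\lambda\varphi_A$ descends immediately to $\varphi_A$ itself. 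Thus $\varphi_A$ is equivariant for both $A$ and $B$, and a short induction on word length (using the cocycle condition of Definition~\ref{DefnEquivSheaf}) shows it is equivariant for the group they generate, namely all of $G$; hence $[\sL_1]=[\sL_2]$ in $\PicCon^G(X)$.

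For surjectivity, I would begin with a compatible pair $([\sL_A],[\sL_B])$ and pick a $\cD$-linear isomorphism $\psi\colon\sL_A\to\sL_B$ that is $C$-equivariant for the restrictions of the given $A$- and $B$-structures, witnessing the equality of the images in $\PicCon^C(X)$. Using $\psi$ to transport the $B$-equivariant structure of $\sL_B$ onto the $\cD$-module underlying $\sL_A$ produces a single line bundle with flat connection $\sL$ carrying both an $A$-equivariant structure $\sigma_A$ and a $B$-equivariant structure $\sigma_B$ whose restrictions to $C$ coincide. The pair $(\sigma_A,\sigma_B)$ therefore lies in $\cS_{\cD}(A,\sL)\times_{\cS_{\cD}(C,\sL)}\cS_{\cD}(B,\sL)$, and Theorem~\ref{amalgequiv} supplies a unique $\sigma_G\in\cS_{\cD}(G,\sL)$ restricting to it. By Proposition~\ref{GAmodStrAsSections}, $\sigma_G$ is the data of a $G$-$\cD$-module structure on $\sL$ extending both $\sigma_A$ and $\sigma_B$.

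The main remaining obstacle will be to verify the continuity condition of Definition~\ref{GLB}(b): the action map $G_U\to\cB(\sL(U))^\times$ must be continuous for every affinoid subdomain $U$ of $X$. Since this is a group homomorphism, it suffices to check continuity at the identity, and $A_U$ is an open neighbourhood of the identity in $G_U$ because $A$ is open in $G$; on $A_U$ the action map agrees by construction with the continuous action of $A_U$ on $\sL_A(U)$ supplied by $\sL_A$ being an $A$-equivariant line bundle. This promotes the $G$-$\cD$-module structure above to a genuine $G$-equivariant line bundle with flat connection whose image in $\PicCon^A(X)\times_{\PicCon^C(X)}\PicCon^B(X)$ is $([\sL_A],[\sL_B])$, establishing surjectivity.
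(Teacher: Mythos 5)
Your proof is correct. The surjectivity half is essentially the paper's argument: after fixing a $C$-equivariant $\cD$-linear isomorphism $\psi\colon \sL_A\to\sL_B$, one transports the $B$-structure to the underlying $\cD$-module of $\sL_A$, obtains a compatible pair in $\cS_{\cD}(A,\sL)\times_{\cS_{\cD}(C,\sL)}\cS_{\cD}(B,\sL)$, applies Theorem~\ref{amalgequiv} and Proposition~\ref{GAmodStrAsSections}, and then verifies the continuity condition of Definition~\ref{GLB}(b) using that the open subgroup $A$ already acts continuously.

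The injectivity half takes a genuinely different route from the paper's. The paper reduces to characters: by Proposition~\ref{PicSeq}, a class in the kernel of $(p_A,p_B)$ is of the form $[\cO_\chi]$ for some $\chi\in\Hom(G,K^\times)$, applying the same proposition to $A$ and $B$ gives $\chi|_A=\chi|_B=1$, and hence $\chi=1$ since $A$ and $B$ generate $G$. You instead argue directly with isomorphisms: if $\varphi_A$ is an $A$-equivariant and $\varphi_B$ a $B$-equivariant $\cD$-linear isomorphism $\sL_1\to\sL_2$, then Corollary~\ref{IsoLL} forces $\varphi_B=\lambda\varphi_A$ with $\lambda\in K^\times$, so $\varphi_A$ is also $B$-equivariant; and since the set of $g\in G$ for which a fixed $\cD$-linear isomorphism satisfies $g^\ast(\varphi)\circ g^{\sL_1}=g^{\sL_2}\circ\varphi$ is a subgroup of $G$ (closure under products and inverses follows from the cocycle identity~(\ref{Cocycl})), containment of $A$ and $B$ gives all of $G$. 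Your approach is slightly more direct, in that it only invokes Corollary~\ref{IsoLL} rather than the full exact sequence of Proposition~\ref{PicSeq} (including its surjectivity-onto-the-kernel step), though both ultimately rest on Proposition~\ref{GlobHorO}. The paper's version has the modest advantage of exhibiting the kernel element explicitly as a character twist, which meshes well with how Proposition~\ref{PicSeq} is used elsewhere. Either reading is fine; your ``induction on word length'' could be replaced with the cleaner observation that the relevant set is a subgroup, but that is a cosmetic point.
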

\begin{proof} Suppose that $[\sL]\in \PicCon^G(X)$ lies in the kernel of $(p_A,p_B)$. Then \[\omega([\sL])=[\cO]\in \PicCon(X).\] By Proposition \ref{PicSeq}, $[\sL]=[\cO^\chi]$ for some $\chi\in \Hom(G,K^\times)$. However, also by Proposition \ref{PicSeq}, $\chi|_A$ and $\chi|_B$ are both trivial and so, as $G$ is generated by $A$ and $B$, $\chi$ is trivial. Thus $[\sL]=[\cO]\in \PicCon^G(X)$ and $(p_A,p_B)$ is injective. 
	
	Suppose now that $[\sL_A]\in \PicCon^A(X)$, $[\sL_B]\in \PicCon^B(X)$ and that there is an isomorphism $\theta\colon \sL_A|_C\stackrel{\cong}{\longrightarrow} \sL_B|_C$ of $C$-equivariant line bundles with flat connection obtained by restriction of equivariant structures from $A$ and $B$ to $C$. We transport the $B$-equivariant structure on $\sL_B$ along $\theta$ to $\sL_A$. In this way, the $\cD$-module $\sL := \underline{\omega}(\sL_A)$ can be equipped with $A$-equivariant and $B$-equivariant structures whose restrictions to $C$ agree.
	
	 	  
By Proposition \ref{GAmodStrAsSections}, for every subgroup $H$ of $G$ there is a bijection between the set of all $H-\cD$-module structures on $\sL$ extending the given $\cD$-module structure on $\sL$, and the set $\cS_{\cD}(H,\sL)$. This bijection is given by
	\[ \{g^\sL\}_{g\in H}\mapsto [g\mapsto (g,g^{\sL})\in \Aut_{\cD}(\sL/X/H)]. \]
On the other hand, by Theorem \ref{amalgequiv} restriction induces a bijection 
	\[ \cS_{\cD}(G,\sL)\stackrel{\cong}{\longrightarrow} \cS_{\cD}(A,\sL)\times_{\cS_{\cD}(C,\sL)}\cS_{\cD}(B,\sL).\]
It follows that there exists a $G-\cD$-module $\sL_G = (\sL, \{g^{\sL}\}_{g \in G})$ whose restriction to $A$ is $\sL_A$, and whose restriction to $B$ is the transport of $\sL_B$ to $\sL_A$ along $\theta$. Since $A$ and $B$ are open in $G$, the action map $G_U\to \cB(\sL(U))^{\times}$ is continuous for every affinoid subdomain $U$ of $X$, because the restrictions of this map to both $A_U$ and $B_U$ are continuous. This shows that $[\sL_G]\in \PicCon^G(X)$. By construction, we have $p_A([\sL_G]) = [\sL_A]$ and $p_B([\sL_G]) = [\sL_B]$. Hence $(p_A,p_B)$ is surjective. \end{proof}




\begin{lem}\label{PicConGlim} If $X$ is a geometrically connected quasi-Stein space with an increasing admissible covering $(X_n)$ by $G$-stable affinoid subdomains, then the restriction maps $\PicCon^G(X)\to \PicCon^G(X_n)$ induce an isomorphism of groups \[ \PicCon^G(X)\cong \invlim \PicCon^G(X_n). \] 
\end{lem}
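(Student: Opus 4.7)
The plan is to combine Proposition \ref{PicSeq} with Corollary \ref{PicConinvlim}: the former reduces the $G$-equivariant problem to the non-equivariant one together with control by continuous characters, while the latter already handles the non-equivariant problem. Before starting I would replace the $X_n$ by a cofinal family of geometrically connected $G$-stable affinoid subdomains if necessary, so that Proposition \ref{PicSeq} and Proposition \ref{GlobHorO} apply to each $X_n$ as well as to $X$.

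For injectivity, suppose $[\sL] \in \PicCon^G(X)$ has trivial image in every $\PicCon^G(X_n)$. Forgetting the $G$-equivariant structure and applying Corollary \ref{PicConinvlim}, the class $\omega([\sL]) \in \PicCon(X)$ vanishes, so by Proposition \ref{PicSeq} we have $[\sL] = [\cO_\chi]$ for some continuous $\chi \in \Hom(G,K^\times)$. Restricting to a single $X_n$ and invoking the injectivity of $\Hom(G,K^\times) \hookrightarrow \PicCon^G(X_n)$ from Proposition \ref{PicSeq} applied to $X_n$ forces $\chi = 1$, hence $[\sL] = 0$.

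For surjectivity, given a compatible system $([\sL_n]) \in \invlim \PicCon^G(X_n)$, I would choose, for each $n$, a $G$-equivariant $\cD$-linear isomorphism $\varphi_n \colon \sL_{n+1}|_{X_n} \stackrel{\cong}{\longrightarrow} \sL_n$ realising the compatibility. Feeding the underlying $\cD$-linear isomorphisms into the sheaf-gluing construction from \cite[\S 4.4]{FvdPut} used in the proof of Corollary \ref{PicConinvlim} yields a line bundle with flat connection $\sL$ on $X$ and $\cD$-linear isomorphisms $\psi_n \colon \sL|_{X_n} \stackrel{\cong}{\longrightarrow} \sL_n$. Transporting the equivariant structures $\{g^{\sL_n}\}$ through the $\psi_n$ equips each $\sL|_{X_n}$ with a $G$-equivariant $\cD$-module structure, and the $G$-equivariance of the $\varphi_n$ ensures these local structures agree on overlaps, so they glue into a single $G$-equivariant structure on $\sL$.

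The main obstacle in my view is checking the continuity axiom of Definition \ref{GLB}(b) for the resulting $\sL$. For this I would use that every affinoid subdomain $U$ of $X$ is quasi-compact, and so, because the $X_n$ form an increasing admissible cover, $U \subseteq X_N$ for some $N$. Then $U$ is an affinoid subdomain of $X_N$, the isomorphism $\psi_N$ identifies $\sL(U)$ with $\sL_N(U)$ as $G_U$-modules, and the action map $G_U \to \cB(\sL(U))^\times$ inherits continuity from the corresponding map for $\sL_N \in \PicCon^G(X_N)$. Together with the earlier injectivity argument this gives the desired isomorphism.
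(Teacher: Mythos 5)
Your proof is correct and follows essentially the same route as the paper's: injectivity via Corollary \ref{PicConinvlim} combined with the exact sequence of Proposition \ref{PicSeq}, and surjectivity by choosing $G$-equivariant $\cD$-linear isomorphisms $\sL_{n+1}|_{X_n} \cong \sL_n$ and gluing. Your explicit verification of the continuity axiom by noting that every affinoid subdomain of $X$ lies in some $X_N$ is a worthwhile detail that the paper leaves implicit, and your remark about needing the $X_n$ to be geometrically connected so that Proposition \ref{PicSeq} and Corollary \ref{PicConinvlim} apply is also a fair observation — the paper implicitly assumes this (in all its applications the $X_n$ are cheeses), though one should be cautious that a cofinal replacement by geometrically connected $G$-stable affinoids is not obviously available in complete generality.
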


\begin{proof}
	Certainly the restriction maps induce a group homomorphism \[\alpha\colon \PicCon^G(X)\to \invlim \PicCon^G(X_n).\]  
	
	By Proposition \ref{PicConinvlim}, if $[\sL]\in \ker\alpha$ then $\omega([\sL])=[\cO_X]\in \PicCon(X)$. Thus $[\sL]=[\cO_X^\chi]$ for some continuous character $\chi\colon G\to K^\times$ by Proposition \ref{PicSeq}. But then $[\sL|_{X_n}]=[\cO_{X_n}^\chi]$ for any $n\geq 0$ and so $\chi$ is the trivial character since $\alpha([\sL])$ is trivial. Thus $\alpha$ is injective. 
	
	Let $([\sL_n])\in \prod \PicCon^{G}(X_n)$ be a compatible family of isomorphism classes of equivariant line bundles with connection under restriction so that for each $n$ we can find an isomorphism \[\theta_{n+1,n}\colon \sL_{n+1}|_{X_{n}}\to \sL_{n}\] of $G$-equivariant line bundles with connection on $X_n$. 
	
	Thus each $l\geq m$, we can define $\theta_{m,l}\colon \sL_l|_{X_m}\to \sL_{m}$ to be the composite of the restrictions of $\theta_{l,l-1},\theta_{l-1,l-2},\ldots, \theta_{m+1,m}$ to $X_m$. Now $(\sL_l,\theta_{m,l})$ forms gluing data for the cover $(X_m)_{m\geq n}$ of $X$. The resulting sheaf $\sL$ is an $G$-equivariant line bundle with connection on $X$ with $\alpha([\sL])=([\sL_n])$ and so $\alpha$ is surjective. 
\end{proof}

\subsection{Cocycles and equivariant line bundles on affinoids}\label{SecCocycELBC}
In this technical subsection, we will explain how isomorphism classes of $G$-equivariant structures on the trivial line bundle over a $K$-affinoid variety $X$ can be classified through the language of continuous $1$-cocycles of $G$ acting on $\cO(X)^\times$. This material will be crucial to the proof of one of the main results in $\S \ref{MainProofSec}$, namely Theorem \ref{phizisolocal}. 

We assume throughout $\S \ref{SecCocycELBC}$ that $X$ is a smooth and geometrically connected $K$-\textbf{affinoid} space, with a topological group $G$ acting continuously on it. 

\begin{lem} \label{GOZ1} \hfill
	\be
	\item The set of $G$-equivariant structures on a trivial line bundle $\sL=\cO\cdot v$ is in natural bijection with $Z^1(G,\cO(X)^\times)$ under a function that sends $\{g^{\sL}:g\in G\}$ to the function $\alpha : G \to \cO(X)^\times$ determined by the rule \[g^\sL(v)=\alpha(g)v \qmb{for all} g\in G.\] 
    \item The bijection in (a) induces an isomorphism 
    \[  \theta^G_X : \ker\left(\Pic^G(X)\to \Pic(X)\right)\stackrel{\cong}{\longrightarrow} H^1(G,\cO(X)^\times).\] \ee
\end{lem}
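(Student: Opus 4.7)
My plan is to verify, via direct computation, that the assignment $\{g^{\sL}\}_{g\in G}\mapsto \alpha$ defined by $g^{\sL}(v) = \alpha(g)v$ is a bijection, and then to analyse how the cocycle class changes under change of generator to deduce part (b).

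For part (a), I would first note that since $X$ is affinoid and $G$-stable, $(g^\ast\sL)(X)=\sL(X)=\cO(X)\cdot v$ as abelian groups, with the $\cO(X)$-action on the left-hand side twisted by the ring automorphism $g^{\cO}$. Hence $g^{\sL}(v)=\alpha(g)v$ for a unique $\alpha(g)\in\cO(X)$, and $\cO$-linearity forces $g^{\sL}(fv)=(g\cdot f)\alpha(g)v$ for all $f\in\cO(X)$, so the equivariant structure is determined by $\alpha$. The key computation translates the cocycle identity $(gh)^{\sL}=h^\ast(g^{\sL})\circ h^{\sL}$ into a condition on $\alpha$: evaluating both sides at $v$, the left side is $\alpha(gh)v$, whereas the right side equals $g^{\sL}(\alpha(h)v)=(g\cdot\alpha(h))\alpha(g)v$, so
\[\alpha(gh)=\alpha(g)\cdot(g\cdot\alpha(h)),\]
which is exactly the multiplicative $1$-cocycle condition. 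Normalisation $1^{\sL}=1_{\sL}$ gives $\alpha(1)=1$, and then substituting $h=g^{-1}$ shows $\alpha(g)\in\cO(X)^\times$ automatically. Conversely, any $\alpha\in Z^1(G,\cO(X)^\times)$ defines a structure by the same formula, and the cocycle computation reverses. Continuity: since $g^{\sL}$ on $\sL(X)\cong\cO(X)\cdot v$ factors as $f v\mapsto (g\cdot f)\alpha(g)v$, the resulting map $G\to\cB(\sL(X))^\times$ is the product of $g\mapsto g^{\cO}$ (continuous by Lemma \ref{TrivGLB}) and $g\mapsto$ multiplication by $\alpha(g)$; continuity of the equivariant structure is therefore equivalent to continuity of $\alpha:G\to\cO(X)^\times$.

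For part (b), given $[\sL]\in\ker(\Pic^G(X)\to\Pic(X))$, $\sL$ admits some trivialisation $\sL\cong\cO\cdot v$ and hence a cocycle $\alpha$ by (a). If $v'=uv$ for another generator, with $u\in\cO(X)^\times$, a direct computation using $g^{\sL}(uv)=(g\cdot u)\alpha(g)v$ shows that the corresponding cocycle is
\[\alpha'(g)=\alpha(g)\cdot(g\cdot u)u^{-1}=\alpha(g)\cdot\delta_G(u)(g),\]
so the class $[\alpha]\in H^1(G,\cO(X)^\times)$ depends only on the $G$-equivariant isomorphism class of $\sL$. This gives a well-defined map $\theta^G_X$, and injectivity is the reverse observation: if $\alpha=\delta_G(u)$ then the generator $v'=u^{-1}v$ exhibits $\sL$ as the trivial $G$-equivariant line bundle. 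Surjectivity is already built into the construction of (a). Finally, $\theta^G_X$ is a group homomorphism because under $\sL_1\otimes_{\cO}\sL_2=\cO\cdot(v_1\otimes v_2)$ the tensor product equivariant structure sends $v_1\otimes v_2\mapsto\alpha_1(g)\alpha_2(g)(v_1\otimes v_2)$, yielding the pointwise product of cocycles.

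The only subtle step is the cocycle computation in (a): one must carefully track the twisted $\cO$-module structure on $g^\ast\sL$ and the fact that $h^\ast(g^{\sL})$ coincides with $g^{\sL}$ as a map of abelian groups but is $\cO$-linear with respect to a further twist by $h^{\cO}$. Once the sign convention is fixed, everything else reduces to elementary bookkeeping.
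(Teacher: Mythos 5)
Your proof is correct and takes essentially the same route as the paper: translate the cocycle identity $(gh)^{\sL}=h^\ast(g^{\sL})\circ h^{\sL}$ into the multiplicative $1$-cocycle condition on $\alpha$, observe that a change of generator $v\mapsto uv$ twists $\alpha$ by the coboundary $\delta_G(u)$, and check compatibility with tensor products. The only cosmetic difference is in how $\alpha(1)=1$ and $\alpha(g)\in\cO(X)^\times$ are extracted (you invoke the normalisation $1^{\sL}=1_{\sL}$ and the substitution $h=g^{-1}$, the paper observes $\alpha(1)^2=\alpha(1)$ and uses connectedness), and that the paper phrases the well-definedness step as a direct comparison of two trivialised bundles $\sL_1=\cO v_1$, $\sL_2=\cO v_2$ under an arbitrary line-bundle isomorphism rather than a change of generator — these are equivalent formulations.
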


\begin{proof} (a) Suppose that for each $g \in G$, we have a morphism of sheaves of $K$-vector spaces $g^{\sL} : \sL \to g^\ast \sL$ such that
\[g^\sL(fv)=g^{\cO}(f)g^{\sL}(v) \qmb{for all}g\in G, f\in \cO.\] 
This data is completely determined by the function  $\alpha\colon G\to \cO(X)$ given by \[g^\sL(v)=\alpha(g)v \qmb{for all} g \in G.\]We first claim that $\{g^\sL:g\in G\}$ is a $G$-equivariant structure on $\sL$ if and only only $\alpha$ is a $1$-cocycle with values in the group $\cO(X)^\times$.
	
We see that for all $g,h\in G$  \begin{eqnarray}  \label{alphagalphah} h^{\ast}(g^\sL)h^{\sL}(v)=h^{\ast}(g^\sL)(\alpha(h)v) & = & (g\cdot \alpha(h))\alpha(g)v \mbox{ and} \\  (gh)^{\sL}(v) & = & \alpha(gh)v. \label{alphagh} \end{eqnarray}

Now if  $\{g^\sL:g\in G\}$ defines a $G$-equivariant structure on $\sL$ then $\alpha(1)=1$ and $h^{\ast}(g^\sL)h^{\sL}(v)=(gh)^{\sL}(v)$. Thus  by (\ref{alphagalphah}) and (\ref{alphagh}) $\alpha(gh)=(g\cdot \alpha(h))\alpha(g)$ for all $g,h\in G$ and in particular \[ 1=\alpha(gg^{-1})=(g\cdot \alpha(g^{-1}))\alpha(g)\] for all $g\in G$. Thus $\alpha$ is a $1$-cocycle with values in $\cO(X)^\times$. Conversely, if $\alpha$ is a $1$-cocycle then by (\ref{alphagalphah}) and (\ref{alphagh}) again, for all $g,h\in G$, \[ (gh)^{\sL}(v)=h^{\ast}(g^\sL)h^{\sL}(v) \mbox{ and so }\]  \[(gh)^{\sL}=h^{\ast}(g^\sL)h^{\sL}.\] Moreover $\alpha(1^2)=\alpha(1)^2$ and so, since $X$ is connected, $\alpha(1)=1$ and $1^\sL=\mathrm{id}_{\sL}$. 
 
 It remains to observe that $\alpha$ is continuous if and only if  for every affinoid subdomain $U$ of $X$, the action map $G_U\to \cB(\sL(U))^\times$ is continuous. This holds because \[ g\cdot(fv)=g^\cO(f)\alpha(g)v \qmb{ for all }g\in G_U, f\in \cO(U)\] and because $G$ acts continuously on $X$.
 
 (b) Suppose now that $\sL_1=\cO\cdot v_1$ and $\sL_2=\cO\cdot v_2$ are two $G$-equivariant line bundles corresponding to $1$-cocycles $\alpha_1$ and $\alpha_2$ respectively.
 
Let $\varphi \colon \sL_1\to \sL_2$ be an isomorphism of the underlying line bundles. Then $\varphi(v_1)=f v_2$ for some $f\in \cO(X)^\times$, so for all $g\in G$ we have \[\varphi(g^{\sL_1}(v_1))= \varphi(\alpha_1(g)v_1)=\alpha_1(g) fv_2 \] whereas
  \[ g^{\sL_2}(\varphi(v_1))=g^{\sL_2}(fv_2)=g^{\cO}(f)\alpha_2(g)v_2. \] 
Hence $\varphi$ defines an isomorphism of $G$-\emph{equivariant} line bundles if and only if   \[\alpha_2(g)= \frac{g^{\cO}(f)}{f} \alpha_1(g)\qmb{ for all }g\in G.\]  Thus the map in (a) induces a bijection \[\theta^G_X\colon \ker \left(\Pic^G(X)\to \Pic(X)\right)\to H^1(G,\cO(X)^\times)\]which is a group homomorphism because $g\cdot (v_1\otimes v_2)= \alpha_1(g)v_1\otimes \alpha_2(g)v_2=(\alpha_1\alpha_2)(g)(v_1\otimes v_2)$ for all $g\in G$. 
\end{proof}

Recall the map $\phi_z$ from Proposition \ref{phiz}; by abuse of notation, we will also denote its pre-composition with the forgetful map $\Con^G(X) \to \Pic^G(X)$ by $\phi_z$.

\begin{prop}\label{defphiz} Suppose that $L$ is a finite field extension of $K$.
	 \be \item The isomorphism from Lemma \ref{GOZ1}(b) induces a homomorphism \[\phi^G_X\colon \Con^{G}(X)\to H^1(G,\cO(X)^\times)\] by pre-composition with the forgetful map \[\Con^G(X)\to \ker (\Pic^G(X)\to \Pic(X)).\]
		\item For every $z\in X(L)$ and every $[\sL] \in \Con^G(X)$, we have 
\[z \circ (\res^G_{G_z} \phi^G_X([\sL]) ) = \phi_z([\sL]).\]
 
\item For every $z\in X(L)$ and every $\chi\in \Hom(G,K^\times)$, we have \[\phi_z([\cO_\chi])=\chi|_{G_z}.\]
\item Let $Y \subseteq X$ be a $G$-stable affinoid subdomain, with $z\in Y(L)\subseteq X(L)$. Then the following diagram is commutative: 
\[ \xymatrix{ \Con^G(X) \ar[drr]^{\phi_{z}} \ar[d]_{ (-)|_Y }& & \\ \Con^G(Y) \ar[rr]_{\phi_{z}} && \Hom(G_z,L^\times).}\]
  \ee 
\end{prop}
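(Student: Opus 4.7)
The plan is to prove the four parts in order, with the main content being a direct unpacking of how a cocycle representing $\phi^G_X([\sL])$ computes the scalar action on the fibre at $z$.

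For part (a), I would simply observe that the forgetful functor induces a group homomorphism $\Con^G(X) \to \ker(\Pic^G(X) \to \Pic(X))$ (every $[\sL] \in \Con^G(X)$ is trivial as a line bundle, since it is trivial even as a $\cD$-module after forgetting the $G$-action). Post-composing with the isomorphism $\theta^G_X$ of Lemma \ref{GOZ1}(b) then gives $\phi^G_X$.

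For part (b), which I expect to be the main obstacle, the key step is to relate the cocycle description to the fibre computation. Given $[\sL] \in \Con^G(X)$, trivialise $\sL = \cO \cdot v$ so that Lemma \ref{GOZ1}(a) produces a cocycle $\alpha \in Z^1(G, \cO(X)^\times)$ determined by $g^{\sL}(v) = \alpha(g) v$; this represents $\phi^G_X([\sL])$. Since $z$ is fixed by $G_z$, the evaluation morphism $z : \cO(X) \to L$ (corresponding to $z \in X(L)$) is $G_z$-equivariant when $G_z$ acts trivially on $L$, so pre-composition with $z$ (restricted to units) carries cocycles to cocycles, and in particular converts $\alpha|_{G_z}$ into the \emph{homomorphism} $g \mapsto z(\alpha(g))$ in $\Hom(G_z, L^\times)$. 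On the other hand, the definition of $\phi_z([\sL])$ in Lemma \ref{phizM} is that $g \in G_z$ acts on $\sL(z) = L \otimes_{\cO_{X,z}} \sL_z$ by a scalar in $L^\times$; computing directly, $g \cdot (1 \otimes v) = 1 \otimes g^{\sL}_z(v) = 1 \otimes \alpha(g) v = z(\alpha(g)) \otimes v$, so this scalar equals $z(\alpha(g))$. This matches the composite $z \circ \res^G_{G_z} \phi^G_X([\sL])$.

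Part (c) follows as a special case: by Definition \ref{defnOchi}, the $G$-equivariant structure on $\cO_\chi$ is $g^{\cO_\chi}(1) = \chi(g)$, so the cocycle $\alpha$ representing $\phi^G_X([\cO_\chi])$ is just $\chi$ (viewed in $\cO(X)^\times$ via the inclusion $K^\times \hookrightarrow \cO(X)^\times$). Since $z$ restricts to the identity on $K$, applying part (b) gives $\phi_z([\cO_\chi])(g) = z(\chi(g)) = \chi(g)$ for every $g \in G_z$.

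For part (d), restriction from $X$ to a $G$-stable affinoid subdomain $Y$ preserves trivialisations and cocycles: if $\sL = \cO_X \cdot v$ with cocycle $\alpha$, then $\sL|_Y = \cO_Y \cdot v|_Y$ with cocycle $\res \alpha : G \to \cO(Y)^\times$ given by $g \mapsto \alpha(g)|_Y$. Because $z \in Y(L) \subseteq X(L)$, evaluation at $z$ factors through $\cO(X) \to \cO(Y) \to L$; equivalently, the stalks $\sL_z$ and $(\sL|_Y)_z$ agree, so the fibres $\sL(z)$ and $(\sL|_Y)(z)$ coincide together with their $G_z$-actions. Applying part (b) on both $X$ and $Y$ then gives the commutativity of the diagram.
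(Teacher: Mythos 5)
Your proposal is correct and takes essentially the same approach as the paper: part (a) post-composes the forgetful map with $\theta^G_X$, part (b) unwinds the cocycle description by computing $g\cdot(1\otimes v) = 1\otimes\alpha(g)v = z(\alpha(g))\otimes v$ inside $L\otimes_{\cO_{X,z}}\sL_z$, and parts (c) and (d) follow by plugging the explicit cocycles into (b). The only cosmetic difference is that you spell out more explicitly why $z\circ\alpha|_{G_z}$ is a homomorphism (triviality of the $G_z$-action on $L$) and explicitly invoke part (b) in (c) and (d) where the paper gestures at Lemma \ref{GOZ1} and Definition \ref{defnOchi}.
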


\begin{proof}
	
	(a) Since the forgetful map $\Con^G(X)\to \ker(\Pic^G(X)\to \Pic(X))$ is a group homomorphism, the function $\phi^G_X$ that is the composite of this forgetful map with $\theta^G_X$ is also a homomorphism.
	
	(b) Suppose that $\phi^G_X([\sL])=[\alpha]\in H^1(G,\cO(X)^\times)$ so that we can write $\sL=\cO\cdot v$ with $g\cdot v=\alpha(g)v$ for all $g\in G$.  Then working inside $L\otimes_{\cO(X)}\sL(X)$ we have \[ \phi_z(g)(1\otimes v)= g\cdot (1\otimes v)=1\otimes \alpha(g)v=(z\circ \alpha)(g)\otimes v \qmb{for all} g\in G_z.\]
	

	(c) This follows from Lemma \ref{GOZ1}(a) and Definition \ref{defnOchi}. 
	
	(d) If $[\sL]=[\cO_X\cdot v]\in \Con^G(X)$, then $[\sL|_{Y}]=[\cO_Y\cdot v]\in \Con^G(Y)$ and $g^{\sL|_Y}(v)=g^{\sL}(v)$ for all $g\in G$. Now use Lemma \ref{GOZ1} together with part (a).
\end{proof}
	 

\begin{defn}\label{ZGXude} Recall from $\S \ref{ConvNotn}$ the map $\delta_G\colon \cO(X)^\times\to Z^1(G,\cO(X)^\times)$, given by $\delta_G(u)(g)=g\cdot u/u$. For each $u\in \cO(X)^\times$ and $d,e\geq 1$, we define
	\[\mathcal{Z}^{G,X}_{u,d,e}:=\left\{ \alpha\in Z^1(G,\cO(X)^\times): \alpha^{de}=\delta_G(u^e)\right\} .\]
\end{defn}
This special set of $1$-cocycles will be useful for our explicit construction of torsion equivariant line bundles with flat connection.

Recall from Lemma \ref{Lud} the line bundle with connection $\sL_{u,d}$: it is the free $\cO_X$-module on the $1$-element set $\{v\}$, and the action of $\cT(X)$ is determined by
\begin{equation}\label{Ludform} \partial(v)=\frac{1}{d}\frac{\partial(u)}{u}v \qmb{for all} \partial\in \cT(X).\end{equation}

\begin{lem}\label{1cocycles} Let $u\in \cO(X)^\times$ and let $d,e\geq 1$. \be\item For each $\alpha \in \cZ^{G,X}_{u,d,e}$, there is a $(de)$-torsion $G$-equivariant line bundle with connection $\sL^\alpha_{u,d}$ on $X$ such that \[\omega([\sL^\alpha_{u,d}]) = \sL_{u,d}\qmb{and} \phi_X^G([\sL^\alpha_{u,d}])=[\alpha]\in H^1(G,\cO(X)^\times).\] 
	\item If $u,w\in \cO(X)^\times$, $\alpha\in \mathcal{Z}^{G,X}_{u,d,e}$ and $\beta\in \mathcal{Z}^{G,X}_{w,d,e}$, then $\alpha\beta\in \mathcal{Z}^{G,X}_{uw,d,e}$ and 
	\[\sL_{u,d}^\alpha\otimes \sL_{w,d}^\beta\cong \sL_{uw,d}^{\alpha\beta}.\]
	\item For each $\alpha \in \mathcal{Z}^{G,X}_{u,d,e}$, the following are equivalent:
	\begin{enumerate}[{(}i{)}]\item $[\sL_{u,d}^\alpha] = [\cO]$ in $\Con^G(X)[de]$,
		\item there is $f\in \cO(X)^\times$ such that $u/f^d\in K^\times$ and $\alpha=\delta_G(f)$.
	\end{enumerate}
	\item  The map $\alpha \mapsto [\sL_{u,d}^\alpha]$ defines a bijection
	\[\cZ^{G,X}_{u,d,e} \stackrel{\cong}{\longrightarrow} \left\{[\sL]\in \Con^G(X)[de]:\omega([\sL])=[\sL_{u,d}] \right\}.\]
	\ee\end{lem}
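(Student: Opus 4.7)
My plan tackles the four parts in the logical order (a) (construction), (b), (c), (a) ($(de)$-torsion), (d), since the torsion claim in (a) is cleanest with (b) and (c) in hand. The guiding idea is to define $\sL^\alpha_{u,d}$ as the line bundle with connection $\sL_{u,d}$ of Lemma \ref{Lud}, equipped with the non-standard $G$-equivariant structure $g^{\sL}(v) := \alpha(g) v$, and then to move freely between equivariant structures and $1$-cocycles via Lemma \ref{GOZ1}.

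For (a), the cocycle condition on $\alpha$ gives a $G$-equivariant $\cO$-module structure on $\sL_{u,d}$ by Lemma \ref{GOZ1}(a), with continuous action maps since $\alpha$ is continuous. The key technical point is $\cD$-linearity, i.e.\ $g^{\sL}(\partial v) = (g\cdot \partial)(g^{\sL}(v))$ for every local vector field $\partial$. Applying the Leibniz rule on the right and the identity $(g\cdot \partial)(g\cdot u) = g\cdot (\partial u)$ on the left, this reduces to showing that $F(g) := \alpha(g)^d u/(g\cdot u)$ lies in $\cO(X)^{\cT(X) = 0}$ for every $g$, which by Proposition \ref{GlobHorO} means $F(g) \in K^\times$. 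Now the cocycle identity $\alpha^{de} = \delta_G(u^e)$ yields $F(g)^e = \alpha(g)^{de} u^e/(g\cdot u)^e = 1$, and since $\cO(X)^\times/K^\times$ has no $e$-torsion by Corollary \ref{dtors}, we conclude $F(g) \in K^\times$ as required. The formulas $\omega([\sL^\alpha_{u,d}]) = [\sL_{u,d}]$ and $\phi^G_X([\sL^\alpha_{u,d}]) = [\alpha]$ are then immediate from the construction.

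For (b), the tensor product of $G$-equivariant line bundles with connection multiplies both the connection data and the cocycles, yielding $\sL^\alpha_{u,d} \otimes \sL^\beta_{w,d} \cong \sL^{\alpha\beta}_{uw,d}$ directly; and $(\alpha\beta)^{de} = \delta_G((uw)^e)$ by multiplication. For (c), implication (ii) $\Rightarrow$ (i) is explicit: if $u = \lambda f^d$ with $\lambda \in K^\times$ and $\alpha = \delta_G(f)$, then $f^{-1} v$ is horizontal (using $\partial u/u = d\cdot \partial f/f$) and $G$-fixed (using $\alpha(g) = g\cdot f/f$), hence generates a $G$-equivariant $\cD$-module trivialisation. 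Conversely, any $G$-equivariant $\cD$-isomorphism $\cO \stackrel{\cong}{\longrightarrow} \sL^\alpha_{u,d}$ sends $1$ to some $hv$ with $h \in \cO(X)^\times$; horizontality forces $uh^d \in K^\times$ and $G$-equivariance gives $\alpha = \delta_G(h^{-1})$, so $f := h^{-1}$ satisfies (ii). The $(de)$-torsion claim of (a) now follows at once from (b) and (c): $(\sL^\alpha_{u,d})^{\otimes de} \cong \sL^{\delta_G(u^e)}_{u^{de},d}$, which is trivial via (c) with $f := u^e$ and $\lambda := 1$.

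For (d), surjectivity: given $[\sL] \in \Con^G(X)[de]$ with $\omega([\sL]) = [\sL_{u,d}]$, pick a $\cD$-module isomorphism $\sL \cong \sL_{u,d}$ to transport the equivariant structure to a $1$-cocycle $\alpha$, which automatically satisfies $\alpha(g)^d u/(g\cdot u) \in K^\times$ by reversing the compatibility calculation of (a). The $(de)$-torsion of $[\sL]$ combined with (b) and (c) yields $f \in \cO(X)^\times$ with $u^{de}/f^d \in K^\times$ and $\alpha^{de} = \delta_G(f)$; writing $u^{de}/f^d = (u^e/f)^d$ and applying Corollary \ref{dtors} forces $u^e/f \in K^\times$, whence $\delta_G(f) = \delta_G(u^e)$ and $\alpha \in \cZ^{G,X}_{u,d,e}$. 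Injectivity: if $[\sL^\alpha_{u,d}] = [\sL^{\alpha'}_{u,d}]$, then by (b) $\sL^{\alpha/\alpha'}_{1,d}$ is trivial, so (c) produces $h \in \cO(X)^\times$ with $h^d \in K^\times$ and $\alpha/\alpha' = \delta_G(h)$; Corollary \ref{dtors} again forces $h \in K^\times$, whence $\delta_G(h) = 1$ and $\alpha = \alpha'$. The main obstacle throughout is the single $\cD$-compatibility computation in part (a); once this is secured, the remaining claims reduce to bookkeeping with cocycles, repeated uses of Corollary \ref{dtors} to strip away $d$-th and $e$-th power ambiguities, and explicit trivialisations.
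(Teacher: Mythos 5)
Your proof is correct and covers all four parts. The framework — trading cocycles for equivariant structures via Lemma \ref{GOZ1} — matches the paper, but you make several genuinely different tactical choices, with Corollary \ref{dtors} doing the work that other tools do in the paper. In (a) you verify $\cD$-compatibility by reducing it to $F(g) := \alpha(g)^d u/(g\cdot u) \in K^\times$, which you obtain from $F(g)^e = 1$ together with Corollary \ref{dtors} and Proposition \ref{GlobHorO}; the paper instead derives the logarithmic-derivative identity $\tfrac{\partial(\alpha(g))}{\alpha(g)}= \tfrac{1}{d}\bigl(\tfrac{\partial(g\cdot u)}{g\cdot u}- \tfrac{\partial(u)}{u}\bigr)$ directly from $\alpha^{de}=\delta_G(u^e)$ and substitutes, needing neither of those auxiliary results at that stage. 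You also defer the $(de)$-torsion claim of (a) until after (b)--(c), deducing it as an instance of (c)(ii) with $f = u^e$, rather than checking by hand (as the paper does) that $v^{\otimes de}\mapsto u^e$ is $G$-linear. The most substantive deviation is in (d)-surjectivity: the paper invokes Corollary \ref{IsoLL} to compare the two $\cD$-linear trivialisations of $\sL^{\otimes de}$ and reads off $\alpha^{de}=\delta_G(u^e)$ from the scalar relating them, whereas you apply (c)(i)$\Rightarrow$(ii) to $\sL^{\alpha^{de}}_{u^{de},d}$ and then strip a $d$th power with Corollary \ref{dtors}. This avoids Corollary \ref{IsoLL} entirely, at the modest cost that you must apply (c) to a cocycle not yet known to lie in $\cZ^{G,X}_{u^{de},d,e}$; this is harmless since your direct proof of (c) (and indeed the paper's) never uses that membership hypothesis for the (i)$\Rightarrow$(ii) implication. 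Both routes are valid and of comparable length.
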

\begin{proof} (a) We equip $\sL_{u,d}$ with the $G-\cO_X$-module structure associated to $\alpha$ by Lemma \ref{GOZ1}. In particular the action map $G_U \to \cB(\sL(U))^\times$ is continuous for every affinoid subdomain $U$ of $X$. We check that this is in fact a $G-\cD_X$-module. Using $\alpha^{de}=\delta_G(u^e)$, we compute
	\[ \frac{\partial(\alpha(g))}{\alpha(g)}= \frac{1}{d}\left(\frac{\partial(g\cdot u)}{g\cdot u}- \frac{\partial(u)}{u}\right) \qmb{for each} \partial\in \cT_X, g\in G.  \]
	Using this together with (\ref{Ludform}) we compute \begin{eqnarray*} (g\cdot \partial)(g\cdot v) & = & (g\cdot \partial)(\alpha(g)) v + \alpha(g) (g\cdot\partial)(v) \\
		& = & \frac{1}{d}\left(\frac{(g\cdot \partial)(g\cdot u)}{g\cdot u}- \frac{(g\cdot\partial) u}{u} \right)\alpha(g)v + \alpha(g)\frac{1}{d}\frac{(g\cdot \partial)(u)}{u} v\\ & = & \frac{1}{d}\frac{g\cdot (\partial(u))}{g\cdot u}g\cdot v \\ & = & g\cdot (\partial (v)). \end{eqnarray*}	
	Thus we obtain a $G$-equivariant line bundle with connection on $X$ that we denote $\sL^\alpha_{u,d}$. It is evident that $\omega\left([\sL_{u,d}^\alpha]\right)=[\sL_{u,d}]$.
	
	Using (\ref{Ludform}), we see that $v^{\otimes de}\mapsto u^e$ defines an isomorphism $\psi : \sL_{u,d}^{\otimes de}\stackrel{\cong}{\longrightarrow} \cO_X$ of line bundles with flat connection. To establish that $\sL_{u,d}^\alpha$ is $(de)$-torsion, it suffices to show this isomorphism is $G$-linear. Since for all $g \in G$ we have
	\[g \cdot (v^{\otimes de})=\alpha^{de}(g)v^{\otimes de}\qmb{and} g\cdot (u^e)=\delta_G(u^e)(g)u^e;\]
	 we have $\alpha^{de}=\delta_G(u)^e$ and the $G$-linearity follows.
	
	(b) Since $\alpha^{de}=\delta_G(u^e)$ and $\beta^{de}=\delta_G(w^e)$ and $\delta_G\colon \cO(X)^\times\to Z^1(G,\cO(X)^\times)$ is a group homomorphism, we see that $(\alpha\beta)^e=\delta_G((uw)^e)$. Hence $\alpha\beta\in \mathcal{Z}_{uw,d,e}^{G,X}$.  
	
	Using Proposition \ref{PicConDtor}, we see that $[\sL_{u,d}\otimes \sL_{v,d}]=[\sL_{uv,d}]$ in $\Con(X)[d]$. Using the definition of the $G$-equivariant structure on $\sL_{u,d}^\alpha$ in part (a) and the definition of the product in $\Con^G(X)$ given in the proof of Lemma \ref{TensorHomPicConG}, we also see that $[\sL_{u,d}^\alpha\otimes \sL_{w,d}^\beta] = [\sL_{uw,d}^{\alpha\beta}]$ in $\Con^G(X)$.
	
	(c) For any $f \in \cO(X)^\times$, there is an isomorphism $\cO \stackrel{\cong}{\longrightarrow} \sL_{f^d,d}^{\delta_G(f)}$ of $G$-equivariant line bundles with connection on $X$, sending $1$ to $f^{-1} v$. This gives the equality $[\sL_{f^d,d}^{\delta_G(f)}]=[\cO]$ in $\Con^G(X)$. Using part (b), we then also have
	\begin{equation}\label{DivideCocycle} [\sL^\alpha_{u,d}] = [\sL^\alpha_{u,d}] \cdot [\cO] = [\sL^\alpha_{u,d}] \cdot [\sL^{\delta_G(f)}_{f^d,d}]^{-1} = [\sL_{u/f^d,d}^{\alpha/\delta_G(f)}] .\end{equation}
	Suppose now that $u = \lambda f^d$ for some $\lambda \in K^\times$ and some $f \in \cO(X)^\times$ such that $\alpha = \delta_G(f)$. Using (\ref{DivideCocycle}), we then have $[\sL^\alpha_{u,d}] = [\sL_{\lambda,d}^1] = [\cO]$ in $\Con^G(X)$.  
	
	Conversely, suppose that $[\sL^\alpha_{u,d}] = [\cO]$ in $\Con^G(X)$. Then $\omega([\sL_{u,d}^\alpha]) = [\sL_{u,d}] = [\cO]$ in $\Con(X)^G$, so using Proposition \ref{PicConDtor} we can find $f\in \cO(X)^\times$ and $\lambda \in K^\times$ such that $u= \lambda f^d$. Then $(\ref{DivideCocycle})$ implies that $[\cO]=  [\sL^\alpha_{u,d}] = [\sL_{\lambda,d}^{\alpha/\delta_G(f)}]$ in $\Con^G(X)$. Hence $G$ must fix the basis vector $v$ of $\sL_{\lambda,d}^{\alpha/\delta_G(f)}(X)^{\cT(X) = 0}$, so $\alpha = \delta_G(f)$.
	
	(d) Suppose that $[\sL] \in \Con^G(X)[de]$ is such that $\omega([\sL]) = [\sL_{u,d}]$. Consider the function $\alpha\colon G\to \cO(X)^\times$ defined by $g \cdot v=\alpha(g)v$. Then $\alpha\in Z^1(G,\cO(X)^\times)$ by Lemma \ref{GOZ1}.
	
	Since $(de) \cdot [\sL] = 0$ in $\Con^G(X)$, there is an isomorphism of $G$-equivariant line bundles with flat connection $\varphi : \sL^{\otimes de}\stackrel{\cong}{\longrightarrow} \cO_X$. On the other hand we also have the isomorphism $\psi : \sL_{u,d}^{\otimes de}\stackrel{\cong}{\longrightarrow} \cO_X$ of line bundles with flat connection constructed in the proof of part (a) above. Since $\omega([\sL]) = \sL_{u,d}$,  Corollary \ref{IsoLL} implies that $\varphi = \lambda \psi$ for some $\lambda \in K^\times$, and then $\varphi(v^{\otimes de}) = \lambda u^e$.  Since $\varphi$ is $G$-linear, we have
	\[g \cdot \lambda u^e = g \cdot \varphi(v^{\otimes de}) = \varphi(g \cdot v^{\otimes de}) = \alpha(g)^{de} \lambda u^e \qmb{for all} g \in G,\]
	so $\alpha^{de}=\delta_G(u^e)$.  Hence $\alpha \in \cZ^{G,X}_{u,d,e}$ and $[\sL] = [\sL^\alpha_{u,d}]$ in $\Con^G(X)$.
	
	Finally, suppose that $\alpha, \beta \in \cZ^{G,X}_{u,d,e}$ are such that $[\cL^\alpha_{u,d}] = [\cL^\beta_{u,d}]$. Then $[\cL^{\alpha/\beta}_{1,d,e}] = [\cO]$ by part (b), so $\alpha/\beta = \delta_G(f)$ for some $f \in \cO(X)^\times$ such that $1 / f^d \in K^\times$. Corollary \ref{dtors} implies that $f\in K^\times$.  Then $\delta_G(f) = 1$ so $\alpha = \beta$ as required.
\end{proof}
We interrupt the flow of $\S \ref{SecCocycELBC}$ to state and prove a useful elementary Lemma which is valid with far less restrictive assumption on the affinoid variety $X$ than those imposed at the start of $\S \ref{SecCocycELBC}$: it holds if $X$ is only assumed to be reduced.
\begin{lem}\label{Ddivis} Let $\varpi:=p^{-\frac{1}{p-1}}\in \bR_{>0}$.
\be \item If $d$ is an integer such that $p \nmid d$, then the $d$th power map \[(-)^d\colon \cO(X)^{\times\times}\to \cO(X)^{\times\times}\] is an isomorphism of topological groups. 	
\item If $r\in (0,\varpi/p)$ then every element of $\cO(X)^{\times\times}_r$ has a $p$th root in $\cO(X)^{\times\times}$. \ee
\end{lem}
\begin{proof} (a) If $a \in \cO(X)^{\circ \circ}$ then the binomial expansion
\[(1 + a)^{1/d} = \sum\limits_{n=0}^\infty \binom{1/d}{n} a^n\]
converges to an element of $\cO(X)^{\times\times}$ because $|a| < 1$ and because $\binom{1/d}{n} \in \Zp \subset K^\circ$ for all $n \geq 0$ as a consequence of the assumption that $p \nmid d$. That the map $a\mapsto \sum\limits_{n=0}^\infty \binom{1/d}{n} a^n$ is continuous is evident. 

(b) Similary if $|a|\leq r<\varpi/p$, the binomial expansion \[ (1+a)^{1/p}=\sum\limits_{n=0}^{\infty} \binom{1/p}{n}a^n \] converges to an element of $\cO^{\times\times}$ since \[v_p\left(p^n\binom{1/p}{n}\right)=-v_p(n!) \geq -\frac{n}{p-1}\] so that for $n\geq 1$ \[ \left|\binom{1/p}{n}a^n \right|_X\leq \left(pr/\varpi \right)^n\] and $pr/\varpi<1$. Thus $\sum\limits_{n=0}^{\infty} \binom{1/p}{n}a^n$ is the required $p$th root of $1+a$. 
\end{proof}

\begin{notn}\label{TXnotn} We will write $T(X)$ to denote the discrete abelian group \[ T(X):=\cO(X)^{\times}/\cO(X)^{\times\times} \] and $\pi_{T(X)}$ to denote the natural projection map $\pi_{T(X)}\colon \cO(X)^\times\to T(X)$.  \end{notn}

Since $\cO(X)^{\times\times}$ is open in $\cO(X)^\times$, $\pi_{T(X)}$ is continuous. We also observe that $T(-)$ defines a functor from affinoid varieties to abelian groups. Our next result gives conditions on the data $u,d,e,\alpha$ that determine when $[\sL^\alpha_{u,d,e}] \in \Con^G(X)$ is in fact the trivial element, in the case where $d,e$ are both coprime to $p$.

	
\begin{prop} \label{Ludatriv} Let  $d,e\geq 1$ be integers coprime to $p$.  Then for every $u \in \cO(X)^\times$ and $\alpha \in \cZ^{G,X}_{u,d,e}$, the following are equivalent:
	\begin{enumerate}[{(}i{)}]
		\item there exists $v\in \cO(X)^\times$ such that $u/v^d\in K^\times$ and $\alpha=\delta_G(v)$,
		\item there exists $v\in \cO(X)^\times$ and $\lambda\in K^\times$ such that 
		\[\pi_{T(X)}(\lambda v^d)=\pi_{T(X)}(u) \qmb{and} \pi_{T(X)}\circ \alpha=\pi_{T(X)}\circ \delta_G(v).\]
		\item $[\sL^{\alpha}_{u,d,e}]=[\cO]\in \Con^G(X)$. 
\end{enumerate} \end{prop}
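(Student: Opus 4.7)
By Lemma \ref{1cocycles}(c), the equivalence (i)$\Leftrightarrow$(iii) is already known, and trivially (i)$\Rightarrow$(ii) on taking the same $v$ together with $\lambda := u/v^d$. So the real content of the proposition is the implication (ii)$\Rightarrow$(i).

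Suppose that $v \in \cO(X)^\times$ and $\lambda \in K^\times$ satisfy the hypotheses in (ii). These two congruences may be rewritten as
\[ u = \lambda\, v^d\, w \qmb{for some} w \in \cO(X)^{\times\times}, \qmb{and} \beta := \alpha \cdot \delta_G(v)^{-1} \in Z^1(G, \cO(X)^{\times\times}).\]
My aim is to produce $s \in \cO(X)^{\times\times}$ satisfying $s^d = w$ and $\delta_G(s) = \beta$; given such an $s$, the element $v' := v s$ verifies (i), because $u / (v')^d = \lambda \in K^\times$ and $\delta_G(v') = \delta_G(v)\,\delta_G(s) = \delta_G(v)\,\beta = \alpha$.

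The key input is that $\cO(X)^{\times\times}$ is \emph{uniquely divisible} by integers coprime to $p$. Indeed, for $\gcd(n,p)=1$ we have $1/n \in \bZ_p$, so the binomial series $(1+f)^{1/n} = \sum_{k \geq 0} \binom{1/n}{k} f^k$ has coefficients of norm $\leq 1$ and therefore converges in $\cO(X)^{\times\times}$ whenever $|f|_X < 1$. Uniqueness follows by expanding $(1+k)^n = 1$: one gets $k\cdot\bigl(n + \binom{n}{2} k + \cdots + k^{n-1}\bigr) = 0$, where the second factor equals $n$ plus terms of supremum seminorm $< 1$ and is therefore a unit in $\cO(X)$, forcing $k=0$. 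In particular $\cO(X)^{\times\times}$ has no non-trivial prime-to-$p$ torsion, and the same applies pointwise to $Z^1(G,\cO(X)^{\times\times})$.

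Accordingly, let $s$ be the unique $d$-th root of $w$ in $\cO(X)^{\times\times}$. On one hand, $\delta_G(s)^{de} = \delta_G(s^{de}) = \delta_G(w^e)$. On the other hand, using that $\alpha^{de} = \delta_G(u^e)$ from Definition \ref{ZGXude} and that $\lambda \in K^\times$ is $G$-fixed,
\[ \beta^{de} = \alpha^{de}\, \delta_G(v)^{-de} = \delta_G\!\bigl((u/v^d)^e\bigr) = \delta_G\!\bigl((\lambda w)^e\bigr) = \delta_G(w^e). \]
Hence $(\beta \cdot \delta_G(s)^{-1})^{de}$ is the trivial element of $Z^1(G, \cO(X)^{\times\times})$, and since $\gcd(de,p)=1$ the torsion-freeness established above forces $\beta = \delta_G(s)$, completing the argument. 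The main obstacle is really just the unique divisibility of $\cO(X)^{\times\times}$ by integers coprime to $p$; once that is in hand, the rest is a routine manipulation of the cocycle identity $\alpha^{de}=\delta_G(u^e)$.
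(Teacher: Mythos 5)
Your proof is correct and follows essentially the same route as the paper. You apply Lemma \ref{Ddivis}(a) to extract a $d$-th root $s\in\cO(X)^{\times\times}$ of $w := u/(\lambda v^d)$ (this is the reciprocal of the paper's $\varepsilon$), set $v':=vs$, and then show that $\alpha/\delta_G(v')=\beta\delta_G(s)^{-1}$ is a $de$-torsion element of $Z^1(G,\cO(X)^{\times\times})$, which must be trivial; the paper reaches the identical conclusion by observing the same element lies in $\mu_{de}(K)\cap\cO(X)^{\times\times}=\{1\}$.
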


\begin{proof}
	The equivalence of (i) and (iii) is a special case of Lemma \ref{1cocycles}(c).
	
	The implication (i)$\Rightarrow$(ii) is immediate since we can take $\lambda=u/v^d$. 
	
	Suppose that $v\in \cO(X)^\times$ and $\lambda\in K^\times$ such that $\pi_{T_X}(\lambda v^d)=\pi_{T(X)}(u)$ and $\pi_{T(X)}\circ \alpha=\pi_{T(X)}\circ \delta_G(v)$.  Since $\ker \pi_{T(X)}=\cO(X)^{\times\times}$, using Lemma \ref{Ddivis}, we can find $\varepsilon \in \cO(X)^{\times\times}$ such that $\varepsilon^d = \lambda v^d/u$. Setting $v':=v/\varepsilon'$, we have \[u/v'^d\in K^\times\qmb{and}\pi_{T(X)}\circ \delta_G(v')=\pi_{T(X)}\circ\delta_G(v)=\pi_{T(X)}\circ\alpha.\] 
	
	To deduce that (i) holds it remains to prove that $\alpha=\delta_G(v')$. Since $u/v'^d\in K^\times$, we have $\delta_G(u)=\delta_G(v')^d$. Because $\alpha\in \mathcal{Z}^{G,X}_{u,d,e}$, $\alpha^{de}=\delta_G(u^e)=\delta_G(v'^{de})$ shows that $\alpha/\delta_G(v')$ takes values in $\mu_{de}(K)$. However $\alpha/\delta_G(v')$ also takes values in $\ker \pi_{T(X)}=\cO(X)^{\times\times}$. We're now done because $\cO(X)^{\times\times}\cap \mu_{de}(K)$ is trivial.
\end{proof}
\begin{prop} \label{approxcocycle} Assume the hypotheses of Proposition \ref{Ludatriv} hold. Suppose also that $G$ is compact and that the exponent of every finite abelian $p'$-quotient of $G$ divides $e$.
	
	For every $\beta\in Z^1(G,\cO(X)^\times)$ and $u \in \cO(X)^\times$ such that $\pi_{T(X)}\circ \left(\beta^{-d}\delta_G(u)\right)$ takes values in $K^\times/K^{\times\times}$ there is a unique $\alpha\in \mathcal{Z}^{G,X}_{u,d,e}$ such that \[\pi_{T(X)} \circ \alpha=\pi_{T(X)} \circ \beta\in Z^1(G,T(X)).\] 
\end{prop}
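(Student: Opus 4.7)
The plan is to write the desired $\alpha$ in the form $\alpha = \beta \gamma$ where $\gamma$ takes values in $\cO(X)^{\times\times}$, so that the condition $\pi_{T(X)} \circ \alpha = \pi_{T(X)} \circ \beta$ holds automatically. Setting $\zeta := \beta^{-d}\delta_G(u) \in Z^1(G, \cO(X)^\times)$, a short computation shows that the requirement $\alpha^{de} = \delta_G(u^e)$ is equivalent to $\gamma^{de} = \zeta^e$. For uniqueness, if $\alpha_1,\alpha_2 \in \cZ^{G,X}_{u,d,e}$ have the same image under $\pi_{T(X)}$, then $\alpha_1/\alpha_2 \in Z^1(G,\cO(X)^{\times\times})$ satisfies $(\alpha_1/\alpha_2)^{de}=1$. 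Because $\gcd(de,p)=1$, the group $\cO(X)^{\times\times}$ is uniquely $de$-divisible (this is the content invoked in the proof of Proposition \ref{Ludatriv} via Lemma \ref{Ddivis}, and underlies the existence step below as well), so $\alpha_1 = \alpha_2$ pointwise.

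For existence, the key step is to show that $\zeta^e$ in fact takes values in the subgroup $\cO(X)^{\times\times}$. By hypothesis, $\pi_{T(X)} \circ \zeta$ is a continuous cocycle $G \to T(X)$ whose values all lie in the $G$-stable subgroup $K^\times/K^{\times\times} \subseteq T(X)$. Since $G$ acts $K$-linearly on $\cO(X)$, the induced action on $K^\times/K^{\times\times}$ is trivial, so $\pi_{T(X)} \circ \zeta$ is a continuous group homomorphism $G \to K^\times/K^{\times\times}$. The group $K^\times/K^{\times\times}$ is discrete (as $K^{\times\times}$ is open in $K^\times$) and decomposes as $\langle \pi_K\rangle \times k^\times$ where $\pi_K$ is a uniformiser and $k$ the residue field of $K$, whose torsion subgroup is the $p'$-group $k^\times$. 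Since $G$ is compact, the image of this homomorphism is finite, hence contained in the torsion subgroup, hence a finite abelian $p'$-quotient of $G$. By our hypothesis, its exponent divides $e$, so $(\pi_{T(X)} \circ \zeta)^e = 1$, i.e.\ $\zeta^e \in Z^1(G, \cO(X)^{\times\times})$.

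Using unique $de$-divisibility of $\cO(X)^{\times\times}$ again, we may take the unique pointwise $(de)$-th root $\gamma$ of $\zeta^e$ inside $\cO(X)^{\times\times}$; uniqueness of roots ensures that $\gamma$ inherits the cocycle identity from $\zeta^e$, and continuity of the $(de)$-th root map on $\cO(X)^{\times\times}$ (coming from its expression via the $p$-adic logarithm and exponential, or from the binomial series) ensures that $\gamma$ is continuous. Then $\alpha := \beta\gamma \in Z^1(G,\cO(X)^\times)$ satisfies $\alpha^{de} = \beta^{de}\zeta^e = \delta_G(u^e)$ and $\pi_{T(X)} \circ \alpha = \pi_{T(X)} \circ \beta$, as required. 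The main conceptual obstacle is the key step in the existence part: reducing the problem to the subgroup $\cO(X)^{\times\times}$ by combining the structural description of $K^\times/K^{\times\times}$ as almost torsion-free away from $p$ with the compactness of $G$ and the exponent hypothesis on its finite abelian $p'$-quotients.
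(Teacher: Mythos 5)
Your proof is correct and follows essentially the same route as the paper: write $\alpha=\beta\gamma$, observe that $\pi_{T(X)}\circ\zeta$ is a continuous homomorphism from the compact group $G$ into the discrete group $K^\times/K^{\times\times}$, hence has finite $p'$-torsion image, so the exponent hypothesis gives $\zeta^e\in Z^1(G,\cO(X)^{\times\times})$, and then extract a $(de)$-th root using unique $de$-divisibility (Lemma \ref{Ddivis}(a)). One small caution: the decomposition $K^\times/K^{\times\times}\cong\langle\pi_K\rangle\times k^\times$ presumes $K$ is discretely valued, which the paper's running hypotheses do not guarantee (e.g.\ $K=\bfC$); the correct and sufficient statement, which the paper uses, is simply that $K^\times/K^{\times\times}$ has no $p$-torsion, since $|K^\times|$ is torsion-free and $\Char k=p$.
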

\begin{proof} Let $\eta$ be the $1$-cocycle $\eta:=\delta_G(u)\beta^{-d}$. The assumption on $\eta$ implies that   
 $\pi_{T(X)} \circ \eta\in Z^1(G,K^\times/K^{\times\times})$. Since $G$ is compact and $K^\times/K^{\times\times}$ is a discrete group with trivial $G$-action, $\pi_{T(X)}\circ \eta\in \Hom(G,K^\times/K^{\times\times})$ has finite image and so takes values in the torsion subgroup of $K^{\times}/K^{\times\times}$. 
 
 Since $K^\times/K^{\times\times}$ has no $p$-torsion, $\pi_{T(X)}\circ \eta$ factors through a finite abelian $p'$-quotient of $G$ and thus  $(\pi_{T(X)}\circ \eta)^e=1$ by our assumption on $G$. That is $\eta^e$ takes values in $\cO(X)^{\times\times}$. By Lemma \ref{Ddivis}(a), $\eta^{e}$ has a $de^{\rm{th}}$ root $\gamma$ in  $Z^1(G,\cO(X)^{\times\times})$: $\gamma^{de} = \eta^e$. 
 
 Now  $\delta_G(u)^{e}= (\eta \beta^d)^e = (\gamma j)^{de}$, so $\alpha:=\gamma \beta$ satisfies $\alpha^{de}=\delta_G(u)^e$.  Moreover $\pi_{T(X)} \circ \alpha=\pi_{T(X)} \circ \beta$ as required.
	
	Suppose now that $\alpha'\in \mathcal{Z}_{u,d,e}^{G,X}$ satisfies $\pi_{T(X)}\circ\alpha'=\pi_{T(X)}\circ \beta=\pi_{T(X)}\circ \alpha$. Then $\alpha/\alpha'$ takes values in $\cO(X)^{\times\times}$ but also $(\alpha/\alpha')^{de}=1$. Then $\alpha'=\alpha$ since $p \nmid de$.\end{proof}


\section{Applications to Drinfeld's upper half plane}
\subsection{Subdomains of the rigid analytic affine line}
We will write $\A := \A_K := \A_K^{1,\an}$ to denote the rigid $K$-analytic affine line, equipped with a fixed choice of local coordinate $x \in \cO(\A)$.  We write $\bP^1$ to denote the rigid $K$-analytic projective line. 
\begin{defn}\label{Cheese}A \emph{$K$-cheese} is an affinoid subdomain of $\bA$ of the form
\[C_K(\mathbf{\alpha}, \mathbf{s}) := \Sp K \left\langle \frac{x - \alpha_0}{s_0}, \frac{s_1}{x - \alpha_1}, \cdots, \frac{s_g}{x - \alpha_g}\right\rangle\]
for some $\mathbf{\alpha} := (\alpha_0,\ldots, \alpha_g) \in K^{g+1}$ and $\mathbf{s} := (s_0,\ldots, s_g) \in (K^\times)^{g+1}$, which satisfy
\begin{itemize}
\item $|s_i| \leq |s_0|$ for all $i = 1,\ldots, g$,
\item $|\alpha_i - \alpha_0| \leq |s_0|$ for all $i = 1,\ldots,g$, and
\item $|\alpha_i - \alpha_j| \geq \max\{|s_i|, |s_j|\}$ whenever $1 \leq i < j \leq g$.
\end{itemize}
When there is no risk of confusion, we will simplify the notation to $C(\mathbf{\alpha},\mathbf{s})$.

We call the open discs \begin{eqnarray*} D_\infty & := & \{z\in \bP^1({\bf{C}}):|z-\alpha_0|>|s_0|\} \mbox { and }\\ D_i & := &\{z\in \bP^1({\bf C}): |z-\alpha_i|<|s_i|\}\mbox{ for }i=1,\ldots,g\end{eqnarray*} the \emph{holes} of $C(\alpha,\mathbf{s})$ and we write  \[ h(C(\alpha,\mathbf{s})):=\{D_1,\ldots,D_g,D_\infty\}\] to denote the set of holes of $C(\alpha,\mathbf{s})$.  
\end{defn}
Of course the $\bf{C}$-points of $C(\mathbf{\alpha}, \mathbf{s})$ are obtained by removing the $g+1$ holes from $\bP^1({\bf C})$. The conditions on the parameters $\mathbf{\alpha}$ and $\mathbf{s}$ are there to ensure that the holes are pairwise disjoint. We also require that $\mathbf{\alpha}$ and $\mathbf{s}$ are defined over $K$.

\begin{rmk}\label{ovlpdiscs} \hsp \be
\item Given two open discs $D_1, D_2$ in $\bP^1(\bfC)$ with $D_1 \cap D_2 \neq \emptyset$, it must necessarily be the case that either $D_1 \subseteq D_2$, or $D_2 \subseteq D_1$.
\item The union and the intersection of two cheeses $C_1,C_2$ are also cheeses, unless $C_1 \cap C_2 = \emptyset$.
\ee \end{rmk}

\begin{lem} \label{rYX} Suppose that $X$ and $Y$ are $K$-cheeses and $\varphi : \bP^1 \to \bP^1$ is a $K$-analytic automorphism such that $\varphi(Y)\subseteq X$.  Then there is a unique function 
\[\varphi^X_Y\colon h(X)\to h(Y)\]
such that for every $D\in h(X)$, $\varphi^X_Y(D)$ is the largest hole of $Y$ containing $\varphi^{-1}(D)$.\end{lem}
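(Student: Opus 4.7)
The plan is to prove existence and uniqueness of $\varphi^X_Y$ simultaneously by showing that for each $D \in h(X)$ there is a \emph{unique} hole $D' \in h(Y)$ with $\varphi^{-1}(D) \subseteq D'$; one then sets $\varphi^X_Y(D) := D'$. Since distinct holes of $Y$ are pairwise disjoint open discs, such a unique $D'$ is automatically the largest hole of $Y$ containing $\varphi^{-1}(D)$, and the function $\varphi^X_Y$ is clearly determined uniquely by its defining property.

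The first step is to note that $\varphi$, being a $K$-analytic automorphism of $\bP^1$, is a M\"obius transformation; hence it sends any open disc in $\bP^1(\bfC)$ (in either of the two forms appearing in Definition \ref{Cheese}) bijectively to another open disc. In particular, for each $D \in h(X)$, the image $\varphi^{-1}(D)$ is a non-empty open disc in $\bP^1(\bfC)$.

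Next I would translate the hypothesis $\varphi(Y) \subseteq X$ into $\varphi^{-1}(D) \cap Y = \emptyset$ for each $D \in h(X)$, using that $D \cap X = \emptyset$, so that $\varphi^{-1}(D) \subseteq \bP^1 \setminus Y = \bigsqcup_{D' \in h(Y)} D'$. Hence at least one $D' \in h(Y)$ intersects the non-empty set $\varphi^{-1}(D)$, and by Remark \ref{ovlpdiscs}(a), for any such $D'$ either $\varphi^{-1}(D) \subseteq D'$ or $D' \subsetneq \varphi^{-1}(D)$.

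The main step, and the only genuine obstacle, is to rule out the possibility that the second alternative holds for every $D' \in h(Y)$ meeting $\varphi^{-1}(D)$. If it did, then $\varphi^{-1}(D)$ would decompose as a finite disjoint union of two or more pairwise-disjoint open subdiscs (the relevant $D'$'s), contradicting the connectedness of $\varphi^{-1}(D)$ as a rigid analytic subspace of $\bP^1$. So some $D' \in h(Y)$ contains $\varphi^{-1}(D)$, and uniqueness of such a $D'$ follows at once from disjointness of distinct holes of $Y$. Beyond this connectedness argument, the lemma is a direct consequence of the cheese axioms and Remark \ref{ovlpdiscs}(a).
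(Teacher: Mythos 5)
Your proof is correct and follows essentially the same route as the paper's: both observe that $\varphi$ is a M\"obius transformation, so $\varphi^{-1}(D)$ is an open disc contained in the union of the holes of $Y$, and then invoke Remark \ref{ovlpdiscs}(a). You make explicit (via connectedness of the disc) a step the paper leaves implicit — namely that $\varphi^{-1}(D)$ cannot be a disjoint union of two or more strictly smaller holes — which is the same fact the paper states openly later, in the proof of Lemma \ref{fibrecheeses}, as "no open disc in $\bP^1(\bfC)$ is a finite union of proper open subdiscs."
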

\begin{proof} By \cite[p. 33]{Lut}, the automorphism $\varphi$ is necessarily a M\"{o}bius transformation. Hence $\varphi$, as well as $\varphi^{-1}$, maps open discs in $\bP^1(\bfC)$ to open discs in $\bP^1(\bfC)$. 

Let $D\in h(X)$. Then $\varphi^{-1}(D)\cap Y(\bfC)\subseteq \varphi^{-1}(D\cap X(\bfC))=\emptyset$. Thus the open disc $\varphi^{-1}(D)$ is contained in the union of the holes of $Y$. Remark \ref{ovlpdiscs}(a) then implies that $\varphi^{-1}(D)$ is contained in a unique hole $\varphi_Y^X(D)$ of $Y$.
\end{proof}

\begin{notn} Suppose that $X,Y$ are $K$-cheeses with $Y \subseteq X$. We will denote the function $\id^X_Y : h(X) \to h(Y)$ associated with the identity map $\id : \bP^1 \to \bP^1$ by $\iota^X_Y : h(X) \to h(Y)$.
\end{notn}

The following lemma will be useful later.
\begin{lem} \label{fibrecheeses} Suppose that $X$ and $Y$ are $K$-cheeses with non-empty intersection. Then there is a natural bijection 
\[ \iota^{X\cup Y}_X \times \iota^{X\cup Y}_Y : h(X\cup Y)\quad\longrightarrow \quad h(X) \underset{h(X\cap Y)}{\times}{} h(Y) \] given by $D\mapsto (\iota^{X\cup Y}_X(D), \iota^{X\cup Y}_Y(D))$.
\end{lem}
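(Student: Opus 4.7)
My plan is to work directly in $\bP^1(\bfC)$, where for each cheese $Z$ the set $h(Z)$ can be read off as the set of connected components of $\bP^1(\bfC)\setminus Z(\bfC)$; in particular, every open disc disjoint from $Z$ lies inside a unique hole of $Z$. The two key tools will be Remark \ref{ovlpdiscs}(a), which says any two open discs in $\bP^1(\bfC)$ are either disjoint or nested, and Remark \ref{ovlpdiscs}(b), which guarantees that $X \cap Y$ and $X \cup Y$ are themselves cheeses. Well-definedness of the map then follows immediately: for $D \in h(X \cup Y)$, both $\iota^X_{X \cap Y}(\iota^{X \cup Y}_X(D))$ and $\iota^Y_{X \cap Y}(\iota^{X \cup Y}_Y(D))$ are holes of $X \cap Y$ containing $D$, and the uniqueness just mentioned forces the two to coincide.

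Next I would establish injectivity. If $D \in h(X \cup Y)$ has image $(D_X, D_Y)$, then $D \subseteq D_X \cap D_Y$, so this intersection is non-empty; by the nesting principle, without loss of generality $D_X \subseteq D_Y$. Then $D_X$ is disjoint from $X$ (being a hole of $X$) and, lying inside $D_Y$, also disjoint from $Y$, hence disjoint from $X \cup Y$. Maximality of $D$ in the family of open discs disjoint from $X \cup Y$ forces $D = D_X$, so $D$ is recovered from the pair $(D_X, D_Y)$.

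Surjectivity is where I expect the main work to lie. Given $(D_X, D_Y) \in h(X) \times_{h(X \cap Y)} h(Y)$ with common image $D_0 \in h(X \cap Y)$, retracing the injectivity argument shows that it suffices to prove the Key Claim that $D_X$ and $D_Y$ are nested; granted this, the smaller of the two is disjoint from both $X$ and $Y$, and is maximal among such discs by the same argument as in injectivity (any strictly larger such disc would lie in a hole of $X$ containing $D_X$, which is $D_X$ itself), so it lies in $h(X \cup Y)$ with the prescribed images. To prove the Key Claim I plan to exploit the characterization of $D_0$ as the \emph{unique} hole of $X \cap Y$ containing each of $D_X$, $D_Y$: every open disc strictly between $D_X$ and $D_0$, or between $D_Y$ and $D_0$, is itself disjoint from $X \cap Y$. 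Combining this with the maximality of $D_X \in h(X)$ and $D_Y \in h(Y)$, the hypothesis $X \cap Y \neq \emptyset$, and the nesting principle applied to the smallest open disc containing $D_X \cup D_Y$, I aim to rule out the case $D_X \cap D_Y = \emptyset$. If this abstract argument proves delicate, a fallback is to argue directly with the explicit parameters $(\alpha, \mathbf{s})$ describing each cheese per Definition \ref{Cheese}, reading off the holes of $X$, $Y$, $X \cap Y$, $X \cup Y$ from those parameters and checking the bijection by hand.
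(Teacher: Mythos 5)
Your well-definedness and injectivity arguments are correct (the injectivity argument is organized a bit differently from the paper's — the paper uses de Morgan's laws to write each $D \in h(X \cup Y)$ directly as $A \cap B$ for $A \in h(X)$, $B \in h(Y)$ — but your version works). The problem is surjectivity, which you correctly flag as the main work but leave as an unfinished plan.

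The gap is in the proof of your Key Claim that $D_X$ and $D_Y$ are nested. The sketch you give — pass to the smallest open disc $D'$ containing $D_X \cup D_Y$ and look for a contradiction — does not obviously close: from $D_X \subsetneq D' \subseteq D_0$ you only learn that $D'$ meets $X$, meets $Y$, and is disjoint from $X \cap Y$, which is \emph{not} a contradiction (such discs exist in general; also, since radii range over the dense-but-proper subgroup $|\bfC^\times|$ of $\bR_{>0}$, a ``smallest open disc containing $D_X \cup D_Y$'' need not exist). What the argument actually needs is the following observation, which you have not isolated: by de Morgan's laws, $(X \cap Y)(\bfC)^c = X(\bfC)^c \cup Y(\bfC)^c$, so the hole $D_0$ of $X \cap Y$ is the union of the holes of $X$ contained in $D_0$ together with the holes of $Y$ contained in $D_0$; since no open disc in $\bP^1(\bfC)$ is a finite union of proper open subdiscs, $D_0$ must itself belong to $h(X) \cup h(Y)$. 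If $D_0 \in h(X)$, then since the holes of $X$ are pairwise disjoint and $D_X \subseteq D_0$, you get $D_X = D_0 \supseteq D_Y$; symmetrically if $D_0 \in h(Y)$. Either way $D_X$ and $D_Y$ are nested, and your plan then finishes. Without this input, the Key Claim remains unproved and the surjectivity argument is incomplete.
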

\begin{proof} Note that the map in the statement of the Lemma is well-defined because 
\[\iota^X_{X\cap Y} \circ \iota^{X\cup Y}_X=\iota^{X\cup Y}_{X\cap Y}=\iota^Y_{X\cap Y}\circ \iota^{X\cup Y}_Y.\]
De Morgan's laws imply that $h(X\cup Y)$ is precisely the set of non-empty intersections $A\cap B$ with $A\in h(X)$ and $B\in h(Y)$. Let $D \in h(X \cup Y)$; hence there exist $A \in h(X)$ and $B \in h(Y)$ with $D = A \cap B$. But then $\iota_X^{X\cup Y}(D)=A$ and $\iota^{X\cup Y}_Y(D)=B$, so $D = A$ or $D= B$ by Remark \ref{ovlpdiscs}(a). Hence $\iota^{X\cup Y}_X \times \iota^{X\cup Y}_Y$ is injective.
	
We now show that $\iota^{X\cup Y}_X \times \iota^{X\cup Y}_Y$ is surjective. Suppose that $A\in h(X)$ and $B\in h(Y)$ are such that $\iota^{X}_{X\cap Y}(A)=\iota^Y_{X\cap Y}(B)=:E$. This means that $A$ and $B$ are contained in the same hole $E$ of $X\cap Y$. Since $E$ is a hole of $X\cap Y$, by de Morgan's laws again we see that $E$ is the union of the holes of $X$ contained in $E$ together with the holes of $Y$ contained in $E$. But no open disc in $\bP^1(\bf{C})$ is a finite union of proper open subdiscs; hence $E\in h(X)\cup h(Y)$. It follows that either $E = A$ or $E = B$. Since $E$ contains both $A$ and $B$, it follows that either $A \subseteq B$ or $B \subseteq A$. Hence $D := A \cap B$ is non-empty and is therefore a hole of $X \cup Y$ as we saw above. It is now clear that 
\[(A,B)=\left(\iota^{X\cup Y}_X(D),\iota^{X\cup Y}_Y(D)\right)\] lies in the image of $\iota^{X\cup Y}_X \times \iota^{X\cup Y}_Y$. \end{proof}
We are interested in these cheeses because \emph{every} connected affinoid subdomain of the affine line $\bA$ is a $K$-cheese whenever $K$ is algebraically closed by \cite[Corollary 2.4.7]{LutJacobians}. We will prove Theorem \ref{RationalCheese} below which carries out a Galois descent of this statement down to our base field $K$ which may fail to be algebraically closed.

\begin{lem}\label{GalDescAff} Let $\cG_K := \Gal(\overline{K}/K)$ and let $A$ be a $K$-affinoid algebra. Then the natural map $A \to (A \h\otimes \bfC)^{\cG_K}$ is an isomorphism of $K$-Banach algebras.
\end{lem}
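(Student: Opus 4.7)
The plan is to bootstrap from the Ax--Sen--Tate theorem $\mathbf{C}^{\mathcal{G}_K} = K$, which holds for any complete non-archimedean valued field $K$ of characteristic $0$. This will be leveraged through two reduction steps: first to the case of Tate algebras $T_n := K\langle x_1, \ldots, x_n\rangle$, and then to an arbitrary $K$-affinoid algebra $A = T_n/I$. Throughout, the $\mathcal{G}_K$-action on $A \h\otimes \bfC$ is the one induced by the natural continuous action on the second factor; that this action is continuous, and that $(A\h\otimes\bfC)^{\mathcal{G}_K}$ is a closed, hence Banach, $K$-subalgebra of $A \h\otimes\bfC$, follows from the fact that the unique extension of the norm on $K$ to $\overline{K}$ is $\mathcal{G}_K$-invariant.

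First I would handle the Tate-algebra case. Every element $f \in T_n \h\otimes_K \bfC = \bfC\langle x_1,\ldots,x_n\rangle$ admits a unique convergent power-series expansion $f = \sum_{\alpha} c_\alpha x^\alpha$ with $c_\alpha \in \bfC$ and $|c_\alpha| \to 0$. Since the $\mathcal{G}_K$-action is coefficient-wise, $\sigma(f) = \sum_\alpha \sigma(c_\alpha) x^\alpha$, and the uniqueness of the expansion shows that $\sigma(f) = f$ for all $\sigma \in \mathcal{G}_K$ if and only if $\sigma(c_\alpha) = c_\alpha$ for every $\alpha$ and every $\sigma$, which by Ax--Sen--Tate is equivalent to $c_\alpha \in K$ for all $\alpha$, i.e. $f \in T_n$.

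For the general case, I would write $A = T_n / I$ for some closed ideal $I$ of $T_n$. Since $\bfC$ is faithfully flat over $K$ for completed tensor products of Banach spaces, base change yields a strict exact sequence
\[ 0 \to I \h\otimes_K \bfC \to T_n \h\otimes_K \bfC \to A \h\otimes_K \bfC \to 0 \]
of $K$-Banach spaces equipped with compatible continuous $\mathcal{G}_K$-actions. The cleanest way to descend through this sequence is to choose an orthogonal Schauder basis $\{e_i\}_{i \in I}$ of $A$ as a $K$-Banach space (which exists since $A$ is of countable type over a spherically complete field in the cases of interest, or more generally by standard $p$-adic functional analysis). Then $\{1 \otimes e_i\}$ is an orthogonal Schauder basis for $A \h\otimes_K \bfC$ as a $\bfC$-Banach space, each element has a unique expansion $\sum_i c_i (1 \otimes e_i)$ with $c_i \in \bfC$ and $|c_i| \cdot \|e_i\| \to 0$, and the $\mathcal{G}_K$-action is again coefficient-wise. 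The same argument as in the Tate-algebra case then shows $(A \h\otimes \bfC)^{\mathcal{G}_K} = A$, as a $K$-subalgebra. Finally, the induced bijection $A \to (A\h\otimes\bfC)^{\mathcal{G}_K}$ is automatically a homeomorphism by the open mapping theorem for Banach spaces, so it is an isomorphism of $K$-Banach algebras.

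The main obstacle in this argument is the descent through the quotient $T_n \twoheadrightarrow A$. Appealing to an orthogonal Schauder basis is convenient but relies on a non-trivial result in $p$-adic functional analysis. An alternative, more self-contained approach is to study the long exact sequence in continuous Galois cohomology associated to the displayed short exact sequence and to verify that the connecting map $(A \h\otimes \bfC)^{\mathcal{G}_K} \to H^1_{\mathrm{cont}}(\mathcal{G}_K, I \h\otimes \bfC)$ vanishes; this can be done by writing a Galois-invariant element of $A \h\otimes \bfC$, lifting to $f \in T_n \h\otimes\bfC$, observing that $\sigma \mapsto \sigma(f) - f$ is already a coboundary in $T_n \h\otimes \bfC$, and then carefully modifying $f$ by an element of $I \h\otimes \bfC$ using the surjection $T_n^m \twoheadrightarrow I$ (where $I = (f_1, \ldots, f_m)$) together with a Galois-equivariant choice of coefficients, again ultimately invoking Ax--Sen--Tate.
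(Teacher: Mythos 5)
Your main argument---picking a Schauder basis for $A$ so that elements of $A \h\otimes \bfC$ have unique coordinate expansions, observing that $\cG_K$ acts coordinate-wise, and applying Ax--Sen--Tate to each coordinate---is essentially the paper's argument. The paper packages this more cleanly by citing \cite[Proposition 1.2.1(3)]{FvdPut} to get a $K$-Banach space isomorphism $A \cong c_0(K)$ (or $K^n$ in the finite-dimensional case) and then computing $(c_0(K) \h\otimes \bfC)^{\cG_K} = c_0(\bfC)^{\cG_K} = c_0(\bfC^{\cG_K}) = c_0(K)$, but these are the same idea.

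Two points where your write-up should be tightened. First, the detour through the Tate-algebra case and the short exact sequence $0 \to I \h\otimes \bfC \to T_n \h\otimes \bfC \to A \h\otimes \bfC \to 0$ is abandoned midway and never used---the Schauder-basis argument you end up giving applies to $A$ directly, so that first reduction can simply be deleted. Second, the parenthetical justification for the Schauder basis is off: $K$ is \emph{not} assumed spherically complete here (for instance $K = \bfC$ is allowed), and over a non-discretely-valued $K$ a Banach space of countable type need not admit a genuinely orthogonal Schauder basis; it only admits $t$-orthogonal bases for $t<1$, or equivalently a (non-isometric) linear homeomorphism to $c_0(K)$. This weaker statement is exactly what the paper cites, and it is all your argument actually needs: the expansion is still unique, the $\cG_K$-action is still coordinate-wise, and Ax--Sen--Tate still closes the argument. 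Your alternative sketch via long exact sequences in continuous Galois cohomology is not carried through and as written does not obviously establish the vanishing of the relevant $H^1$, so I would not lean on it.
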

\begin{proof} Since $A$ is a quotient of a Tate algebra, $A$ is of countable type as a $K$-Banach space: it has a dense $K$-linear subspace of countable dimension. Assume first that $\dim_K A = \infty$. By \cite[Proposition 1.2.1(3)]{FvdPut}, we can find a $K$-Banach space isomorphism $\varphi : A \to c_0(K)$; this means that $\varphi$ is a bounded $K$-linear map which has a bounded $K$-linear inverse. Now consider the commutative diagram

\[ \xymatrix{ A \ar[rr]\ar[d]_{\varphi} && (A \h\otimes\bfC )^{\cG_K} \ar[d]^{\varphi \h\otimes 1} \\ c_0(K) \ar[rr] && (c_0(K) \h\otimes \bfC)^{\cG_K}.}\]
The arrow in the bottom row is an isomorphism because 
\[(c_0(K) \h\otimes \bfC)^{\cG_K} = c_0(\bfC)^{\cG_K} = c_0(\bfC^{\cG_K}) = c_0(K)\]
by the Ax-Sen-Tate theorem --- see, for example, \cite[Proposition 2.1.2]{BriCon}; the proof given there works for any complete non-Archimedean field of characteristic zero. The case $\dim_KA < \infty$ is handled in a similar manner.
\end{proof}

\begin{defn} When $X$ is an affinoid subdomain of $\bA$ and $K'$ is a finite extension of $K$ we say that $X$ is \emph{split over $K'$} if $X_{K'}$ is a finite union of pairwise disjoint cheeses.
\end{defn}

We will write $\sqrt{|K|^\times}$ to denote the divisible subgroup of $\R^\times$ generated by $|K^\times|$; this is the same as $|\overline{K}^\times|$. 
\begin{thm}\label{RationalCheese}
Let $X$ be an affinoid subdomain of the affine line $\A$. Then there is a finite extension $K'$ of $K$ such that $X$ splits over $K'$. 
\end{thm}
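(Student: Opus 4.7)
The plan is to use Galois descent to reduce to the algebraically closed case, which is handled by \cite[Corollary 2.4.7]{LutJacobians}: every connected affinoid subdomain of $\bA_\bfC$ is a $\bfC$-cheese. I would first base-change to $\bfC$: since $X_\bfC$ is quasi-compact it has only finitely many connected components, so $X_\bfC = X_1 \sqcup \cdots \sqcup X_n$ with each $X_i$ a $\bfC$-cheese. The continuous action of $\cG_K := \Gal(\overline{K}/K)$ on $X_\bfC$ permutes the $X_i$ through a finite quotient, so after replacing $K$ by the finite extension $K_1$ fixing every $X_i$ setwise, the orthogonal idempotents in $\cO(X_\bfC)$ cutting out the components become Galois-invariant and descend via Lemma \ref{GalDescAff} to $\cO(X_{K_1})$. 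This yields a decomposition $X_{K_1} = Y_1 \sqcup \cdots \sqcup Y_n$ into $K_1$-affinoid subdomains of $\bA_{K_1}$ with $(Y_i)_\bfC = X_i$, and it remains to realise each $Y_i$ as a cheese over a sufficiently large finite extension $K'$ of $K_1$.

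Fix $i$. The finite set $h(X_i)$ of holes of $X_i$ carries a continuous $\cG_{K_1}$-action, and after a further finite extension $K_2$ I may assume every hole is $\cG_{K_2}$-stable. For each such Galois-stable hole $D$ that does not contain $\infty$, the key step is to find a rational centre. Since $\overline{K}$ is dense in $\bfC$ and $D$ is a non-empty open disc, the intersection $D \cap \overline{K}$ is non-empty; I pick one algebraic point $\alpha_D \in D$ for each hole, and adjoin the finitely many $\alpha_D$ to $K_2$ to obtain a finite extension $K_3$. Over $K_3$, each hole has the rational centre $\alpha_D$, so $D = \{z \in \bfC : |z - \alpha_D| < r_D\}$. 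Because $\alpha_D \notin X_i$, the function $(z - \alpha_D)^{-1}$ is a unit in $\cO((Y_i)_{K_3})$, and its spectral norm equals $1/r_D$; hence $r_D \in \sqrt{|K_3^\times|}$, so a final finite extension $K'$ puts every $r_D$ into $|K'^\times|$ and we may write $r_D = |s_D|$ for some $s_D \in K'^\times$. For the unique hole $D_\infty^{(i)}$ of $X_i$ containing $\infty$, I would take $\alpha_0^{(i)}$ to be one of the $\alpha_D$ already constructed for a finite hole (or, in the degenerate case where $X_i$ is a closed disc with $D_\infty^{(i)}$ as its only hole, a $K'$-rational point of $X_i$ itself, produced by the same density argument).

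The three conditions of Definition \ref{Cheese} are then automatic: pairwise disjointness of the holes translates to $|\alpha_i - \alpha_j| \geq \max\{|s_i|, |s_j|\}$, and containment of the finite holes in the closed disc complementary to $D_\infty^{(i)}$ translates to $|\alpha_i - \alpha_0| \leq |s_0|$. The $K'$-cheese $C_{K'}(\alpha^{(i)}, s^{(i)})$ so produced is a $K'$-affinoid subdomain of $\bA_{K'}$ whose $\bfC$-base change coincides with $X_i = (Y_i)_\bfC$; applying Lemma \ref{GalDescAff} to both $K'$-affinoid algebras gives $\cO(C_{K'}(\alpha^{(i)}, s^{(i)})) = \cO(X_i)^{\cG_{K'}} = \cO((Y_i)_{K'})$ as subalgebras of $\cO(\bA_{K'})$, forcing $C_{K'}(\alpha^{(i)}, s^{(i)}) = (Y_i)_{K'}$, and hence $X_{K'} = \bigsqcup_i C_{K'}(\alpha^{(i)}, s^{(i)})$ as required. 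I expect the main technical point to be ensuring the existence of rational centres of Galois-stable holes, which is why I adjoin algebraic points from each hole directly rather than attempting to average a Galois orbit (the latter can fail to land inside the hole when the degree of the algebraic approximant is divisible by $p$).
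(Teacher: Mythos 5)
Your proposal follows essentially the same route as the paper's proof: base-change to $\bfC$, invoke \cite[Corollary 2.4.7]{LutJacobians} to see the components are $\bfC$-cheeses, use density of $\overline{K}$ in $\bfC$ and of $\sqrt{|K^\times|}$ in $|\bfC^\times|$ to produce a rational cheese over a finite extension, and conclude by Galois descent via Lemma~\ref{GalDescAff}; the idempotent argument for disconnectedness is also identical (you do it first, the paper does it last, which is immaterial). The one place you are looser than the paper is the final invocation of Lemma~\ref{GalDescAff}: the chain $\cO(C_{K'}(\alpha^{(i)},s^{(i)})) = \cO(X_i)^{\cG_{K'}} = \cO((Y_i)_{K'})$ tacitly assumes that the two identifications of $\cO(X_i)$ with $\cO(C_{K'}(\alpha^{(i)},s^{(i)}))\hotimes_{K'}\bfC$ and with $\cO((Y_i)_{K'})\hotimes_{K'}\bfC$ induce the \emph{same} $\cG_{K'}$-action; this is exactly the content the paper supplies by embedding both $K'$-affinoids in a common closed disc $D$ over $K'$ and noting that the resulting $\bfC$-algebra isomorphism sends rational functions of the coordinate on $D$ with $K'$-coefficients to functions of the same form, hence is $\cG_{K'}$-equivariant on a dense subalgebra and thus everywhere. (Your phrase ``as subalgebras of $\cO(\bA_{K'})$'' is also the wrong way round — these algebras are targets of restriction from $\cO(\bA_{K'})$, not subalgebras of it; you want to compare them as $K'$-subalgebras of the common base-change $\cO(X_i)$.) This is a routine but necessary verification; alternatively one can simply observe directly that two affinoid subdomains of $\bA_{K'}$ with the same set of $\bfC$-points are equal, which sidesteps the equivariance issue entirely. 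Otherwise your argument is sound.
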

\begin{proof} Recall that $X$ is \emph{geometrically connected} if the base change $X_\bfC$ is connected. Suppose first that $X$ is geometrically connected. Then $X_\bfC$, being connected, is a cheese $C_{\bfC}(\alpha,\mathbf{s})$ by \cite[Corollary 2.4.7]{LutJacobians}. Since $\overline{K}$ is dense in $\bfC$, we can choose the centres $\alpha_0,\alpha_1,\ldots,\alpha_g$ to be first in $\overline{K}$, and then find a large enough finite extension $K'$ of $K$ such that $\alpha_i \in K'$ for all $i$. Since $|\overline{K}^\times| = |\bfC^\times| = \sq{K}$, we may enlarge $K'$ if necessary and arrange that $s_i \in K'$ for all $i$ as well. Let $Z  := C_{K'}(\alpha,\mathbf{s})$ be the same cheese but defined over $K'$. Choose a large enough closed disc $D$ defined over $K'$ which contains both $X' := X_{K'}$ and $Z$, and fix a coordinate $y$ on $D$. Then there is an isomorphism of $\bfC$-affinoid varieties $X' \times_{K'} \bfC \cong Z \times_{K'} \bfC$ compatible with the inclusions $X' \hookrightarrow D$ and $Z \hookrightarrow D$. Now consider the induced $\bfC$-algebra isomorphism 
\[ \psi : \cO(X') \h\otimes_{K'} \bfC = \cO(X' \times_{K'} \bfC) \stackrel{\cong}{\longrightarrow} \cO(Z \times_{K'} \bfC) = \cO(Z) \h\otimes_{K'}\bfC.\]
Because $X'$ is an affinoid subdomain of $D$, the $\bfC$-algebra $\cO(X') \h\otimes_{K'} \bfC$ contains a dense $\bfC$-subalgebra generated by rational functions in $y$ with coefficients in $K'$, and $\psi$ must send these rational functions to $\cG_{K'}$-invariants in the target. Hence $\psi$ respects the natural $\cG_{K'} := \Gal(\overline{K}/K')$-actions on both sides. Taking $\cG_{K'}$-invariants and applying Lemma \ref{GalDescAff} we deduce a $\cO(D)$-algebra isomorphism $\cO(X') \cong \cO(Z)$, so that $X' = Z$ is a cheese.

Returning to the general case, it will now be enough to show that there is some finite extension $K''$ of $K$ such that every connected component of $X_{K''}$ is geometrically connected. To see this, consider again $X_\bfC$, and let $\{e_1,\ldots,e_n\}$ be the primitive idempotents of $\cO(X_\bfC)$. Since $\cG_K$ acts continuously on $\cO(X_\bfC)$ the stabiliser $H_i$ in $\cG_K$ of each $e_i$ is closed. On the other hand, $\cG_K$ preserves $\{e_1,\ldots,e_n\}$ so each $H_i$ has finite index in $\cG_K$. Hence each $H_i$ is also open in $\cG_K$. We can therefore find a finite extension $K''$ of $K$ such that $\cG_{K''}$ fixes each $e_i$ pointwise. Then $e_i \in \cO(X_\bfC)^{\cG_{K''}} = (\cO(X) \h\otimes \bfC)^{\cG_{K''}} = \cO(X) \h\otimes K'' = \cO(X_{K''})$ again by Lemma \ref{GalDescAff}. It follows that every connected component of $X_{ K''}$ is geometrically connected as required. \end{proof}

\begin{prop}\label{CheeseSchauder} Let $C=C_K(\alpha;\mathbf{s})$ be a $K$-rational cheese and $\xi=\xi_0$ a coordinate on $\mathbb{A}^1$ such that $D_\infty=\{z\in \bP^1(\bfC):|\xi(z)|>1\}$. 
	
	For $i=1,\ldots,g$ let $\xi_i=\frac{c_i}{\xi-\xi(\alpha_i)}$ with $c_i\in K^\times$ such that $|\xi_i|=1$. Then the set $\{1,\xi_i^j: j\geq 1, 0\leq i\leq g \}$ is an orthonormal Schauder basis for the $K$-Banach space $\cO(C)$, in the sense of \cite[\S 2.7.2]{BGR}. 
\end{prop}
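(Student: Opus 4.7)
The plan is to realise $\cO(C)$ as an orthogonal direct sum of $K$-Banach spaces
\[ \cO(C) \;=\; K\langle \xi_0\rangle \;\oplus\; \bigoplus_{i=1}^g \xi_i \cdot K\langle \xi_i\rangle. \]
Since $\{\xi_i^j : j \geq 0\}$ is the standard orthonormal Schauder basis of $K\langle \xi_i\rangle$ and $\{\xi_i^j : j \geq 1\}$ is the standard orthonormal Schauder basis of $\xi_i K\langle \xi_i\rangle$, such a decomposition will immediately yield the claim.

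First, I would establish the algebraic partial fraction decomposition. Let $R \subseteq K(\xi)$ denote the ring of rational functions on $\bP^1_K$ all of whose poles lie in $\{\xi(\alpha_1), \ldots, \xi(\alpha_g), \infty\}$. Since all these poles are $K$-rational, classical partial fractions over $K$ yield a unique $K$-linear direct sum decomposition
\[ R \;=\; K[\xi_0] \;\oplus\; \bigoplus_{i=1}^g \xi_i \hsp K[\xi_i]. \]
Second, I would observe that in the coordinate $\xi$ the cheese $C$ coincides with the rational subdomain of $\bP^1_K$ cut out by $|\xi_0| \leq 1$ and $|\xi_i| \leq 1$ for $i = 1,\ldots,g$, so standard affinoid theory (e.g.\ \cite[Proposition 7.2.3/3]{BGR}) gives that $R$ is dense in $\cO(C)$ for the supremum norm.

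Third, and this is the crux, I would show that the supremum norm $|\cdot|_C$ restricted to $R$ agrees with the Gauss max norm
\[ \left\vert p(\xi_0) + \sum_{i=1}^g q_i(\xi_i) \right\vert_{\max} \;:=\; \max\{|p|_{\mathrm{Gauss}},\, |q_i|_{\mathrm{Gauss}}\}. \]
The inequality $|f|_C \leq |f|_{\max}$ is immediate from the ultrametric inequality and the bounds $|\xi_i|_C \leq 1$. For the reverse, after scaling by $K^\times$ it suffices to show that $|f|_{\max} = 1$ forces the reduction $\overline{f}$ of $f$ in $\cO(C)^\circ / K^{\circ\circ}\cO(C)^\circ$ to be nonzero.

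The main obstacle is this last non-vanishing statement, particularly in the degenerate case where several of the $g+1$ holes of $C$ are allowed to reduce to the same point of $\bP^1_{k_K}$. In the generic case of pairwise distinct reductions, $\overline{C} := \Spec \overline{\cO(C)^\circ}$ identifies with $\bP^1_{k_K}$ minus $g+1$ distinct closed points, and each $\overline{\xi_i}^j$ is a rational function with a unique pole of order $j$ at the $i$-th reduction point and regular elsewhere; linear independence then follows from partial fractions over $k_K$ via the distinct pole loci and orders. In the degenerate case I would pass to the normalization $\widetilde{\overline{C}}$ of $\overline{C}$, which decomposes as a disjoint union of punctured affine lines indexed by the distinct reduction points of the holes; the $\overline{\xi_i}$'s become coordinates on disjoint branches, and a component-by-component partial fractions argument on $\widetilde{\overline{C}}$, combined with the dominance of $\widetilde{\overline{C}} \to \overline{C}$, again produces $\overline{f} \neq 0$. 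Once the norm identification on $R$ is in hand, the desired orthonormal Schauder basis on $\cO(C)$ is obtained by completing $R$ in the supremum norm and appealing to standard properties of $K$-Banach spaces with orthonormal bases from \cite[\S 2.7.2]{BGR}.
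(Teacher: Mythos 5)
The paper itself does not argue this statement: it simply cites \cite[Proposition 2.4.8(a)]{LutJacobians}, so yours is an independent attempt. Steps one and two (algebraic partial fractions over $K$; density of $R$ in $\cO(C)$) are fine. The crux step contains a genuine error, however. Your ``generic case'' claim --- that $\overline{C}\cong \bP^1_{k_K}$ minus $g+1$ points whenever the holes of $C$ have pairwise distinct reductions --- is false. Consider the thin annulus $C=\{\,|\pi_F|\le |z|\le 1\,\}$: its two holes $\{|z|<|\pi_F|\}$ and $\{|z|>1\}$ reduce to the distinct points $\bar 0$ and $\bar\infty$, yet $\overline{C}=\Spec k[\bar\xi_0,\bar\xi_1]/(\bar\xi_0\bar\xi_1)$ is two affine lines meeting at a node, not $\bG_m$. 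The case where $\overline{C}$ is a punctured $\bP^1_{k_K}$ is precisely the case where \emph{all the radii coincide}, $|s_i|=|s_0|$ for all $i$; in general $\overline{C}$ is a nodal tree of affine lines whose component count reflects the subtree of $\mathcal{BT}$ carved out by $C$, not the number of distinct reduction points of the holes (for example $C=\{\,|\pi_F^2|\le |z|\le 1,\ |z-\pi_F|\ge |\pi_F^2|\,\}$ has two inner holes reducing to the same point, yet $\overline{C}$ has three branches). Your description of the normalization in the ``degenerate'' case is mis-indexed for the same reason, and its branches are not punctured.

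There is also a circularity you have not addressed. Deciding whether $\bar f\ne 0$ in $\cO(C)^\circ/K^{\circ\circ}\cO(C)^\circ$ presupposes knowing $\cO(C)^\circ$, and the identification of $\cO(C)^\circ$ with the $|\cdot|_{\max}$-unit ball of $\cO(C)$ is equivalent to the norm equality you are trying to prove. What you have a priori is only the inclusion $B\subseteq\cO(C)^\circ$, where $B$ is the $|\cdot|_{\max}$-unit ball of $R$, hence a ring map $B/\pi_F B\to \cO(C)^\circ/K^{\circ\circ}\cO(C)^\circ$; but the injectivity of this map is exactly the content of the statement. To break the circle you must either produce, for each $f$ with $|f|_{\max}=1$, an explicit point or Shilov-boundary annulus of $C$ on which $|f|=1$ (being careful about holes whose boundary circles are mutually adjacent), or verify directly that $|\cdot|_{\max}$ is power-multiplicative on $R$ and invoke uniqueness of the spectral norm on the reduced affinoid $\cO(C)$. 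As written the argument does not close.
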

\begin{proof} This is a straightforward rephrasing of \cite[Proposition 2.4.8(a)]{LutJacobians}.\end{proof}
	
\begin{prop}\label{CheeseUnits} Let $X = C(\mathbf{\alpha},\mathbf{s})$ be a cheese. Then the map 
\[\mathbb{Z}^g \longrightarrow \frac{\cO(X)^\times}{K^\times \cdot \cO(X)^{\times\times}}\] 
defined by
\[(n_1,\ldots,n_g) \quad \mapsto \quad (x-\alpha_1)^{n_1}\cdots (x - \alpha_g)^{n_g} \cdot K^\times \cdot \cO(X)^{\times\times}\]
is an isomorphism of abelian groups.
\end{prop}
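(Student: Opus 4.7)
The map is clearly well-defined: each $x - \alpha_i$ with $1 \leq i \leq g$ is a unit in $\cO(X)$, because $s_i/(x - \alpha_i) \in \cO(X)$ by the very definition of the cheese, and this gives an inverse up to the scalar $s_i$. It is obviously a group homomorphism. The content of the proposition is injectivity and surjectivity.

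For injectivity, I plan to construct, for each $i \in \{1,\ldots,g\}$, an integer-valued ``order at hole $D_i$'' homomorphism $\nu_i : \cO(X)^\times \to \bZ$ which is trivial on $K^\times \cdot \cO(X)^{\times\times}$ and such that the matrix $(\nu_i(x-\alpha_j))_{1 \leq i,j \leq g}$ is invertible over $\bQ$. The idea is to pick a radius $r_i \in \sqrt{|K^\times|}$ slightly larger than $s_i$ and strictly smaller than $|\alpha_i - \alpha_j|$ for each $j$ with $|\alpha_i - \alpha_j| > s_i$, and to restrict $f$ to the closed annulus $A_i := \{s_i \leq |x-\alpha_i| \leq r_i\}$. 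After possibly excising holes which touch $D_i$ at the critical radius, this subdomain lies inside $X$, and units on it factor uniquely as $c(x-\alpha_i)^n(1 + h)$ with $c \in K^\times$, $n \in \bZ$ and $|h|_{A_i} < 1$; define $\nu_i(f) := n$. A direct computation using $x-\alpha_j = (x-\alpha_i)+(\alpha_i-\alpha_j)$ shows $\nu_i(x-\alpha_j) = \delta_{ij}$ when $r_i$ is chosen as above.

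For surjectivity, given $f \in \cO(X)^\times$, set $n_i := \nu_i(f)$ and replace $f$ by $f' := f\prod_{i=1}^g (x-\alpha_i)^{-n_i}$. It remains to show $f' \in K^\times \cdot \cO(X)^{\times\times}$. By construction $\nu_i(f') = 0$ for all $i$, so the restriction of $f'$ to each annulus $A_i$ is a scalar times a small unit. Expanding $f'$ in the orthonormal Schauder basis from Proposition \ref{CheeseSchauder} as $f' = a + \sum_{i=0}^g \sum_{j \geq 1} a_{i,j}\xi_i^j$ with $|f'|_X = \max(|a|, \sup_{i,j}|a_{i,j}|)$, I argue that the vanishing of all $\nu_i(f')$ together with $f'$ being a unit forces $|a_{i,j}| < |a|$ for all $(i,j)$: any coefficient $a_{i,j}$ of maximal norm would be detected by the restriction to $A_i$ as a nontrivial dominant Laurent term, contradicting $\nu_i(f')=0$ or forcing $f'$ to have a zero in $A_i$. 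Consequently $f' = a(1 + h)$ with $|h|_X < 1$, completing the factorisation.

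The main obstacle is the combinatorial case where several holes ``touch'' at the critical radius, i.e.\ $|\alpha_i - \alpha_j| = \max(s_i, s_j)$ for certain pairs $i \neq j$, so that the naive annulus $A_i$ is not contained in $X$. To treat this carefully I expect to either excise the offending holes (so $A_i$ becomes a cheese of smaller complexity inside $X$ on which the classical factorisation of units on an annulus still yields the required $\nu_i$), or to run a short induction on $g$, removing one ``outermost'' hole at a time and splicing the resulting Laurent expansions. Alternatively, one may base change to a finite extension $K'/K$ chosen to make all pairwise distances $|\alpha_i - \alpha_j|$ strictly exceed the maxima $\max(s_i,s_j)$, apply the clean version of the argument over $K'$, and descend using that the generators $x-\alpha_i$ are already defined over $K$.
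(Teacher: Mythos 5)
The paper offers no proof of this proposition, instead citing \cite[Proposition 2.4.8(b)]{LutJacobians}. Your attempt at a self-contained argument has the right broad shape (order valuations $\nu_i$ at each hole for injectivity; a Schauder-basis reduction for surjectivity), but the treatment of touching holes --- which you correctly flag as the main obstacle --- is a genuine gap, and none of your three proposed fixes closes it.

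Excising a touching hole turns $A_i$ from an annulus into a cheese, and on a cheese the factorisation $c(x-\alpha_i)^n(1+h)$ you invoke is simply not the correct description of units: the excised holes contribute their own factors $(x-\beta_k)^{m_k}$, and accounting for these brings you right back to the statement of the proposition, so the excision route is circular. Base change to $K'$ cannot make $|\alpha_i-\alpha_j|$ strictly exceed $\max(s_i,s_j)$: these are fixed real numbers determined by the geometric configuration of the holes in $\bP^1(\bfC)$, and the equality $|\alpha_i-\alpha_j|=\max(s_i,s_j)$ is intrinsic (it says one hole sits on the boundary sphere of the other), so it persists over any $K'$ and under any re-centring. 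The inductive variant is not fleshed out enough to evaluate. A related warning sign: if one replaces the thick annulus by the circle $\{|x-\alpha_i|=r_i\}$, a direct computation gives $\nu_i(x-\alpha_j)=1$ whenever $|\alpha_i-\alpha_j|<r_i$; for two holes of equal radius with $|\alpha_1-\alpha_2|=s_1=s_2$ one is forced to take $r_1,r_2>s_1$, and the resulting $2\times 2$ matrix of orders has all four entries equal to $1$, hence is singular. So the identity $\nu_i(x-\alpha_j)=\delta_{ij}$ genuinely requires a careful construction of $\nu_i$ that handles the touching configuration correctly, and this is precisely what the proposal is missing; the surjectivity argument, which leans on the same $\nu_i$, inherits the same uncertainty.
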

\begin{proof}This is \cite[Proposition 2.4.8(b)]{LutJacobians}.\end{proof}

\begin{prop}\label{PicCheese} For every cheese $X$, $\Pic(X)=0$.\end{prop}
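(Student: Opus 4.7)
The plan is to prove the stronger claim that $A := \cO(X)$ is a principal ideal domain; since line bundles on an affinoid correspond to rank-one projective modules over the coordinate ring, the vanishing of $\Pic(X)$ is then immediate.

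First I would observe that $A$ is a Dedekind domain. As $X$ is an affinoid subdomain of the smooth one-dimensional rigid $K$-analytic space $\A$, the algebra $A$ is regular, Noetherian and of Krull dimension one. Moreover $X_\bfC$ is the complement in $\bP^1_\bfC$ of the pairwise disjoint open discs $D_\infty, D_1, \ldots, D_g$, hence connected, so $X$ is geometrically connected and $A$ is a domain. Therefore $\Pic(X)$ is isomorphic to the ideal class group of $A$, and it suffices to exhibit a principal generator for every maximal ideal $\fr{m} \subset A$.

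Fix such an $\fr{m}$. Because $A$ is affinoid, the residue field $A/\fr{m}$ is a finite extension of $K$ of some degree $d$, and the geometric points of $X$ lying above $\fr{m}$ form a single $\cG_K$-orbit $\{z_1, \ldots, z_d\} \subset X(\overline{K}) \subset \overline{K}$. The Galois-invariant polynomial
\[ f_\fr{m}(x) := \prod_{i=1}^d (x - z_i) \in K[x] \]
defines an element of $A$ via the restriction map $K[x] \hookrightarrow \cO(\A) \to A$, and $f_\fr{m} \in \fr{m}$ by construction. To show $(f_\fr{m}) = \fr{m}$ I would pass to a splitting field $K'/K$ of $f_\fr{m}$ and compare dimensions: using the smoothness of $\A_{K'}$, each linear factor $x - z_i$ generates the maximal ideal of $A_{K'}$ at the point $z_i$, and these ideals are pairwise coprime, so the Chinese remainder theorem yields $A_{K'}/(f_\fr{m}) \cong \prod_{i=1}^d K'$. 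Thus $\dim_K A/(f_\fr{m}) = d = \dim_K A/\fr{m}$, which forces the natural surjection $A/(f_\fr{m}) \twoheadrightarrow A/\fr{m}$ to be an isomorphism, and hence $(f_\fr{m}) = \fr{m}$.

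The only technical subtlety is the local identification of $x - z_i$ with a uniformizer of $A_{K'}$ at $z_i$, which boils down to the completed local ring at $z_i$ being $K'[[x - z_i]]$ by smoothness; once this is in hand, the rest of the argument is formal and does not actually require the specific combinatorial structure of a cheese beyond its inclusion into $\A$.
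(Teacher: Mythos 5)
Your proof is correct, and it takes a genuinely different route from the paper's, which simply cites \cite[Proposition 8.2.3(1)]{FvdPut} and \cite[Corollary 3.8]{vdPut82} without supplying an argument. You give a direct, self-contained proof that $\cO(X)$ is a principal ideal domain: for each maximal ideal $\fr{m}$ you construct the $\cG_K$-invariant polynomial $f_{\fr{m}}(x) = \prod_i (x - z_i)$ (the $z_i$ being pairwise distinct because the affinoid subdomain inclusion gives an injection $X(\overline{K}) \hookrightarrow \A(\overline{K}) = \overline{K}$), and verify $(f_{\fr{m}}) = \fr{m}$ by a $K$-dimension count after base change to a splitting field $K'$. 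The local identification of $x - z_i$ with a uniformizer at $z_i$, which you rightly flag as the technical heart, is justified because the affinoid subdomain inclusion $X_{K'} \hookrightarrow \A_{K'}$ induces an isomorphism on completed local rings, giving $\widehat{\cO_{X_{K'},z_i}} \cong K'[[x - z_i]]$. Your approach has the advantage of being elementary and self-contained, and, as you observe, it applies verbatim to any geometrically connected affinoid subdomain of $\A$ in characteristic zero, not only to cheeses; the paper's citation-based proof is shorter but outsources the substance to the references.
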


\begin{proof} This follows from \cite[Proposition 8.2.3(1)]{FvdPut} and \cite[Corollary 3.8]{vdPut82}.
\end{proof}

\begin{prop}\label{admquasiStein} Suppose that $X$ is an admissible subdomain of $\bA$ and $\{X_n\}_{n=0}^{\infty}$  is an admissible cover of $X$ by cheeses such that $X_n\subset X_{n+1}$ for all $n$ and each map $\iota^{X_n}_{X_{n+1}}\colon h(X_{n+1})\to h(X_n)$ is surjective. Then $X$ is geometrically connected, smooth and quasi-Stein.
\end{prop}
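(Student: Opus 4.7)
The strategy is to verify the three properties separately, as they are largely independent.

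\textbf{Smoothness} is immediate: $\bA$ is smooth, and any admissible open subspace of a smooth rigid $K$-analytic variety is smooth, so $X$ is smooth.

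\textbf{Geometric connectedness.} I would show directly that $X_\bfC$ is connected. Each $X_n$ is a cheese over $K$, so $X_{n,\bfC}$ is a cheese over $\bfC$ and hence is connected by \cite[Corollary 2.4.7]{LutJacobians}. Since $(X_{n,\bfC})_{n\geq 0}$ is a nested admissible cover of $X_\bfC$ by non-empty connected subspaces, if $X_\bfC$ admitted a disconnection into non-empty admissible opens $U,V$ with $U\cap V=\emptyset$, then by admissibility of the cover some $X_{n,\bfC}$ would meet both $U$ and $V$, contradicting its connectedness. This is the same argument already used in the proof of Corollary \ref{PicConinvlim}.

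\textbf{Quasi-Stein.} Since $(X_n)_{n\geq 0}$ is already a nested admissible affinoid cover, what remains is to show that each restriction map $\cO(X_{n+1})\to \cO(X_n)$ has dense image. I would apply Proposition \ref{CheeseSchauder}: fixing a coordinate $\xi$ on $\bA$ such that the hole $D_\infty$ of $X_n$ equals $\{|\xi|>1\}$, and setting $\xi_i = c_i/(\xi - \xi(\alpha_i))$ for each finite hole $D_i$ of $X_n$, the set $\{1\}\cup\{\xi_i^j:j\geq 1,\ 0\leq i\leq g\}$ is an orthonormal Schauder basis of $\cO(X_n)$. The powers of $\xi_0=\xi$ are polynomials and so already lie in $\cO(\bA)\subseteq \cO(X_{n+1})$. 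For $i\geq 1$, the surjectivity of $\iota^{X_n}_{X_{n+1}}$ supplies a hole $D'\in h(X_{n+1})$ with $D'\subseteq D_i$; writing $\alpha'$ for its centre, we have $\alpha'\notin X_{n+1}$, so $1/(\xi-\alpha')\in \cO(X_{n+1})$. Since $|\alpha_i-\alpha'|<s_i\leq |\xi-\alpha_i|$ on $X_n$, the ultrametric inequality gives $|\xi-\alpha'|=|\xi-\alpha_i|$ on $X_n$, so the geometric series
\[\frac{1}{\xi-\alpha_i}=\sum_{k=0}^\infty \frac{(\alpha_i-\alpha')^k}{(\xi-\alpha')^{k+1}}\]
converges in $\cO(X_n)$, with partial sums lying in the image of $\cO(X_{n+1})\to\cO(X_n)$. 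Hence $\xi_i$, and thus every $\xi_i^j$, lies in the closure of that image, establishing density.

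The main obstacle is this last density computation, but once one has the Schauder basis in hand and exploits the surjectivity of $\iota^{X_n}_{X_{n+1}}$ to produce a point near each centre $\alpha_i$ that is still not in $X_{n+1}$, the geometric series expansion finishes things off quickly.
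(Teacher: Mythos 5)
Your proof is correct and follows essentially the same approach as the paper: smoothness is inherited from $\bA$, geometric connectedness follows from the connectedness of the cheeses $X_{n,\bfC}$, and density uses the Schauder basis of Proposition \ref{CheeseSchauder} together with the surjectivity of $\iota^{X_n}_{X_{n+1}}$ to produce elements of $\cO(X_{n+1})$ approximating the basis. The only cosmetic difference is in the last step: the paper uses the surjectivity to \emph{re-choose} the centres $\alpha_i$ of the holes of $X_n$ so that they lie outside $X_{n+1}$, which makes the dense subalgebra $K[x,(x-\alpha_1)^{-1},\ldots,(x-\alpha_g)^{-1}]$ sit directly inside $\cO(X_{n+1})$, whereas you keep the given centres and approximate $1/(\xi-\alpha_i)$ by a geometric series in $1/(\xi-\alpha')$ — both valid, the paper's being marginally more economical.
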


\begin{proof} Since $X_{\bfC}$ has an admissible cover by the cheeses $X_{n,\bfC}$, $X$ is geometrically connected and smooth. 
	
	 By \cite[Exercise 2.6.2]{FvdPut} all the maps $\cO(X_{n+1})\to \cO(X_n)$ have dense image and so $X$ is quasi-Stein. One argument to complete the exercise is as follows. For any cheese $C:=C(\alpha,\mathbf{s})$ the sub-$K$-algebra of rational functions \[\cO_{\mathrm{rat}}(C):=K[x,(x-\alpha_1)^{-1},\ldots,(x-\alpha_d)^{-1}]\] is dense in $\cO(C)$, by Proposition \ref{CheeseSchauder}. Moreover, the condition $h(X_{n+1})\to h(X_n)$ is surjective guarantees that the centres of the holes in $X_n$ can all be chosen to not lie in $X_{n+1}$ so that $\cO_{\mathrm{rat}}(X_n)\subset \cO(X_{n+1})$. 
	\end{proof}

\subsection{Drinfeld's upper half-plane}

In this section we study $\Omega_F$ the \emph{Drinfeld upper half plane}, which is a rigid $F$-analytic space whose underlying set consists of the $\Gal(\overline{F}/F)$-orbits in $\Omega_F(\overline{F})=\bP^1(\overline{F}) \backslash \bP^1(F)$.

It is straightforward to see that for any finite extension $L$ of $F$, $\Omega_F(L)$ can be identified with $L\backslash F$ and we will often silently make this identification. The rigid space $\Omega_F$ comes naturally equipped with an action of $GL_2(F)$ by M\"obius transformations: \[ \begin{pmatrix} a & b \\ c & d \end{pmatrix}\cdot z= \frac{az+b}{cz+d}\] and the same formula induces an action on each set $\Omega_F(L)$.

We  recall from \cite[\S I.1.2]{BC} the $GL_2(F)$-equivariant reduction map $\lambda\colon \Omega_F(\bfC) \to |\mathcal{BT}|$ from $\Omega_F(\bfC) = \bP^1(\bfC) \backslash \bP^1(F)$ to the geometric realisation $|\mathcal{BT}|$ of the Bruhat-Tits tree $\mathcal{BT}$ associated with $PGL_2(F)$. Since $\lambda \circ \sigma = \lambda$ for any $\sigma \in \Gal(\overline{F}/F)$, this map factors through the rigid $F$-analytic space $\Omega_F$, giving us a map
\[ \lambda : \Omega_F \to |\mathcal{BT}|.\]
Any point in $\Omega_F$ is the $\Gal(\overline{F}/F)$-orbit $[z]$ of some $z \in \Omega_F(\overline{F})$; then we have $\lambda([z]) = \lambda(z)$. We abuse notation and also call $\lambda : \Omega_F \to |\mathcal{BT}|$  the \emph{reduction map}. 

\begin{lem}\label{stabrel} Let $z \in \Omega_F(\overline{F})$. Then $GL_2(F)_z \leq  GL_2(F)_{[z]}  \leq  GL_2(F)_{\lambda\left([z]\right)}$.
\end{lem}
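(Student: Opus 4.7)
The plan is to observe that both inclusions follow formally from compatibility properties of the various $GL_2(F)$-actions in play.

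For the first inclusion, I would note that because $GL_2(F)$ acts on $\Omega_F(\overline{F})$ by $F$-rational formulas (M\"obius transformations with $F$-coefficients), its action commutes with the natural $\Gal(\overline{F}/F)$-action. Hence for any $g \in GL_2(F)$ the induced action on $\Omega_F$ satisfies $g \cdot [z] = [gz]$. In particular, if $g \in GL_2(F)_z$, so that $gz = z$ in $\Omega_F(\overline{F})$, then $g \cdot [z] = [gz] = [z]$, which shows that $g \in GL_2(F)_{[z]}$.

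For the second inclusion, I would appeal to the $GL_2(F)$-equivariance of the reduction map $\lambda : \Omega_F(\bfC) \to |\mathcal{BT}|$ recalled from \cite[\S I.1.2]{BC}, together with the fact that $\lambda$ factors through $\Omega_F$ since $\lambda \circ \sigma = \lambda$ for all $\sigma \in \Gal(\overline{F}/F)$. Thus if $g \in GL_2(F)_{[z]}$, then applying $\lambda$ to the equality $g \cdot [z] = [z]$ and using equivariance yields
\[ g \cdot \lambda([z]) = \lambda(g \cdot [z]) = \lambda([z]), \]
so that $g \in GL_2(F)_{\lambda([z])}$.

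Neither step presents any real obstacle: the lemma is a formal consequence of equivariance, and I expect it to be recorded essentially only for later reference. The only subtlety worth making explicit is the identification $g \cdot [z] = [gz]$ on the level of Galois orbits, which relies on the fact that $g$ is defined over $F$.
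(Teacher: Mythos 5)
Your proposal is correct and takes the same approach as the paper: the paper's proof is the one-line remark ``this is a consequence of the $GL_2(F)$-equivariance of $z\mapsto [z]$ and $\lambda$,'' and you have simply unpacked that equivariance observation into its two steps.
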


\begin{proof}
This is a consequence of the $GL_2(F)$-equivariance of $z\mapsto [z]$ and $\lambda$. 
\end{proof}

\begin{prop} \label{TreeToCheese} Suppose that $\cT$ is a finite subtree of $\mathcal{BT}$. Then $\lambda^{-1}(|\cT|)$ is an $F$-cheese contained in $\Omega_F$.
\end{prop}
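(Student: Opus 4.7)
The approach will be induction on the number $n$ of vertices of $\cT$. The key tools are: the $GL_2(F)$-equivariance of the reduction map $\lambda$ (which lets me reduce any single vertex or edge to a standard one); the explicit form of $\lambda^{-1}(v_0)$ for the standard vertex $v_0 = [\cO_F^2]$; and Remark \ref{ovlpdiscs}(b), which tells me that the union of two intersecting cheeses is a cheese.

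For the base case $n=1$, write $\cT = \{v\}$. Using the transitivity of the $GL_2(F)$-action on the vertices of $\mathcal{BT}$, pick $g \in GL_2(F)$ with $g\cdot v_0 = v$, so that by equivariance $\lambda^{-1}(v) = g \cdot \lambda^{-1}(v_0)$. Since $g$ acts by an $F$-rational M\"obius transformation, it sends $F$-cheeses to $F$-cheeses, so it suffices to treat $v_0$. Unfolding the definition of $\lambda$ in terms of the associated non-Archimedean norm on $F^2$, one computes
\[ \lambda^{-1}(v_0) \;=\; \bP^1_F \setminus \bigsqcup_{\bar a \in \bP^1(k_F)} D^-_{\bar a},\]
where $D^-_{\bar a}$ is the open residue disc around a lift $\tilde a \in \cO_F$ of $\bar a$ (with $\tilde\infty = \infty$). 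This realises $\lambda^{-1}(v_0)$ as the $F$-cheese $C_F((\alpha_0,\alpha_1,\ldots,\alpha_q);(1,\ldots,1))$, with $\alpha_0 = 0$ and $\alpha_1,\ldots,\alpha_q$ the chosen lifts of $k_F$; the cheese conditions are readily verified (note that $\alpha_0$ coincides with one of the $\alpha_i$ in the permitted degenerate fashion, in which the big-disc condition $|z|\leq 1$ and matching small-hole condition $|z|<1$ cut out the Gauss circle).

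For the inductive step, choose a leaf $v$ of $\cT$ with unique incident edge $e \in \cT$ joining $v$ to $v' \in \cT$, and decompose $\cT = \cT_1 \cup \cT_2$ with $\cT_1 := \cT\setminus\{v,e\}$ (a subtree with $n-1$ vertices) and $\cT_2 := \{v',v,e\}$, so $|\cT_1|\cap|\cT_2| = \{v'\}$. By induction, $X_1 := \lambda^{-1}(|\cT_1|)$ is an $F$-cheese; a parallel explicit computation (reducing via $GL_2(F)$-equivariance to the standard edge between $v_0$ and $\mathrm{diag}(1,\pi_F)\cdot v_0$) shows that $X_2 := \lambda^{-1}(|\cT_2|)$ is an $F$-cheese with $2q$ holes, one for each direction at $\{v,v'\}$ that does not lie along $e$. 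Since $X_1\cap X_2 \supseteq \lambda^{-1}(v') \neq \emptyset$ (by the base case applied to $v'$), Remark \ref{ovlpdiscs}(b) gives that $X_1\cup X_2 = \lambda^{-1}(|\cT|)$ is again an $F$-cheese. The inclusion $\lambda^{-1}(|\cT|) \subseteq \Omega_F$ is automatic since $\Omega_F$ is by definition the domain of $\lambda$.

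The main obstacle will be the base-case calculation of $\lambda^{-1}(v_0)$, and the analogous two-vertex computation: both require carefully unfolding the construction of $\lambda$ via lattice norms on $F^2$, verifying $F$-rationality of the centres and radii of the resulting holes, and checking that the disjointness/containment conditions of Definition \ref{Cheese} hold (even in the slightly degenerate configurations where $\alpha_0$ coincides with some $\alpha_i$). Once these explicit identifications are in place, the inductive gluing via Remark \ref{ovlpdiscs}(b) is essentially formal.
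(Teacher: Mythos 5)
Your proposal is correct and follows essentially the same route as the paper's proof: both arguments induct (you on the number of vertices, the paper on the equivalent quantity of edges), use Remark \ref{ovlpdiscs}(b) to glue, and reduce to the two base cases of a single vertex and a single edge. The only difference is that you sketch the base-case cheese computations explicitly, whereas the paper cites them from \cite[\S I.2.3]{BC}.
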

\begin{proof} Since the union of two non-disjoint $F$-cheeses is an $F$-cheese and $\cT$ is connected, by an induction on the number of edges of $\cT$, it suffices to prove the result when $\cT$ is a single vertex or has two vertices connected by an edge. Both of these cases can be deduced from the discussion in \cite[\S I.2.3]{BC}.
\end{proof}

\begin{defn} \label{KcheeseCT} For every finite subtree $\cT$ of $\mathcal{BT}$, we define
\[ C_{\cT} := \lambda^{-1}(|\cT|) \times_F K.\]
\end{defn}
Note that $C_{\cT}$ is a $K$-cheese contained in $\Omega := \Omega_F \times_F K$, by Proposition \ref{TreeToCheese}.
\begin{defn} \label{nbdtree} Suppose $\cT$ is a finite subtree of $\mathcal{BT}$. \be \item The \emph{neighbourhood of $\cT$} is the subset $N(\cT)$ of the set of edges of $\mathcal{BT}$ with precisely one vertex in $\cT$: \[  N(\cT):=\{(ss')\in E(\mathcal{BT}):s\in \cT, s'\not\in \cT\}.\]
\item For $e\in N(\cT)$ we write $s_\cT(e)$ to denote the vertex of $e$ in $\cT$ and $t_\cT(e)$ to denote the vertex of $e$ not in $\cT$; \[s_{\cT}((ss')):=s\mbox{ and } t_{\cT}((ss')):=s'\mbox{ for }s\in \cT, s'\not\in \cT\]
\ee \end{defn} 
\begin{lem}\label{mapnbdtreeslem} Let $\cT' \subseteq \cT$ be finite subtrees of $\mathcal{BT}$ and let $e\in N(\cT)$. Then there is a unique $f\in N(\cT')$ such that the unique path in $\mathcal{BT}$ from $t_\cT(e)$ to $t_{\cT'}(f)$ contains no vertices of $\cT'$. 
\end{lem}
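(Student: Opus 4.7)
\textbf{Proof plan for Lemma \ref{mapnbdtreeslem}.} The approach is purely combinatorial. The key structural fact to exploit is that since $\mathcal{BT}$ is a tree and $\cT'$ is a connected finite subtree of it, removing the vertex set $V(\cT')$ from $\mathcal{BT}$ yields a forest in which each connected component is attached to $\cT'$ by exactly one edge. Indeed, if some component $C$ of $\mathcal{BT} \setminus V(\cT')$ were attached to $\cT'$ by two distinct edges $f_1 = (u_1 v_1)$ and $f_2 = (u_2 v_2)$ with $u_i \in \cT'$ and $v_i \in C$, then concatenating the path in $\cT'$ from $u_1$ to $u_2$, the edge $f_2$, the path in $C$ from $v_2$ to $v_1$, and the edge $f_1$ would yield a cycle in $\mathcal{BT}$, a contradiction.

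With this in hand, the existence of $f$ is immediate: since $t_\cT(e) \notin V(\cT) \supseteq V(\cT')$, the vertex $t_\cT(e)$ lies in some connected component $C$ of $\mathcal{BT} \setminus V(\cT')$. Let $f$ be the unique edge joining $C$ to $\cT'$, and set $s_{\cT'}(f)$ to be its endpoint in $\cT'$ and $t_{\cT'}(f)$ to be its endpoint in $C$; then $f \in N(\cT')$. The unique path from $t_\cT(e)$ to $t_{\cT'}(f)$ lies entirely in $C$ (any subtree between two points of a connected subgraph of a tree is contained in that subgraph), hence contains no vertex of $\cT'$.

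For uniqueness, suppose $f' \in N(\cT')$ also has the property that the unique path from $t_\cT(e)$ to $t_{\cT'}(f')$ avoids $V(\cT')$. Then $t_{\cT'}(f')$ is connected to $t_\cT(e)$ by a path disjoint from $V(\cT')$, so $t_{\cT'}(f')$ lies in the same component $C$. But $C$ is attached to $\cT'$ by a unique edge, so $f' = f$.

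I do not anticipate any real obstacle here; the lemma is a purely combinatorial statement about finite subtrees of a tree, and the only substantive point is the cycle-based argument showing uniqueness of the attaching edge of each component of $\mathcal{BT} \setminus V(\cT')$.
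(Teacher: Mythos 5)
Your proof is correct. The paper's own argument is more direct: it picks an arbitrary vertex $w$ of $\cT'$, looks at the unique path from $t_\cT(e)$ to $w$, and observes that this path crosses exactly one edge of $N(\cT')$, which is taken to be $f$; uniqueness is then deduced by noting that for any other $f' \in N(\cT')$, the path from $t_{\cT'}(f)$ to $t_{\cT'}(f')$ must meet $\cT'$. Your version instead first isolates the structural fact that each component of $\mathcal{BT}\setminus V(\cT')$ is attached to $\cT'$ by a unique edge (proved by the cycle argument), and then reads off both existence and uniqueness from it. The two arguments rest on the same combinatorics; your route is slightly longer but makes the ``one attaching edge per component'' fact explicit, which the paper uses implicitly in its uniqueness step. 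Either is fine, and nothing is missing in yours.
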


\begin{proof} Let $w$ be any vertex of $\cT'$. Since $\mathcal{BT}$ is a tree it contains a unique path from $t_\cT(e)$ to $w$. Since $w$ is a vertex of $\cT'$ and $t_\cT(e)$ is not, there is precisely one edge $f$ in this path contained in $N(\cT')$. We can then truncate the path to a path from $t_\cT(e)$ to $t_{\cT'}(f)$ that contains no vertices of $\cT'$. 

If $f'$ is an element of $N(\cT)\backslash\{f\}$ then the unique path from $t_{\cT'}(f)$ to $t_{\cT'}(f')$ must pass through a vertex of $\cT'$ so there is no path from $t_\cT(e)$ to $t_{\cT'}(f')$ that contains no vertices of $\cT'$. 
\end{proof}
\begin{defn} \label{mapnbdtreesdef} If $\cT' \subseteq \cT$ are finite subtrees of $\mathcal{BT}$, then \[\iota^{\cT}_{\cT'}\colon N(\cT)\to N(\cT')\] is the map that sends $e\in N(\cT)$ to $f \in N(\cT')$ given by Lemma \ref{mapnbdtreeslem}.
	\end{defn}

\begin{ex}\label{leafvertex} Suppose that $\cS$ is a subtree of $\cB\cT$ consisting of two vertices $s$ and $s'$ and the single edge $(ss')$, and $\{s\}$ is the subtree of $\cS$ with $s$ as its only vertex. 
Then there exist $2q$ edges $e_1,\ldots,e_q,f_1,\ldots,f_q \in E(\cB\cT)$ such that:
\be \item  $N(\cS)=\{e_1,\ldots,e_q,f_1,\ldots,f_q\}$,
\item $s_\cS(e_i)=s$ and $s_{\cS}(f_i)=s'$ for each $i=1,\ldots,q$,
\item $N(\{s\})=\{e_1,\ldots,e_q,(ss')\}$, and
\item $\iota^{\cS}_{\{s\}}(e_i)=e_i$ and $\iota^{\cS}_{\{s\}}(f_i)=(ss')$ for each $i=1,\ldots, q$.
\ee\end{ex}

\begin{lem}\label{iotalem} Suppose that $\cT_1\subseteq \cT_2\subseteq \cT_3$ are finite subtrees of $\cB\cT$.
	\be\item  $\iota^{\cT_3}_{\cT_1}=\iota^{\cT_2}_{\cT_1} \circ \iota^{\cT_3}_{\cT_2}$.
	\item If $e\in N(\cT_2)$ and $s_{\cT_2}(e)\in V(\cT_1)$, then $\iota_{\cT_1}^{\cT_2}(e)=e$. 
	\item If $e\in N(\cT_2)$ and $s_{\cT_2}(e)\not\in V(\cT_1)$, then $\iota_{\cT_1}^{\cT_2}(e)\in E(\cT_2)\backslash E(\cT_1)$.
	\ee
\end{lem}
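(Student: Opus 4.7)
The plan for all three parts is to appeal to the uniqueness clause of Lemma \ref{mapnbdtreeslem}: to identify $\iota^{\cT}_{\cT'}(e)$ with a particular $f \in N(\cT')$, it suffices to exhibit a path from $t_\cT(e)$ to $t_{\cT'}(f)$ whose vertices all lie outside $V(\cT')$. Part (b) then falls out immediately: assuming $s_{\cT_2}(e) \in V(\cT_1)$, the inclusion $V(\cT_1) \subseteq V(\cT_2)$ together with $t_{\cT_2}(e) \notin V(\cT_2)$ show that $e$ already lies in $N(\cT_1)$ with identical endpoints, and the trivial length-zero path at $t_{\cT_2}(e) \notin V(\cT_1)$ witnesses $\iota^{\cT_2}_{\cT_1}(e) = e$.

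For (a) I would set $f_2 := \iota^{\cT_3}_{\cT_2}(e)$ and $f_1 := \iota^{\cT_2}_{\cT_1}(f_2)$ and aim to verify $\iota^{\cT_3}_{\cT_1}(e) = f_1$ by the same criterion. The paths supplied by these definitions concatenate to a walk from $t_{\cT_3}(e)$ to $t_{\cT_1}(f_1)$ whose vertices all miss $V(\cT_1)$: the first half misses $V(\cT_2) \supseteq V(\cT_1)$, and the second misses $V(\cT_1)$ directly. The main obstacle is that this concatenation could backtrack at the junction $t_{\cT_2}(f_2)$ and so fail to be a path. I would overcome this by recalling that any walk in a graph contains a path between its endpoints supported on a subset of its vertices, and in the tree $\cB\cT$ this path is unique and therefore coincides with the actual path from $t_{\cT_3}(e)$ to $t_{\cT_1}(f_1)$, which inherits the $V(\cT_1)$-avoidance. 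Uniqueness in Lemma \ref{mapnbdtreeslem} then forces $\iota^{\cT_3}_{\cT_1}(e) = f_1$.

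For (c), set $f := \iota^{\cT_2}_{\cT_1}(e)$. The statement $f \notin E(\cT_1)$ is automatic because $t_{\cT_1}(f) \notin V(\cT_1)$, so the real content is $f \in E(\cT_2)$, i.e.\ showing $t_{\cT_1}(f) \in V(\cT_2)$. My plan is to realise $f$ explicitly as the edge along which a specific path first crosses into $\cT_1$. I would fix any $v \in V(\cT_1)$ and form the concatenation $Q$ of the edge $e$ with the unique path from $s_{\cT_2}(e)$ to $v$ inside the connected subtree $\cT_2$; all vertices of $Q$ except $t_{\cT_2}(e)$ then lie in $V(\cT_2)$. Letting $w$ be the first vertex of $Q$ in $V(\cT_1)$ and $w'$ its predecessor, the assumption $s_{\cT_2}(e) \notin V(\cT_1)$ forces $w' \neq t_{\cT_2}(e)$, hence $w' \in V(\cT_2) \setminus V(\cT_1)$. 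The initial segment of $Q$ up to $w'$ avoids $V(\cT_1)$, so Lemma \ref{mapnbdtreeslem}'s uniqueness matches $f$ with the edge $(ww')$, and both of its endpoints lie in $V(\cT_2)$ as required.
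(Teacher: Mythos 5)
Your proof is correct, and for parts (b) and (c) it follows the paper's argument closely: part (b) is the same trivial-path observation, and for part (c) both you and the paper push a walk from $t_{\cT_2}(e)$ through $\cT_2$ into $\cT_1$ and read off the crossing edge — you terminate at an arbitrary vertex of $\cT_1$ and stop at the first crossing, while the paper terminates specifically at $s_{\cT_1}\left(\iota^{\cT_2}_{\cT_1}(e)\right)$ and strips off the last edge. The content is the same.

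For part (a) your direction is genuinely the reverse of the paper's. The paper starts with the defining path for $\iota^{\cT_3}_{\cT_1}(e)$, splits it at the last vertex outside $\cT_2$, and recognises the two halves as the defining paths for $\iota^{\cT_3}_{\cT_2}(e)$ and $\iota^{\cT_2}_{\cT_1}(\cdot)$ respectively. You instead concatenate the two defining paths for the composite $\iota^{\cT_2}_{\cT_1}\circ\iota^{\cT_3}_{\cT_2}$ and pass to the embedded simple path. The backtracking concern at the junction $t_{\cT_2}(f_2)$ is exactly the issue the paper's decomposition sidesteps by never concatenating; your remedy — extract the unique tree-path between the endpoints and note its vertex set is a subset of the walk's — is correct and keeps the $V(\cT_1)$-avoidance intact. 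In both cases the load-bearing tool is the uniqueness clause of Lemma \ref{mapnbdtreeslem}. The paper's split is marginally slicker since it avoids the walk-to-path step, but your version has the small advantage that it establishes the identity by computing the right-hand side directly rather than by inspecting the left.
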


\begin{proof}
(a)	If $e\in N(\cT_3)$ then the path $\cP$ in $\cB\cT$ from $t_{\cT_3}(e)$ to $t_{\cT_1}\left(\iota^{\cT_3}_{\cT_1}(e)\right)$ that contains no vertices of $\cT_1$ can be decomposed as a union of two subpaths with a single vertex in common (and possibly no edges): one of these subpaths  goes from $t_{\cT_3}(e)$ to the last vertex $s$ in $\cP$ that does not lie in $\cT_2$ and the other goes from $s$ to $t_{\cT_1}\left(\iota^{\cT_3}_{\cT_1}(e)\right)$. Then these paths show that $\iota_{\cT_3}^{\cT_2}(e)$ is the unique element $f$ of $N(\cT_2)$ such that $t_{\cT_2}(f)=s$ and $\iota^{\cT_2}_{\cT_1}(f)$ is $\iota^{\cT_3}_{\cT_1}(e)$ as required.

(b)  Since $\cT_1\subseteq \cT_2$, the condition $s_{\cT_2}(e)\in \cT_1$ gives that $e\in N(\cT_1)$ and the path from $t_{\cT_2}(e)$ to $t_{\cT_1}(e)=t_{\cT_2}(e)$ has no edges and so contains no vertices of $\cT_1$. 

(c) First $N(\cT_1)\cap E(\cT_1)=\emptyset$ so $\iota^{\cT_2}_{\cT_1}(e)\not\in E(\cT_1)$.  Since $s_1:=s_{\cT_2}(e)\in \cT_2$ and  $s_2:=s_{\cT_1}\left(\iota_{\cT_1}^{\cT_2}(e)\right)\in \cT_1\subseteq \cT_2$, the unique path in $\cB\cT$ from $s_1$ to $s_2$ lies in $\cT_2$. The condition $s_1\not\in \cT_1$ ensures this path contains at least one edge $f:=\left(t_{\cT_1}\left(\iota_{\cT_1}^{\cT_2}(e)\right) s_2\right)$. Moreover $f\in N(\cT_1)$. Adding the edge $(t_{\cT_2}(e)s_1)$ to the start of the path and removing $f$ from its end gives the path that shows that $\iota_{\cT_1}^{\cT_2}(e)=f$. 
\end{proof}
\begin{lem} \label{fibretree} Suppose that $\cS$ and $\cT$ are finite subtrees of $\cB\cT$ such that \[E(\cS)=\{(ss')\}\mbox { and }V(\cS)\cap V(\cT)=\{s\}.\] There is a natural bijection 
\[ \iota^{\cS\cup \cT}_\cS \times \iota^{\cS\cup \cT}_{\cT} : N(\cS\cup \cT)\to N(\cS) \underset{N(\cS\cap \cT)}{\times}{} N(\cT) \] given by $e\mapsto (\iota^{\cS\cup \cT}_\cS(e), \iota^{\cS\cup \cT}_{\cT}(e))$. 
		\end{lem}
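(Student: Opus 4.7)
The plan is to imitate the proof of Lemma \ref{fibrecheeses}, replacing arguments about open discs and De Morgan with arguments about vertices and unique paths in the tree $\cB\cT$. First I would observe that since $V(\cS) = \{s, s'\}$, $V(\cT) \ni s$ and $V(\cS) \cap V(\cT) = \{s\}$, the intersection $\cS \cap \cT$ is the one-vertex subtree $\{s\}$, and $V(\cS \cup \cT) = V(\cT) \cup \{s'\}$. The map in the statement is well-defined into the fibre product by Lemma \ref{iotalem}(a) applied to $\cS \cap \cT \subseteq \cS \subseteq \cS \cup \cT$ and $\cS \cap \cT \subseteq \cT \subseteq \cS \cup \cT$.

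The main step is the construction of an inverse by case analysis on $e \in N(\cS \cup \cT)$. An edge $e \in N(\cS \cup \cT)$ has one vertex in $V(\cT) \cup \{s'\}$ and the other outside; moreover $e \neq (ss')$. In Case A, when $s_{\cS \cup \cT}(e) = s'$, Lemma \ref{iotalem}(b) gives $\iota^{\cS \cup \cT}_\cS(e) = e$, while Lemma \ref{iotalem}(c) forces $\iota^{\cS \cup \cT}_\cT(e) \in E(\cS \cup \cT) \setminus E(\cT) = \{(ss')\}$, so the image pair is $(e, (ss'))$ with $e \in N(\cS)$ of source $s'$. Conversely, given such a pair $(f_1, (ss'))$, I claim $e := f_1 \in N(\cS \cup \cT)$: here the key tree-theoretic observation is that if a neighbour $v \neq s$ of $s'$ lay in $V(\cT)$, then the subtree property of $\cT$ would force $s' \in V(\cT)$, contradicting $V(\cS) \cap V(\cT) = \{s\}$. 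Hence $t_\cS(f_1) \notin V(\cT) \cup \{s'\}$, and $f_1 \in N(\cS \cup \cT)$ as required.

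In Case B, when $s_{\cS \cup \cT}(e) \in V(\cT)$, the fact that $e \neq (ss')$ and $t_{\cS \cup \cT}(e) \notin V(\cS \cup \cT) = V(\cT) \cup \{s'\}$ forces $e \in N(\cT)$ with the same source, so $\iota^{\cS \cup \cT}_\cT(e) = e$ by Lemma \ref{iotalem}(b). Send $e$ to $(\iota^{\cS \cup \cT}_\cS(e), e)$. Conversely, given a compatible pair $(f_1, f_2)$ in the fibre product with $f_2 \neq (ss')$, take $e := f_2$; one verifies $e \in N(\cS \cup \cT)$ exactly because $f_2 \neq (ss')$ prevents $t_\cT(f_2) = s'$ (again using that the unique path through $s'$ in $\cB\cT$ enters $V(\cT)$ only at $s$), and one checks $\iota^{\cS \cup \cT}_\cS(e) = f_1$ using the fibre-product compatibility $\iota^\cS_{\{s\}}(f_1) = \iota^\cT_{\{s\}}(f_2)$ together with the transitivity in Lemma \ref{iotalem}(a).

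The main obstacle will be the bookkeeping in Case B, specifically showing that the two inverse constructions cover exactly the fibre product: given a compatible pair, one must decide which case it lies in. The clean dichotomy is $f_2 = (ss')$ (Case A) versus $f_2 \neq (ss')$ (Case B); in the former case the compatibility condition combined with the fact that the only edge of $N(\cT)$ mapping to $(ss') \in N(\{s\})$ is $(ss')$ itself forces $f_1$ to have source $s'$, matching Case A exactly. Once this dichotomy is established, checking that the two constructions are mutually inverse reduces to a direct application of Lemma \ref{iotalem}(b) and (c).
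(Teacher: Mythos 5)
Your proposal is correct and uses the same toolkit as the paper's proof --- Lemma \ref{iotalem}(a)--(c), Example \ref{leafvertex}(d), and the uniqueness of paths in $\cB\cT$ --- the only real difference being organizational: you construct an explicit two-sided inverse keyed on the dichotomy $f_2=(ss')$ versus $f_2\neq(ss')$, whereas the paper proves injectivity by contradiction and surjectivity via a three-way split on $s_\cT(f)$. Your dichotomy folds two of the paper's surjectivity cases into one, which is modestly tidier, but the underlying combinatorics coincide.
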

		
\begin{proof} The map $\xi := \iota^{\cS\cup \cT}_\cS \times \iota^{\cS\cup \cT}_{\cT}$ in the statement is well-defined, because
\[\iota^{\cS}_{\cS\cap \cT}\circ\iota^{\cS\cup \cT}_{\cS}=\iota^{\cS\cup \cT}_{\cS\cap \cT}=\iota^{\cT}_{\cS\cap \cT}\circ\iota^{\cS\cup \cT}_{\cT} \] by Lemma \ref{iotalem}(a). 

Next we show that $\xi$ is injective. To this end, suppose that $e_1, e_2$ are two elements of $N(\cS \cup \cT)$ such that $\xi(e_1) = \xi(e_2)$. Let $v_i=s_{\cS\cup \cT}(e_i)$ for $i=1,2$. 
Suppose first that both of $v_1,v_2$ lie in $\cS$. In this case, $\iota^{\cS \cup \cT}_{\cS}(e_1) = e_1$ and $\iota^{\cS \cup \cT}_{\cS}(e_2) = e_2$, by Lemma \ref{iotalem}(b), and we deduce by looking at the first component of $\xi(e_1) = \xi(e_2)$ that $e_1 = e_2$. The case where both $v_1,v_2$ lie in $\cT$ is entirely similar. Suppose for a contradiction that $e_1 \neq e_2$. Then without loss of generality, we can now assume that $v_1$ lies in $V(\cT) \backslash V(\cS)$ and $v_2$ lies in $V(\cS)\backslash V(\cT)$. Since $\cS$ is a single leaf with $V(\cS) \cap V(\cT) = \{s\}$, this forces $v_2 = s'$. Therefore since $\iota^{\cS \cup \cT}_{\cS}(e_2) = e_2$ by Lemma \ref{iotalem}(b), the only vertex of $\iota^{\cS \cup \cT}_{\cS}(e_2)$ in $\cS$ is $s'$. On the other hand, because $v_1 \notin V(\cS)$,  $\iota^{\cS \cup \cT}_{\cS}(e_1)\in E(\cT)$ by Lemma \ref{iotalem}(c). This contradicts $\iota^{\cS \cup \cT}_{\cS}(e_2) = \iota^{\cS \cup \cT}_{\cS}(e_1)$ because $s' \notin V(\cT)$.

Finally we show that $\xi$ is surjective. Suppose that $(e,f)\in N(\cS)\underset{N(\cS\cap \cT)}{\times}{}N(\cT)$. 

We first consider the case where $s_{\cT}(f)\neq s$, so that $t_{\cT}(f)\not\in \cS\cup \cT$. It follows that $f\in N(\cS\cup \cT)$ and we claim $\xi(f)=(e,f)$. That $\iota^{\cS\cup \cT}_{\cT}(f)=f$ follows from Lemma \ref{iotalem}(b) because $s_\cT(f)\in \cT$. Consider the following element $g$ of $N(\cS \cap \cT)$: \[g := \iota^{\cS}_{\cS\cap \cT}(e)=\iota^{\cT}_{\cS\cap \cT}(f).\]
Since $s_{ \cT}(f)\not\in\cS\cap \cT$, $g\in E(\cT)$ by Lemma \ref{iotalem}(c). In particular $g \neq (ss')$. Since $g = \iota^{\cS}_{\cS \cap \cT}(e)$, this implies that $g=e$ by Example \ref{leafvertex}(d). Now $s_{\cS\cup\cT}(f)\not\in \cS$ so $h := \iota^{\cS\cup \cT}_{\cS}(f)\in E(\cT)\backslash E(\cS)$ by Lemma \ref{iotalem}(c) again. Hence $\iota^{\cS}_{\cS \cap \cT}(h) = h$ by Example \ref{leafvertex}(d). Using Lemma \ref{iotalem}(a) several times, we now see that
\[\iota^{\cS\cup \cT}_{\cS}(f)= h = \iota^{\cS}_{\cS \cap \cT}(h) = \iota^{\cS}_{\cS \cap \cT} \iota^{\cS\cup \cT}_{\cS}(f) = \iota^{\cS\cup \cT}_{\cS\cap \cT}(f)=\iota^\cT_{\cS\cap \cT}\iota^{\cS\cup\cT}_{\cT}(f)=g. \]Hence $\iota^{\cS\cup \cT}_\cS(f)=g=e$ as required.

Next we consider the case where $s_{\cT}(f)=s$ so that, by Lemma \ref{iotalem}(b),  $\iota^{\cT}_{\cS \cap \cT}(f) = f$, and hence $f = \iota_{\cS \cap \cT}^{\cT}(f) = \iota^{\cS}_{\cS \cap \cT}(e)$.  This splits into two subcases. 

Suppose first that  $f=(ss')$. Then $\iota^{\cS}_{\cS\cap \cT}(e)=f =(ss')$ implies by Example \ref{leafvertex}(d) that $s_{\cS}(e) = s'$. Therefore $t_{\cS}(e) \notin V(\cT)$ which means that $e \in N(\cS \cup \cT)$. Then $\iota^{\cS\cup \cT}_{\cS}(e) = e$ by Lemma \ref{iotalem}(b) and $\iota^{\cS \cup \cT}_{\cT}(e) = (ss') = f$  by Lemma \ref{iotalem}(c), so $\xi(e) = (e,f)$ as required. 

Finally, suppose that $f \neq (ss')$. Then $t_{\cT}(f) \notin V(\cS)$, so $f \in N(\cS \cup \cT)$. Then $\iota^{\cS}_{\cS \cap \cT}(e) = f \neq (ss')$ implies that  $e = \iota^{\cS}_{\cS \cap \cT}(e) = f$ by Example \ref{leafvertex}(d). Hence $\iota^{\cS \cup \cT}_{\cT}(f) = f$ and $\iota^{\cS \cup \cT}_{\cS}(f) = \iota^{\cS \cup \cT}_{\cS}(e) = e$, and so $\xi(f) = (e,f)$ as required. 
\end{proof}

		\begin{prop} \label{holesnbd} Let $\cT$ be a finite subtree of $\mathcal{BT}$. Then there is a $G^0_\cT$-equivariant bijection \[h_\cT\colon N(\cT)\to h\left(C_\cT\right)  \] such that following diagram is commutative for every substree $\cT'$ of $\cT$:
\begin{equation}\label{hTfunc}\xymatrix{ N(\cT) \ar[rr]^{\iota^{\cT}_{\cT'}} \ar[d]_{h_\cT} & & N(\cT') \ar[d]^{h_{\cT'}} \\	
	h(C_\cT) \ar[rr]_{\iota^{C_\cT}_{C_{\cT'}}} && h(C_{\cT'}). }\end{equation}
\end{prop}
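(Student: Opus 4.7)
The plan is to prove this proposition by induction on the number of edges of $\cT$. For the base case, when $\cT = \{s\}$ has a single vertex, I appeal to the description of the reduction map $\lambda$ at a vertex given in \cite[\S I.2.3]{BC}: the cheese $C_{\{s\}} = \lambda^{-1}(\{s\}) \times_F K$ is obtained by excising $q+1$ open residue discs, one for each of the $q+1$ $k_F$-rational points of the reduction $\bP^1_{k_F}$, which in turn are in $GL_2(\cO_F)$-equivariant bijection with the $q+1$ edges in $N(\{s\})$. The resulting bijection $h_{\{s\}}$ is thereby $G^0_s$-equivariant, and the functoriality condition is trivial since the only subtree is $\cT$ itself.

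For the inductive step, assume $\cT$ has at least one edge and that the proposition holds for all proper subtrees of $\cT$. Pick a leaf $s'$ of $\cT$, let $s$ be its unique neighbour in $\cT$, let $\cS$ be the subtree on $\{s,s'\}$ with single edge $(ss')$, and set $\cT' := \cT \setminus \{s'\}$. Then $\cT = \cS \cup \cT'$ with $\cS \cap \cT' = \{s\}$, so $C_\cT = C_\cS \cup C_{\cT'}$ with $C_\cS \cap C_{\cT'} = C_{\{s\}}$. Combining the bijection $N(\cT) \cong N(\cS) \times_{N(\{s\})} N(\cT')$ of Lemma \ref{fibretree} with the bijection $h(C_\cT) \cong h(C_\cS) \times_{h(C_{\{s\}})} h(C_{\cT'})$ of Lemma \ref{fibrecheeses}, and applying the inductive bijections $h_\cS$ and $h_{\cT'}$ (whose common compatibility with $h_{\{s\}}$ is guaranteed by the functoriality clause of the inductive hypothesis and is exactly what is needed to induce a map on fibre products), I obtain the required bijection $h_\cT$. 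Note that the case of $\cS$ itself requires separate treatment, carried out by hand using the explicit description in Example \ref{leafvertex}: each of the $2q$ holes of $C_\cS$ is a residue disc at $s$ or $s'$, and these are naturally matched with the $2q$ elements of $N(\cS)$ enumerated there.

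For functoriality with respect to an arbitrary $\cT' \subseteq \cT$, by Lemma \ref{iotalem}(a) I can reduce to the case of removing a single leaf, and then verify the commutativity of the defining square by unwinding the inductive construction and using the universal property of the fibre product. For equivariance, the subtle point is that the inductive decomposition $\cT = \cS \cup \cT'$ depends on the choice of leaf $s'$, while $G^0_\cT$ permutes the leaves. I handle this by observing that the inductive hypothesis already provides $G^0_\cS$- and $G^0_{\cT'}$-equivariance, and since $G^0_\cT$ is topologically generated by the stabilisers of the edges of $\cT$, it suffices to verify equivariance under elements that permute leaves adjacent to a common vertex, which reduces to the base case analysis.

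The main obstacle I expect is making the equivariance argument fully watertight without appealing to a choice of leaf. A cleaner alternative, which I would adopt if the above runs into difficulty, is to give a manifestly $G^0_\cT$-equivariant direct geometric construction of $h_\cT$: for each $e \in N(\cT)$, the set of vertices of $\cB\cT$ reachable from $t_\cT(e)$ without crossing $\cT$ forms a sub-halftree $\cH_e$, and one defines $h_\cT(e)$ to be the unique connected component of $\bP^1(\bfC) \setminus C_\cT(\bfC)$ whose image under $\lambda$ meets $|\cH_e|$. Equivariance and functoriality are then immediate from the $GL_2(F)$-equivariance of $\lambda$, and the induction above is reduced to showing that this direct construction yields a bijection, which follows from Lemma \ref{fibretree} and Lemma \ref{fibrecheeses} together with the base case.
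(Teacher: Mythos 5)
Your first plan has exactly the gap you flag: if $h_\cT$ is defined by choosing a leaf $s'$ and gluing along the fibre-product bijections, then nothing makes the result independent of the choice of $s'$, and a $g \in G^0_\cT$ can move $s'$ to a different leaf, so $G^0_\cT$-equivariance does not come for free. Your suggested repair (reducing to elements that permute leaves around a common vertex, and invoking generation by edge stabilisers) would still require an independent-of-choice argument for the inductive construction itself, which is the whole difficulty; it is not a fix so much as a restatement of the problem. Your diagnosis that a choice-free direct construction is the way out is correct, and your fallback plan does follow the same outline as the paper: define $h_\cT$ directly (so equivariance and functoriality are soft), then use the leaf decomposition together with Lemma \ref{fibretree} and Lemma \ref{fibrecheeses} only to prove bijectivity by induction.

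However, the specific direct definition you propose --- $h_\cT(e)$ is ``the unique connected component of $\bP^1(\bfC)\setminus C_\cT(\bfC)$ whose $\lambda$-image meets $|\cH_e|$'' --- is precisely the conceptual description the paper flags in the remark after the proposition as not straightforward to make rigorous: it requires knowing how $\lambda$ behaves on each hole and why exactly one hole corresponds to each half-tree. The paper sidesteps this by defining $h_\cT(e)$ to be the unique hole of $C_\cT$ containing the cheese $C_{\{t_\cT(e)\}}$. Well-definedness is then immediate from Proposition \ref{TreeToCheese} (disjoint trees give disjoint cheeses) and Remark \ref{ovlpdiscs}(a) (overlapping open discs are nested), equivariance is a one-line consequence of $\lambda$ being $G^0$-equivariant, and functoriality drops out because the path $\cS$ from $t_\cT(e)$ to $t_{\cT'}(\iota^\cT_{\cT'}(e))$ is disjoint from $\cT'$, so $C_{\{t_\cT(e)\}}$, $C_\cS$ and $C_{\{t_{\cT'}(\iota^\cT_{\cT'}(e))\}}$ all lie in one hole of $C_{\cT'}$. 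So: right structure, but you should pin down the hole via the auxiliary cheese $C_{\{t_\cT(e)\}}$ rather than via $\lambda$-images, which would otherwise leave a genuine well-definedness gap.
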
 
\begin{proof} 
	Suppose $\cS$ and $\cT$ are disjoint finite subtrees of $\mathcal{BT}$. It follows from Proposition \ref{TreeToCheese} that $C_{\cS}$ and $C_{\cT}$ are disjoint $K$-cheeses, so $C_{\cS}$ is contained in a unique hole of $C_{\cT}$. In particular, if $e\in N(\cT)$, then $C_{\{t_\cT(e)\}}$ and $C_\cT$ are disjoint $K$-cheeses, so $C_{\{t_\cT(e)\}}$ is contained in a unique hole $h_\cT(e)$ of $C_\cT$.

	Since $\lambda$ is $G^0$-equivariant, if $g\in G^0_\cT$ then $t_\cT(g\cdot e)=gt_\cT(e)$ and so \[\lambda^{-1}(t_\cT(ge))=g\lambda^{-1}(t_\cT(e)) \qmb{and} h_\cT(ge)=g h_\cT(e).\] Thus $e\mapsto h_\cT(e)$ defines a $G^0_\cT$-equivariant function.
	
Suppose that $\cT'\subseteq \cT$ is a subtree and that $\iota_{\cT'}^{\cT}(e)=f$. Then the path from $t_\cT(e)$ to $t_{\cT'}(f)$ in $\mathcal{BT}$ is a tree, $\cS$ say, that is disjoint from $\cT'$. Then $C_{\{t_\cT(e)\}}$ and $C_{\{t_{\cT'}(f)\}}$ and $C_{\cS}$ are all contained in the same hole, $D$ say, of $C_{\cT'}$. It follows that $h_{\cT'}\circ\iota^\cT_{\cT'}(e)=D=\iota^{C_\cT}_{C_\cT'}\circ h_\cT(e)$ and that the diagram (\ref{hTfunc}) is commutative.

	To show that $h_\cT$ is always a bijection, we induct on the number of edges of $\cT$. If $\cT$ consists of a single vertex (no edges), or a single edge, then the result is a simple consequence of \cite[I.2.3]{BC}. In the general case, we decompose $\cT$ as $\cS\cup \cT'$ where $\cS$ is a single leaf of $\cT$ and $\cT'$ is $\cT$ with $\cS$ removed. Using the diagram (\ref{hTfunc}) twice, we obtain the following commutative diagram:   
	\[ \xymatrix{ N(\cT) \ar[rr]^(0.35){\iota^{\cT}_{\cS} \times \iota^{\cT}_{\cT'} } \ar[d]_{h_\cT} & & N(\cS) \underset{N(\cS\cap \cT')}{\times}{} N(\cT') \ar[d]^{h_{\cS}\times h_{\cT'}} \\	
		h(C_\cT) \ar[rr]_(0.35){\iota^{C_\cT}_{C_{\cS}} \times \iota^{C_\cT}_{C_{\cT'}}} && h(C_\cS) \underset{h(C_{\cS\cap \cT'})}{\times}{} h(C_{\cT'}).}\] 
Now, the horizontal arrows in this diagram are bijections by Lemma \ref{fibretree} and Lemma \ref{fibrecheeses} respectively. Since $h_{\cS} \times h_{\cT'}$ is a bijection by the induction hypothesis, it follows that $h_{\cT}$ is a bijection as well.
	 \end{proof} 
\begin{rmk} We note that the bijectivity of $h_\cT$ in Proposition \ref{holesnbd} is more conceptually clear than our proof suggests. If $D$ is in $h(C_{\cT})$ then $\overline{\lambda(D\cap \Omega_F)}$ is a connected component $X_D$ of $|\cB\cT|\backslash |\cT|$. There is precisely one edge $e_D$ in $N(\cT)$ such that the interior of $|e_D|$ is contained in $X_D$. Then the inverse of $h_\cT$ sends $D$ to $e_D$. However it is not straightforward to make this argument rigorous in the context of this paper.  \end{rmk}

\begin{defn} \label{defnOmegan} Let $s_0$ be the vertex of $\mathcal{BT}$ fixed by $GL_2(\cO_F)$ and let $n\geq 0$.
\be \item $\cT_n \subset \mathcal{BT}$ is the subtree whose vertices have distance at most $n$ from $s_0$.
\item $\Omega_n$ is the cheese $\Omega_n:=C_{\cT_n}.$
\ee   \end{defn} 

\begin{rmk}\label{TreeStabs} Since $ G^0_{\cT_n}=G^0_{s_0}=GL_2(\cO_F)$ for all $n\geq 0$,  $\Omega_n$ is $GL_2(\cO_F)$-stable for all $n\geq 0$. \end{rmk}

\begin{rmk} \label{treecover} For any family $\{\cT_j\}_{j\in J}$ of finite subtrees of $\mathcal{BT}$ such that $\bigcup_{j\in J} |\cT_j|=|\mathcal{BT}|$, the family of cheeses $\{C_{\cT_j}\}_{j\in J}$ forms an admissible cover of $\Omega$. 
\end{rmk}

\begin{lem}Let $n \geq 0$. \label{fibres} \be \item $GL_2(\cO_F)$ acts transitively on $h(\Omega_n)$.
\item The fibres of the maps $\iota^{\Omega_{n+1}}_{\Omega_n}\colon h(\Omega_{n+1})\to h(\Omega_n)$ all have size $q$.
\ee \end{lem}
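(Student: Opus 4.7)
The plan is to translate both statements via Proposition \ref{holesnbd} into combinatorial statements about the subtrees $\cT_n$, using that the bijection $h_{\cT_n}\colon N(\cT_n)\to h(\Omega_n)$ is $GL_2(\cO_F)$-equivariant by Remark \ref{TreeStabs}, and that the diagram \eqref{hTfunc} identifies the map $\iota^{\Omega_{n+1}}_{\Omega_n}$ with $\iota^{\cT_{n+1}}_{\cT_n}\colon N(\cT_{n+1})\to N(\cT_n)$. After this translation, each part becomes an elementary statement about the local geometry of $\cB\cT$ near $s_0$, which I expect to be the bulk of the work.

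For part (a), I first note that any edge $e\in N(\cT_n)$ necessarily has $s_{\cT_n}(e)$ at distance exactly $n$ from $s_0$ and $t_{\cT_n}(e)$ at distance exactly $n+1$, since $d(s_0,-)$ changes by $\pm 1$ along edges. Since $GL_2(\cO_F)$ preserves $s_0$ and hence the distance function $d(s_0,-)$, the assignment $e\mapsto t_{\cT_n}(e)$ is a $GL_2(\cO_F)$-equivariant injection from $N(\cT_n)$ to the sphere $S_{n+1}$ of vertices at distance $n+1$ from $s_0$; it is a bijection because every vertex $w\in S_{n+1}$ has a unique neighbour at distance $n$ (the penultimate vertex of the geodesic from $s_0$). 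It then remains to show that $GL_2(\cO_F)$ acts transitively on $S_{n+1}$, which is standard: the Cartan decomposition $GL_2(F)=GL_2(\cO_F)\cdot\{\diag(\pi_F^a,\pi_F^b):a,b\in\bZ\}\cdot GL_2(\cO_F)$ implies that every vertex at distance $n+1$ is in the $GL_2(\cO_F)$-orbit of $\diag(\pi_F^{n+1},1)\cdot s_0$.

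For part (b), I fix $f\in N(\cT_n)$ and let $w:=t_{\cT_n}(f)\in S_{n+1}$. I claim the fibre of $\iota^{\cT_{n+1}}_{\cT_n}$ over $f$ is exactly the set of edges $e\in N(\cT_{n+1})$ with $s_{\cT_{n+1}}(e)=w$. For any such $e$, the path from $t_{\cT_{n+1}}(e)$ (at distance $n+2$) to $w$ is the single edge $e$, which contains no vertices of $\cT_n$, so the defining property in Lemma \ref{mapnbdtreeslem} forces $\iota^{\cT_{n+1}}_{\cT_n}(e)=f$. Conversely, if $s_{\cT_{n+1}}(e)=w'\in S_{n+1}\setminus\{w\}$, then the unique path from $t_{\cT_{n+1}}(e)$ heading toward $\cT_n$ first leaves $\cT_{n+1}\setminus\cT_n$ at $w'$ and hits $N(\cT_n)$ at the edge joining $w'$ to its distance-$n$ neighbour, which cannot equal $f$. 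The count then follows from the fact that $w$ has exactly $q+1$ neighbours in $\cB\cT$, one at distance $n$ (back toward $s_0$) and the remaining $q$ in $S_{n+2}$; these latter $q$ edges give precisely the fibre.

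The main obstacle, such as it is, is the bookkeeping with $\iota^{\cT_{n+1}}_{\cT_n}$ in part (b): one must verify that the candidate fibre description really does match the abstract definition in Lemma \ref{mapnbdtreeslem}, and this requires the observation that $s_{\cT_{n+1}}(e)\notin V(\cT_n)$ always (so the easy case of Lemma \ref{iotalem}(b) never applies here, and one is always in the regime of Lemma \ref{iotalem}(c)). Once this is noted the rest is routine tree combinatorics.
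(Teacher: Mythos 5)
Your proof is correct. Part (a) follows essentially the same route as the paper: reduce via Proposition \ref{holesnbd} and Remark \ref{TreeStabs} to transitivity on $N(\cT_n)$, then identify $N(\cT_n)$ with the sphere $S_{n+1}$ of vertices at distance $n+1$ from $s_0$ and invoke transitivity of $GL_2(\cO_F)$ on that sphere (the paper states this last fact without citing the Cartan decomposition explicitly, but it is the same point). For part (b), you take a genuinely more explicit route than the paper: you identify the fibre of $\iota^{\cT_{n+1}}_{\cT_n}$ over a fixed $f\in N(\cT_n)$ as precisely the $q$ edges emanating from $w=t_{\cT_n}(f)$ into $S_{n+2}$, carefully checking against the definition in Lemma \ref{mapnbdtreeslem} that nothing in $S_{n+1}\setminus\{w\}$ contributes. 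The paper instead simply counts $|N(\cT_n)|=(q+1)q^n$ from $(q+1)$-regularity of the tree and uses the transitivity established in (a) to conclude that all fibres have equal size, hence size $q$. Your argument is slightly longer but more informative (it exhibits the fibres concretely rather than only their cardinality), and your observation that $s_{\cT_{n+1}}(e)\notin V(\cT_n)$ always, so only the case of Lemma \ref{iotalem}(c) arises, is a correct and clarifying point. Both approaches are sound.
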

\begin{proof}
(a) By Proposition \ref{holesnbd} and Remark \ref{TreeStabs}, it suffices to prove that $GL_2(\cO_F)$ acts transitively on $N(\cT_n)$ for each $n\geq 0$. But $N(\cT_n)$ consists of all edges between vertices of distance $n$ from $s_0$ and vertices of distance $n+1$ from $s_0$. This holds because $GL_2(\cO_F)$ acts transitively on the set of vertices of distance $n+1$ from $s_0$.

(b) Note that $|h(\Omega_n)|=|N(\cT_n)|=(q+1)q^n$ since $\mathcal{BT}$ is a $(q+1)$-regular tree. The fibres of $\iota^{\Omega_{n+1}}_{\Omega_n}\colon h(\Omega_{n+1})\to h(\Omega_n)$ all have the same size, by part (a).   \end{proof}

We introduce some other admissible covers of $\Omega$ by $K$-cheeses, for later use.

Recall that $w=\begin{pmatrix} 0 & 1 \\ \pi_F & 0\end{pmatrix}\in GL_2(F)$ and  $w\cdot s_0$ is a vertex of $\mathcal{BT}$ adjacent to $s_0$.

\begin{defn}\label{defnPsin} Let $n\geq 0$. \be \item Let $e_0$ be the unique edge of $\mathcal{BT}$ with vertices $s_0$ and $w\cdot s_0$.
\item Let $\cS_n$ be the subtree of $\mathcal{BT}$ consisting of vertices a distance at most $n$ from either $s_0$ or $ws_0$.  \item Let $\Psi_n$ be the cheese $\Psi_n:=C_{\cS_n}$. \ee \end{defn}

 \begin{lem} For each $n\geq 1$, \be \item$\Psi_{n}=\Omega_n\cup w\Omega_n$, and \item $\Psi_{n-1}=\Omega_{n}\cap w\Omega_{n}$.  \ee \end{lem}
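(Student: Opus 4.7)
The plan is to reduce both identities to corresponding subtree identities in $\cB\cT$, and then apply the functor $C_{(-)}$ from Definition \ref{KcheeseCT}. As a preliminary step, since the reduction map $\lambda$ is $GL_2(F)$-equivariant, we have $w \cdot \lambda^{-1}(|\cT_n|) = \lambda^{-1}(|w \cdot \cT_n|)$, where $w \cdot \cT_n$ denotes the subtree of $\cB\cT$ consisting of vertices at distance at most $n$ from $ws_0 = w \cdot s_0$. Base-changing to $K$ and using Definition \ref{KcheeseCT} gives $w\Omega_n = C_{w \cdot \cT_n}$, so the statements (a) and (b) will follow from the subtree identities
\[\cS_n \;=\; \cT_n \cup (w\cdot\cT_n) \qquad \text{and} \qquad \cS_{n-1} \;=\; \cT_n \cap (w\cdot\cT_n), \]
since the assignment $\cT' \mapsto |\cT'|$ commutes with finite unions and intersections, and $\lambda^{-1}$ respects these operations at the level of point sets.

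The key combinatorial input for the subtree identities is that for every vertex $v \in V(\cB\cT)$ we have $|d(v, s_0) - d(v, ws_0)| = 1$. Indeed, $e_0 = (s_0, ws_0)$ is an edge of $\cB\cT$, so the unique path from $v$ to $ws_0$ is obtained from the path from $v$ to $s_0$ by either appending or deleting $e_0$. From this observation, the identity $V(\cT_n) \cup V(w \cdot \cT_n) = V(\cS_n)$ is immediate from Definition \ref{defnPsin}, while the identity $V(\cT_n) \cap V(w \cdot \cT_n) = V(\cS_{n-1})$ follows because the condition $\max(d(v,s_0), d(v,ws_0)) \le n$ is, via $|d(v,s_0) - d(v,ws_0)| = 1$, equivalent to the condition $\min(d(v,s_0), d(v,ws_0)) \le n-1$. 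The edge sets match up as well: for $n \ge 1$, no edge of $\cB\cT$ can have one endpoint strictly in $V(\cT_n) \setminus V(w\cdot\cT_n)$ and the other strictly in $V(w\cdot\cT_n) \setminus V(\cT_n)$, since such vertices necessarily lie on opposite sides of $e_0$ at mutual graph distance at least $2n+1 \ge 3$; so every edge of $\cS_n$ belongs to $\cT_n$ or to $w\cdot\cT_n$, and the analogous edge identity for intersections is automatic.

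The main subtlety is to upgrade these set-theoretic identities to identities of $K$-affinoid subdomains of $\Omega$. For (b) this is automatic, because the scheme-theoretic intersection of two affinoid subdomains of $\Omega$ is again an affinoid subdomain of $\Omega$, and there is a unique such subdomain with the prescribed underlying point set, namely $\Psi_{n-1} = C_{\cS_{n-1}}$. For (a) one uses that $\Omega_n$ and $w\Omega_n$ are affinoid subdomains of the $K$-cheese $\Psi_n$ whose underlying point sets together exhaust $\Psi_n$, so they form an admissible affinoid cover of $\Psi_n$ by \cite[Corollary 8.2.2/6]{BGR}, yielding the desired equality $\Omega_n \cup w\Omega_n = \Psi_n$.
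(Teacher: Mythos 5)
Your proof is correct and takes essentially the same approach as the paper: reduce to the subtree identities $\cS_n = \cT_n \cup w\cT_n$ and $\cS_{n-1} = \cT_n \cap w\cT_n$ via distance computations in $\cB\cT$, then apply $\cT \mapsto C_\cT$. Your observation $|d(v,s_0)-d(v,ws_0)|=1$ organizes the distance arguments a little more crisply than the paper's appeal to the unique path through $x$, $s_0$, $ws_0$, and you make explicit both the edge-level verification and the passage from subtree identities to identities of affinoid subdomains, which the paper compresses into ``Both parts now follow easily.''
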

 
 \begin{proof} Let $n \geq 1$. It is clear that $\cS_n=\cT_n\cup w\cT_n$. We claim that $\cS_{n-1}=\cT_n\cap w\cT_n$.   For the forward inclusion, because $w^2$ acts trivially on $\mathcal{BT}$, it is enough to show that $\cT_{n-1} \subseteq w\cT_n$. Let $d$ be the distance function on $V(\mathcal{BT})$ and let $x \in V(\cT_{n-1})$. Then $d(x,s_0) \leq n-1$, so $d(x,ws_0) \leq d(x,s_0) + d(s_0,ws_0) \leq (n-1) + 1 = n$ and hence $x \in V(w\cT_n)$. For the reverse inclusion, it is enough to show that $\cT_n \cap w \cT_n \subseteq \cT_{n-1}$. Suppose that $x \in V(\cT_n \cap w \cT_n)$ so that $d(x,s_0) \leq n$ and $d(x,ws_0) \leq n$. By considering the unique path in $\mathcal{BT}$ passing through $x,s_0$ and $ws_0$, we see that we must have either $d(x,s_0) \leq n-1$ or $d(x,ws_0) \leq n-1$, and hence $x \in V(\cT_{n-1})$.
 
Both parts now follow easily. \end{proof}

 \begin{rmk}\label{ITreeStabs} Since,  for each $n\geq 0$, $G^0_{\cS_n}=G^0_{e_0}=I$ is the Iwahori subgroup from Notation \ref{IwahoriDefn}(b), each cheese $\Psi_n=C_{\cS_n}$ is $I$-stable.
 	\end{rmk}
 
\begin{lem}\label{iomegapsi} Suppose that $n\geq 0$. 
	
	\be \item $h(\Psi_n)$ has precisely two $I$-orbits, each of size $q^{n+1}$.
	
	\item The map $\iota^{\Psi_{n+1}}_{\Psi_{n}}\colon h(\Psi_{n+1})\to h(\Psi_{n})$ is surjective, with all fibres of size $q$.
	\ee
\end{lem}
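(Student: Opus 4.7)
The plan is to apply Proposition \ref{holesnbd}, which gives an $I$-equivariant bijection $h_{\cS_n}\colon N(\cS_n) \to h(\Psi_n)$ (using that $I = G^0_{e_0} = G^0_{\cS_n}$ by Remark \ref{ITreeStabs}) and which identifies $\iota^{\Psi_{n+1}}_{\Psi_n}$ with the combinatorial $\iota^{\cS_{n+1}}_{\cS_n}$ of Definition \ref{mapnbdtreesdef}. Both parts then become statements about the tree $\cS_n$.

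For part (a), I will first observe that the outer leaves of $\cS_n$---namely $V(\cS_n) \setminus V(\cS_{n-1})$---split naturally into an $s_0$-side (vertices of distance $n$ from $s_0$ whose geodesic to $s_0$ avoids $ws_0$) and a symmetric $ws_0$-side, each of size $q^n$; each such leaf contributes $q$ edges to $N(\cS_n)$, giving $|N(\cS_n)| = 2q^{n+1}$. Since $I$ fixes both endpoints of $e_0$ it preserves this bipartition, while the matrix $w$ preserves the edge $e_0$ and so both normalises $I$ and swaps the two sides. The two sides thus form $w$-conjugate $I$-stable subsets of size $q^{n+1}$ each, and it suffices to show that the $s_0$-side is a single $I$-orbit. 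Via the bijection sending an edge to its outer endpoint, this reduces to proving by induction on $n$ that $I$ acts transitively on $V^s_{n+1} := \{v \in V(\mathcal{BT}) : d(v, s_0) = n+1,\ \text{the geodesic from } v \text{ to } s_0 \text{ avoids } ws_0\}$. The base case $n=0$ uses that the reduction of $I$ modulo its pro-$p$ radical is the Borel subgroup of $GL_2(k_F)$ stabilising the direction $ws_0 \in \bP^1(k_F) \cong \{\text{neighbours of } s_0\}$, which acts transitively on the remaining $q$ neighbours. For the inductive step, given $v, v' \in V^s_{n+1}$ with parents $p, p' \in V^s_n$, the inductive hypothesis yields $g \in I$ with $gp = p'$; it then suffices to show that $\Stab_I(p')$ acts transitively on the $q$ children of $p'$ lying in $V^s_{n+1}$, which follows from an analogous Borel-action argument at $p'$ applied to the subgroup of $\Stab_I(p')$ that fixes the full geodesic $(ws_0, s_0, v_1, \ldots, p')$ in $\mathcal{BT}$.

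For part (b), fix $e \in N(\cS_n)$ and let $u = s_{\cS_n}(e)$, $v = t_{\cS_n}(e)$, so $v \in V(\cS_{n+1}) \setminus V(\cS_n)$. The $q$ neighbours of $v$ other than $u$ all lie outside $\cS_{n+1}$, since they are further from both $s_0$ and $ws_0$ than $v$; this produces $q$ edges $f_1, \ldots, f_q \in N(\cS_{n+1})$ emanating from $v$, and by Definition \ref{mapnbdtreesdef} each maps to $e$ under $\iota^{\cS_{n+1}}_{\cS_n}$, because the length-one path from $t_{\cS_{n+1}}(f_i)$ to $v$ contains no vertex of $\cS_n$. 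This proves surjectivity and exhibits at least $q$ preimages. Conversely, any preimage $f$ of $e$ has inner vertex $x := s_{\cS_{n+1}}(f) \in V(\cS_{n+1}) \setminus V(\cS_n)$; since $x$'s only neighbour inside $\cS_{n+1}$ is its parent, which lies in $V(\cS_n)$, the unique geodesic in $\mathcal{BT}$ from $x$ to any other outer leaf of $\cS_{n+1}$ must pass through $V(\cS_n)$. Applied to $x$ and $v$, this forces $x = v$ (otherwise the defining property of $\iota^{\cS_{n+1}}_{\cS_n}$ would fail), so $f \in \{f_1, \ldots, f_q\}$ and the fibre has exactly $q$ elements.

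I expect the main obstacle to be the transitivity step inside the induction of part (a). Showing that $\Stab_I(v)$ is large enough to act transitively on the $q$ children of $v$ in $V^s_{n+1}$ requires identifying an explicit unipotent subgroup---essentially a suitable $GL_2(F)$-conjugate of $\left\{\begin{pmatrix} 1 & \pi_F a \\ 0 & 1\end{pmatrix} : a \in \cO_F\right\}$---that fixes the entire geodesic from $ws_0$ through $v$, yet whose image modulo the pro-$p$ radical of $\Stab_{G^0}(v)$ is the unipotent radical of a Borel in $PGL_2(k_F)$ acting simply transitively on the $q$ ``downward'' directions at $v$.
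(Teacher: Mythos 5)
Your proposal takes the same route as the paper: reduce both parts, via Proposition \ref{holesnbd}, to purely combinatorial statements about $N(\cS_n)$ and the map $\iota^{\cS_{n+1}}_{\cS_n}$, and then partition $N(\cS_n)$ into the two $I$-stable ``sides'' determined by the edge $e_0$. The only difference is one of granularity: where you supply a careful induction to show $I$ acts transitively on each side, and an explicit fibre count for part (b), the paper simply exhibits the same two $I$-stable subsets, asserts without further argument that they are orbits, and declares the fibre count ``straightforward since $\cB\cT$ is $(q+1)$-regular''. The transitivity claim you flag as the crux --- that the pointwise stabiliser in $G^0$ of the geodesic from $ws_0$ out to $v$ acts transitively on the $q$ outward directions at $v$ --- is true and standard (it follows, as you indicate, from a unipotent computation at each vertex along the ray); the paper simply assumes it as known. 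Incidentally, the paper's proof of (a) has a typo: ``each of size $q+1$'' should read ``each of size $q^{n+1}$'', which your version correctly states.
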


\begin{proof}
(a) By Proposition \ref{holesnbd} it suffices to show that $N(\cS_n)$ has precisely two $I$-orbits each of size $q+1$. But $N(\cS_n)$ consists of those edges of $\cS_{n+1}$ that are not edges of $\cS_{n}$. These fall into those that connect vertices of distance $n$ and $n+1$ from $s_0$ (and distance $n+1$ and $n+2$ from $w\cdot s_0$) and those that connect vertices of distance $n$ and $n+1$ from $w\cdot s_0$ (and distance $n+1$ and $n+2$ from $s_0$). These two sets of edges in $N(\cS_n)$ are its $I$-orbits. 

(b) Using Proposition \ref{holesnbd} again, it suffices to prove the same thing about the fibres of the $I$-equivariant function $N(\cS_{n+1})\to N(\cS_n)$. This is straightforward to verify since $\mathcal{BT}$ is $q+1$-regular. 
\end{proof}
\begin{lem}	\label{coverOmega} The following collections of affinoid subdomains form admissible covers of $\Omega$:
	\be \item $\{\Omega_n\}_{n\geq 0}$;
	\item $\{w\Omega_n\}_{n\geq 0}$;
	\item $\{\Psi_n\}_{n\geq 0}$.
	\ee
\end{lem}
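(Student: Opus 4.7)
The plan is to reduce all three assertions to a single underlying fact, namely Remark \ref{treecover}, which states that whenever $\{\cT_j\}_{j \in J}$ is a family of finite subtrees of $\mathcal{BT}$ whose geometric realisations exhaust $|\mathcal{BT}|$, the family of cheeses $\{C_{\cT_j}\}_{j \in J}$ gives an admissible cover of $\Omega$. So the entire proof comes down to identifying, for each of the three collections in the statement, a family of finite subtrees of $\mathcal{BT}$ whose cheeses are exactly the members of the collection, and then checking the exhaustion property on $|\mathcal{BT}|$.

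For (a), by Definition \ref{defnOmegan} we have $\Omega_n = C_{\cT_n}$, where $\cT_n$ is the subtree spanned by the vertices of $\mathcal{BT}$ at distance at most $n$ from $s_0$. Since every vertex of $\mathcal{BT}$ lies within finite distance of $s_0$, and every edge is captured as soon as both its endpoints are, we get $\bigcup_{n \geq 0} |\cT_n| = |\mathcal{BT}|$, so Remark \ref{treecover} applies. For (c), similarly $\Psi_n = C_{\cS_n}$ by Definition \ref{defnPsin}, and $\cS_n$ contains all vertices at distance at most $n$ from $s_0$ (as well as from $ws_0$), so again $\bigcup_n |\cS_n| = |\mathcal{BT}|$.

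For (b), the only small subtlety is to identify $w\Omega_n$ as the cheese $C_{w\cT_n}$ attached to the translated subtree $w\cT_n$. This follows from the $G^0$-equivariance of the reduction map $\lambda \colon \Omega \to |\mathcal{BT}|$: one has $\lambda^{-1}(|w\cT_n|) = w \cdot \lambda^{-1}(|\cT_n|)$, hence $C_{w\cT_n} = w\Omega_n$. The vertices of $w\cT_n$ are precisely those within distance $n$ of $ws_0$, and these exhaust $V(\mathcal{BT})$ as $n \to \infty$. Remark \ref{treecover} then finishes the argument.

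There is no real obstacle here; the result is essentially a bookkeeping exercise verifying that in each case the underlying subtrees exhaust $|\mathcal{BT}|$, after which Remark \ref{treecover} does all of the work.
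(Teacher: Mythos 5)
Your proof is correct and follows the same route as the paper, which simply observes that each part is an easy consequence of Remark \ref{treecover}; you have merely spelled out the straightforward verifications that the relevant subtrees exhaust $|\mathcal{BT}|$ and that $w\Omega_n = C_{w\cT_n}$ by equivariance of $\lambda$.
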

\begin{proof} 
	Each part is an easy consequence of Remark \ref{treecover}.
	\end{proof}

\begin{prop}\label{BasicUH} $\Omega$ is a smooth, geometrically connected, quasi-Stein rigid $K$-analytic space.
\end{prop}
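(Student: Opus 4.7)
The plan is to reduce the statement directly to Proposition \ref{admquasiStein} by using the admissible cover $\{\Omega_n\}_{n \geq 0}$ of $\Omega$ introduced in Definition \ref{defnOmegan}. Since $\infty \in \bP^1(F)$ we have $\Omega(\bfC) \subseteq \bA(\bfC)$, and the fact that the affinoid subdomains $\Omega_n$ of $\bA$ form an admissible cover of $\Omega$ (by Lemma \ref{coverOmega}(a)) identifies $\Omega$ with an admissible subdomain of $\bA$, so Proposition \ref{admquasiStein} is applicable in principle.

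To actually invoke it, I would check its three hypotheses in turn. First, each $\Omega_n = C_{\cT_n}$ is a $K$-cheese by Proposition \ref{TreeToCheese} and Definition \ref{KcheeseCT}. Second, the inclusion $\cT_n \subseteq \cT_{n+1}$ of finite subtrees of $\mathcal{BT}$ gives $\Omega_n \subseteq \Omega_{n+1}$. Third, the surjectivity of the maps $\iota^{\Omega_{n+1}}_{\Omega_n}\colon h(\Omega_{n+1}) \to h(\Omega_n)$ follows from Lemma \ref{fibres}(b), which states that these maps have fibres of size $q \geq 1$.

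With these three verifications, Proposition \ref{admquasiStein} immediately yields that $\Omega$ is geometrically connected, smooth and quasi-Stein, as claimed.

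The only point that requires a moment's thought is the (implicit) check that $\Omega$ truly sits inside $\bA$ in a way compatible with the setup of Proposition \ref{admquasiStein}; this is not an obstacle, merely a matter of observing that the $\Omega_n$'s are explicitly constructed as affinoid subdomains of $\bA$ via preimages under the reduction map $\lambda$, so no further work is needed. Thus there is no genuine obstacle and the proposition is essentially a summary of what has already been established.
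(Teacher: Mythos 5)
Your proposal is correct and follows exactly the same route as the paper's proof: invoke Lemma \ref{coverOmega}(a) for the admissible cover by the cheeses $\Omega_n$, cite Lemma \ref{fibres}(b) for the surjectivity of the hole maps, and then apply Proposition \ref{admquasiStein}. The only difference is that you spell out the hypothesis checks slightly more explicitly than the paper does.
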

\begin{proof} 
	We've seen that the chain $\Omega_0\subseteq \Omega_{1}\subseteq \cdots $ is an admissible cover of $\Omega$ by an increasing union of cheeses. Moreover the maps $h(\Omega_{n+1})\to h(\Omega_n)$ are all surjective by Lemma \ref{fibres}. Thus $\Omega$ is a smooth, geometrically connected, quasi-Stein rigid $K$-analytic space by Proposition \ref{admquasiStein}.
\end{proof}

\subsection{Units, measures and flat connections on \ts{\Omega}}

 Recall, for $\varphi\in \Aut(\bP^1)$ and cheeses $X$ and $Y$ with $\varphi(Y)\subseteq X$, the map $\varphi^X_Y\colon h(X)\to h(Y)$ from Lemma \ref{rYX} together with the notation $D_\infty$ to denote the element of $h(X)$ containing the point $\infty\in \bP^1(\bfC)$. 

\begin{prop}\label{UnitsMeasCheese} Let $X = C(\alpha, \mathbf{s})$ and $Y$ be cheeses and $\varphi\in \Aut(\bP^1)$ with $\varphi(Y)\subseteq X$. Then there is commutative diagram
\[\xymatrix{  1  \ar[r]& K^\times \cdot  \cO(X)^{\times\times} \ar[r] \ar[d] & \cO(X)^\times  \ar[r]^{\mu_X} \ar[d]  & M_0(h(X),\bZ)  \ar[r]\ar[d]_{\varphi^X_{Y,\ast}} & 0 \\
		1 \ar[r] & K^\times\cdot \cO(Y)^{\times\times} \ar[r]  & \cO(Y)^\times 
  \ar[r]_{\mu_Y} &  M_0(h(Y),\bZ) \ar[r] & 0}\]	
whose rows are short exact sequences of abelian groups and whose non-labelled vertical arrows are induced by the composite of  the restriction $\cO(X)\to \cO(\varphi(Y))$ and $\varphi^\sharp\colon \cO(\varphi(Y))\to \cO(Y)$. 

The map $\mu_X$ is characterised by $\mu_X(x-\alpha)=\delta_D-\delta_{D_\infty}$ for $D\in h(X)$ and $\alpha\in D(K)$.
\end{prop}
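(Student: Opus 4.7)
The plan is to first extract $\mu_X$ from Proposition \ref{CheeseUnits}, verify the characterization given, and then reduce the commutativity of the right square to a case analysis on Möbius transformations.

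For the construction, Proposition \ref{CheeseUnits} gives an isomorphism $\bZ^g \cong \cO(X)^\times / (K^\times \cdot \cO(X)^{\times\times})$ sending the $i$th standard basis vector to the class of $x-\alpha_i$. Since $M_0(h(X),\bZ)$ is freely generated by $\{\delta_{D_i}-\delta_{D_\infty}\}_{i=1}^g$ (where $D_\infty$ is the hole containing $\infty$), I define $\mu_X$ as the unique homomorphism factoring through the above quotient and sending $[x-\alpha_i] \mapsto \delta_{D_i}-\delta_{D_\infty}$. The top row is then exact by construction. For the characterization: if $\alpha \in D_i(K)$ with $1 \leq i \leq g$, the ultrametric inequality $|\alpha-\alpha_i| < |s_i| \leq |x-\alpha_i|_X$ gives $(x-\alpha)/(x-\alpha_i) = 1 - (\alpha-\alpha_i)/(x-\alpha_i) \in \cO(X)^{\times\times}$, so $\mu_X(x-\alpha) = \delta_{D_i}-\delta_{D_\infty}$. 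If $\alpha \in D_\infty(K)$, then $|\alpha - \alpha_0| > |s_0| \geq |x-\alpha_0|_X$ yields $x-\alpha = -(\alpha-\alpha_0)(1 - (x-\alpha_0)/(\alpha-\alpha_0)) \in K^\times \cdot \cO(X)^{\times\times}$, so $\mu_X(x-\alpha) = 0 = \delta_{D_\infty}-\delta_{D_\infty}$.

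The left square commutes because $\varphi^\sharp \circ \res : \cO(X) \to \cO(Y)$ is a $K$-algebra homomorphism that is non-expanding in sup norm (the restriction is non-expanding and $\varphi^\sharp$ is a $K$-Banach algebra isomorphism, hence isometric); thus it sends $\cO(X)^{\times\times}$ into $\cO(Y)^{\times\times}$ and $K^\times$ to itself. For the right square, by exactness of both rows it suffices to verify commutativity on the generators $x - \alpha_i$ for $i = 1,\ldots,g$. Writing the Möbius transformation $\varphi$ as $\varphi(z) = (az+b)/(cz+d)$, a direct computation shows $\varphi - \alpha_i = \lambda (z-\beta)/(z-\gamma)$, where $\beta := \varphi^{-1}(\alpha_i)$, $\gamma := \varphi^{-1}(\infty)$, and $\lambda \in K^\times$ (with the evident interpretation when $\beta$ or $\gamma$ equals $\infty$). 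I split into three subcases: $c = 0$; $c \neq 0$ with $\alpha_i \neq \varphi(\infty)$; and $c \neq 0$ with $\alpha_i = \varphi(\infty)$. In each case the characterization applied to $\mu_Y(z-\beta)$ and $\mu_Y(z-\gamma)$, together with the containments $\beta \in \varphi^{-1}(D_i) \subseteq \varphi^X_Y(D_i)$ and $\gamma \in \varphi^{-1}(D_\infty) \subseteq \varphi^X_Y(D_\infty)$ coming from Lemma \ref{rYX}, delivers the required equality $\mu_Y(\varphi - \alpha_i) = \delta_{\varphi^X_Y(D_i)} - \delta_{\varphi^X_Y(D_\infty)}$.

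The main obstacle is the bookkeeping in the right square, specifically handling the degenerate cases where $\beta$ or $\gamma$ equals $\infty$: in those situations the factor $z-\beta$ or $z-\gamma$ is no longer a unit on $Y$, but $\varphi^{-1}(\infty) = \infty$ (when $c=0$) and $\varphi^{-1}(\alpha_i) = \infty$ (when $\alpha_i = \varphi(\infty)$) force the relevant $\varphi^X_Y$-image to be the distinguished hole $D_\infty^Y$ of $Y$ containing $\infty$, which is exactly the value needed for the identity to still hold after collapsing one of the two $\delta$-terms.
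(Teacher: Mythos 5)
Your proposal is correct and follows essentially the same approach as the paper: define $\mu_X$ via the isomorphism from Proposition \ref{CheeseUnits}, and verify commutativity of the right square by computing the divisor of $\varphi^\sharp(x-\alpha_i)$ and tracking which holes of $Y$ contain $\varphi^{-1}(\alpha_i)$ and $\varphi^{-1}(\infty)$. The paper states this as a single divisor observation (``$\varphi^\sharp(x-\alpha_i)$ is a rational function with divisor $(\varphi^{-1}(\alpha_i))-(\varphi^{-1}(\infty))$'') while you make the same computation explicit by writing out the Möbius transformation and separating into the degenerate cases $\gamma=\infty$ and $\beta=\infty$; that expansion is a correct and careful unwinding of the paper's more terse remark.
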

\begin{proof} For each $i=1,\ldots, g := g_X$, let $D_i\in h(X)$ be the open disc containing $\alpha_i$. Given $u \in \cO(X)^\times$, use Proposition \ref{CheeseUnits} to find integers $n_1,\ldots,n_g$ such that
 \[u \equiv (x-\alpha_1)^{n_1}\cdots (x - \alpha_g)^{n_g}  \quad\mod\quad K^\times\cdot \cO(X)^{\times\times}\]
 and define the measure $\mu_X(u) \in M_0(h(X),\bZ)$ by 
 \[\mu_X(u):=\sum_{i=1}^g n_i(\delta_{D_i} - \delta_{D_\infty}).\]
 The top row is then exact by Proposition \ref{CheeseUnits}. We note that  $\mu_X$ does not depend on the choice of the centres $\alpha_1,\ldots,\alpha_g$ of the holes of the cheese $X$. 
 
 Since $Y$ is also a cheese the bottom row is also exact. The commutativity of the left-hand square is clear. 
 
 To see the right-hand square commutes it suffices by the argument just given to show that for all $i=1,\cdots,g$, we have  
  \[\varphi^X_{Y,\ast}\mu_X(x-\alpha_i)=\mu_Y(\varphi^{\sharp}(x-\alpha_i)).\]
 
Now, $\varphi^X_{Y,\ast} \mu_X(x-\alpha_i)=\delta_{\varphi^X_Y(D_i)}- \delta_{\varphi^X_Y(D_\infty)}$, and $\varphi^{\sharp}(x-\alpha_i)$ is a rational function with divisor $(\varphi^{-1}(\alpha_i))-(\varphi^{-1}(\infty))$. So, since $\varphi_Y^X(D_i)\in h(Y)$ contains $\varphi^{-1}(\alpha_i)\in \varphi^{-1}(D_i)$ and $\varphi_Y^X(D_\infty)\in h(Y)$ contains $\varphi^{-1}(\infty)\in \varphi^{-1}(D_\infty)$, we see that
  \[ \mu_Y(\varphi^{\sharp}(x-\alpha_i))=\delta_{\varphi^X_Y(D_i)}- \delta_{\varphi^X_Y(D_\infty)} = \varphi^X_{Y,\ast} \mu_X(x-\alpha_i). \qedhere\]
 \end{proof}

Because of Proposition \ref{PicConDtorG}, we are interested in the groups $\frac{\cO(X)^\times}{K^\times} \underset{\bZ}{\otimes}{} \frac{\frac{1}{d} \bZ}{\bZ}$ for positive integers $d$. 


\begin{cor}\label{UnitsModD} Let $X$ be a cheese and let $d$ be an integer. 
\be \item The map $\mu_X$ induces a surjective homomorphism 
\[ \mu_{X,d} : \frac{\cO(X)^\times}{K^\times\cO(X)^{\times d}}  \twoheadrightarrow M_0\left(h(X), \bZ/d \bZ \right).\] 
\item If $G\to \Aut(\bP^1)_X$ is a group homomorphism,  then $\mu_{X,d}$ is $G$-equivariant. 
\item If $p\nmid d$ then $\mu_{X,d}$ is an isomorphism. \ee
\end{cor}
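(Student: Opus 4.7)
The plan is to deduce Corollary \ref{UnitsModD} by reducing modulo $d$-th powers in the short exact sequence
\[ 1 \to K^\times \cdot \cO(X)^{\times\times} \to \cO(X)^\times \xrightarrow{\mu_X} M_0(h(X), \bZ) \to 0\]
supplied by Proposition \ref{UnitsMeasCheese}. For part (a), note that $\mu_X$ kills $K^\times$ and sends $u^d$ to $d\mu_X(u) \in d \cdot M_0(h(X),\bZ)$, so composing $\mu_X$ with the reduction $M_0(h(X),\bZ) \twoheadrightarrow M_0(h(X),\bZ/d\bZ)$ (which is surjective by Lemma \ref{M0modD}) factors through the quotient to give $\mu_{X,d}$. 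Surjectivity of $\mu_{X,d}$ is then immediate from surjectivity of both composed maps.

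For part (b), I would apply the naturality square from Proposition \ref{UnitsMeasCheese} with $Y = X$ and $\varphi = g$ for each $g \in G$ stabilising $X$ to get $G$-equivariance of $\mu_X$ itself. Since the $G$-actions on $\cO(X)^\times$ and $M_0(h(X),\bZ)$ preserve $K^\times \cO(X)^{\times d}$ and $d \cdot M_0(h(X),\bZ)$ respectively, the induced map $\mu_{X,d}$ on quotients is automatically $G$-equivariant.

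The substantive step is (c), injectivity when $p \nmid d$. Suppose $u \in \cO(X)^\times$ represents an element of $\ker \mu_{X,d}$. Then $\mu_X(u) = d \nu$ for some $\nu \in M_0(h(X),\bZ)$; by surjectivity of $\mu_X$ write $\nu = \mu_X(w)$, so that $u w^{-d}$ lies in $\ker \mu_X = K^\times \cdot \cO(X)^{\times\times}$. Writing $u w^{-d} = c v$ with $c \in K^\times$ and $v \in \cO(X)^{\times\times}$, Lemma \ref{Ddivis}(a) (which uses $p \nmid d$) produces $v_1 \in \cO(X)^{\times\times}$ with $v_1^d = v$, whence $u = c(w v_1)^d \in K^\times \cO(X)^{\times d}$. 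This forces $u$ to represent $0$ in the source.

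The main obstacle here is really just the extraction of a $d$-th root in $\cO(X)^{\times\times}$, which is why part (c) requires $p \nmid d$; this is exactly what Lemma \ref{Ddivis}(a) furnishes via the binomial series. The remaining manipulations are formal consequences of the right-exact sequence in Proposition \ref{UnitsMeasCheese} together with the exactness statement of Lemma \ref{M0modD}.
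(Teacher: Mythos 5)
Your proof is correct and follows essentially the same route as the paper: both reduce to the short exact sequence of Proposition \ref{UnitsMeasCheese} and invoke Lemma \ref{Ddivis}(a) to supply the crucial $d$-divisibility of $\cO(X)^{\times\times}$ when $p\nmid d$. The only cosmetic difference is that the paper applies right-exactness of $-\otimes_{\bZ}\bZ/d\bZ$ in one stroke, whereas you unwind it into an explicit element chase; the underlying argument is identical.
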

\begin{proof} (a) Proposition \ref{UnitsMeasCheese} gives us an exact sequence of abelian groups
\[1 \to \cO(X)^{\times\times}/K^{\times\times} \to \cO(X)^\times / K^\times \stackrel{\mu_X}{\longrightarrow} M_0(h(X), \bZ) \to 0.\]
Tensoring this sequence with $\bZ/d\bZ$ gives an exact sequence
\begin{equation}\label{muXd} \frac{\cO(X)^{\times\times}}{K^{\times\times}}\underset{\bZ}{\otimes} \bZ/d\bZ\to \frac{\cO(X)^\times}{K^\times} \underset{\bZ}{\otimes}{} \bZ/d\bZ \stackrel{\mu_X \otimes 1}{\longrightarrow} M_0(h(X), \bZ) \underset{\bZ}{\otimes}{} \bZ/d\bZ \to 0.\end{equation}
The second term is $\cO(X)^\times/K^\times\cO(X)^{\times d}$ and the third term is $M_0(h(X), {\bZ}/d\bZ )$ by Lemma \ref{M0modD}. 

(b) This part follows easily from Proposition \ref{UnitsMeasCheese}.  

(c) Since $p\nmid d$,  the first term in (\ref{muXd}) vanishes by Lemma \ref{Ddivis}, \end{proof}
\begin{cor}\label{ConCheeseMeas} Let $X$ be a cheese, $d$ is an integer such that $p\nmid d$ and suppose that $G\to \Aut(\bP^1)_X$ is a group homomorphism. Then  
\[ \mu_{X,d}\circ\theta_d\colon \Con(X)^G[d]\to M_0\left(h(X),\bZ/d\bZ\right)^G\] is an isomorphism.   
\end{cor}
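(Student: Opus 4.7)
The plan is to assemble two $G$-equivariant isomorphisms already available in the text and then pass to $G$-fixed points. First I would check that the hypotheses of Proposition \ref{PicConDtorG} are satisfied for the cheese $X$: it is smooth as an affinoid subdomain of the smooth space $\bA^{1,\an}_K$, quasi-Stein since it is $K$-affinoid, and geometrically connected because its base change $X_{\bfC}$ is itself a cheese over $\bfC$ and hence connected by \cite[Corollary 2.4.7]{LutJacobians}. Proposition \ref{PicConDtorG} then supplies a $G$-equivariant isomorphism
\[ \theta_d \colon \Con(X)[d] \stackrel{\cong}{\longrightarrow} \cO(X)^\times / K^\times \cO(X)^{\times d}. \]

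Next I would invoke Corollary \ref{UnitsModD}: part (c) tells us that because $p \nmid d$, the map $\mu_{X,d}$ is an isomorphism onto $M_0(h(X), \bZ/d\bZ)$, and part (b) tells us that $\mu_{X,d}$ is $G$-equivariant with respect to the induced $G$-action on $h(X)$ (which comes from the given homomorphism $G \to \Aut(\bP^1)_X$). Composing, we obtain a $G$-equivariant isomorphism
\[ \mu_{X,d} \circ \theta_d \colon \Con(X)[d] \stackrel{\cong}{\longrightarrow} M_0(h(X), \bZ/d\bZ). \]

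Restricting an isomorphism of $G$-modules to $G$-invariants preserves bijectivity, so passing to $G$-fixed points on both sides yields the desired isomorphism. The result is therefore a direct consequence of combining Proposition \ref{PicConDtorG} with Corollary \ref{UnitsModD}, and no serious obstacle arises; the whole argument is essentially bookkeeping.
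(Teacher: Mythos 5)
Your proof is correct and follows essentially the same route as the paper, which dispatches the corollary in one line by citing Proposition \ref{PicConDtorG} and Corollary \ref{UnitsModD}; you simply make the bookkeeping explicit (check the hypotheses of Proposition \ref{PicConDtorG}, compose the two $G$-equivariant isomorphisms, pass to $G$-invariants). One small nit: \cite[Corollary 2.4.7]{LutJacobians} is the converse statement (a connected affinoid subdomain of $\bA$ over an algebraically closed field is a cheese), not the assertion that a cheese is connected; geometric connectedness of $X$ is cleaner to see from the definition (the $\bfC$-points of $X_\bfC$ are $\bP^1(\bfC)$ minus finitely many pairwise disjoint open discs) or from the Schauder basis of Proposition \ref{CheeseSchauder}.
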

\begin{proof} 
This follows immediately from Proposition \ref{PicConDtorG} and Corollary \ref{UnitsModD}.
\end{proof}
We will now use Corollary \ref{ConCheeseMeas} to investigate how the group $\Con^G(X)[d]$ changes when we vary $X$ and $G$. More precisely, we have the following
\begin{prop}\label{RestCon} Let $Y \subseteq X$ be cheeses such that $\iota^X_Y : h(X) \to h(Y)$ is surjective and let $d \geq 1$ be an integer such that $p \nmid d$.  
	\be \item Suppose that 
	\begin{enumerate}[{(}i{)}]
		\item each fibre of $\iota^X_Y\colon h(X)\to h(Y)$ has size coprime to $d$, and
		\item the $G$-orbits in $h(X)$ are unions of these fibres.
	\end{enumerate}
	Then the following restriction map is injective:
	\[\Con(X)[d]^G \hookrightarrow \Con(Y)[d].\]
	\item Suppose that additionally to the assumptions in (a),
	\begin{enumerate}[{(}i{)}]
		\setcounter{enumii}{2}
		\item $H$ is a closed subgroup of $G_Y$, and
		\item the restriction map $\Hom(G,\mu_d(K))\to \Hom(H,\mu_d(K))$ is injective.
	\end{enumerate}
	Then the following restriction map is injective: 
	\[\Con^G(X)[d]\hookrightarrow \Con^H(Y)[d].\]
	\item Suppose that additionally to the assumptions in (b),
	\begin{enumerate}[{(}i{)}]
		\setcounter{enumii}{4}
		\item $\iota^X_Y\colon h(X)\to h(Y)$ induces a bijection between the $G$-orbits in $h(X)$ and the $H$-orbits in $h(Y)$,  
		\item the map $\Hom(G,\mu_d(K))\to \Hom(H,\mu_d(K))$ is surjective, and
		\item $\omega\colon \Con^{G}(X)[d]\to \Con(X)[d]^G$ is surjective. 
	\end{enumerate}
	Then the following restriction map is an isomorphism: 
	\[\Con^G(X)[d] \stackrel{\cong}{\longrightarrow} \Con^H(Y)[d].\]
	\ee
\end{prop}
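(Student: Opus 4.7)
Plan: I would reduce all three parts to the concrete pushforward of measures. Since $p\nmid d$ and $G$ (resp.\ $H$) acts on $X$ (resp.\ $Y$) through $\Aut(\bP^1)$, Corollary~\ref{ConCheeseMeas} gives isomorphisms $\Con(X)[d]^G \cong M_0(h(X),\bZ/d\bZ)^G$ and $\Con(Y)[d]^H \cong M_0(h(Y),\bZ/d\bZ)^H$. By Proposition~\ref{UnitsMeasCheese} applied with $\varphi=\id$, the restriction map along $Y\subseteq X$ becomes, on the measure side, the pushforward $(\iota_Y^X)_\ast$. Moreover $\iota_Y^X$ is $H$-equivariant: this follows from $H\leq G_Y$ together with the uniqueness in Lemma~\ref{rYX}. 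The hypothesis (i) guarantees that every fibre size is invertible mod $d$, which is what makes the measure side tractable.

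For (a), take $\nu\in M_0(h(X),\bZ/d\bZ)^G$ with $(\iota_Y^X)_\ast\nu=0$. By $G$-invariance and hypothesis (ii), $\nu$ is constant on each fibre $F=(\iota_Y^X)^{-1}(E)$; then $0=(\iota_Y^X)_\ast\nu(E)=|F|\cdot\nu(D)$ for $D\in F$, and (i) forces $\nu(D)=0$, whence $\nu=0$. For (b), Proposition~\ref{PicCheese} gives $\Pic(X)=\Pic(Y)=0$, so Proposition~\ref{PicSeq} yields an exact sequence $0\to\Hom(G,\mu_d(K))\to\Con^G(X)[d]\xrightarrow{\omega}\Con(X)[d]^G$ (using that the $d$-torsion subgroup of $\Hom(G,K^\times)$ is $\Hom(G,\mu_d(K))$), and likewise for $H$ and $Y$. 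Restriction is a commutative ladder of these sequences. By (a) the right-hand vertical is injective, by (iv) so is the left-hand one, and a four-term diagram chase then gives injectivity of the middle vertical, which is the restriction map of (b).

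For (c), the extra work is to upgrade the right-hand vertical to an isomorphism $c\colon\Con(X)[d]^G\stackrel{\cong}{\to}\Con(Y)[d]^H$. On measures, $M_0(-)^G$ decomposes as $\bigoplus_O(\bZ/d\bZ)\Sigma_O$ over $G$-orbits $O$ (cf.\ Lemma~\ref{HiM0}), with the single constraint $\sum_O a_O|O|\equiv0\pmod d$; under (v) and the $H$-equivariance of $\iota_Y^X$, the orbits match as $O\leftrightarrow O'$, the fibres of $\iota_Y^X$ above any point of $O'$ have a common size $f_{O'}=|O|/|O'|$ invertible mod $d$ by (i), and $(\iota_Y^X)_\ast\Sigma_O=f_{O'}\Sigma_{O'}$. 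Given $\sum b_{O'}\Sigma_{O'}\in M_0(h(Y),\bZ/d\bZ)^H$, the assignment $a_O:=f_{O'}^{-1}b_{O'}$ satisfies the constraint and provides a preimage, so $c$ is surjective. With $c$ an isomorphism, I chase the ladder from (b): for $[\sM]\in\Con^H(Y)[d]$, pull $\omega_H([\sM])$ back via $c^{-1}$ to $[\sL_0]\in\Con(X)[d]^G$, lift to $[\sL]\in\Con^G(X)[d]$ using (vii); the residue $[\sM]\otimes b([\sL])^{-1}$ lies in $\ker\omega_H=\Hom(H,\mu_d(K))$, and (vi) extends it to a character of $G$, giving a preimage of $[\sM]$. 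The main technical point is the orbit/fibre bookkeeping behind surjectivity of $c$, but it resolves cleanly once the $H$-equivariance of $\iota_Y^X$ and hypotheses (i), (v) are combined.
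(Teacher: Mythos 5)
Your proposal is correct and follows essentially the same route as the paper: Corollary~\ref{ConCheeseMeas} to transfer to measure spaces, the exact sequences of Proposition~\ref{PicSeq} (using $\Pic=0$ for cheeses from Proposition~\ref{PicCheese}), the Four Lemma for (b), and the Five Lemma (or an equivalent explicit chase) for (c). Parts (a) and (b) match the paper's argument line for line.

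Where you differ is in part (c). The paper simply asserts that hypothesis (v) forces the image of $\iota^X_{Y,\ast}\colon M_0(h(X),\bZ/d\bZ)^G\to M_0(h(Y),\bZ/d\bZ)$ to equal $M_0(h(Y),\bZ/d\bZ)^H$, and leaves the justification implicit. You make this precise by writing $M(h(X),\bZ/d\bZ)^G=\bigoplus_O(\bZ/d\bZ)\Sigma_O$ over $G$-orbits $O$ (with $M_0$ cut out by the constraint $\sum_O a_O|O|\equiv 0$), checking via $H$-equivariance of $\iota^X_Y$ that fibres above a given $H$-orbit $O'$ have a common size $f_{O'}=|O|/|O'|$ coprime to $d$, that $\iota^X_{Y,\ast}\Sigma_O=f_{O'}\Sigma_{O'}$, and then inverting $f_{O'}$ mod $d$ to produce a preimage. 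This is a genuine and useful unpacking of the paper's one-line assertion. One small quibble: the citation of Lemma~\ref{HiM0} for the orbit decomposition is imprecise (that lemma concerns transitive actions), but the decomposition itself is elementary, so nothing is at stake. Your closing diagram chase (pull back along $c^{-1}$, lift with (vii), correct by a character using (vi)) is equivalent to the paper's appeal to the Five Lemma.
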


\begin{proof} (a) By Corollary \ref{ConCheeseMeas} there is a commutative diagram \begin{eqnarray}\label{rest} \xymatrix{ \Con(X)[d]^G \ar[rr]^{\mu_{X,d}\circ\theta_d}_\cong \ar[d] &&  M_0\left(h(X),{\bZ}/d{\bZ}\right)^G \ar[d]^{\iota^X_{Y,\ast}}\\
			\Con(Y)[d] \ar[rr]_{\mu_{Y,d}\circ\theta_d}^\cong && M_0\left(h(Y),\bZ/d\bZ\right) }\end{eqnarray}
	whose left-vertical arrow is restriction and whose horizontal arrows are isomorphisms. Thus it suffices to prove that $\iota^X_{Y,\ast}\colon M_0\left(h(X),\bZ/d\bZ\right)^G \to M_0\left(h(Y),\bZ/d\bZ\right)$ is injective. Suppose that $\nu$ is in the kernel. Then for $D\in h(Y)$, 
	\[ 0=\iota^X_{Y,\ast}\nu(\{D\})=\nu((\iota^X_Y)^{-1}\{D\}). \]
	Since $\iota^X_Y$ is surjective by assumption, we may choose some $D' \in h(X)$ such that $\iota^X_Y(D') = D$. Because $\nu$ is $G$-invariant, assumption (ii) implies that
	\[\nu\left((\iota^X_Y)^{-1}\{D\}\right)=|(\iota^X_Y)^{-1}(D)|\cdot \nu(\{D'\}).\]
	Then assumption (i) gives $\nu(\{D'\})=0$, so $\nu=0$ as required. 
	
	(b) Using Lemma \ref{PicSeq} together with Proposition \ref{PicCheese} and (iii), we have the commutative diagram \begin{eqnarray}\label{rest2}
		\xymatrix{ 0 \ar[r] &\Hom(G,\mu_d(K)) \ar[d]\ar[r]&  \Con^G(X)[d] \ar[r]\ar[d] & \Con(X)[d]^G \ar[d]  \\ 0 \ar[r] & \Hom(H,\mu_d(K))\ar[r] & \Con^H(Y)[d] \ar[r] & \Con(Y)[d]^H } \end{eqnarray} with exact rows, whose vertical arrows are given by restriction. Using (iv), part (a) and the Four Lemma, we see that the middle arrow is injective.
	
	(c) Assumption (v) implies that the right vertical map in (\ref{rest}) has image equal to $M_0\left(h(Y),\bZ/d\bZ\right)^H$, so the right vertical arrow in (\ref{rest2}) is an isomorphism. Using this together with (vii) gives that both of the rightmost horizontal arrows in diagram (\ref{rest2}) are surjective. We can now use (vi) and the Five Lemma to finish the proof.
\end{proof}

The following technical Lemma is needed for the important Corollary \ref{KilledByq+1} below. Recall the $K$-cheeses $C_{\cT}$ from Definition \ref{KcheeseCT}.
\begin{lem} \label{subtreeunitfilt} Suppose that $\cT'\subseteq \cT$ are finite subtrees of $\mathcal{BT}$ with $N(\cT')\subseteq \cT$. Then for every $n\geq 1$ and $f\in K^\times \cO(C_\cT)^{\times\times}_{|\pi_F|^n}$, we have $f|_{C_{\cT'}}\in K^\times\cO(C_{\cT'})^{\times\times}_{|\pi_F|^{n+1}}$.
\end{lem}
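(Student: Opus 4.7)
The plan is to expand $f$ in an orthonormal Schauder basis of $\cO(C_\cT)$ adapted to the holes of $C_\cT$, separate off the constant term to produce the required scalar $\mu \in K^\times$, and exploit the hypothesis $N(\cT') \subseteq \cT$ to show that every non-constant basis element has supremum norm at most $|\pi_F|$ when restricted to $C_{\cT'}$. The main obstacle is this last norm estimate: it requires showing that each hole of $C_{\cT'}$ strictly contains every hole of $C_\cT$ that maps to it, which in turn forces the relevant radii to shrink by a factor of at least $|\pi_F|$.

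Concretely, first write $f = \lambda(1+g)$ with $\lambda \in K^\times$ and $|g|_{C_\cT} \leq |\pi_F|^n$. By Proposition \ref{CheeseSchauder} choose a coordinate $\xi_0 = \xi$ on $\mathbb{A}^1$ so that the hole $D_0^{(\cT)}$ of $C_\cT$ containing $\infty$ is $\{|\xi|>1\}$, together with $\xi_i = c_i/(\xi-\xi(\alpha_i))$ for the remaining holes $D_i^{(\cT)} = \{|\xi - \xi(\alpha_i)| < s_i\}$ of $C_\cT$; then $\{1\} \cup \{\xi_i^j : j\geq 1,\, 0 \leq i \leq g\}$ is an orthonormal Schauder basis of $\cO(C_\cT)$. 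Expand $g = b_0 + \sum_{i,j} b_{i,j}\xi_i^j$ with $|b_0|, |b_{i,j}| \leq |\pi_F|^n$, and set $\mu := \lambda(1+b_0) \in K^\times$; since $|b_0|<1$ this satisfies $|\mu|=|\lambda|$.

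For the key estimate $|\xi_i|_{C_{\cT'}} \leq |\pi_F|$, use Proposition \ref{holesnbd} to let $e_i := h_\cT^{-1}(D_i^{(\cT)}) \in N(\cT)$, set $f_i := \iota^\cT_{\cT'}(e_i) \in N(\cT')$, and define $D_i^{(\cT')} := h_{\cT'}(f_i) \in h(C_{\cT'})$; the commutative diagram in Proposition \ref{holesnbd} gives $D_i^{(\cT)} \subseteq D_i^{(\cT')}$. This containment is strict: otherwise $D_i^{(\cT')}$ would be a hole of $C_\cT$ and hence disjoint from $C_\cT$, but the hypothesis $N(\cT') \subseteq \cT$ forces $t_{\cT'}(f_i) \in V(\cT)$, so the non-empty cheese $C_{\{t_{\cT'}(f_i)\}}$ is contained in both $D_i^{(\cT')}$ and $C_\cT$, a contradiction. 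Since the radii of tree-holes lie in $|F^\times| = |\pi_F|^{\bZ}$ (the Bruhat--Tits tree is built from $F$-lattices), strict containment forces the radius ratio to be at most $|\pi_F|$. A direct strong-triangle calculation --- for finite $i$, writing $D_i^{(\cT')} = \{|\xi - \xi(\beta_i)| < t_i\}$ and using $\alpha_i \in D_i^{(\cT')}$ to deduce $|\xi(w) - \xi(\alpha_i)| = |\xi(w) - \xi(\beta_i)|$ for all $w \in C_{\cT'}$, giving $|\xi_i|_{C_{\cT'}} = s_i/t_i \leq |\pi_F|$; and for $i = 0$, writing $D_0^{(\cT')} = \{|\xi| > r\}$ with $r \leq |\pi_F|$ and noting $|\xi|_{C_{\cT'}} = r$ --- completes the proof of the estimate.

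Assembling the pieces, $f|_{C_{\cT'}} - \mu = \lambda\sum_{i,j} b_{i,j}(\xi_i|_{C_{\cT'}})^j$ has norm at most $|\lambda|\cdot |\pi_F|^n \cdot |\pi_F| = |\mu|\cdot |\pi_F|^{n+1}$, whence $f|_{C_{\cT'}} / \mu \in \cO(C_{\cT'})^{\times\times}_{|\pi_F|^{n+1}}$ and $f|_{C_{\cT'}} \in K^\times \cO(C_{\cT'})^{\times\times}_{|\pi_F|^{n+1}}$ as required.
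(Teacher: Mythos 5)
Your proof is correct, and the overall skeleton (separate off the constant term in the Schauder expansion, establish the norm estimate $|\xi_i|_{C_{\cT'}}\leq|\pi_F|$, and conclude by the ultrametric inequality) matches the paper's. The difference is in how the key estimate is obtained. The paper proves the more general inequality $|(x-a)^{\pm 1}|_{C_{\cT'}}\leq|\pi_F|\,|(x-a)^{\pm 1}|_{C_{\cT}}$ by using a $GL_2(F)$ change of coordinate to reduce to the single case $|x|_{C_{\cT'}}=1$, which forces $s_0\in\cT'$ and hence $\cT_1\subseteq\cT'\cup N(\cT')\subseteq\cT$, so that $|x|_{C_{\cT}}\geq|x|_{\Omega_1}\geq|\pi_F|^{-1}$. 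You instead argue directly on the hole combinatorics: Proposition \ref{holesnbd} gives a hole $D_i^{(\cT')}\supseteq D_i^{(\cT)}$, the hypothesis $N(\cT')\subseteq\cT$ forces $t_{\cT'}(f_i)\in V(\cT)$ so that $D_i^{(\cT')}$ meets $C_{\cT}$ and cannot equal $D_i^{(\cT)}$, and then strict containment of open discs with radii in $|F^\times|=|\pi_F|^{\bZ}$ forces the radius to drop by a factor of $|\pi_F|$. Both arguments are sound; the paper's has the advantage of needing only $\Omega_1$ explicitly, whereas yours leans on the (true, standard, but not explicitly recorded in the paper) fact that the radii of the holes of $C_{\cT}$ all lie in $|\pi_F|^{\bZ}$, and correspondingly makes heavier use of the machinery of Proposition \ref{holesnbd}. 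Minor nit: you claim $|\xi_i|_{C_{\cT'}}=s_i/t_i$, but the equality is not needed --- the inequality $\leq s_i/t_i$ is all the assembly step requires.
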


\begin{proof}
	 We claim first that for each $a\in F$ and $n\in\{\pm 1\}$, \begin{equation} \left|(x-a)^n \right|_{C_{\cT'}}\leq |\pi_F|\left|(x-a)^n\right|_{C_\cT}.\label{estlinearfunct} \end{equation}
	 
	  By a change of coordinate induced by an element of $GL_2(F)$ we may reduce the proof of this claim to proving $|\pi_F|^{-1}\leq |x|_{C_\cT}$ in the particular case $|x|_{C_{\cT'}}=1$. Now if $|x|_{C_{\cT'}}=1$, then $s_0\in \cT'$ and so, by hypothesis, \[\cT_1\subseteq \cT'\cup N(\cT')\subseteq \cT.\] Then because $\Omega_1=C_{\cT_1}$, we have $|x|_{C_\cT}\geq |x|_{\Omega_1}\geq |\pi_F|^{-1}$ which proves the claim.
	  
	Now suppose that $f\in K^\times \cO(C_{\cT})_{|\pi_F|^n}^{\times\times}$ so that $f=\lambda(1+h)$ for some $\lambda\in K^\times$ and $h\in \pi^n_F\cO(C_{\cT})^\circ$. We have to show that $1+h\in K^\times \cO(C_{\cT'})^{\times\times}_{|\pi_F|^{n+1}}$.  By Proposition \ref{CheeseSchauder}, we can write \[1+ h= (1+\lambda_0) + \sum_{i=0}^g \sum_{j\geq 1} \lambda_{ij}\xi_i^j\] with $\lambda_0, \lambda_{ij}\in \pi_F^n\cO_K$ and $\xi_0,\ldots,\xi_g$ each of the form $c(x-a)$ or $\frac{c}{x-a}$ with $a,c\in F$ and $c\neq 0$ and $|\xi_i|_{C_\cT}=1$. Since $(1+\lambda_0)\in K^{\times\times}$, by considering $(1+\lambda_0)^{-1}(1+h)$ we may further assume that $\lambda_0=0$ and then it suffices to prove that $|h|_{C_{\cT'}}\leq |\pi_F^{n+1}|$.  
	Now by (\ref{estlinearfunct}), for all suitable $i,j$ we have \[|\xi_i^j|_{C_{\cT'}}=|\xi_i|^j_{C_{\cT'}}\leq |\pi_F|^j\leq |\pi_F|\] so the result follows by the ultrametric inequality. \end{proof}

	
	 


Recall the $K$-cheeses $\Omega_n$ from Definition \ref{defnOmegan}. 
\begin{cor} \label{omegaunitfilt} Suppose that $n,m\geq 0$. Then for all $f\in K^\times \cO(\Omega_{n+m})^{\times\times}$,
\[f|_{\Omega_n}\in K^\times\cO(\Omega_n)^{\times\times}_{|\pi_F|^m}.\]
\end{cor}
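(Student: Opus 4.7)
The plan is to iterate Lemma \ref{subtreeunitfilt} $m$ times along the chain $\cT_n \subseteq \cT_{n+1} \subseteq \cdots \subseteq \cT_{n+m}$ of finite subtrees of $\cB\cT$, which corresponds under $\cT \mapsto C_\cT$ to the chain of cheeses $\Omega_n \subseteq \Omega_{n+1} \subseteq \cdots \subseteq \Omega_{n+m}$. The key geometric input, which I would verify first, is the inclusion $N(\cT_k) \subseteq \cT_{k+1}$ for each $k \geq 0$: any edge in $N(\cT_k)$ joins a vertex of $\cT_k$ (at distance $k$ from $s_0$) to a vertex at distance $k+1$ from $s_0$, which lies in $\cT_{k+1}$; hence both endpoints, and therefore the edge itself, lie in $\cT_{k+1}$. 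This guarantees that Lemma \ref{subtreeunitfilt} applies to each consecutive pair $(\cT_{k+1}, \cT_k)$.

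The case $m = 0$ is trivial (interpreting the conclusion as $f|_{\Omega_n} \in K^\times \cO(\Omega_n)^{\times\times}$). For $m \geq 1$, the only technical point is a mild mismatch between the hypothesis and the input required by Lemma \ref{subtreeunitfilt}: the former provides $f = \lambda(1+h)$ with $|h|_{\Omega_{n+m}} < 1$, but the latter requires $|h|_{\Omega_{n+m}} \leq |\pi_F|^n$ with $n \geq 1$ (and if $K$ is wildly ramified over $F$ we could have $|h|_{\Omega_{n+m}} > |\pi_F|$). However, inspection of the proof of Lemma \ref{subtreeunitfilt} shows that its Schauder-basis argument only uses the estimate $|\xi_i|_{C_{\cT'}} \leq |\pi_F|$ from equation (\ref{estlinearfunct}) and works uniformly: for any $r \in (0,1)$, restriction sends $K^\times \cO(C_\cT)^{\times\times}_r$ into $K^\times \cO(C_{\cT'})^{\times\times}_{r |\pi_F|}$. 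I would invoke this slight strengthening of the lemma for the first step, obtaining $f|_{\Omega_{n+m-1}} \in K^\times \cO(\Omega_{n+m-1})^{\times\times}_{|\pi_F|}$.

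A straightforward induction on $k = 1, 2, \ldots, m-1$ then applies Lemma \ref{subtreeunitfilt} as stated to the pair $(\cT_{n+m-k}, \cT_{n+m-k-1})$, upgrading the exponent from $|\pi_F|^k$ on $\Omega_{n+m-k}$ to $|\pi_F|^{k+1}$ on $\Omega_{n+m-k-1}$; setting $k = m-1$ delivers the desired conclusion on $\Omega_n$. The main subtlety, though rather mild, lies in the first restriction step, where one needs to bridge the gap between $|h|_{\Omega_{n+m}} < 1$ and the $\leq |\pi_F|$ bound required to initialise the inductive chain; once that is settled, the rest is routine bookkeeping.
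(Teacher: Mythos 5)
Your proof is correct and follows the same strategy as the paper's one-line argument: iterate Lemma \ref{subtreeunitfilt} down the chain $\cT_{n+m} \supseteq \cdots \supseteq \cT_n$, using the inclusion $N(\cT_k) \subseteq \cT_{k+1}$, which you verify correctly. You have in fact been more careful than the published proof in handling the first step, where the hypothesis only supplies $|h| < 1$ rather than $|h| \leq |\pi_F|$: the mild strengthening you invoke --- that restriction sends $K^\times\cO(C_\cT)^{\times\times}_r$ into $K^\times\cO(C_{\cT'})^{\times\times}_{r|\pi_F|}$ for any $r \in (0,1)$ --- is indeed exactly what the Schauder-basis estimate inside Lemma \ref{subtreeunitfilt} yields, and it is genuinely needed to initialise the induction, since the lemma as stated only covers $r = |\pi_F|^n$ with $n \geq 1$.
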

\begin{proof}
	Since $N(\cT_{n+k})\subseteq \cT_{n+k+1}$ for all $n,k\geq 0$, this follows from Lemma \ref{subtreeunitfilt} by a straightforward induction on $m$. 
\end{proof}

\begin{prop}\label{propomegan} Write $A := GL_2(\cO_F)$.
	\be \item For all $n\geq 0$, the restriction map \[\Con(\Omega_{n+1})^{A}[p']\to \Con(\Omega_{n})^{A}[p']\] is an isomorphism. These groups are cyclic of order $q+1$. 
	
\item There is $m\geq 1$ such that the restriction maps \[ \Con(\Omega_{m+n})^{A}[p]\to \Con(\Omega_n)^{A}[p]\] are zero for all $n\geq 0$. 
\ee \end{prop}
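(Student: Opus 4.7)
For part (a), Corollary \ref{ConCheeseMeas} applied with $d$ coprime to $p$ provides an $A$-equivariant isomorphism $\Con(\Omega_n)[d] \cong M_0(h(\Omega_n),\bZ/d\bZ)$. Since $A$ acts transitively on $h(\Omega_n)$ by Lemma \ref{fibres}(a) and $|h(\Omega_n)| = (q+1)q^n$, Proposition \ref{M0Gfinite}(b) identifies the $A$-invariants with a cyclic group of order $\gcd(d,q+1)$ (using $\gcd(d,p) = 1$). Passing to the colimit over such $d$ gives $\Con(\Omega_n)^A[p']$ cyclic of order $q+1$. The restriction map $\Con(\Omega_{n+1})^A[d] \to \Con(\Omega_n)^A[d]$ is injective by Proposition \ref{RestCon}(a): the fibres of $\iota^{\Omega_{n+1}}_{\Omega_n}$ have size $q$, coprime to $d$ (Lemma \ref{fibres}(b)), and transitivity makes hypothesis (ii) of that proposition trivial. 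Equality of finite cardinalities then upgrades injectivity to an isomorphism.

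For part (b), tensoring the exact sequence of Proposition \ref{UnitsMeasCheese} with $\bZ/p\bZ$ (exact since $M_0(h(\Omega_n),\bZ)$ is torsion-free) and invoking Proposition \ref{PicConDtor} yields
\[ 0 \to \frac{\cO(\Omega_n)^{\times\times}}{K^{\times\times}(\cO(\Omega_n)^{\times\times})^p} \to \Con(\Omega_n)[p] \to M_0(h(\Omega_n),\bZ/p\bZ) \to 0. \]
Two inputs kill the end terms under restriction for sufficiently large $m$. First, for $m \geq 1$, $M_0(h(\Omega_{n+m}),\bZ/p)^A$ is cyclic generated by $\overline\Sigma$ (Proposition \ref{M0Gfinite}(b), using $p \mid q$), and the pushforward to $M_0(h(\Omega_n),\bZ/p)^A$ is multiplication by $q^m \equiv 0 \pmod p$ (Proposition \ref{M0Gfinite}(c)). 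Second, for $m$ large enough that $|\pi_F|^m < \varpi/p$, Corollary \ref{omegaunitfilt} combined with Lemma \ref{Ddivis}(b) shows that every $f \in \cO(\Omega_{n+m})^{\times\times}$ restricts to an element of $K^{\times\times}(\cO(\Omega_n)^{\times\times})^p$, killing the left-hand term. A naive five-lemma chase fails here because taking $A$-invariants is only left-exact.

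The actual argument uses an explicit cocycle representative. Given $[u] \in \Con(\Omega_{n+m})^A[p]$, factor $u = \tilde\lambda\,\tilde u_0 \prod_j (x-\beta_j)^{l_j}$ via Proposition \ref{CheeseUnits}, with $\beta_j$ centres of the finite holes $D_j$ of $\Omega_{n+m}$. The class of $\sum_j l_j(\delta_{D_j} - \delta_{D_\infty})$ in $M_0(h(\Omega_{n+m}), \bZ/p)^A$ is $k\,\overline\Sigma$ for some $k \in \bZ/p\bZ$, forcing $l_j \equiv k \pmod p$ for every $j$. A direct norm estimate shows that each restricted factor $(x - \beta_j)|_{\Omega_n}$ equals, up to a scalar and a factor in $\cO(\Omega_n)^{\times\times}_{|\pi_F|^m}$, either $(x - \alpha_{D'_j})$ when $\iota(D_j) = D'_j$ is a finite hole of $\Omega_n$, or a constant when $\iota(D_j) = D'_\infty$. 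Combining this with the small-unit vanishing applied to $\tilde u_0|_{\Omega_n}$ and to the accumulated correction factors expresses $u|_{\Omega_n}$ modulo $K^\times \cO(\Omega_n)^{\times p}$ as $\prod_{D' \text{ finite}} (x - \alpha_{D'})^{L_{D'}}$, where $L_{D'} = \sum_{\iota(D_j) = D'} l_j \equiv q^m k \equiv 0 \pmod p$ by Lemma \ref{fibres}(b). This product is a $p$-th power, so $[u]|_{\Omega_n} = 0$. Any $m \geq \lceil ep/(p-1) \rceil$ works, where $e$ is the ramification index of $F/\bQ_p$. The main obstacle, beyond the bookkeeping, is the infinite-hole case: one must verify $|\beta_j| \geq |\pi_F|^{-n-m}$ to ensure $1 - x/\beta_j \in \cO(\Omega_n)^{\times\times}_{|\pi_F|^m}$.
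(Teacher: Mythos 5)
Your part~(a) is correct and matches the paper's argument closely; the only variation is that you combine Proposition~\ref{RestCon}(a) (injectivity of restriction) with a cardinality count, whereas the paper directly computes that the pushforward sends the generator $\frac{d}{q+1}\Sigma_{h(\Omega_n)}$ to $\frac{qd}{q+1}\Sigma_{h(\Omega_{n-1})}$ and uses $\gcd(q,d)=1$. Both routes are fine.

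Part~(b) has a genuine gap in the ``direct norm estimate''. You claim that each restricted linear factor $(x-\beta_j)|_{\Omega_n}$ differs from $(x-\alpha_{D'})$ (where $D'=\iota(D_j)$ is a finite hole of $\Omega_n$ with centre $\alpha_{D'}$) by a scalar times an element of $\cO(\Omega_n)^{\times\times}_{|\pi_F|^m}$. This is false. Writing the ratio as
\[\frac{x-\beta_j}{x-\alpha_{D'}}\Big|_{\Omega_n} \;=\; 1 - \frac{\beta_j-\alpha_{D'}}{x-\alpha_{D'}},\]
the only uniform bounds available are $|\beta_j-\alpha_{D'}|<s_{D'}$ (since $\beta_j\in D'$, hence $\leq |\pi_F|\,s_{D'}$) and $|x-\alpha_{D'}|_{\Omega_n}\geq s_{D'}$, giving the ratio sup-norm at most $|\pi_F|$ --- \emph{independent of $m$}. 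Since the $q^m$ sub-holes $D_j$ of $D'$ are spread throughout $D'$, some $\beta_j$ do sit at distance exactly $|\pi_F|\,s_{D'}$ from $\alpha_{D'}$, so the bound $|\pi_F|$ is sharp; you cannot improve it to $|\pi_F|^m$ for an individual factor. The same problem occurs in the infinite-hole case: the modulus $|\beta_j|$ of a finite hole of $\Omega_{n+m}$ sitting inside the $\infty$-hole of $\Omega_n$ is only guaranteed to exceed $|\pi_F|^{-n}$, not $|\pi_F|^{-n-m}$, so the condition you flag as the ``main obstacle'' is not actually verifiable.

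The iterated improvement from $|\pi_F|$ to $|\pi_F|^{m}$ is precisely the content of Lemma~\ref{subtreeunitfilt} and Corollary~\ref{omegaunitfilt}, but those apply only to elements that are \emph{already} in $K^\times\cO(\cdot)^{\times\times}$ --- that is, after the measure has been killed. The paper therefore descends in two stages: first one step $\Omega_{n+m}\to\Omega_{n+m-1}$ annihilates the image in $M_0(\cdot,\bZ/p)$ (using $A$-invariance, $p\mid q$, and the pushforward factor $q$), producing a genuine small unit $w\in K^\times\cO(\Omega_{n+m-1})^{\times\times}$; only then do the remaining $m-1$ steps plus Corollary~\ref{omegaunitfilt} give $w|_{\Omega_n}\in K^\times\cO(\Omega_n)^{\times\times}_{|\pi_F|^{m-1}}$, after which Lemma~\ref{Ddivis}(b) extracts the $p$-th root. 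Your single-pass estimate cannot see this gain; you need to split off the measure first.
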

\begin{proof} (a) Suppose $d$ is an integer coprime to $p$ and that $d$ is a multiple of $(q+1)=|h(\Omega_0)|$. Then by Corollary \ref{ConCheeseMeas} and Proposition \ref{UnitsMeasCheese}, for each $n\geq 1$, there is a commutative diagram \[ \xymatrix{ \Con(\Omega_n)^{A}[d] \ar[r] \ar[d] & M_0(h(\Omega_n),\bZ/d\bZ)^{A} \ar[d] \\ \Con(\Omega_{n-1})^{A}[d] \ar[r] & M_0(h(\Omega_{n-1}),\bZ/d\bZ)^{A}}\] whose horizontal maps are isomorphisms. Since $A$ acts transitively on each $h(\Omega_n)$, by Lemma \ref{fibres}(a), we see by Proposition \ref{M0Gfinite}(b) that  $M_0(h(\Omega_n),\bZ/d\bZ)^{A}$ is cyclic of order $\gcd(d,|h(\Omega_n)|)=q+1$  and generated by the image of $\frac{d}{q+1}\Sigma_{h(\Omega_n)}$. Moreover by Lemma \ref{fibres}(b) together with Proposition \ref{M0Gfinite}(c), the right-hand vertical map sends the image of $\frac{d}{q+1}\Sigma_{h(\Omega_n)}$ in $M_0(h(\Omega_n),\bZ/d\bZ)^{A}$ to the image of $\frac{qd}{q+1}\Sigma_{h(\Omega_{n-1})}$ in $M_0(h(\Omega_{n-1}),\bZ/d\bZ)^{A}$. Since $q$ is coprime to $d$, it follows that the map is an isomorphism. Part (a) now follows easily.

(b) We take $m\geq 1$ such that $|\pi_F^{m-1}|<\varpi$ and let $N=n+m$. Suppose that \[[\sL]\in \Con(\Omega_{N})^{A}[p].\]  We will show that $[\sL|_{\Omega_n}]=[\cO]\in \Con(\Omega_n)$. By Proposition \ref{PicConDtorG} there is $u\in \cO(\Omega_{N})^\times$ such that 
\[ \theta_p([\sL])=uK^\times \cO(\Omega_{N})^{\times p}\in \left(\frac{\cO(\Omega_{N})^\times}{K^\times \cO(\Omega_{N})^{\times p}}\right)^{A}. \] It suffices to show that $u|_{\Omega_n}\in K^\times\cO(\Omega_n)^{\times p}$. 
Now $\mu_{\Omega_N,p}(u)\in M_0(h(\Omega_{N}),\bZ/p\bZ)^{A}$ by Corollary \ref {UnitsModD}(b), and by Proposition \ref{M0Gfinite}(b,c) and Lemma \ref{fibres}(b), the natural map induced by the inclusion $\Omega_N\subset \Omega_{N-1}$ \[M_0(h(\Omega_{N}),\bZ/p\bZ)^{A}\to M_0(h(\Omega_{N-1}),\bZ/p\bZ)\]is zero. It follows, using Proposition \ref{UnitsMeasCheese}, that there is $v\in \cO(\Omega_{N-1}^\times)$ such that $\mu_{\Omega_{N-1}}\left(u|_{\Omega_{N-1}}v^p\right)=0$. Writing \[w:=u|_{\Omega_{N-1}}v^p,\] to prove $u|_{\Omega_n}\in K^\times\cO(\Omega_n)^{\times p}$ it suffices to show that $w|_{\Omega_n}\in K^\times\cO(\Omega_n)^{\times p}$.  Since $\mu_{\Omega_{N-1}}(w)=0$, Proposition \ref{UnitsMeasCheese} now implies that $w\in K^\times\cO(\Omega_{n+m-1})^{\times\times}$.
 
 Now by Corollary \ref{omegaunitfilt}, \[w|_{\Omega_n}\in K^\times\cO(\Omega_n)^{\times\times}_{|\pi_F|^{m-1}}.\] Our assumption that $|\pi_F^{m-1}|<\varpi$ now allows us to deduce from Lemma \ref{Ddivis}(b) that $w|_{\Omega_n}\in K^\times\cO(\Omega_n)^{\times p}$ as required. 
\end{proof}

We can now compute $\PicCon(\Omega)_{\tors}^{GL_2(\cO_F)}$.

\begin{cor}\label{KilledByq+1} The group $\PicCon(\Omega)^{GL_2(\cO_F)}_{\tors}$ is cyclic of order $q+1$.  
\end{cor}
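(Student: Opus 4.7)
My plan is to reduce the computation of $\PicCon(\Omega)^{GL_2(\cO_F)}_{\tors}$ to the inverse system of the $K$-cheeses $\Omega_n$, each of which is $GL_2(\cO_F)$-stable (Remark \ref{TreeStabs}), and then invoke Proposition \ref{propomegan}. First I would note that Lemma \ref{coverOmega}(a) gives $\{\Omega_n\}_{n \geq 0}$ as an admissible cover of $\Omega$ by an increasing chain of cheeses; since $\Omega$ is quasi-Stein and geometrically connected by Proposition \ref{BasicUH}, Corollary \ref{PicConinvlim} gives an isomorphism $\PicCon(\Omega) \cong \invlim \PicCon(\Omega_n)$. This isomorphism is $GL_2(\cO_F)$-equivariant for the natural actions, so taking $GL_2(\cO_F)$-invariants (which commutes with limits) yields
\[ \PicCon(\Omega)^{GL_2(\cO_F)} \;\cong\; \invlim \PicCon(\Omega_n)^{GL_2(\cO_F)}. \]
Because each $\Omega_n$ is a cheese, Proposition \ref{PicCheese} gives $\Pic(\Omega_n) = 0$ and thus $\PicCon(\Omega_n) = \Con(\Omega_n)$. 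Taking $d$-torsion also commutes with inverse limits in the obvious sense, so for any integer $d \geq 1$,
\[ \PicCon(\Omega)^{GL_2(\cO_F)}[d] \;\cong\; \invlim \Con(\Omega_n)^{GL_2(\cO_F)}[d]. \]

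Next I would handle the prime-to-$p$ torsion and the $p$-power torsion separately. For the prime-to-$p$ part, Proposition \ref{propomegan}(a) asserts that each $\Con(\Omega_n)^{GL_2(\cO_F)}[p']$ is cyclic of order $q+1$, with all restriction maps being isomorphisms. In particular the whole inverse system is uniformly $(q+1)$-torsion, so
\[ \PicCon(\Omega)^{GL_2(\cO_F)}[p'] \;=\; \PicCon(\Omega)^{GL_2(\cO_F)}[q+1] \;\cong\; \invlim \Con(\Omega_n)^{GL_2(\cO_F)}[q+1], \]
and the latter is cyclic of order $q+1$ since it is a limit under isomorphisms of a cyclic group of that order.

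For the $p$-power torsion I would first show that $\PicCon(\Omega)^{GL_2(\cO_F)}[p] = 0$. Indeed, by Proposition \ref{propomegan}(b), there is $m \geq 1$ such that the restriction map $\Con(\Omega_{n+m})^{GL_2(\cO_F)}[p] \to \Con(\Omega_n)^{GL_2(\cO_F)}[p]$ is the zero map for every $n \geq 0$. A compatible family $(x_n) \in \invlim \Con(\Omega_n)^{GL_2(\cO_F)}[p]$ then satisfies $x_n = \res(x_{n+m}) = 0$ for every $n$, so the inverse limit vanishes. Since an abelian group with trivial $p$-torsion has trivial $p^\infty$-torsion, we obtain $\PicCon(\Omega)^{GL_2(\cO_F)}[p^\infty] = 0$.

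Combining the two cases, $\PicCon(\Omega)^{GL_2(\cO_F)}_{\tors} = \PicCon(\Omega)^{GL_2(\cO_F)}[p'] \oplus \PicCon(\Omega)^{GL_2(\cO_F)}[p^\infty]$ is cyclic of order $q+1$, as required. The main obstacles are already packaged into Proposition \ref{propomegan}; the only subtlety in the argument above is ensuring that $p'$-torsion is preserved under the inverse limit, which works precisely because part (a) provides a uniform bound $q+1$ on the order of the $p'$-torsion at every level.
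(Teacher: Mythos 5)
Your proposal is correct and follows essentially the same route as the paper: reduce to $\invlim \Con(\Omega_n)^{GL_2(\cO_F)}[d]$ via Proposition \ref{BasicUH}, Corollary \ref{PicConinvlim}, Proposition \ref{PicCheese} and Remark \ref{TreeStabs}, then invoke Proposition \ref{propomegan}(a) for the $p'$-part and Proposition \ref{propomegan}(b) to kill the $p$-power part. The paper is terser but uses the identical key steps, and your extra explanation of why the uniform bound $q+1$ allows the $p'$-torsion to be computed as a limit is a sound and welcome clarification.
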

\begin{proof}
By Proposition \ref{BasicUH}, Proposition \ref{PicConinvlim}, and Proposition \ref{PicCheese} we have
\[\PicCon(\Omega)\cong \invlim \Con(\Omega_n).\]
Since each $\Omega_n$ is $A:=GL_2(\cO_F)$-stable by Remark \ref{TreeStabs}, and the functors taking $A$-invariants and taking the $d$-torsion subgroup each commute with limits it follows that for each $d\geq 1$ we have 
 \[ \PicCon(\Omega)^{A}[d]\cong \invlim \Con(\Omega_n)^{A}[d].\]
By Proposition \ref{propomegan}(b), $\invlim \Con(\Omega_n)^{A}[p]=0$. So $\PicCon(\Omega)^{A}$ has no $p$-torsion.

By Proposition \ref{propomegan}(a), we can see that for each $d$ that is a multiple of $q+1$, $\invlim \Con(\Omega_n)^{A}[d]$ is cyclic of order $q+1$. The result follows.
\end{proof}
\subsection{Proof of Theorem A}\label{MainProofSec}
We now return to the setting of $\S \ref{SecCocycELBC}$, and start working towards our proof of Theorem A. Recall the cheeses $\Omega_n$ from Definition \ref{defnOmegan}(b) and the map $\phi_z$ from Proposition \ref{phiz}. 

$\Omega_{F,n}$ will denote the version of $\Omega_n$ obtained when $K = F$.

 \begin{thm} \label{phizisolocal} Let $L$ be an unramified quadratic extension of $F$. Then for every $z \in \Omega_{F,0}(L)$ and every $n\geq 0$, the map \[\phi_z\colon \Con^{GL_2(\cO_F)}(\Omega_n)[p']\to \Hom(GL_2(\cO_F)_z,K(z)^\times)[p']\] is an isomorphism.
\end{thm}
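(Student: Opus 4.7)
The plan is to prove the theorem at each level $n$ by combining an upper bound on the source with an explicit construction of enough equivariant line bundles to account for the target.

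\emph{Orders and target.} Since $\lambda(z) = s_0$, Lemma~\ref{stabrel} together with $GL_2(F)_{s_0} = F^\times A$ forces $A_z = G^0_z$, which by Lemma~\ref{stabz}(c) is isomorphic to $\cO_L^\times$ via $j_z$. Because $L/F$ is unramified, Lemma~\ref{HomG0z} identifies the target $\Hom(A_z, K(z)^\times)[p']$ with the cyclic group $\langle \widehat{j_z}\rangle$ of order $q^2-1$. On the source side, Proposition~\ref{PicSeq} combined with $\Pic(\Omega_n) = 0$ (Proposition~\ref{PicCheese}) gives an exact sequence whose outer $p'$-terms have orders $q-1$ (Lemma~\ref{Hommup'}(a)) and $q+1$ (Proposition~\ref{propomegan}(a)), so $|\Con^A(\Omega_n)[p']| \leq q^2-1$.

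\emph{The character subgroup.} For $\chi = \widehat{\det}^k \in \Hom(A,K^\times)[p']$, Proposition~\ref{defphiz}(c) gives $\phi_z([\cO_\chi]) = \chi|_{A_z}$. Lemma~\ref{stabz}(b) combined with the fact that the norm $\mu_{q^2-1}(L) \to \mu_{q-1}(F)$ for an unramified extension is $x \mapsto x^{q+1}$ yields $\widehat{\det}|_{A_z} = \widehat{j_z}^{q+1}$; hence $\phi_z$ carries $\Hom(A,K^\times)[p']$ isomorphically onto the unique order-$(q-1)$ subgroup $\langle \widehat{j_z}^{q+1}\rangle$. In view of the bound above and the fact that the target is cyclic of order $q^2-1$, it now suffices to produce $[\sL_n] \in \Con^A(\Omega_n)[p']$ whose $\phi_z([\sL_n])$ projects to a generator of $\langle \widehat{j_z}\rangle/\langle \widehat{j_z}^{q+1}\rangle \cong \bZ/(q+1)\bZ$.

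\emph{Construction of a lift generating the quotient.} For each hole $D \in h(\Omega_n)$ choose an $F$-rational point $p_D \in D$ and set $u_n := \prod_{D \neq D_\infty}(x - p_D) \in \cO(\Omega_n)^\times$. By Proposition~\ref{UnitsMeasCheese}, $\mu_{\Omega_n}(u_n) = \Sigma_{h(\Omega_n)} - |h(\Omega_n)|\delta_{D_\infty} \equiv \Sigma_{h(\Omega_n)} \pmod{q+1}$ since $|h(\Omega_n)| = (q+1)q^n$. By Proposition~\ref{M0Gfinite}(b) this class generates $M_0(h(\Omega_n), \bZ/(q+1)\bZ)^A$, so Corollary~\ref{ConCheeseMeas} gives that $[\sL_{u_n,q+1}]$ generates $\Con(\Omega_n)^A[q+1]$. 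The identity
\[\mu_{\Omega_n}(\delta_A(u_n)(g)) = g_*\mu_{\Omega_n}(u_n) - \mu_{\Omega_n}(u_n) = (q+1)(\delta_{D_\infty} - \delta_{gD_\infty})\]
exhibits $\delta_A(u_n)$ as a $(q+1)$-th power modulo $K^\times \cO(\Omega_n)^{\times\times}$, allowing the construction of a suitable $\beta \in Z^1(A, \cO(\Omega_n)^\times)$. Remark~\ref{p'exp} then permits the application of Proposition~\ref{approxcocycle} with $d = q+1$ and $e = q-1$, producing $\alpha_n \in \cZ^{A,\Omega_n}_{u_n,q+1,q-1}$ and, via Lemma~\ref{1cocycles}(a), an element $[\sL^{\alpha_n}_{u_n,q+1}] \in \Con^A(\Omega_n)[q^2-1]$ lifting $[\sL_{u_n,q+1}]$.

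\emph{Character at $z$ and main obstacle.} By Proposition~\ref{defphiz}(b), $\phi_z([\sL^{\alpha_n}_{u_n,q+1}])(g) = z(\alpha_n(g))$; since $g \in A_z$ fixes $z$, the equation $\alpha_n^{q^2-1} = \delta_A(u_n^{q-1})$ forces $z(\alpha_n(g)) \in \mu_{q^2-1}(K(z))$. The ambiguity in the choice of $\alpha_n$ only shifts this character by an element of $\Hom(A,\mu_{q^2-1})$, whose restriction to $A_z$ lies in $\langle \widehat{j_z}^{q+1}\rangle$, so the image of $\phi_z([\sL^{\alpha_n}_{u_n,q+1}])$ in $\langle \widehat{j_z}\rangle/\langle \widehat{j_z}^{q+1}\rangle$ depends only on $[\sL_{u_n,q+1}]$. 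The hard part will be to show this invariant is a generator of $\bZ/(q+1)\bZ$: this requires evaluating $\alpha_n$ at a matrix $g \in A_z$ whose $j_z$-image generates $k_L^\times/k_F^\times \cong \mu_{q+1}(L)$, and extracting the resulting $(q+1)$-th root of unity from the explicit factorisation of $u_n$ over the $F$-rational points $p_D$ and the normalisation of $\alpha_n$ provided by Proposition~\ref{approxcocycle}. Once this is established, the image of $\phi_z$ contains both the order-$(q-1)$ subgroup from the characters and an element projecting to a generator of $\bZ/(q+1)\bZ$, hence equals all of $\langle \widehat{j_z}\rangle$, completing the proof.
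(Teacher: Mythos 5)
Your structural setup is sound and follows the paper's route closely: you correctly bound $|\Con^A(\Omega_n)[p']|$ by $q^2-1$ using Proposition~\ref{PicSeq}, Proposition~\ref{propomegan}(a) and Lemma~\ref{Hommup'}(a); you correctly identify the target as cyclic of order $q^2-1$ via Lemmas~\ref{stabrel}, \ref{stabz} and \ref{HomG0z}; and your observation that $\widehat{\det}|_{A_z} = \widehat{j_z}^{q+1}$ (so that the characters of $A$ land in $\langle\widehat{j_z}^{q+1}\rangle$) is correct. Your $u_n$ is, up to sign, exactly the unit $u$ the paper constructs, since $\mu_{\Omega_n}(u_n)=\Sigma_{h(\Omega_n)}-|h(\Omega_n)|\delta_{D_\infty}=-\nu$.

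However, there is a genuine gap at the critical final step, and you acknowledge it yourself: you write ``The hard part will be to show this invariant is a generator'' and describe a plan of ``extracting the resulting $(q+1)$-th root of unity from the explicit factorisation of $u_n$'' without carrying it out. As stated this is not merely a deferred computation: nothing in your construction of $\beta$ pins down what $z\circ\alpha_n|_{A_z}$ actually is, because you have only said that $\delta_A(u_n)$ is a power modulo $K^\times\cO(\Omega_n)^{\times\times}$, not \emph{which} cocycle $\beta$ you take a root of. (There is also a small slip: $\delta_A$ of your measure is $|h(\Omega_n)|(\delta_{D_\infty}-\delta_{gD_\infty}) = q^n(q+1)(\delta_{D_\infty}-\delta_{gD_\infty})$, not $(q+1)(\delta_{D_\infty}-\delta_{gD_\infty})$.)

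The paper's proof closes this gap by reversing the logic. It first introduces the explicit $1$-cocycle $j\in Z^1(A,\cO(\Omega_n)^\times)$ given by $j\left(\begin{pmatrix}a&b\\c&d\end{pmatrix}\right)=a-cx$, whose two essential properties are (i) $\mu_{\Omega_n}(j(g))=\delta_{gD_\infty}-\delta_{D_\infty}$, and (ii) evaluation at $z$ gives $z\circ j|_{A_z}\equiv\widehat{j_z}$ mod $L^{\times\times}$ by Lemma~\ref{stabz}(b). It then chooses $u$ so that $\mu_{\Omega_n}\circ j^{|h(\Omega_n)|}=\mu_{\Omega_n}\circ\delta_A(u)$, and applies Proposition~\ref{approxcocycle} with $\beta=j^{q^n}$, $d=q+1$, $e=q-1$. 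Because the resulting $\alpha$ is congruent to $j^{q^n}$ modulo $\cO(\Omega_n)^{\times\times}$, the evaluation $\phi_z([\sL^\alpha_{u,q+1}])=z\circ\alpha|_{A_z}=\widehat{j_z}^{q^n}$ falls out immediately, and $q^n$ coprime to $q^2-1$ gives a generator of the full target directly, with no need to analyse the quotient by $\langle\widehat{j_z}^{q+1}\rangle$ separately. This identification of $j$ as the cocycle to approximate is the missing idea in your proposal.
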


\begin{proof} Note that because $z \in \Omega_{F,0}(L)$, we may view it as a point of $\Omega_{F,0}(K(z))  = \Omega_0(K(z)) \subseteq \Omega_n(K(z))$. Hence the map $\phi_z$ from Proposition \ref{phiz} makes sense in this setting. Write $A:=\GL_2(\cO_F)$. By Proposition \ref{PicSeq} together with the left exactness of the endofunctor $(-)[p']$ on abelian groups, there is an exact sequence 
	 	 \[ 0\to \Hom(A,K^\times)[p']\to \Con^{A}(\Omega_n)[p']\to \Con(\Omega_n)^{A}[p']. \]  The group $\Con(\Omega_n)^{A}[p']$ is cyclic of order $q+1$ by Proposition \ref{propomegan}(a), whereas $\Hom(A,K^\times)[p']$ is cyclic of order $q-1$ by Lemma \ref{Hommup'}(a). Thus, \[\left|\Con^A(\Omega_n)[p']\right|\leq q^2-1.\] 
Since $z\in \cO_{F,0}(L) \subseteq \Omega_F(\overline{F})$ by assumption, we can apply Lemma \ref{stabrel} to see that $A_z=G^0_z$. Also, $z \in \Omega_{F,0}(L) \subseteq \Omega_F(L) = L \backslash F$ implies that $F(z) = L$ is a quadratic extension of $F$, so Lemma \ref{HomG0z} can be applied to deduce that $\Hom(A_z,K(z)^\times)[p']$ is cyclic of order $q^2-1$. Now it suffices to show that the image of $\phi_z$ contains a generator of this cyclic group. To this end we will construct an element $[\sL]$ of $\Con^A(\Omega_n)[p']$ such that, in the notation of Lemma \ref{HomG0z}, $\phi_z([\sL])=\widehat{j_z}^{q^n}$. To do this, we will first construct a suitable unit $u \in \cO(\Omega_n)^\times$, then an appropriate $1$-cocycle $\alpha \in \cZ^{A,\Omega_n}_{u,q+1,q-1}$ and then the required equivariant line bundle $\sL$ is given by an application of Lemma \ref{1cocycles}(a).
	
Consider the function $j\colon A\to \cO(\Omega_n)^{\times}$ given by \[ j\left(\begin{pmatrix} a & b \\ c & d\end{pmatrix}\right)= {a-cx}. \] We compute that \begin{eqnarray*} j\left(\begin{pmatrix} a_1 & b_1 \\ c_1 & d_1\end{pmatrix}\right) \begin{pmatrix} a_1 & b_1 \\ c_1 & d_1\end{pmatrix}\cdot j\left(\begin{pmatrix} a_2 & b_2 \\ c_2 & d_2\end{pmatrix}\right) & = & ({a_1-c_1x})\left(a_2-c_2{\frac{d_1x-b_1}{a_1-c_1x}}\right) \\ & = & {(a_1a_2+b_1c_2)-(c_1a_2+c_2d_2)x} \\ & = & j\left(\begin{pmatrix} a_1 & b_1 \\ c_1 & d_1\end{pmatrix}\begin{pmatrix} a_2 & b_2 \\ c_2 & d_2 \end{pmatrix}\right)\end{eqnarray*} and see that $j\in Z^1(A,\cO(\Omega_n)^\times)$. The reason for considering this $1$-cocycle $j$ is that 
     \[\mu_{\Omega_n}(j(g))=\delta_{gD_\infty}-\delta_{D_\infty} \qmb{for all} g\in A, \qmb{and} z\circ j|_{A_z} \equiv \widehat{j_z} \qmb{mod} L^{\times\times}.\]   
Now we define $\nu:=|h(\Omega_n)|\delta_{D_\infty}-\Sigma_{\Omega_n}\in M_0(h(\Omega_n),\bZ)$. Applying Proposition \ref{UnitsMeasCheese}, we find $u\in \cO(\Omega_n)^\times$ such that $\mu_{\Omega_n}(u)=\nu$.
	Then we calculate that
	\[\delta_A(\nu)(g)=g\cdot\nu - \nu = |h(\Omega_n)|(\delta_{gD_\infty}-\delta_{D_\infty}) \qmb{for all} g \in A.\] 
	Therefore inside $Z^1(A, M_0(h(\Omega_n),\bZ))$ we have the equality
	\[ \mu_{\Omega_n}\circ j^{|h(\Omega_n)|}= \delta_{A}(\nu)=\mu_{\Omega_n}\circ \delta_{A}(u).\] 
	Since $|h(\Omega_n)| = q^n(q+1)$, this means that $j^{-q^n(q+1)} \delta_{A}(u)$ takes values in $\ker \mu_{\Omega_n}$. Now Proposition \ref{UnitsMeasCheese} tells us that $\ker \mu_{\Omega_n}=K^\times \cdot \cO(\Omega_n)^{\times\times}$. So we may rephrase this as saying that 
	\[\pi_{T(\Omega_n)}\circ(j^{-q^n(q+1)}\delta_G(u))\] 
	takes values in $K^\times/K^{\times\times}$. Since $\Omega_n$ is geometrically connected, $A$ is compact and every finite abelian $p'$-quotient of $A$ has exponent dividing $q-1$, by Remark \ref{p'exp}, we may apply Proposition \ref{approxcocycle} with $(d,e,u,\beta)=(q+1,q-1,u,j^{q^n})$ to deduce that there exists an $\alpha\in \cZ^{A,\Omega_n}_{u,q+1,q-1}$ such that 
	\begin{equation}\label{alphajqn}\pi_{T(\Omega_n)}\circ \alpha=\pi_{T(\Omega_n)}\circ j^{q^n}.\end{equation}	
	By Lemma \ref{1cocycles}(a), there is a $(q^2-1)$-torsion $A$-equivariant line bundle with connection $\sL^\alpha_{u,q+1}$ on $\Omega_n$, such that $\phi_{\Omega_n}^A([\sL^\alpha_{u,q+1}])=[\alpha]$ inside $H^1(A,\cO(\Omega_n)^\times)$. 
	
	To see what $\phi_z$ does to this $[\sL^\alpha_{u, q+1}]$, we apply Proposition \ref{defphiz}(b) to find that
\[\phi_z([\sL^\alpha_{u,q+1}])= z \circ (\res^A_{A_z} \phi^A_{\Omega_n}([\sL^\alpha_{u,q+1}]) ) = z\circ \alpha|_{A_z}.\]  
Applying the functor $T(-)$ from Notation \ref{TXnotn} to the morphism of affinoid varieties $z : \Sp L \hookrightarrow \Omega_n$ and using equation (\ref{alphajqn}), we see that
\[z\circ\alpha|_{A_z} \equiv z\circ j^{q^n}|_{A_z} \mod K(z)^{\times\times}.\] 
 	But $z\circ j\left(\begin{pmatrix} a & -cN(z) \\ c & a-c\tr(z)\end{pmatrix} \right) = a-cz$, so as $z\circ \alpha|_{A_z}$ takes values in $\mu_{q^2-1}(K(z)^{\times})$, we conclude that inside $\Hom(A_z,K(z)^{\times})[p']$ we have
	\[\phi_z([\sL^\alpha_{u,q+1}])=z\circ \alpha|_{A_z} = \widehat{j_z}^{q^n}\]as claimed earlier. This is a generator because $q^n$ is coprime to $q^2-1$. 
	\end{proof}
Using our next Lemma, we will be able to use Theorem \ref{phizisolocal} to shed light on our main group of interest, namely $\PicCon^{G^0}(\Omega)_{\tors}$. See Corollary \ref{fibreproduct} below for a description of the $p'$-torsion part of $\PicCon^{G_0}(\Omega)$.

\begin{lem}\label{PicConGOmega} Let $A = \GL_2(\cO_F)$ and $B = {}^w \GL_2(\cO_F)$. \be \item $\PicCon^A(\Omega)\quad \cong\quad \invlim \Con^A(\Omega_n)$.
		\item $\PicCon^{G^0}(\Omega)\quad \cong\quad\PicCon^{A}(\Omega)\underset{\PicCon^I(\Omega)}{\times}{}\PicCon^{B}(\Omega)$. \ee
\end{lem}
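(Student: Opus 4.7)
The plan is to deduce both parts from machinery already developed in the paper, leveraging two key facts: the admissible cover of $\Omega$ by the $A$-stable cheeses $\Omega_n$, and the amalgamated product decomposition $G^0 = A \ast_I B$ from Theorem \ref{AmalgTrees}.

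For part (a), I would first invoke Lemma \ref{coverOmega}(a) to see that $\{\Omega_n\}_{n\geq 0}$ is an increasing admissible cover of $\Omega$ by affinoid subdomains, and note by Remark \ref{TreeStabs} that each $\Omega_n$ is $A$-stable. Since $\Omega$ is geometrically connected and quasi-Stein by Proposition \ref{BasicUH}, the hypotheses of Lemma \ref{PicConGlim} are satisfied, yielding $\PicCon^A(\Omega) \cong \invlim \PicCon^A(\Omega_n)$. To identify the right-hand side with $\invlim \Con^A(\Omega_n)$, I would then observe that each $\Omega_n$ is a cheese, so $\Pic(\Omega_n) = 0$ by Proposition \ref{PicCheese}; this forces the kernel defining $\Con^A(\Omega_n)$ in Definition \ref{ConGdefn} to be the entire group $\PicCon^A(\Omega_n)$, completing the identification.

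For part (b), I would apply Proposition \ref{PicConamal} directly with $G = G^0$, $X = \Omega$, and the amalgamated product $G^0 = A \ast_I B$ supplied by Theorem \ref{AmalgTrees}. The hypotheses to check are immediate: $\Omega$ is quasi-Stein and geometrically connected by Proposition \ref{BasicUH}, while $A$ and $B$ are open subgroups of $G^0$ (being compact open stabilisers of vertices in $\mathcal{BT}$) with common subgroup $I = A \cap B$. The conclusion of Proposition \ref{PicConamal} is precisely the asserted fibre product description.

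Neither step looks to present any real obstacle; the work is essentially a matter of verifying that the structural results of the paper apply in the current setting. The only mild subtlety is that part (a) as stated mixes $\PicCon$ on the left with $\Con$ on the right, and this is precisely accounted for by the vanishing of $\Pic$ on cheeses.
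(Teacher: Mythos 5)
Your proof is correct and follows essentially the same route as the paper: part (a) combines Lemma \ref{PicConGlim} (via Proposition \ref{BasicUH}, Lemma \ref{coverOmega}(a), Remark \ref{TreeStabs}) with the vanishing of $\Pic(\Omega_n)$ from Proposition \ref{PicCheese}, and part (b) is a direct application of Proposition \ref{PicConamal} to the amalgam $G^0 = A \ast_I B$ from Theorem \ref{AmalgTrees}. Your explicit remark about why $\Con^A(\Omega_n)=\PicCon^A(\Omega_n)$ is a helpful clarification of what the paper leaves implicit.
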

\begin{proof} We note by Proposition \ref{BasicUH} and Lemma \ref{coverOmega}(a), $\Omega$ is a smooth, geometrically connected, quasi-Stein space with admissible cover  $\{\Omega_n\}$. Thus	(a) follows from Lemma \ref{PicConGlim} together with Remark \ref{TreeStabs} and Proposition \ref{PicCheese}.
	
	(b) This follows from Proposition \ref{PicConamal} and Theorem \ref{AmalgTrees}. \end{proof}

Recall the $K$-cheeses $\Psi_n$ from Definition \ref{defnPsin}. Since $\Pic(\Psi_n)=0$ by Proposition \ref{PicCheese}, there are restriction maps $\PicCon^I(\Omega)\to \Con^I(\Psi_n)$ for all $n\geq 0$. 

\begin{cor} \label{restpsi0} The restriction map $\PicCon^I(\Omega)[p']\to \Con^I(\Psi_0)[p']$ is injective.
\end{cor}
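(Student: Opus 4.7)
The strategy is to reduce to a finite-level assertion about the cheeses $\Psi_n$ and then invoke Proposition \ref{RestCon}(b). First I would note that $\Omega$ is a geometrically connected quasi-Stein rigid $K$-analytic space by Proposition \ref{BasicUH}, and that the increasing chain $\Psi_0 \subseteq \Psi_1 \subseteq \cdots$ is an admissible cover of $\Omega$ by $I$-stable cheeses thanks to Lemma \ref{coverOmega}(c) and Remark \ref{ITreeStabs}. Applying Lemma \ref{PicConGlim} and using $\Pic(\Psi_n) = 0$ from Proposition \ref{PicCheese} then yields a natural isomorphism
\[ \PicCon^I(\Omega)\;\cong\;\invlim_n \Con^I(\Psi_n).\]
Hence, to establish the corollary it suffices to show that for every $n \geq 0$ and every integer $d \geq 1$ coprime to $p$, the restriction map $\Con^I(\Psi_n)[d] \to \Con^I(\Psi_0)[d]$ is injective.

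The heart of the plan is to verify the four hypotheses of Proposition \ref{RestCon}(b) with $X = \Psi_n$, $Y = \Psi_0$ and $G = H = I$. Iterating Lemma \ref{iomegapsi}(b) shows that $\iota^{\Psi_n}_{\Psi_0}$ is surjective with every fibre of size $q^n$, and since $q$ is a power of $p$ this is automatically coprime to $d$, which settles hypothesis (i). For hypothesis (ii), Lemma \ref{iomegapsi}(a) tells us that $h(\Psi_n)$ and $h(\Psi_0)$ each split into precisely two $I$-orbits; because $\iota^{\Psi_n}_{\Psi_0}$ is an $I$-equivariant surjection from a set with two orbits to another set with two orbits, a pigeonhole argument forces each $I$-orbit in $h(\Psi_n)$ to coincide with the full preimage of some $I$-orbit in $h(\Psi_0)$, so in particular the $I$-orbits in $h(\Psi_n)$ are unions of fibres of $\iota^{\Psi_n}_{\Psi_0}$. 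Hypotheses (iii) and (iv) are immediate: $H = G = I$ makes the subgroup condition trivial and turns the restriction $\Hom(I,\mu_d(K)) \to \Hom(I,\mu_d(K))$ into the identity.

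To close the argument, suppose $[\sL] \in \PicCon^I(\Omega)[p']$ satisfies $[\sL|_{\Psi_0}] = 0$. I would choose $d$ coprime to $p$ with $[\sL] \in \PicCon^I(\Omega)[d]$; then for every $n$ the class $[\sL|_{\Psi_n}] \in \Con^I(\Psi_n)[d]$ restricts to $0 \in \Con^I(\Psi_0)[d]$, hence vanishes by the injectivity just established, and so $[\sL] = 0$ by the inverse limit description. I expect no substantive obstacle: the only point requiring real thought is hypothesis (ii) of Proposition \ref{RestCon}(b), which is essentially handed to us by the exact orbit count in Lemma \ref{iomegapsi}(a).
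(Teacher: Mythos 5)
Your argument is correct and follows essentially the same route as the paper's: reduce via Lemma \ref{PicConGlim} and $\Pic(\Psi_n)=0$ to a finite-level injectivity statement, then apply Proposition \ref{RestCon}(b) with $G = H = I$, using Lemma \ref{iomegapsi} to verify the orbit and fibre hypotheses. The only cosmetic difference is that you apply Proposition \ref{RestCon}(b) directly to the pair $(\Psi_n,\Psi_0)$ — which requires iterating Lemma \ref{iomegapsi}(b) to compute that the fibres of $\iota^{\Psi_n}_{\Psi_0}$ all have size $q^n$ — whereas the paper applies it to consecutive pairs $(\Psi_{n+1},\Psi_n)$ and then chains the one-step injectivities, avoiding the iteration.
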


\begin{proof} By  Lemma \ref{PicConGlim} and Lemma \ref{coverOmega}(c), it suffices to show that the restriction map $\Con^I(\Psi_{n+1})[p']\to \Con^I(\Psi_{n})[p']$ is injective for all $n \geq 1$. Fixing $n \geq 1$, this is equivalent to $\Con^I(\Psi_{n+1})[d]\to \Con^I(\Psi_{n})[d]$ being injective for each $d$ coprime to $p$. We will prove this using Proposition \ref{RestCon}(b). 	

Condition (i) of Proposition \ref{RestCon} follows from Lemma \ref{iomegapsi}(c). Condition (ii) holds since the induced map on $I$-orbits $h(\Psi_{n+1})/I\to h(\Psi_n)/I$ is surjective and hence injective by Lemma \ref{iomegapsi}(c),(a).   Conditions  (iii) and (iv) are trivial since in this case $G=H=I$. Thus $\Con^{I}(\Psi_n)[d]\to \Con^I(\Psi_{n-1})[d]$ is injective.\end{proof}

\begin{prop} \label{twistw} For every $[\sL]\in \PicCon^{GL_2(\cO_F)}(\Omega)[p']$ there is an integer $k$ such that the restriction $\sL|_I$ satisfies 
\[ [\sL|_I] \cdot w[\sL|_I]=[\cO_{\widehat{\det}^k}] \qmb{in} \PicCon^I(\Omega).\]
\end{prop}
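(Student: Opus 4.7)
Let $\cL := [\sL|_I] \cdot w[\sL|_I]$. The plan is to first establish structural properties of $\cL$, then reduce to the affinoid $\Psi_0$ using Corollary \ref{restpsi0}, and finally conclude via a measure-theoretic computation on the holes $h(\Psi_0)$. I would begin by verifying that $\cL$ is both $p'$-torsion and $w$-invariant in $\PicCon^I(\Omega)$: the former is immediate from the hypothesis on $[\sL]$, while the latter follows because $w^2 = \pi_F I_2$ lies in $F^\times$ and acts as the identity on $\Omega$, hence trivially on $\PicCon^I(\Omega)$, giving $w \cdot \cL = w[\sL|_I] \cdot w^2[\sL|_I] = w[\sL|_I] \cdot [\sL|_I] = \cL$.

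By Corollary \ref{restpsi0} the restriction map $\PicCon^I(\Omega)[p'] \hookrightarrow \Con^I(\Psi_0)[p']$ is injective, and since $[\cO_{\widehat{\det}^k}]$ is also $p'$-torsion, it therefore suffices to establish the equality $\cL|_{\Psi_0} = [\cO_{\widehat{\det}^k}]|_{\Psi_0}$ in $\Con^I(\Psi_0)[p']$ for some $k$. For this I would apply Proposition \ref{PicSeq} to the geometrically connected quasi-Stein cheese $\Psi_0$ to obtain the exact sequence
\[0 \to \Hom(I,K^\times)[p'] \to \Con^I(\Psi_0)[p'] \stackrel{\omega}{\longrightarrow} \Con(\Psi_0)^I[p'].\]
The main step — which I expect to be the principal obstacle — is to show that $\omega(\cL|_{\Psi_0})$ is trivial. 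To achieve this I would invoke Corollary \ref{ConCheeseMeas} to identify $\Con(\Psi_0)^I[d]$ with $M_0(h(\Psi_0),\bZ/d\bZ)^I$ for each $d \geq 1$ coprime to $p$. By Lemma \ref{iomegapsi}(a), $h(\Psi_0)$ decomposes into two $I$-orbits $O_1, O_2$, each of size $q$; since $w$ normalizes $I$ and interchanges the vertices $s_0$ and $ws_0$, it swaps $O_1$ and $O_2$. Because $q$ is coprime to $d$, a short direct computation shows that $M_0(h(\Psi_0),\bZ/d\bZ)^I$ is cyclic of order $d$ generated by $\Sigma_{O_1} - \Sigma_{O_2}$, on which $w$ acts by $-1$. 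Hence $w\cdot\nu = \nu^{-1}$ for every $\nu$ in this group, and in particular $\omega(\cL|_{\Psi_0}) = \omega([\sL|_I]|_{\Psi_0}) \cdot w\cdot\omega([\sL|_I]|_{\Psi_0})$ is trivial.

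By exactness of the displayed sequence, $\cL|_{\Psi_0} = [\cO_\chi]|_{\Psi_0}$ for some $\chi \in \Hom(I,K^\times)[p']$. The embedding $\Hom(I,K^\times)[p'] \hookrightarrow \Con^I(\Psi_0)[p']$ is injective and $w$-equivariant with respect to the natural action $(w\cdot\chi)(g) = \chi(w^{-1}gw)$, so the $w$-invariance of $\cL|_{\Psi_0}$ forces $\chi = w\cdot\chi$. By Lemma \ref{Hommup'}(c), the only such characters are $\widehat{\det}^k$ for $k \in \bZ/(q-1)\bZ$, and so $\chi = \widehat{\det}^k$ for some $k$, as required.
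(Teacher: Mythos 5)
Your proof is correct and takes essentially the same approach as the paper: restrict to $\Psi_0$ using Corollary \ref{restpsi0}, apply the exact sequence of Proposition \ref{PicSeq}, kill the $\Con(\Psi_0)^I$-component by translating to measures on $h(\Psi_0)$ and observing that $w$ swaps the two $I$-orbits (hence acts by negation on the cyclic group of invariant measures of total value zero), and finally pin down the resulting character via $w$-invariance and Lemma \ref{Hommup'}(c). The only minor variation is that the paper first invokes Corollary \ref{KilledByq+1} to know $\omega([\sL])$ is killed by $q+1$ and then computes in $M_0(h(\Psi_0),\bZ/(q+1)\bZ)^I$, whereas you work with an arbitrary $d$ coprime to $p$ annihilating $\omega([\sL|_{I,\Psi_0}])$, which makes the argument marginally more self-contained at no cost since the measure computation is identical.
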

\begin{proof} We restrict $\sL|_I$ further to $\Psi_0$, forming $[\sL|_{I,\Psi_0}] \in \Con^I(\Psi_0)[p']$. By Corollary \ref{restpsi0}, it suffices to show that inside in $\Con^I(\Psi_0)$ we have 
\[[\sL|_{I,\Psi_0}] \cdot w[\sL|_{I,\Psi_0}]=[\cO_{\widehat{\det}^k}] \qmb{for some} k\in \frac{\bZ}{(q-1)\bZ}.\]
We consider the exact sequence coming from Lemma \ref{PicSeq}
	\begin{equation}\label{ConIPsi0} 1\to \Hom(I,K^\times)[p']\to \Con^I(\Psi_0)[p']\stackrel{\omega}{\to} \Con(\Psi_0)^I[p']. \end{equation}
Note that $\omega([\sL]) \in \PicCon^{\GL_2(\cO_F)}(\Omega)_{\tors}$ is killed by $q+1$ by Corollary \ref{KilledByq+1}. Therefore the image $\omega([\sL|_{I,\Psi_0}])$ of this class in $\Con(\Psi_0)^I$ is also killed by $q+1$.

 Since $w$ normalises $I$ and $\Psi_0$ is $\langle w, I\rangle$-stable, Corollary \ref{UnitsModD} and Proposition \ref{PicConDtorG} gives us an isomorphism of groups with $\langle w\rangle$-action \[\mu_{\Psi_0,q+1}\circ\theta_{q+1}\colon \Con(\Psi_0)^I[q+1] \stackrel{\cong}{\longrightarrow} M_0\left(h(\Psi_0),\frac{\bZ}{(q+1)\bZ}\right)^I.\] 
Next, $h(\Psi_0)$ has two $I$-orbits $\cO_1$ and $\cO_2$ of size $q$ by Lemma \ref{iomegapsi}(a). Hence $M_0\left(h(\Psi_0),\frac{\bZ}{(q+1)\bZ}\right)^I$ is generated by the image of $\Sigma_{\cO_1}-\Sigma_{\cO_2}$. Since $w$ swaps the two orbits, it acts on $M_0\left(h(\Psi_0),\frac{\bZ}{(q+1)\bZ}\right)^I$ by negation. Hence $w$ acts on $\Con(\Psi_0)^I[q+1]$ by inversion, so that
\[\omega\left([\sL|_{I,\Psi_0}] \cdot w[\sL|_{I,\Psi_0}]\right) = \omega\left([\sL|_{I,\Psi_0}]\right) \cdot w \omega\left([\sL|_{I,\Psi_0}]\right) = [\cO]\] is the trivial element of $\Con(\Psi_0)^I[q+1]$. 
The exact sequence (\ref{ConIPsi0}) above now implies that $[\sL|_{I,\Psi_0}]\cdot w [\sL|_{I,\Psi_0}]=[\cO_\chi]$ for some $\chi\in \Hom(I,\mu_{p'}(K))$. 
 
Finally, since $w^2\in Z(GL_2(F))$ acts trivially on $\Psi_0$ and $\Hom(I,\mu_{p'}(K))$, \[w \cdot ([\sL|_{I,\Psi_0}] \cdot w[\sL|_{I,\Psi_0}])=[\sL|_{I,\Psi_0}] \cdot w[\sL|_{I,\Psi_0}]. \] 
	Hence $[\cO_{w\chi}]=w \cdot [\cO_\chi] =[\cO_{\chi}]$, which implies that \[\chi\in \Hom(I,\mu_{p'}(K))^{\langle w\rangle}.\] Now Lemma \ref{Hommup'}(c) completes the proof. 
\end{proof}

\begin{cor}\label{fibreproduct} The following restriction map is an isomorphism of groups:
\[ \PicCon^{G^0}(\Omega)[p']\to \PicCon^{GL_2(\cO_F)}(\Omega)[p'].\]
\end{cor}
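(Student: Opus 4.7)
The plan is to use the fibre-product description of $\PicCon^{G^0}(\Omega)$ from Lemma \ref{PicConGOmega}(b). Since $(-)[p']$ is left exact on abelian groups it commutes with fibre products, and so
\[\PicCon^{G^0}(\Omega)[p']\cong \PicCon^A(\Omega)[p']\underset{\PicCon^I(\Omega)[p']}{\times}{}\PicCon^B(\Omega)[p'].\]
Under this identification, the map in the statement becomes the first projection. So it will suffice to show that for each $[\sL_A]\in \PicCon^A(\Omega)[p']$ there exists a unique $[\sL_B]\in \PicCon^B(\Omega)[p']$ with $[\sL_A|_I]=[\sL_B|_I]$ in $\PicCon^I(\Omega)$.

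For existence, I would apply Proposition \ref{twistw} to produce an integer $k$ with $[\sL_A|_I]\cdot w[\sL_A|_I]=[\cO_{\widehat{\det}^k}]$ in $\PicCon^I(\Omega)$, and set
\[[\sL_B]:=[w_\ast(\sL_A^{\otimes -1})\otimes \cO_{\widehat{\det}^k}]\in \PicCon^B(\Omega)[p'],\]
where $w_\ast$ converts the $A$-equivariant structure on $\sL_A^{\otimes -1}$ into a $B$-equivariant one via $B=wAw^{-1}$. Proposition \ref{GactsPicConeq} identifies $[(w_\ast \sL_A^{\otimes -1})|_I]$ with $w[\sL_A|_I]^{-1}$, and substituting from Proposition \ref{twistw} then yields $[\sL_B|_I]=[\sL_A|_I]$.

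For uniqueness, suppose $([\cO],[\sL_B])$ lies in the fibre product. Then $[\sL_B|_I]=[\cO]$ and, since the forgetful map $\omega$ commutes with restriction, $\omega([\sL_B])=\omega([\sL_B|_I])=0$ in $\PicCon(\Omega)$. Proposition \ref{PicSeq} produces a character $\chi\in \Hom(B,K^\times)[p']$ with $[\sL_B]=[\cO_\chi]$, and restricting once more to $I$ forces $\chi|_I=0$ by Proposition \ref{PicSeq} applied to $I$. Lemma \ref{Hommup'}(a), transported from $A$ to $B$ via conjugation by $w$ (and using that $\widehat{\det}$ is $w$-invariant), shows that $\chi=\widehat{\det}^k$ for some $k\in \bZ/(q-1)\bZ$; since $\det|_I:I\to \cO_F^\times$ is surjective, the condition $\widehat{\det}^k|_I=0$ forces $k\equiv 0$, and hence $\chi=0$ and $[\sL_B]=[\cO]$.

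The hardest step will be the existence computation: one must carefully unwind the interaction between the push-forward $w_\ast$ on $\sL_A^{\otimes -1}$ and the induced $w$-action on $\PicCon^I(\Omega)$ described in Proposition \ref{GactsPicConeq}, in order to match the two $I$-equivariant line bundle structures on the nose via the cocycle identity of Proposition \ref{twistw}.
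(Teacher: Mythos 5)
Your proposal is correct and follows essentially the same path as the paper's proof: both rest on the fibre-product decomposition from Lemma \ref{PicConGOmega}(b), use Proposition \ref{twistw} to produce the candidate $B$-equivariant partner (via a $w$-twist of an inverse and a $\widehat{\det}$-twist), and use the exact sequence of Proposition \ref{PicSeq} together with Lemma \ref{Hommup'} for the injectivity/uniqueness step. The only difference is presentational: the paper factors the target corner of the pullback square through $\im q_1 + \im q_2$ and invokes that pullbacks preserve isomorphisms, whereas you argue existence-and-uniqueness of the $B$-component directly; your uniqueness step is exactly the paper's proof of injectivity of $q_1$ (via injectivity of the restriction $\Hom(B,K^\times)[p']\to \Hom(I,K^\times)[p']$ and the left-exactness in Proposition \ref{PicSeq}), packaged without the Snake Lemma.
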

\begin{proof} Let $A:=GL_2(\cO_F)$ and $B={}^w A$. The commutative diagram \[ \xymatrix{\PicCon^{G^0}(\Omega)\ar[r] \ar[d] & \PicCon^{A}(\Omega) \ar[d]   \\
	\PicCon^{B}(\Omega) \ar[r] & \PicCon^I(\Omega)}\]
	 maps given by restriction is a pullback square by Lemma \ref{PicConGOmega}(b). Since taking $p'$-torsion preserves limits in the category of abelian groups it follows that \[ \xymatrix{\PicCon^{G^0}(\Omega)[p']\ar[r]^{p_1} \ar[d]_{p_2} & \PicCon^{A}(\Omega)[p'] \ar[d]^{q_2}   \\
	 	\PicCon^{B}(\Omega)[p'] \ar[r]_{q_1} & \PicCon^I(\Omega)[p']}\]
is also a pullback square and so the diagram \[ \xymatrix{\PicCon^{G^0}(\Omega)[p']\ar[r]^{p_1} \ar[d]_{p_2} & \PicCon^{A}(\Omega)[p'] \ar[d]^{q_2}   \\
	\PicCon^{B}(\Omega)[p'] \ar[r]_{q_1} & \im q_1+\im q_2}\] is a pullback square as well. Since pullbacks preserve isomorphisms, it suffices to see that in the last diagram, we have $\im q_2\subseteq \im q_1$ and that $q_1$ is injective. 
We consider the commutative diagram \[\xymatrix{ 1 \ar[r] &  \Hom(A,K^\times)[p'] \ar[r] \ar[d] & \PicCon^{A}(\Omega)[p'] \ar[d]_{q_2} \ar[r] & \PicCon(\Omega)^A[p'] \ar[d] \\
1 \ar[r] &  \Hom(I,K^\times)[p']  \ar[r] & \PicCon^I(\Omega)[p'] \ar[r] & \PicCon(\Omega)^I[p'] }	 \] whose rows are exact by Proposition \ref{PicSeq}. The left vertical map is injective by Lemma \ref{Hommup'}(a,b) and the right vertical map is an inclusion map. Now the injectivity of $q_2$ follows from the Snake Lemma. Therefore $q_1$ is also injective, because $q_2(w[\sL])=wq_1([\sL])$ for every $[\sL]\in \PicCon^{B}(\Omega)$.

Finally, by Proposition \ref{twistw}, $q_2([\sL])=q_1(w[\sL]^{-1}\otimes \cO_{\widehat{\det}^k})$ for some integer $k$. Hence the image of $q_2$ is contained in the image of $q_1$.
\end{proof}

\begin{rmk} One may wonder if it might be possible to strengthen the statement of Corollary \ref{fibreproduct} to give a similar description of \emph{all} torsion elements in $\PicCon^{G_0}(\Omega)$. However when $q=2$, the restriction map \[\PicCon^{G^0}(\Omega)_{\tors}\to \PicCon^{GL_2(\cO_F)}(\Omega)_{\tors}\] is not an isomorphism in general, because the homomorphism from the abelianization of $GL_2(\cO_F)$ to the abelianization of $G^0$ induced by inclusion has a kernel of order $2$ and so particular the restriction map \[\Hom(G^0,K^\times)_{\tors}\to \Hom(GL_2(\cO_F),K^\times)_{\tors}\] then may not be surjective.  \end{rmk}

Next we pass to the limit as $n \to \infty$ to deduce the consequences of Theorem \ref{phizisolocal} for the $p'$-torsion part of our main group of interest, namely $\PicCon^{G^0}(\Omega)$. First we recall the Sylow pro-$p$ subgroup $P_z$ of $SL_2(F)_z$ from Lemma \ref{stabz}.

\begin{lem} \label{phizisoglobal} Suppose that $K$ contains the quadratic unramified extension $L$ of $F$ and let $z \in \Omega_{F,0}(L)$. Then the homomorphism \[ \phi_z[p']\colon \PicCon^{G^0}(\Omega)[p'] \quad \longrightarrow \quad \Hom(G^0_z,K^\times)[p'] \] is an isomorphism. Moreover, every $p'$-torsion character $\chi :G^0_z \to K^\times$ kills $P_z$.
\end{lem}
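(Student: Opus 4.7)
The plan is to assemble the earlier results into a global statement by taking inverse limits. Since $z\in\Omega_{F,0}(L)$, the reduction $\lambda([z])$ is the vertex $s_0$, so by Lemma \ref{stabrel} we have $G^0_z = A_z$ where $A := GL_2(\cO_F)$, and $K(z)=K$ since $K\supseteq L = F(z)$. Combined with Corollary \ref{fibreproduct}, which identifies $\PicCon^{G^0}(\Omega)[p']$ with $\PicCon^{A}(\Omega)[p']$, it suffices to show that
\[\phi_z[p']\colon \PicCon^{A}(\Omega)[p']\to \Hom(A_z,K^\times)[p']\]
is an isomorphism.

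First I would apply Lemma \ref{PicConGOmega}(a) to write $\PicCon^{A}(\Omega)\cong \invlim \Con^{A}(\Omega_n)$, the transition maps being restrictions. By Theorem \ref{phizisolocal}, each $\phi_z\colon \Con^{A}(\Omega_n)[p']\to \Hom(A_z,K^\times)[p']$ is an isomorphism onto the cyclic group of order $q^2-1$ generated by $\widehat{j_z}$. By Proposition \ref{defphiz}(d), $\phi_z$ commutes with restriction, so the transition maps of the inverse system $\{\Con^{A}(\Omega_n)[p']\}_n$ correspond under $\phi_z$ to the identity on the constant target system. In particular, the system stabilises and the restriction map $\Con^{A}(\Omega_n)[p']\to\Con^{A}(\Omega_m)[p']$ is an isomorphism for $n\geq m$. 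All the elements in sight have order dividing the fixed integer $q^2-1$ coprime to $p$, so the inverse limit of $p'$-torsion subgroups equals the $p'$-torsion subgroup of the inverse limit, and we deduce
\[\PicCon^{A}(\Omega)[p']\;\cong\;\invlim \Con^{A}(\Omega_n)[p']\;\cong\;\Hom(A_z,K^\times)[p']\]
with the composite being precisely $\phi_z[p']$.

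For the second statement, by Lemma \ref{HomG0z} every $p'$-torsion character $\chi\colon G^0_z\to K^\times$ is a power of $\widehat{j_z}$, which factors through the projection $\widehat{\cdot}\colon \cO_{F(z)}^\times\to \mu_{p'}(F(z))$ whose kernel $\cO_{F(z)}^{\times\times}$ is pro-$p$. Since $P_z$ is pro-$p$ and $j_z\colon G^0_z\to\cO_{F(z)}^\times$ is a continuous homomorphism, $j_z(P_z)$ is a pro-$p$ subgroup of $\cO_{F(z)}^\times$, hence contained in $\cO_{F(z)}^{\times\times}$; thus $\chi$ kills $P_z$. The only subtle point in the argument is verifying that the inverse limit does commute with taking $p'$-torsion here, but this is clean because the local calculation shows all groups involved have uniformly bounded exponent $q^2-1$ coprime to $p$.
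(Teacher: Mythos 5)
Your proof is correct and follows essentially the same path as the paper: reduce via Corollary \ref{fibreproduct} and Lemma \ref{PicConGOmega}(a) to an inverse limit of $\Con^{A}(\Omega_n)[p']$, invoke Theorem \ref{phizisolocal} together with Proposition \ref{defphiz}(d), and conclude. You helpfully spell out why taking $p'$-torsion commutes with the inverse limit (uniform bounded exponent $q^2-1$), a point the paper passes over in silence. For the final claim, the paper's argument is shorter: $P_z$ is a pro-$p$ subgroup of $G^0_z$, and the image of a pro-$p$ group under a continuous $p'$-torsion character is simultaneously a pro-$p$ group and a finite group of order prime to $p$, hence trivial --- no appeal to Lemma \ref{HomG0z} or the structure of $j_z$ is needed.
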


\begin{proof} By Lemma \ref{PicConGOmega}(a) and Corollary \ref{fibreproduct}, restriction maps induce an isomorphism \[\PicCon^{G^0}(\Omega)[p'] \stackrel{\cong}{\longrightarrow} \invlim\Con^{GL_2(\cO_F)}(\Omega_n)[p'].\] Using Proposition \ref{defphiz}(d) together with Theorem \ref{phizisolocal}, we deduce that map $\phi_z[p']$ in the statement of the Lemma is an isomorphism. 

The last statement holds because $P_z$ is a (normal) pro-$p$ subgroup of $G^0_z$. 
\end{proof}
With the last result in hand, it is natural to wonder about the $p$-torsion part of $\PicCon^{G^0}(\Omega)$. The following description of this group does not require the full force the methods employed in the proof of Theorem \ref{phizisolocal}.

\begin{lem}\label{phizptorsion} Suppose that $K$ contains the quadratic unramified extension $L$ of $F$ and let $z \in \Omega_F(L)$. The homomorphism \[\phi_z[p^{\infty}]\colon \PicCon^{G^0}(\Omega)[p^\infty]\quad \longrightarrow \quad \Hom(G^0_z, K^\times)[p^\infty]\] is injective with image $\Hom(G^0_z/P_z,K^\times)[p^\infty]$. 
\end{lem}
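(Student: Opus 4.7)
The plan is to reduce the problem to a statement about continuous characters of $G^0$ using Proposition \ref{PicSeq}, and then exploit the simple structure of the abelianisation of $G^0$ to describe both the kernel and image of $\phi_z[p^\infty]$ explicitly.

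First, I would invoke Proposition \ref{PicSeq} (applicable because $\Omega$ is geometrically connected and quasi-Stein by Proposition \ref{BasicUH}) to obtain the exact sequence
\[ 0 \to \Hom(G^0, K^\times) \to \PicCon^{G^0}(\Omega) \stackrel{\omega}{\longrightarrow} \PicCon(\Omega)^{G^0}. \]
Passing to $p^\infty$-torsion is left-exact, and any element of $\PicCon(\Omega)^{G^0}_{\tors}$ lies in $\PicCon(\Omega)^{GL_2(\cO_F)}_{\tors}$, which is cyclic of order $q+1$ by Corollary \ref{KilledByq+1}. Since $q+1$ is coprime to $p$, the $p^\infty$-torsion of $\PicCon(\Omega)^{G^0}$ vanishes, so the rule $\chi \mapsto [\cO_\chi]$ gives an isomorphism $\Hom(G^0, K^\times)[p^\infty] \stackrel{\cong}{\longrightarrow} \PicCon^{G^0}(\Omega)[p^\infty]$.

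Next I would observe that, because $SL_2(F)$ is perfect as an abstract group ($F$ being infinite) and $G^0/SL_2(F) \cong \cO_F^\times$ via the determinant is abelian, the derived subgroup of $G^0$ is $SL_2(F)$, and every continuous character of $G^0$ factors through $\det \colon G^0 \to \cO_F^\times$. By Proposition \ref{defphiz}(c), the composite $\phi_z[p^\infty]$ sends $[\cO_\chi]$ to $\chi|_{G^0_z}$. Corollary \ref{G0zSL2} tells us that $\det$ restricts to a surjection $G^0_z \twoheadrightarrow \cO_F^\times$ with kernel $SL_2(F)_z$, so restriction to $G^0_z$ yields an isomorphism $\Hom(G^0, K^\times)[p^\infty] \stackrel{\cong}{\longrightarrow} \Hom(G^0_z/SL_2(F)_z, K^\times)[p^\infty]$. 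This simultaneously gives injectivity of $\phi_z[p^\infty]$ and identifies its image with $\Hom(G^0_z/SL_2(F)_z, K^\times)[p^\infty]$.

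Finally, to match this image with the stated group $\Hom(G^0_z/P_z, K^\times)[p^\infty]$, I would use that $P_z$ is the Sylow pro-$p$ subgroup of $SL_2(F)_z$ (Lemma \ref{stabz}(d)). Via $j_z$, we have $SL_2(F)_z \cong \ker(N_{L/F}) \cap \cO_L^\times$ by Lemma \ref{stabz}(c), and the Teichm\"uller decomposition $\cO_L^\times \cong \mu_{q^2-1}(L) \times (1 + \pi_F \cO_L)$ combined with the formula $N_{L/F}(x) = x^{q+1}$ on $\mu_{q^2-1}$ shows that the finite group $SL_2(F)_z/P_z$ is cyclic of order $q+1$, which is coprime to $p$. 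Consequently, any $p^\infty$-torsion character of $G^0_z$ that kills $P_z$ must also kill $SL_2(F)_z$, so $\Hom(G^0_z/P_z, K^\times)[p^\infty] = \Hom(G^0_z/SL_2(F)_z, K^\times)[p^\infty]$, completing the proof. The only slightly delicate point is this explicit description of $SL_2(F)_z/P_z$, but it follows directly from the isomorphism $j_z$ and standard facts about the unramified extension $L/F$.
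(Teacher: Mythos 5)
Your proposal is correct and follows essentially the same route as the paper: reduce to $\Hom(G^0, K^\times)[p^\infty]$ via Proposition \ref{PicSeq} and Corollary \ref{KilledByq+1}, then use perfectness of $SL_2(F)$ together with Corollary \ref{G0zSL2} to analyse the restriction to $G^0_z$. One genuine merit of your write-up: the paper's own proof states that it suffices to show $\res$ is injective with image exactly $\Hom(G^0_z/P_z,K^\times)[p^\infty]$, but then only explicitly argues injectivity and the containment $\im(\res)\subseteq \Hom(G^0_z/P_z,K^\times)[p^\infty]$; the reverse containment is left implicit. Your computation that $SL_2(F)_z/P_z$ is cyclic of order $q+1$ (via $j_z$, the Teichm\"uller decomposition of $\cO_L^\times$ and $N=(\cdot)^{q+1}$ on roots of unity), hence coprime to $p$, gives precisely the missing surjectivity step, so your argument is slightly more complete than the one printed.
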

\begin{proof} Since $K \supseteq L$ and $z \in \Omega_F(L)$ by assumption, we see that $z \in \Omega(K)$. Hence the map $\phi_z : \Pic^{G^0}(\Omega) \to \Hom(G^0_z, K^\times)$ exists by Proposition \ref{phiz}. 

Now consider the following triangle:
\[\xymatrix{   \Hom(G^0,K^\times)[p^\infty] \ar[rr]\ar[dr]_{\res} && \PicCon^{G^0}(\Omega)[p^\infty] \ar[dl]^{\phi_z[p^\infty]} \\ &\Hom(G^0_z, K^\times)[p^\infty]&
}\]
Here, the horizontal map sends the character $\chi$ to $[\cO_\chi]$, and the diagonal arrow $\res$ on the left is restriction of characters. The triangle is commutative by Lemma \ref{defphiz}(c), and the horizontal arrow is an isomorphism by
 Proposition \ref{PicSeq} and Corollary \ref{KilledByq+1}. Hence it suffices to show that $\res$ is injective, and that its image is $\Hom(G^0_z/P_z,K^\times)[p^\infty]$.
 
Note that $SL_2(F)$ is a perfect subgroup of $G^0$ and $K^\times$ is abelian. Hence $\chi|_{SL_2(F)} = 1$ for any character $\chi : G^0 \to K^\times$. In particular, $\res(\chi)$ vanishes on the subgroup $P_z$ of $SL_2(F)$. Now if $\chi: G^0 \to K^\times$ is a character such that $\res(\chi) = \chi|_{G^0_z} = 1$, then Corollary \ref{G0zSL2} immediately implies that $\chi = 1$. Therefore $\res$ is injective as required.
 \end{proof}
	 
\begin{prop} \label{imagephiztors} Suppose that $K$ contains the quadratic unramified extension $L$ of $F$. Then for all $g\in GL_2(F)$ and $z\in \Omega_F(L)$, there is a commutative diagram \[ \xymatrix{ \PicCon^{G^0}(\Omega)_{\tors} \ar[rr]^{\phi_z} \ar[d]_{g}&& \Hom(G^0_z/P_z, K^\times)_{\tors} \\ \PicCon^{G^0}(\Omega)_{\tors} \ar[rr]_{\phi_{g\cdot z}} && \Hom(G^0_{g\cdot z}/P_{g\cdot z},K^\times)_{\tors}  \ar[u]_{c_g^\ast}} \] whose arrows are all isomorphisms of abelian groups. 	
\end{prop}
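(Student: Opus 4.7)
The plan is to deduce the result from Proposition \ref{GactsPicConeq} together with Lemmas \ref{phizisoglobal} and \ref{phizptorsion}, handling the mild technicality that Lemma \ref{phizisoglobal} is stated only for $z \in \Omega_{F,0}(L)$.

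First I would deduce commutativity of the square from Proposition \ref{GactsPicConeq}(b) applied with $U = \Omega$ and $H = G^0$: this is valid because $G^0 = \ker(v_{\pi_F}\circ \det)$ is normal in $GL_2(F)$ and $\Omega$ is $GL_2(F)$-stable, so $gU = U$ and ${}^g H = H$. That $\phi_z$ lands in $\Hom(G^0_z/P_z, K^\times)_{\tors}$ follows by combining the image computation in Lemma \ref{phizptorsion} for the $p$-primary part with the observation that any $p'$-torsion character of $G^0_z$ automatically kills the pro-$p$ subgroup $P_z$ (cf.\ the last sentence of Lemma \ref{phizisoglobal}).

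Next I would check that the vertical maps are isomorphisms. The left vertical arrow is the restriction to torsion of the automorphism of $\PicCon^{G^0}(\Omega)$ furnished by Proposition \ref{GactsPicConeq}(a); its inverse is induced by $g^{-1}$. For the right vertical map, the conjugation $c_g \colon G^0_z \to G^0_{g\cdot z}$ is a topological group isomorphism with inverse $c_{g^{-1}}$. Since $SL_2(F)$ is normal in $GL_2(F)$, $c_g$ restricts to an isomorphism $SL_2(F)_z \stackrel{\cong}{\to} SL_2(F)_{g\cdot z}$, and as $P_z$ is characteristic in $SL_2(F)_z$ by Lemma \ref{stabz}(d)(ii), $c_g$ descends to an isomorphism $G^0_z/P_z \stackrel{\cong}{\to} G^0_{g\cdot z}/P_{g\cdot z}$. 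Hence $c_g^\ast$ is an isomorphism on the corresponding $\Hom$ groups.

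The main obstacle is showing $\phi_z$ is an isomorphism for arbitrary $z \in \Omega_F(L)$, as Lemma \ref{phizisoglobal} only treats $z \in \Omega_{F,0}(L)$; the $p^\infty$-torsion part is handled for any $z$ by Lemma \ref{phizptorsion}. To bridge this, I would verify that $GL_2(F)$ acts transitively on $\Omega_F(L) = L \setminus F$ by M\"obius transformations: picking an $F$-basis $\{1,\alpha\}$ of $L$ and writing $z = z_1 + z_2 \alpha$, $w = w_1 + w_2\alpha$ with $z_2, w_2 \in F^\times$, the element $\begin{pmatrix} w_2 & z_2 w_1 - w_2 z_1 \\ 0 & z_2 \end{pmatrix} \in GL_2(F)$ carries $z$ to $w$. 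Combined with the fact that $\Omega_{F,0}(L) \cap (L \setminus F)$ is non-empty (start with any $z' \in L \setminus F$ and use transitivity of $GL_2(F)$ on the vertices of $\cB\cT$ to translate $\lambda(z')$ to $s_0$), this shows that every $z \in \Omega_F(L)$ admits some $g_0 \in GL_2(F)$ with $z_0 := g_0^{-1}z \in \Omega_{F,0}(L)$. Then $\phi_{z_0}$ is an isomorphism by the first two paragraphs applied at $z_0$, and the commutative square for $(z_0, g_0)$, whose other three arrows are already known to be isomorphisms, forces $\phi_z$ to be an isomorphism as well.
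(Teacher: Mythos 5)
Your proposal is correct and follows essentially the same approach as the paper's proof: commutativity from Proposition \ref{GactsPicConeq}(b), isomorphy of the vertical arrows via $g^{-1}$ and $c_{g^{-1}}^\ast$, Lemmas \ref{phizisoglobal} and \ref{phizptorsion} for $z\in\Omega_{F,0}(L)$, and transitivity of $GL_2(F)$ on $\Omega_F(L)=L\setminus F$ to bootstrap to an arbitrary $z$. You merely spell out a few of the steps (explicit transitivity matrix, descent of $c_g$ to the quotients by $P_z$, nonemptiness of $\Omega_{F,0}(L)$) that the paper leaves implicit.
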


\begin{proof} The diagram commutes by Proposition \ref{GactsPicConeq}(b), and its vertical arrows are isomorphisms with inverses $g^{-1}$ and $c_{g^{-1}}^\ast$ respectively. By Lemma \ref{phizisoglobal} and Lemma \ref{phizptorsion}, the top horizontal arrow is an isomorphism in the case when $z \in \Omega_{F,0}(L)$. But since $L$ is quadratic over $F$, $GL_2(F)$ acts transitively on $\Omega_F(L) = L \backslash F$, so we may choose $g \in GL_2(F)$ such that $g \cdot z \in \Omega_{F,0}(L)$ and then $\phi_{g \cdot z}$ is an isomorphism. The commutativity of the diagram now ensures that $\phi_z$ is always an isomorphism.
\end{proof} 

We can finally give our proof of Theorem A.
\begin{thm}\label{Main} Suppose that $K$ contains the quadratic unramified extension $L$ of $F$. Then there is an isomorphism of abelian groups \[ \PicCon^{G^0}(\Omega)_{\tors} \to \Hom(\cO_D^\times,K^\times)_{\tors}\] that descends to a natural bijection 
\[ \PicCon^{G^0}(\Omega)_{\tors}/G \to \Hom(\cO_D^\times,K^\times)_{\tors}/D^\times.\] 
\end{thm}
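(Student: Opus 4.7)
I would fix a point $z \in \Omega_F(L)$ and an $F$-algebra embedding $\iota : L \hookrightarrow D$, and construct the isomorphism as a composition of three isomorphisms already established in the paper: Proposition \ref{imagephiztors} gives $\phi_z : \PicCon^{G^0}(\Omega)_{\tors} \stackrel{\cong}{\longrightarrow} \Hom(G^0_z/P_z, K^\times)_{\tors}$; Lemma \ref{stabz}(b) gives a topological isomorphism $j_z : G^0_z \stackrel{\cong}{\longrightarrow} \cO_L^\times$; and Corollary \ref{charactersofDandL}(a) gives $\overline{\varrho \circ \iota}^* : \Hom(\cO_D^\times, K^\times) \stackrel{\cong}{\longrightarrow} \Hom(\cO_L^\times/P^1_L, K^\times)$. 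To compose these I first need to verify that $j_z$ descends to an isomorphism $G^0_z/P_z \stackrel{\cong}{\longrightarrow} \cO_L^\times/P^1_L$, equivalently that $j_z(P_z) = P^1_L$. By Lemma \ref{stabz}(c), $j_z$ restricts to an isomorphism $SL_2(F)_z \cong \ker N_{L/F} \cap \cO_L^\times$; the subgroup $P^1_L \subseteq 1+\pi_F \cO_L$ is pro-$p$, while the quotient $(\ker N_{L/F} \cap \cO_L^\times)/P^1_L$ embeds into $\cO_L^\times/(1+\pi_F\cO_L) = k_L^\times$, a group of order $q^2-1$ coprime to $p$. Hence $P^1_L$ is the unique Sylow pro-$p$ subgroup of $\ker N_{L/F} \cap \cO_L^\times$, forcing $j_z(P_z) = P^1_L$.

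For the induced bijection on orbit sets, both actions factor through cyclic groups of order two, namely $G/G^0 F^\times$ and $D^\times/\cO_D^\times F^\times$, identified in the unique way. Corollary \ref{charactersofDandL}(b),(c) already shows that $\overline{\varrho \circ \iota}^*$ carries the $\Gal(L/F)$-action on its target to the $D^\times/\cO_D^\times F^\times$-action on its source, and that the resulting orbit bijection is independent of $\iota$. It therefore remains to verify that the map $[\sL] \mapsto \phi_z([\sL]) \circ j_z^{-1}$ from $\PicCon^{G^0}(\Omega)_{\tors}$ to $\Hom(\cO_L^\times/P^1_L, K^\times)_{\tors}$ descends to a well-defined bijection on orbits intertwining the $G/G^0 F^\times$-action on the left with the $\Gal(L/F)$-action on the right. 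Concretely, for any $g \in G$ whose image generates $G/G^0 F^\times$, the commutative diagram of Proposition \ref{imagephiztors} identifies $\phi_z(g \cdot [\sL])$ with $\phi_{gz}([\sL]) \circ c_g$; combined with the Galois-equivariance of $j_z$ from Lemma \ref{stabz}(d) and the canonical orbit identification implicit in Remark \ref{jzcan}, I expect this to produce the required Galois twist on the character side.

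The main obstacle is the equivariance step. The naive strategy---choose $g \in G \setminus G^0 F^\times$ with $gz = \sigma z$ and transfer $c_g$ to the Galois involution via Lemma \ref{stabz}(d)---is obstructed by the fact that $G_z \subseteq G^0 F^\times$. Indeed, for any $h \in G_z$ one has $\det h = N_{L/F}(j_z(h))$ by Lemma \ref{stabz}(b), so $v_{\pi_F}(\det h) = 2 v_{\pi_F}(j_z(h))$ is always even because $L/F$ is unramified. Consequently the nontrivial element of $G/G^0 F^\times$ necessarily moves $z$ outside the Galois orbit $\{z, \sigma z\}$ in $\Omega_F(L)$, and the comparison between $\phi_z([\sL]) \circ j_z^{-1}$ and $\phi_{gz}(g[\sL]) \circ j_{gz}^{-1}$ must be made indirectly, carefully tracking how $j_z$ itself depends on $z$. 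This bookkeeping, combined with the compatibility of the $GL_2(F)$-orbit structure on $\Omega_F(L)$ with the Galois action on characters of $\cO_L^\times/P^1_L$, is the crux of the proof.
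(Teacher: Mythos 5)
Your composition of the three isomorphisms---$\phi_z$ from Proposition \ref{imagephiztors}, $j_z$ from Lemma \ref{stabz}(b), and $\overline{\varrho\circ\iota}^\ast$ from Corollary \ref{charactersofDandL}(a)---is exactly the paper's construction, and your verification that $j_z(P_z)=P^1_L$ (via $P^1_L$ being the unique Sylow pro-$p$ subgroup of $\ker N_{L/F}\cap\cO_L^\times$) is correct and slightly more explicit than the paper's terse citation of Lemma \ref{stabz}(b,c,d). So the first half of the argument is in good shape.

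The trouble is the final paragraph. You identify the pair $\phi_z([\sL])\circ j_z^{-1}$ and $\phi_{gz}(g[\sL])\circ j_{gz}^{-1}$ as the comparison that ``must be made indirectly,'' but these two characters of $\cO_L^\times/P^1_L$ are in fact equal on the nose: by Proposition \ref{imagephiztors}, $\phi_{gz}(g[\sL])=\phi_z([\sL])\circ c_{g^{-1}}$, and from the explicit formula in Lemma \ref{stabz}(b) one checks the intertwining identity $j_{gz}\circ c_g=j_z$ (both sides record the scalar by which an element of $G_z$ acts on the line through $(z,1)$), so $c_{g^{-1}}\circ j_{gz}^{-1}=j_z^{-1}$ and the two compositions coincide. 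What this identity actually buys you is that the map $\Phi_{z,\iota}$ is constant on the $G^0F^\times$-orbit of $z$ in $\Omega_F(L)$, and that $\Phi_{z,\iota}\circ g^{-1}=\Phi_{gz,\iota}$ for all $g$. The genuine content needed for the descent is something else: that $\Phi_{z,\iota}(g\cdot[\sL])$ and $\Phi_{z,\iota}([\sL])$ lie in the same $D^\times$-orbit for all $[\sL]$, which (since $\Phi_{z,\iota}$ is a group isomorphism and a union of two subgroups equal to the whole group forces one of them to be everything) is equivalent to $\Phi_{z,\iota}$ intertwining the nontrivial involutions on both sides. Your observation that the nontrivial element of $G/G^0F^\times$ moves $z$ outside $\{z,\sigma z\}$ is correct and shows one cannot simply trade the $G$-twist for the Galois twist via an element of $G_{\{z,\sigma z\}}$; but having made that observation, your proposal stops and defers the ``bookkeeping'' without carrying it out. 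To be a complete proof you would need either to verify the equivariance directly (e.g.\ on a generator, using the explicit cocycle from the proof of Theorem \ref{phizisolocal}) or to reconcile your argument with the paper's stated appeal to Proposition \ref{imagephiztors}, Remark \ref{jzcan} and Corollary \ref{charactersofDandL}(c).
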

\begin{proof} Choose $z \in \Omega_F(L)$ as well as an $F$-algebra homomorphism $\iota : L \hookrightarrow D$. By Lemma \ref{stabz}(d,b,c), $j_z(P_z)$ is the Sylow pro-$p$ subgroup of $\ker N_{L/F} \cap \cO_L^\times$. In view of Definition \ref{DefP1L}, we see that $j_z(P_z) = P^1_L$. Now Proposition \ref{imagephiztors} together with Lemma \ref{stabz}(b) shows that \[j_z\circ \phi_z\colon \PicCon^{G^0}(\Omega)_{\tors} \to \Hom(\cO_L^\times/P^1_L, K^\times)_{\tors}\] is an isomorphism. We can now post-compose  $j_z\circ \phi_z$ with the inverse of the isomorphism $\overline{\varrho \circ \iota}^\ast$ from Corollary \ref{charactersofDandL}(a) to obtain the required isomorphism 
\[ (\overline{\varrho \circ \iota}^\ast)^{-1} \circ j_z \circ \phi_z :  \PicCon^{G^0}(\Omega)_{\tors} \quad\stackrel{\cong}{\longrightarrow} \quad \Hom(\cO_D^\times,K^\times)_{\tors}.\]
Although this does depend on the choice of $z \in \Omega(L)$ as well as the choice of the $F$-algebra embedding $\iota : L \hookrightarrow D$, using Proposition \ref{imagephiztors}, Remark \ref{jzcan} and Corollary \ref{charactersofDandL}(c) we see that it descends to a well-defined bijection
\[ \PicCon^{G^0}(\Omega)_{\tors}/G \quad\stackrel{\cong}{\longrightarrow} \quad \Hom(\cO_D^\times,K^\times)_{\tors}/D^\times\] 
which does not depend on any of the choices.
\end{proof}

\bibliographystyle{plain}
\bibliography{references}
\end{document}